\documentclass[11pt, letterpaper]{article}
\usepackage{arxiv}
\usepackage{booktabs,threeparttable,xcolor,colortbl}
\allowdisplaybreaks

\title{\Large\textbf{Optimal Online and Offline Algorithms for Contextual MNL\\ with Applications to Assortment and Pricing}}

\author{
\textbf{Yunfan Zhang}\thanks{Equal contribution, reverse-alphabetical ordered.}\\
{\small Stern School of Business, New York University}\\
{\small \texttt{yz11751@stern.nyu.edu}}
\and
\textbf{Yuxuan Han}\footnotemark[1]\\
{\small Stern School of Business, New York University}\\
{\small \texttt{yh6061@stern.nyu.edu}}
\and
\textbf{Hongyu Shan}\\
{\small Department of Finance, China Europe International Business School (CEIBS)}\\
{\small \texttt{hongyushan@ceibs.edu}}
\and
\textbf{Jose Blanchet}\\
{\small Department of Management Science \& Engineering, Stanford University}\\
{\small \texttt{jose.blanchet@stanford.edu}}
\and
\textbf{Zhengyuan Zhou}\\
{\small Stern School of Business, New York University}\\
{\small \texttt{zzhou@stern.nyu.edu}}
}

\date{}

\begin{document}

%%%%%%%%%%%%%%%%%%%% title/author/date/abstract/content %%%%%%%%%%%%%%%%%%%%

\maketitle

\begin{abstract}
Selecting which products to display and at what prices is a central decision in retail and e-commerce operations. In many applications, these two choices must be made jointly under limited display capacity and uncertain customer demand.
In this paper, we study the joint assortment and pricing problem under a price-based contextual multinomial logit model, where customer preferences depend on both product features and selling prices. Our analysis begins with the construction of a new confidence region for demand estimation under price-dependent features. Building on this result, we develop a pessimistic offline algorithm and SupCB-type online algorithms for joint assortment and pricing optimization. In the offline setting, we establish a suboptimality guarantee governed by local information around the optimal assortment-price pair, rather than by exact coverage of the optimal action. In the online setting, our SupCB-type algorithm improves the best previously known regret bound to $\widetilde{O}(W\sqrt{dT\log N}/L_0)$, and we also provide a computationally simpler Thompson-sampling alternative. When specialized to the assortment-only or pricing-only setting, our bounds recover the near-minimax-optimal rates established in those respective domains, thereby bridging the gap between the mature study on assortment optimization or dynamic pricing and the limited literature on their joint optimization.
\end{abstract}

\noindent\textbf{Keywords:} joint assortment and pricing; multinomial logit choice model; revenue management; offline learning; online learning

%%%%%%%%%%%%%%%%%%%% input main text here %%%%%%%%%%%%%%%%%%%%

\section{Introduction}

In modern commerce, sellers managing a large inventory face two closely related decisions: which products to offer and at what prices. These correspond to two classical problems that have been extensively studied in isolation. In assortment optimization, prices are fixed and the seller selects a subset of products, typically subject to a capacity constraint, to maximize expected revenue. In dynamic pricing, the offered assortment is fixed, and the seller adjusts prices to maximize revenue while learning unknown demand. In practice, however, these two decisions are often made jointly.
For example, when a customer searches for ``laptops’’ on an e-commerce platform, limited screen space allows only a small number of products to be displayed out of thousands of candidates. At the same time, the platform must determine the price or promotional discount for each displayed item. This naturally leads to the study of \textit{joint assortment and pricing optimization} under \textit{cardinality constraints}, where the seller aims to simultaneously determine an assortment of at most $K$ out of $N$ items and their corresponding prices to maximize expected revenue.

A central difficulty is that the expected revenue of any assortment-pricing decision depends not only on posted prices but also on customer choice behavior, which is typically unknown and must be learned from data. Discrete-choice models provide a natural framework for capturing such behavior, and the multinomial logit (MNL) model is a particularly attractive benchmark because of its computational tractability and broad use in revenue management; see, for example, \citet{vanRyzinMahajan1999,rusmevichientong2010dynamic,wang2012capacitated,berbeglia2022comparative,feldman2022alibaba}. Motivated by these considerations, we study joint assortment and pricing under a price-based contextual MNL model.
Specifically, each item $i\in[N]$ is associated with a feature vector $\bx_i\in\mathbb{R}^d$ (e.g., color, texture, brand, quality, reviews), and its attraction under price $p_i$ is modeled as
$$v_i(p_i) := \exp\left( \langle \bm\psi^\star, \bm x_i \rangle - \langle \bm \phi^\star, \bm x_i \rangle p_i \right),$$
where $\bm\psi^\star$ captures the customer’s intrinsic preference and $\bm\phi^\star$ captures price sensitivity. Given an assortment $S$, the customer’s choice then follows the standard multinomial distribution as described in \eqref{eq-mnl-choice-prob}.

Under this formulation, the difficulty described above becomes a statistical learning problem: the seller must estimate the unknown parameter pair $(\bm\psi^\star,\bm\phi^\star)$ from observed purchase data. In other words, the seller must learn both customers’ baseline preferences and their feature-dependent price sensitivities while simultaneously making revenue-generating decisions. This formulation generalizes the standard linear MNL model as the special case of fixed prices, and it also recovers the dynamic pricing setting when the assortment is fixed throughout. Our framework therefore provides a unified model for assortment optimization, dynamic pricing, and their joint formulation.

Although assortment optimization and dynamic pricing have each been studied extensively, comparatively fewer papers consider their joint formulation, especially in contextual settings where observable item features influence both baseline preference and price sensitivity. In this paper, we study this problem under a price-based contextual MNL model in both online and offline learning settings.

The online setting has been the primary focus of the existing literature. It captures applications such as e-commerce and digital retail, where customers arrive sequentially and the seller can continuously adapt assortments and prices using the purchase data accumulated so far. For the joint problem, \citet{miao2021dynamic} initiated the study of joint assortment and pricing under the MNL model, and recent contextual work \citep{erginbas2025online} has established $\widetilde{\mathcal O}(d\sqrt{KT}/L_0)$ regret bounds for related online settings. By contrast, in the simpler contextual assortment-only MNL setting, the sharp benchmark regret is known to be $\widetilde{\mathcal O}(\sqrt{dT})$ \citep{oh2021multinomial,han2025improved}.
This naturally raises the question of whether the contextual joint problem can attain the same sharp benchmark. Our main result shows that this is indeed possible. We propose a SupCB-type algorithm that achieves the optimal $\widetilde{\mathcal O}(\sqrt{dT}/L_0)$ regret bound for contextual joint assortment and pricing, and the same analysis also yields the corresponding sharp guarantees for dynamic pricing and assortment optimization as special cases.

Despite its importance, the online setting captures only part of the practical picture. In many real-world applications, due to high experimentation costs, strict operational constraints, or the risk of degrading the customer experience, firms cannot experiment with live assortment-pricing policies. This motivates offline learning, where the seller must instead rely on historical transaction data collected under past decisions and learn a strong policy before deployment. Existing work on offline learning under MNL choice models is largely restricted to assortment-only problems \citep{dong2023pasta,han2025improved}. To the best of our knowledge, comparable theoretical guarantees for offline joint assortment and pricing have not been developed. Our work bridges this gap by extending the analysis to the offline setting, providing sharp instance-dependent guarantees for learning joint assortment-pricing policies from historical data.

Together, these results show that our price-based contextual MNL framework supports sharp guarantees for joint assortment and pricing, while recovering assortment-only and pricing-only problems as special cases. We compare our guarantees with the existing literature in Table~\ref{tab:comparison} and summarize our contributions below.
 %% ---- BEGIN linear_mnl_table_offline.tex ----

\begin{table*}[t]
\centering
\small
\setlength{\tabcolsep}{6pt}
\renewcommand{\arraystretch}{1.3}
\resizebox{\textwidth}{!}{%
\begin{tabular}{lcccccc}
\toprule
\textbf{Reference} & \textbf{Assortment} & \textbf{Pricing} & \textbf{Context} 
  & \textbf{Upper Bound} & \textbf{Lower Bound} \\
\midrule
\multicolumn{6}{c}{\textsc{Online Learning}} \\[2pt]
\midrule
\cite{chen2018note}       & \cmark & \xmark & \xmark 
  & --                                                        & $\Omega(\sqrt{NT})$ \\
\cite{agrawal2019mnl}     & \cmark & \xmark & \xmark 
  & $\widetilde{\mathcal{O}}(\sqrt{NT})$                     & $\Omega(\sqrt{NT/K})$ \\
\cite{chen2020dynamic}    & \cmark & \xmark & \cmark 
  & $\widetilde{\mathcal{O}}(d\sqrt{T})$                     & $\Omega(d\sqrt{T}/K)$ \\
\cite{oh2021multinomial}  & \cmark & \xmark & \cmark 
  & $\widetilde{\mathcal{O}}(\kappa^{-1}\sqrt{dT\log N})$    & -- \\
\cite{miao2021dynamic}    & \cmark & \cmark & \xmark 
  & $\widetilde{\mathcal{O}}(e^{W+1/L_0}\sqrt{NT})^{\ddagger}$ & -- \\
\cite{perivier2022dynamic}& \xmark & \cmark & \cmark 
  & $\widetilde{\mathcal{O}}(d\sqrt{T})$                     & -- \\
\cite{lee2024nearly}      & \cmark & \xmark & \cmark 
  & $\widetilde{\mathcal{O}}(d\sqrt{T})$                     & $\Omega(d\sqrt{T})$ \\
\cite{erginbas2025online} & \cmark & \cmark & \cmark 
  & $\widetilde{\mathcal{O}}(e^{W}d\sqrt{KT}/L_0)^{\S}$     & $\Omega(d\sqrt{T}/L_0)$ \\[2pt]
\rowcolor{gray!12}
\textbf{This paper}       & \cmark & \cmark & \cmark 
  & $\widetilde{\mathcal{O}}\!\left(W\sqrt{dT\log N}/L_0\right)$ 
  & $\Omega(\sqrt{dT})$ \\
\midrule[0.4pt]
\multicolumn{6}{c}{\textsc{Offline Learning}} \\[2pt]
\midrule
\cite{dong2023pasta}      & \cmark & \xmark & \cmark 
  & $\widetilde{\mathcal{O}}\!\left(\sqrt{d/(\kappa' n_{S^\star})}\right)^{\dagger}$ & -- \\
\cite{han2025optimal}     & \cmark & \xmark & \xmark 
  & $\widetilde{\mathcal{O}}(K/\sqrt{n^\star})^{\diamond}$              & $\Omega(K/\sqrt{n^\star})$ \\[2pt]
\rowcolor{gray!12}
\textbf{This paper}       & \cmark & \cmark & \cmark 
  & $\widetilde{\mathcal{O}}\!\left(\dfrac{W}{L_0}
      \sqrt{\displaystyle\sum_{j\in S^\star}q_0^\star q_j^\star
            \|\widetilde{\bx}_j(p_j^\star)\|^2_{\bH_{\widetilde{\mathcal{D}}}^{-1}}}
    \right)$
  & $\Omega\!\left(\dfrac{W}{L_0}
      \sqrt{\displaystyle\sum_{j\in S^\star}q_0^\star q_j^\star
            \|\widetilde{\bx}_j(p_j^\star)\|^2_{\bH_{\widetilde{\mathcal{D}}}^{-1}}}
    \right)$ \\[6pt]
\bottomrule
\end{tabular}%
}
\caption{Comparison of leading-order guarantees for online and offline learning in MNL assortment optimization, dynamic pricing, and joint assortment--pricing problems. Here $N$ denotes the number of items, $K$ the assortment size, $d$ the feature dimension, $L_0$ the price-sensitivity parameter, $W$ the parameter radius, and $q_j^\star$ denote the choice probabilities evaluated at the optimal solution $(S^\star,\bp^\star|\bm\vartheta^\star)$. $^{\ddagger}$~In \cite{miao2021dynamic}, the factor $e^{W+L_0^{-1}}$ is absorbed into the constant $c_0$ in Theorem~1. $^{\S}$~Although \cite{erginbas2025online} states the result for $W=1$, Lemma~C.1 implies an exponential dependence on $W$ for general $W>0$. Offline guarantees rely on different notions of data coverage: $^{\dagger}$~\citet{dong2023pasta} requires \emph{assortment-level coverage}, captured by $n_{S^\star}=\sum_{m=1}^n \mathbf{1}\{S_m=S^\star\}$; $^{\diamond}$~\citet{han2025optimal} establishes a sharp \emph{item-level coverage} benchmark, where $n^\star=\min_{j\in S^\star}\sum_{m=1}^n \mathbf{1}\{j\in S_m\}$. Our offline guarantee is instead characterized by a \emph{local information-weighted item-level} quantity that reduces to the $\widetilde{\Theta}(K/\sqrt{n^\star})$ benchmark under canonical features and balanced coverage.}
\label{tab:comparison}
\end{table*}
%% ---- END linear_mnl_table_offline.tex ----

\subsection{Our Contributions}

\paragraph{Improved Confidence Region for Price-Based MNL MLE. } 
Our first contribution is a sharper, variance-aware confidence region for the regularized MLE that avoids explicit dependence on $\kappa$. Specifically, for the MLE $\widehat{\bm \vartheta}_{\widetilde\cD}^\lambda$ computed on dataset $\widetilde{\cD}$, we establish that with high probability:
$$|\widetilde{\bm x}(p)^{\top}(\widehat{\bm \vartheta}_{\widetilde\cD}^\lambda-\bm \vartheta^\star)| =\widetilde{\mathcal O} \left(\|\widetilde{\bm x}( p)\|_{\bH_{\widetilde\cD}^\lambda\left(\bm\vartheta^\star\right)^{-1}} \right),\quad \forall \lVert \bx\rVert \leq 1,\ p\in[0,P],$$
where $\bH_{\widetilde\cD}^\lambda(\bm\vartheta^\star)$ is the log-likelihood Hessian. 
By exploiting the self-concordant-like geometry of the MNL likelihood \citep{perivier2022dynamic,agrawal2023tractable,lee2024nearly}, we obtain a local prediction bound governed by the inverse Hessian. This result is the key technical foundation for our subsequent analysis and enables sharper offline and online guarantees.

\paragraph{Offline Joint Assortment and Pricing via Pessimism.} 
Building on the improved confidence region, we develop a pessimism-based offline algorithm and establish a sharp instance-dependent guarantee for joint assortment and pricing under an item-level coverage condition. The leading term scales as $\widetilde{\mathcal{O}}\left(\frac{W}{L_0}\sqrt{\sum_{j\in S^\star}q_0^\star q_j^\star \| \widetilde{\bx}_j( p^\star_j)\|^2_{\bH_{\widetilde\cD}^{-1}}}\right)$.  
% We also show that a burn-in-free variant can be obtained by plugging in alternative confidence regions, at the cost of additional $\sqrt{dK\bar P^4W^3}$ multiplicative dependency, and under a fixed data-collection policy our result yields a clean asymptotic rate in terms of the limiting pricing Hessian.

\paragraph{Online Joint Assortment and Pricing.} 
By leveraging the improved confidence region, we develop a SupCB-style algorithm for the online problem and obtain the regret bound $\widetilde{\mathcal O}\left(W\sqrt{dT\log N}/L_0\right).$ This improves the dependence of dimension in the leading term over the $\widetilde{\mathcal O}(e^Wd\sqrt{KT}/L_0)$ guarantee of \citet{erginbas2025online} whenever $N=o(2^d)$. We further show that a computationally simpler Thompson-sampling variant achieves Bayesian regret $\widetilde{\mathcal O}\left(Wd\sqrt{T}/L_0\right).$

% An earlier conference version studied only the assortment-only linear MNL setting~\citep{han2025improved}. The present paper extends that work to the more general price-based contextual MNL model and establishes new offline and online guarantees for joint assortment and pricing. The linear MNL results are recovered as a special case.
An earlier conference version \citep{han2025improved} studied only the assortment-only linear MNL setting. The present paper extends that work to the more general price-based contextual MNL model and establishes new offline and online guarantees for joint assortment and pricing. The linear MNL results are recovered as a special case.

%-----------------------------related work-------------------------------%
\subsection{Related Works}

\paragraph{Dynamic Assortment optimization under MNL.}

The study of dynamic assortment optimization with demand learning was initiated by \citet{caro2007dynamic}. \citet{rusmevichientong2010dynamic} formulated the canonical capacitated assortment problem under the MNL model and established one of the earliest regret guarantees $O(N^2\log^2 T)$ in this setting. \citet{saure2013optimal} subsequently studied a more general dynamic demand-learning framework that includes the MNL model as a special case. The regret frontier for the classical $N$-item MNL problem was substantially sharpened by \citet{agrawal2017thompson,agrawal2019mnl}, who analyzed bandit algorithms for MNL-based assortment selection and achieved a regret of order $\widetilde{\mathcal O}(\sqrt{NT})$. Complementing these upper bounds, \citet{chen2018note} established a matching lower bound, so the canonical assortment-only MNL problem is now understood to have minimax regret $\widetilde{\Theta}(\sqrt{NT})$ up to logarithmic factors. Subsequent work has further refined the structural and computational understanding of this benchmark problem \citep{chen2021optimal,saha2024stop}.

A parallel line of work studies contextual (linear) MNL assortment optimization, which is closer to our setting because utilities are parameterized by item features. \citet{chen2020dynamic} initiated the study of dynamic assortment optimization under time-varying contextual information and obtained a regret bound of $\widetilde{\mathcal O}(d\sqrt{T})$, together with a lower bound of order $\Omega(d\sqrt{T}/K)$. Building on earlier Thompson-sampling ideas for multinomial logit contextual bandits \citep{oh2019thompson}, \citet{oh2021multinomial} showed that the linear structure can be exploited more aggressively to achieve regret of order $\widetilde{\mathcal O}(\kappa^{-1}\sqrt{dT}\log N)$, improving the dependence on the feature dimension $d$ at the cost of a multiplicative dependence on the problem-dependent constant $\kappa^{-1}$. Subsequent work revisited exactly this tradeoff through sharper confidence analyses for the MNL likelihood. \citet{perivier2022dynamic} improved the dependence on $\kappa$ by establishing a regret bound of order $\widetilde{\mathcal O}(d\sqrt{\kappa T}+\kappa^{-1}\log T)$. \citet{lee2024nearly} further improved the bound to $\widetilde{\mathcal O}(d\sqrt{T})$, a $\kappa$-free bound that is nearly minimax optimal relative to the known lower bound. Most recently, \citet{lee2025improved} further refined the online confidence analysis for multinomial logistic bandits. Taken together, these results highlight a fundamental trade-off in the linear MNL assortment problem: tighter dependence on the feature dimension $d$ has come at the cost of a multiplicative $\kappa^{-1}$ factor, and eliminating this factor without sacrificing the dimension dependence has been the central technical challenge.

In contrast, the offline assortment literature remains sparse. \citet{dong2023pasta} proposed a pessimistic method for linear MNL assortment optimization whose guarantee scales with the number of exact observations of the optimal assortment $S^\star$. More recently, \citet{han2025optimal} showed that such exact coverage is unnecessary in the canonical $N$-item model. By using an item-wise pessimistic principle, they established the minimax item-coverage rate $K/\sqrt{\min_{i\in S^\star} n_i}$ in the general reward setting. These papers are the closest offline baselines to our work, but both rely on discrete coverage notions, either exact coverage of $S^\star$ or item-wise coverage of the optimal items, rather than the finer local information exploited in our analysis.

\paragraph{Dynamic pricing with demand learning.}

Dynamic pricing with demand learning has generated a substantial literature, but the line most relevant to our paper is contextual pricing, where the seller learns a continuous price from observable features. 
Classical single-product models without contextual information establish the basic regret benchmarks. \citet{broder2012dynamic} show that under general parametric demand the regret is $\Theta(\sqrt{T})$, while stronger separation conditions can improve it to $\Theta(\log T)$. The more relevant development for our setting is contextual single-product pricing. In stationary stochastic models, \citet{zhao2024contextual} establish the nearly minimax benchmark $\widetilde{\Theta}(\sqrt{dT})$ under generalized linear demand. Under adversarial context sequences, the regret depends on the structural assumptions: with contextual elasticity, \citet{xu2023elasticity} obtain a $\widetilde{\mathcal O}(\sqrt{dT})$ guarantee up to logarithmic factors, while under stronger known-noise assumptions \citet{xu2021logarithmic} achieve $O(d\log T)$ regret. 

Moving from one product to many products introduces substitution effects and is therefore closer to our setting. In a contextual multi-product MNL pricing model with heterogeneous price sensitivity, \citet{javanmard2020multi} prove regret $O(\log(Td)(\sqrt{T}+d\log T))$. More closely related to our setting, \citet{perivier2022dynamic} study contextual MNL pricing under adversarial arrivals and obtain an $O(d\sqrt{T}\log T)$ regret bound.

\paragraph{Joint assortment and pricing under MNL.}
Compared with the two separate strands above, the literature on joint assortment and pricing is much more limited. Under known demand, a body of work studies the static joint optimization under MNL and related choice models \citep{hopp2005product,LiHuh2011,wang2012capacitated,Besbes2016Assortment,song2021demand,ke2025modeling}. Learning with unknown demand is much more recent. \citet{miao2021dynamic}proposed the first online learning algorithm for dynamic joint assortment and pricing under an MNL model in the canonical setting, establishing a Bayesian regret guarantee with explicit dependence on problem-specific constants. 
Subsequent work has extended this line in two directions. \citet{erginbas2025online} study the contextual joint problem, where customer preferences depend on observed features, and establish a regret bound of order $\widetilde{\mathcal O}(d\sqrt{KT}/L_0)$ together with a lower bound $\Omega(d\sqrt{T}/L_0)$.  \citet{kim2025dynamic} address a related formulation with censored demand feedback, in which customers first screen out overpriced products before making a multinomial choice among the remaining items. They combine LCB-based pricing with either UCB or Thompson sampling for assortment selection, obtaining regret bounds $\widetilde{\mathcal O}(d^{3/2}\sqrt{T/\kappa})$ and $\widetilde{\mathcal O}(d^{2}\sqrt{T/\kappa})$, respectively.

%---------------------Preliminary-----------------------------%
%% ---- BEGIN 2-Preliminary.tex ----
%---------------------Preliminary-----------------------------%

\paragraph{Notations.} 
Through out the paper, for any integer $N$ we denote $[N]:= \{1,\dots,N\}$. For any real numbers $a,b$ we use the notion $a \vee b$ to denote $\max\{a,b\}$, and $a \wedge b$ to denote $\min\{a,b\}$. 
% Moreover, we use the notation $a\lesssim b$ if there exists some absolute constant $c>0$ so that $a< c b$. 
 We also use the notation $a = \widetilde{\mathcal O}(b)$ to indicate that $a \leq Cb$ for some constant $C$ that depends only logarithmically on $T$ and $K$.
For any subset $S$ of $[N]$, we denote $S_+:= S\cup \{0\}$ . For matrices and vectors, given any vector $\bm x\in \mathbb{R}^d$ and $A\in \mathbb{R}^{d\times d}$, we denote $\lVert \cdot \rVert$ the $\ell_2$-norm, and $\lVert \bm x \rVert_{A}:= \sqrt{\bm x^\top A\bm x},$ we also denote $A^\dagger$ the pseudo inverse of $A$.

\section{Preliminary}
% \yf{Edited: format/$\cdot$, $x\to\bm x$, $\theta\to\bm\theta$, $p_i(S|\bm v)\to q_i(S|\bm v)$}\\
%\paragraph{Revenue Maximization under the Linear MNL Model.} 

%---------------------Notations-----------------------------%
%-------------------------------------------------------------%

% \yf{general confidence bound - MNL: online- offline - pricing : online, offline }

We study the assortment optimization problem, which models the interaction between a \emph{seller} and a \emph{customer}. Let $[N]$ denote the set of $N$ available products/items. An assortment $S \subseteq [N]$ represents the subset of products that the seller offers to the customer. When presented with assortment $S$, the customer chooses a product from the choice set $S_+ = \{0\} \cup S$, where $\{0\}$ represents the no-purchase option. In the $N$-item MNL model, each item has an attraction value $v_i\geq 0$. The no-purchase option is normalized to have an attraction value $v_0=1$. When a customer encounters assortment $S$, the probability that he/she will choose product $i\in S_+$ is given by
\begin{equation}\label{eq-mnl-choice-prob}
    q_i(S\lvert \bm v):= \frac{v_i}{1 + \sum_{j\in S} v_j}.
\end{equation}  
Each item $i$ generates a revenue $r_i$ when purchased, while the no-purchase option generates no revenue: $r_0 = 0$. The seller's goal is to maximize the expected revenue from the selected assortment, defined as
\begin{equation}\label{preliminary-eq: revenue-v}
R(S\lvert \bm v): = \sum_{i\in S} r_i q_i(S\lvert \bm v) = \frac{  \sum_{i\in S} r_i v_i}{1+\sum_{j\in S} v_j}.
\end{equation}
Following standard research conventions, we consider assortments $S$ of size at most $K$, i.e., $\mathcal{S}_K=\{S\in[N]:|S|\le K\}$. The optimal assortment is defined as $S^\star = \text{argmax}_{S\in \mathcal{S}_K}R(S\lvert \bv)$.
Although this step involves solving a maximization problem over an exponentially large set, it has been well-studied in the assortment optimization literature. In particular, several well-known algorithms \citep{rusmevichientong2010dynamic, davis2013assortment, avadhanula2016tightness} can solve it in polynomial time.

%---------------------linear mnl-----------------------------%
%-------------------------------------------------------------%
\subsection{Assortment Optimization under the Linear MNL Model}\label{preliminary-subsection: assortment-optimization}
In the $d$-dimensional linear MNL model, the attraction values are parameterized by feature vectors. Specifically, each item $i\in [N]$ is associated with a feature vector $\bm x_i \in \mathbb{R}^d,$ and there exists an underlying parameter $\bm\theta^\star\in \mathbb{R}^d$ such that $$v_i = \exp(\bm x_i^\top\bm\theta^\star).$$
The collection of all feature vectors forms the design matrix $\bm{X}:= (\bm x_1,\dots,\bm x_N)\in \mathbb{R}^{N\times d}$. Under this parameterization, the choice probabilities and the expected revenue become functions of $\bm\theta^\star$:
\begin{align}
\label{preliminary-eq: linear-choice}
q_i(S\lvert \exp(\bm{X}^\top \bm\theta^\star))= \frac{ \exp(\bm x_i^\top\bm\theta^\star)}{1+\sum_{j\in S} \exp(\bm x_j^\top\bm\theta^\star)},
\\
\label{preliminary-eq: linear-revenue}
R(S\lvert \exp(\bm{X}^\top \bm\theta^\star))= \frac{\sum_{i\in S} r_i\exp(\bm x_i^\top\bm\theta^\star)}{1+\sum_{j\in S} \exp(\bm x_j^\top\bm\theta^\star)}.
\end{align}
The optimal decision $S^\star$ is defined by $S^\star= \text{argmax}_{S \in \mathcal{S}_K} R(S\lvert \exp(\bm{X}^\top \bm\theta^\star))$. 
In addition, we impose the following assumption on the $\bX$ and $\bm\theta^\star:$

\begin{assumption}[Boundedness]\label{assump-bounded}
For some constants $M>0$ and $W>0$, we assume
$r_i\in[0,M]$, $\|\bm\theta^\star\|_2 \le {W}$, and
$\|\bm x_{i}\|_2 \le 1$ for all $i \in [N]$.
\end{assumption}

In particular, in the known $\bm\theta^\star$ setting, the assortment optimization problem is still computationally feasible as we discussed below~\eqref{preliminary-eq: revenue-v}. In this work, our focus is the data-driven setting, where instead of knowing $\bm\theta^\star,$ the seller can either have access to a pre-collected dataset or interact with customers for certain rounds, as described below:

%---------------------offline & online-----------------------------%
\paragraph{Offline Assortment Optimization.} In the offline setting, the seller does not know the underlying parameter $\bm\theta^\star$ but can access a pre-collected dataset $\{(i_m,S_m,\bX_m)\}_{m = 1}^n$ consisting of the choice-assortment pairs $(i_m,S_m)$ and time-varying item features $\bX_m$ satisfying Assumption~\ref{assump-bounded}, where for each given $S_m$, the corresponding $i_m$ is sampled independently from the linear MNL choice model with parameter $\bm\theta^\star$ and feature $\bX_m$. The seller's goal is to approximate the optimal assortment based on this dataset under a specific feature $\Xoff$. The learning objective is the sub-optimality gap, defined as 
    \begin{align*}
    \SubOpt(S, \Xoff):= R(S_{\text{off}}^\star\lvert \exp(\Xoff^\top \bm\theta^\star))- R(S\lvert \exp(\Xoff^\top \bm\theta^\star)),
    \end{align*}
which measures the difference between the revenue achieved by a given assortment $S$ and the maximum achievable revenue under the true model.
The optimal offline decision is given by $S_{\text{off}}^\star \in \text{argmax}_{S \in \mathcal{S}_K} R(S\lvert \exp(\Xoff^\top \bm\theta^\star))$. 

\paragraph{Online Assortment Optimization.} In the online setting, the seller interacts sequentially with a sequence of $T$ customers. At each round $t$, the seller observes a set of contextual vectors $\bm X_t=(\bm x_{t1},\dots,\bm x_{tN})$ satisfying Assumption~\ref{assump-bounded}. Based on this context and the history of past interactions, the seller provides an assortment $S_t\in \mathcal{S}_K$ to the customer, and then receives a feedback $i_t$ drawn according to the distribution specified in \eqref{eq-mnl-choice-prob}. The seller's objective is to design an adaptive policy $\pi = (\pi_1,\dots,\pi_T)$ that sequentially selects $S_t$ to minimize the cumulative regret over $T$ rounds, which is defined as the total sub-optimality of the policy $\pi$ over $T$ rounds, 
\begin{align*}
    \text{Reg}(T) &= \E\left[\sum_{t = 1}^T \left(R(S_t^\star\lvert \exp(\bm{X}_t^\top \bm\theta^\star))-R(S_t\lvert \exp(\bm{X}_t^\top \bm\theta^\star))\right)\right].
\end{align*}
where the optimal decision in round $t$ is given by $S_t^\star \in  \text{argmax}_{S \in \mathcal{S}_K} R(S\lvert \exp(\bm X_t^\top \bm\theta^\star))$.

%---------------------assumption-----------------------------%
% \noindent Those optimization analyses will work under the standard boundedness assumption.
%we assume that the attraction values $v_i$ and the underlying parameter norm $\lVert \bm\theta^\star \rVert$ are bounded, which is standard and appears in almost all the linear MNL bandit literature.

\vspace{2mm}
\noindent In the linear MNL model, we define the problem parameters 
\begin{align}
\label{eq-kappa-def-off}
\Koff:&= \min_{S\in \mathcal{S}_K, i\in S, \lVert \bm\theta \rVert_2 \leq W} q_i(S\lvert \exp(\Xoff^\top \bm\theta)) q_0(S\lvert \exp(\Xoff^\top \bm\theta)), \\
\label{eq-kappa-def-online}
\Konline :&= \min_{\substack{t\in [T], S\in \mathcal{S}_K\\  i\in S, \lVert \bm\theta \rVert_2 \leq W}}  q_i(S\lvert \exp(\bm{X}_t^\top \bm\theta)) q_0(S\lvert \exp(\bm{X}_t^\top \bm\theta)),    
\end{align}
corresponding to the offline and online settings, respectively. It can be observed that both $\Koff^{-1}$ and $\Konline^{-1}$ may scale as $\exp(W)$ in the worst case. This exponential dependency on $W$ has motivated a series of previous works on MNL bandits~\citep{chen2020dynamic, perivier2022dynamic, agrawal2023tractable, lee2024nearly}, which aim to mitigate the dependency on $\Koff^{-1}$ or $\Konline^{-1}$ in the sample complexity.

%---------------------mnl pricing-----------------------------%
%-------------------------------------------------------------%
\subsection{Joint Assortment and Pricing under Linear MNL}
The linear MNL framework can be extended to the joint assortment and pricing problems, where the seller needs to determine not only the assortment $S\in \mathcal{S}_K$ but also the price vector $\bm p \in \mathbb{R}_+^N$, representing the price for each items\footnote{Strictly speaking, the seller only needs to specify prices for the items included in $S$, since prices outside $S$ do not affect the revenue. We consider $\bp\in \mathbb{R}^N$ for notational simplicity.}.
In this setting, the revenue $r_i$ for item $i$ is its price $p_i$. We continue to model customer choice behavior using the linear MNL framework, but incorporate a price-dependent formulation of the attraction values. Specifically, for an item with feature vector $\bx_i \in \mathbb{R}^d$, the effect of price on its attraction value is then formulated by a log-linear model: 
\begin{equation}\label{preliminary-eq: utlity-with-price}
v_i(p_i) := \exp\left( \langle \bm\psi^\star, \bm x_i \rangle - \langle \bm \phi^\star, \bm x_i\rangle p_i \right) ,    
\end{equation}
where  $\bm\psi^\star\in \mathbb{R}^d$ and $\bm\phi^\star \in \mathbb{R}^d$ are parameter vectors. In this formulation, $\alpha_i:=\langle \bm\psi^\star, \bm x_i \rangle$ represents the customer's intrinsic preference of the item $i$, while $\beta_i:=\langle \bm \phi^\star, \bm x_i\rangle$ captures the customer's price sensitivity.

For analytical convenience, the utility function can be written as a single linear interaction between two augmented vectors. Define $\bm\vartheta^\star:=(\bm\psi^\star,\bm\phi^\star)\in \mathbb{R}^{2d}$, $\widetilde{\bm x}_i(p_i):=(\bm x_i,-p_i\bm x_i)\in \mathbb{R}^{2d}$, and let the corresponding design matrix be $\widetilde{\bm{X}}(\bm p):= (\widetilde{\bm x}_1(p_1),\dots,\widetilde{\bm x}_N(p_N))\in \mathbb{R}^{N\times 2d}$. Then, equation~\eqref{preliminary-eq: utlity-with-price} can be equivalently written as $v_i(p_i)=\exp(\widetilde{\bm x}_i(p_i)^\top \bm\vartheta^\star)$. Consequently, the choice probabilities and the expected revenue under each pair $(S, \bp)$ and $i \in S$ can be expressed as follows:
\begin{align}
q_i(S\lvert \exp(\widetilde{\bm{X}}(\bm p)^\top \bm\vartheta^\star))& = \frac{ \exp(\widetilde{\bm x}_i(p_i)^\top \bm\vartheta^\star)}{1+\sum_{j\in S} \exp(\widetilde{\bm x}_j(p_j)^\top \bm\vartheta^\star)},
\label{preliminary-eq: choice-with-price}\\
R(S\lvert \exp(\widetilde{\bm{X}}(\bm p)^\top \bm\vartheta^\star)) & = \frac{\sum_{i\in S} p_i\exp(\widetilde{\bm x}_i(p_i)^\top \bm\vartheta^\star)}{1+\sum_{j\in S} \exp(\widetilde{\bm x}_j(p_j)^\top \bm\vartheta^\star)}.\label{preliminary-eq: revenue-with-price}
\end{align}
Similar to the assortment optimization problem, when $\bm\vartheta^\star$ is known,  the joint assortment-pricing problem of finding optimal $(S, \bp)$ pair to maximize~\eqref{preliminary-eq: revenue-with-price} can be solved in polynomial time using existing algorithms \citep{wang2012capacitated,ke2025modeling}. As discussed in~Section~\ref{preliminary-subsection: assortment-optimization}, our interest lies in the data-driven setting without the knowledge of $\bm\vartheta^\star$, as detailed below.

Similar to Section~\ref{preliminary-subsection: assortment-optimization}, we assume the bounded parameter assumptions in this setting.
\begin{assumption}[Boundedness]\label{assump-bounded-pricing} For some constant $W>0$, we assume $\|\bm\vartheta^\star\|_2 \le {W}$, and
$\|\bm x_{i}\|_2 \le 1$ for all $i \in [N]$.
\end{assumption}

\paragraph{Offline Assortment and Pricing Optimization.} Similar as in Section~\ref{preliminary-subsection: assortment-optimization}, in the offline setting, the seller does not know the underlying parameter $\bm\vartheta^\star$ but can access a pre-collected dataset $\{(i_m,S_m, \bp_m,\bX_m)\}_{m = 1}^n$, the only difference here is the additional observation of per-time price $\bp_m$, and the choice model generating $i_m$ now follows~\eqref{preliminary-eq: choice-with-price} with some underlying parameter parameter $\bm\vartheta^\star$ and feature $\widetilde{\bX}_m(\bp_m)$. The seller's goal is to approximate the optimal assortment based on this dataset under a specific feature $\Xoff$. The learning objective is the sub-optimality gap, defined as 
    \begin{align*}
    \SubOpt(S,\bm p, \Xoff):= R(S_{\text{off}}^\star\lvert \exp(\widetilde{\bX}_{\text{off}}(\bm p_{\text{off}}^\star)^\top \bm\vartheta^\star))- R(S\lvert \exp(\widetilde{\bX}_{\text{off}}(\bm p)^\top \bm\vartheta^\star)),
    \end{align*}
where the optimal offline decision is given by $(S_{\text{off}}^\star, \bm{p}_{\text{off}}^\star) \in \text{argmax}_{S \in \mathcal{S}_K,\, \bm{p} \in \mathbb{R}^N} R(S\lvert \exp(\widetilde{\bX}_{\text{off}}(\bm p)^\top \bm\vartheta^\star))$.

\paragraph{Online Assortment and Pricing Optimization.} This setting is analogous to the online assortment optimization problem under the linear MNL model, with the key difference that the seller must simultaneously choose both the assortment and the price vector, $(S_t,\bm p_t)$, at each round $t$. The goal is to design a policy $\pi$ that adaptively generates $\{S_t,\bm p_t\}_{t=1}^T$ to minimize the cumulative regret:
    \begin{align}
        \text{Reg}(T)
        %:=\mathbb{E}\left[ \sum_{t=1}^T \left(R_t(S_t^\star, \bm{p}_t^\star\lvert \bm\vartheta^\star) - R_t(S_t, \bm{p}_t\lvert \bm\vartheta^\star)\right)\right],\\
        := \mathbb{E}\left[ \sum_{t=1}^T \left(R(S_t^\star\lvert \exp(\widetilde{\bm{X}}_t(\bm p^\star_t)^\top \bm\vartheta^\star)) - R(S_t\lvert \exp(\widetilde{\bm{X}}_t(\bm p_t)^\top \bm\vartheta^\star)) \right)\right].
    \end{align}
where the optimal decision in round $t$ is given by $(S_t^\star, \bm{p}_t^\star) \in \text{argmax}_{S \in \mathcal{S}_K,\, \bm{p} \in \mathbb{R}^N} R(S\lvert \exp(\widetilde{\bm X}_t(\bm p)^\top \bm\vartheta^\star))$.

%-----------------------Assumptions----------------------%

\begin{assumption}[Minimum Price Sensitivity]\label{assump-mim-price}
    There exists some constant $L_0>0$ such that $\langle \bm{\phi}^\star, \bm{x}_{ti} \rangle \ge L_0 $ for all $t\in[T]$ and $i\in[N]$.
\end{assumption} 
Assumption~\ref{assump-mim-price} has also been adopted in prior work ~\citep{miao2021dynamic, erginbas2025online}. It ensures that the utility function $v_{i}(p)$ is strictly decreasing, and the positive lower bound $L_0>0$ is shown to be necessary for achieving sublinear regret as in \cite{erginbas2025online}. An additional important implication of Assumption~\ref{assump-mim-price} is that, for every $t \in [T]$ and $i \in [N]$, the optimal price component $p_{ti}^\star$ is guaranteed to lie within a finite interval $[0, P]$.

\begin{lemma}\label{p-bound-refined}
    Under Assumption~\ref{assump-bounded-pricing} and~\ref{assump-mim-price}, we have $\bm{p}^\star_t$ satisfies $0\leq p_{ti}^\star  \leq P$ with $P=\frac{3+W+\log K}{L_0}$ for all  $i\in S^\star_t$ and $t\in [T]$.
\end{lemma}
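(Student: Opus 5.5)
Fix $t$ and drop it from the notation. Write $\alpha_i=\langle\bm\psi^\star,\bm x_i\rangle$ and $\beta_i=\langle\bm\phi^\star,\bm x_i\rangle$, so $v_i(p)=e^{\alpha_i-\beta_i p}$. Assumption~\ref{assump-bounded-pricing} gives $|\alpha_i|\le W$ and $\beta_i\le W$, and Assumption~\ref{assump-mim-price} gives $\beta_i\ge L_0$. Let $(S^\star,\bm p^\star)$ be optimal with value $R^\star$.

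\textbf{Step 1: the first-order condition.} For any $i\in S^\star$, differentiating $R$ in $p_i$ with $D=1+\sum_{j\in S^\star}v_j$ gives
\[
\frac{\partial R}{\partial p_i}=\frac{v_i\bigl(1-\beta_i p_i\bigr)}{D}+\frac{\beta_i v_i\sum_{j}p_jv_j}{D^2}=\frac{v_i}{D}\bigl(1-\beta_i(p_i-R)\bigr).
\]
Interior optimality in $p_i$ then forces $p_i^\star=R^\star+1/\beta_i$. This is the classical MNL-pricing "constant markup" structure. Nonnegativity is immediate, since $p_i^\star\ge 1/\beta_i>0$. If a boundary at $0$ were imposed it could not bind, because the derivative at $p_i=0$ is positive. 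Existence of the maximizer follows because revenue vanishes as $p_i\to\infty$.

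\textbf{Step 2: reduce to bounding $R^\star$.} Since $1/\beta_i\le 1/L_0$, it suffices to show $R^\star\le (2+W+\log K)/L_0$.

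\textbf{Step 3: bound $R^\star$.} Substituting the markup form gives $v_i^\star=e^{\alpha_i-1-\beta_iR^\star}\le e^{W-1-L_0R^\star}$. Also $R^\star=\sum_i(p_i^\star-R^\star)q_i^\star\cdot\frac{1}{q_0^\star}\cdot q_0^\star$. The cleaner route is the standard identity: plugging $p_i^\star=R^\star+1/\beta_i$ into $R^\star=\sum_i p_i^\star v_i^\star/(1+\sum_j v_j^\star)$ and rearranging gives
\[
R^\star=\sum_{i\in S^\star}\frac{v_i^\star}{\beta_i}\le \frac{1}{L_0}\,K e^{W-1-L_0R^\star}.
\]
Set $y=L_0R^\star$. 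Then $y e^{y}\le Ke^{W-1}$. If $y\ge1$, this gives $e^y\le Ke^{W-1}$, so $y\le W-1+\log K$. Otherwise $y<1$. In either case $y\le 1+W+\log K$, hence $p_i^\star\le (2+W+\log K)/L_0\le P$.

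\textbf{Main obstacle.} The delicate step is rigorously justifying the first-order condition, that is, that the optimum is interior in each $p_i$ for $i\in S^\star$. This holds because $R$ is continuous, tends to the revenue without item $i$ as $p_i\to\infty$, and the stationary point is unique since $1-\beta_i(p_i-R)$ crosses zero once. Dropping an item, which corresponds to $p_i=\infty$, is dominated: including $i$ at price $R+1/\beta_i$ strictly increases revenue, so items in $S^\star$ have finite optimal prices. The constant $3$ in the statement leaves slack relative to the $2$ obtained here.
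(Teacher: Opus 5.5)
Your proof is correct, and it shares its backbone with the paper's proof: both rest on the constant-markup structure $p_i^\star=p_0+1/\beta_i$. Where you differ is in how you get that structure and how you bound the markup.

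The paper imports the markup form from Corollary~3 of \citet{wang2012capacitated} without identifying $p_0$. It then bounds $p_0$ by comparing against one explicit alternative. The optimal revenue is at most $(p_0+1/L_0)\,A/(e+A)$, while charging the uniform price $p_0$ on the same assortment earns $p_0\,A/(1+A)$, where $A=\sum_{j\in S^\star}e^{\alpha_j-\beta_j p_0}\le Ke^{W-L_0p_0}$. This gives $p_0\le (1+A)/(L_0(e-1))$, and a case split yields $p_0\le (2+W+\log K)/L_0$, hence the constant $3$.

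You instead derive the first-order condition yourself, which pins down the markup as $p_0=R^\star$. You then use the exact identity $R^\star=\sum_{i\in S^\star}v_i^\star/\beta_i$ to obtain the fixed-point inequality $ye^y\le Ke^{W-1}$ directly, with no comparison policy. Your route is self-contained and slightly sharper: your case split actually gives $p_i^\star\le \max\{2,\,W+\log K\}/L_0$. The paper's route is shorter on the page but leans on the external citation, and it loses a constant because $p_0$ is never identified.

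Two small fixes.
\begin{itemize}
\item The aside ``$R^\star=\sum_i(p_i^\star-R^\star)q_i^\star\cdot\frac{1}{q_0^\star}\cdot q_0^\star$'' is false as written; its right-hand side equals $R^\star q_0^\star$. The correct identity is $R^\star=\sum_i(p_i^\star-R^\star)v_i^\star$. Since you do not use it, just delete it.
\item The existence and uniqueness discussion in your last paragraph is unnecessary, and the ``revenue vanishes as $p_i\to\infty$'' remark is imprecise. The lemma concerns a given maximizer with finite prices. All you need is that $\partial R/\partial p_i=\frac{v_i}{D}(1+\beta_i R)>0$ at $p_i=0$. Then $p_i^\star>0$ is interior and the partial derivative vanishes there, which you already show.
\end{itemize}
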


Combining Assumption~\ref{assump-bounded-pricing} and Lemma~\ref{p-bound-refined} yields $\|\widetilde{\bx}_{ti}(p_{ti}^\star)\|_2 \le \bar{P} := \sqrt{1+P^2}$. Therefore, in the subsequent analysis, the setting where $\|\bx\|_2 \le 1$ and $p \in [0,P]$ implies that $\|\widetilde{\bx}(p)\|_2 \le \bar{P}$. For notational convenience, we define the feasible price domain as $\mathcal{P} := [0,P]$.

Thus, it is sufficient to consider the optimization problem over $\mathcal{S}_K \times \mathcal{P}^N$ instead of $\mathcal{S}_K \times \mathbb{R}_+^N$. The problem parameter in this setting is defined as 
\begin{align}
\label{eq-tilde-kappa-off}
\tilde\kappa_{\text{off}}:&= \min_{\substack{
S\in \mathcal{S}_K, \bm p\in  \mathcal{P}^N, \\
 i\in S , \lVert \bm\vartheta \rVert_2 \leq W
}} q_i(S\lvert \exp(\widetilde{\bX}_{\text{off}}(\bm p)^\top \bm\vartheta^\star)) q_0(S\lvert \exp(\widetilde{\bX}_{\text{off}}(\bm p)^\top \bm\vartheta^\star)), \\
\label{eq-tilde-kappa-online}
\tilde\kappa_{\text{on}} :&=  \min_{\substack{
t\in [T], S\in \mathcal{S}_K, \bm p\in \mathcal{P}^N,  \\
 i\in S, \lVert \bm\vartheta \rVert_2 \leq W
}}  q_i(S\lvert \exp(\widetilde{\bm X}_t(\bm p)^\top \bm\vartheta^\star)) q_0(S\lvert \exp(\widetilde{\bm X}_t(\bm p)^\top \bm\vartheta^\star)).   
\end{align}

%% ---- END 2-Preliminary.tex ----

%--------------Improved Confidence Region---------------------%
%% ---- BEGIN 3-Confidence Bound.tex ----
%--------------Improved Confidence Region---------------------%
\section{Improved Confidence Region}

Given the extensive body of research on the linear contextual MNL model, the most widely adopted approach for utilizing the parametric forms in~\eqref{preliminary-eq: linear-choice} and~\eqref{preliminary-eq: choice-with-price} relies on MLE-based estimators or their variants \citep{erginbas2025online,perivier2022dynamic,oh2021multinomial,agrawal2023tractable,miao2021dynamic}. In this section, we develop an improved MLE confidence region for a \emph{price-based demand model}, which strictly generalizes the standard linear contextual MNL model by allowing price-dependent features.
As a consequence, our result also yields an improved confidence region for the linear MNL MLE as a special case

%---------------------MNL Pricing-----------------------------%
\subsection{General Guarantees under Price-based Demand}\label{confidenceb-bound-subsec: linear pricing}

We first establish the improved confidence region for the price-based demand model.  
This setting generalizes the linear MNL by incorporating price-dependent features.
Let $\widetilde{\bm{X}}(\bm{p}) := (\widetilde{\bm{x}}_{1}(p_{1}), \dots, \widetilde{\bm{x}}_{N}(p_{N}))$ represent the price-parameterized feature matrix. Given any dataset $\widetilde{\cD} := \{i_m, \widetilde{\bm{X}}_m(\bm{p}_m), S_m\}_{m=1}^t$ and $\lambda\ge 0$, the price-based $\lambda$-regularized log-likelihood function is defined as
\begin{align}\label{eq-price-mle}
    \ell^\lambda_{\widetilde{\cD}}(\bm\vartheta):= -\sum_{m=1}^t\sum_{j\in (S_m)_+}y_{mj}\log q_{j}(S_m\lvert \exp(\widetilde{\bm{X}}_m(\bm p_m)^\top \bm\vartheta)) - \frac{\lambda}{2}\lVert \bm \vartheta \rVert_2^2
\end{align}
where $y_{mj} = \bm{1}\{i_m = j\}.$ We also define the Hessian matrix $\bH_{\widetilde\cD}^\lambda(\bm\vartheta):= \nabla^2_{\bm\vartheta }\ell^\lambda_{\widetilde\cD}(\bm\vartheta) $ and $\lambda$-regularized MLE $\hat{\bm \vartheta}_{\widetilde{\cD}}^\lambda:= \operatorname*{argmin}_{\bm\vartheta\in \mathbb{R}^{2d}} \ell^\lambda_{\widetilde{\cD}}(\bm\vartheta)$ associated to~\eqref{eq-price-mle}. In addition, we simply set $\bH_{\widetilde\cD}(\bm\vartheta), \hat{\bm\vartheta}_{\widetilde\cD}$ as un-regularized version with $\lambda = 0.$

% Our first result establishes a confidence interval for $\widetilde{\bm x}(p)^\top \hat{\bm\vartheta}_{\widetilde\cD}^\lambda$ with any $\lVert \bm x \rVert_2 \leq 1$ and $p\in[0,P]$ given the following \textit{conditional independence assumption:}

 Then, we present an improved confidence region result for the MLE estimator given the following \textit{conditional independence assumption:}
\begin{assumption}\label{assumption-conditional-independence-pricing}
Condition on $\{\widetilde{\bm X}_m(\bm p_m),S_m\}_{m = 1}^t,$ the observed choices $\{i_m\}_{m = 1}^t$ are mutually independent.
\end{assumption}

\begin{theorem}\label{prop-sup-ucb-confidence-bound}
Given $\widetilde{\cD}= \{i_m,\widetilde{\bm X}_m(\bm p_m),S_m\}_{m=1}^t$, and under Assumption~\ref{assumption-conditional-independence-pricing} and \ref{assump-bounded-pricing}, for any $\bm x \in \mathbb{R}^d$ with $\|\bm x\|_2 \leq 1$ and $p \in [0,P]$, we have condition on 
\begin{equation}\label{eq-burn-in}
    \max_{m\le t,j\in S_m} \lVert \widetilde{\bm x}_{mj}( p_{mj}) \rVert_{\bH_{\widetilde\cD}^{\lambda}(\bm\vartheta^\star)^{-1}} \leq  \frac{1}{144\sqrt{2d\log(N/\delta)}}\wedge \frac{1}{24\sqrt\lambda W},
\end{equation} 
it holds that with probability at least $1-\delta$ that 

i)  $$ \frac{1}{3}\bH_{\widetilde\cD}^\lambda(\bm\vartheta^\star) \preceq \bH_{\widetilde\cD}^\lambda(\widehat{\bm \vartheta}_{\widetilde\cD}^\lambda) \preceq 3\bH_{\widetilde\cD}^\lambda(\bm\vartheta^\star).$$

ii) 
$$|\widetilde{\bm x}(p)^{\top}(\widehat{\bm \vartheta}_{\widetilde\cD}^\lambda-\bm \vartheta^\star)| \leq 16\|\widetilde{\bm x}( p)\|_{\bH_{\widetilde\cD}^\lambda\left(\bm\vartheta^\star\right)^{-1}} \left( \sqrt{\log({NT}/{\delta})} + \sqrt{\lambda}W \right).
$$
\end{theorem}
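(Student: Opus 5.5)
The plan is to run a self-concordance argument for the MNL log-likelihood, together with a union bound over directions that are controlled through the $N$ items. This is what produces a $\sqrt{d\log(N/\delta)}$ burn-in condition but only a $\sqrt{\log(NT/\delta)}$ width in ii).

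Write $\bH^\star:=\bH_{\widetilde\cD}^\lambda(\bm\vartheta^\star)$. Let $\bg:=\nabla\ell^\lambda_{\widetilde\cD}(\bm\vartheta^\star)$; since the problem is a minimization, this is the gradient of the negative log-likelihood plus the ridge term. Then $\bg=\sum_m\sum_{j\in S_m}(q_{mj}^\star-y_{mj})\widetilde{\bx}_{mj}+\lambda\bm\vartheta^\star$, a sum of conditionally independent, bounded, mean-zero vectors by Assumption~\ref{assumption-conditional-independence-pricing}, plus a bias of size $\lambda W$.

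\emph{Step 1: self-concordance.} For multinomial logistic losses, moving along $\bm\vartheta^\star+s\bm\Delta$ gives
\[
\bH(\bm\vartheta^\star+\bm\Delta)\succeq e^{-c\max_{m,j}|\widetilde{\bx}_{mj}^\top\bm\Delta|}\,\bH^\star,
\]
and the symmetric upper bound holds as well, as in \citet{perivier2022dynamic,lee2024nearly}. So it suffices to show $\max_{m,j}|\widetilde{\bx}_{mj}^\top(\widehat{\bm\vartheta}-\bm\vartheta^\star)|\le 1/2$ or so; then i) follows with constant $3$.

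\emph{Step 2: localize via a fixed point.} Consider the map $\bm\Delta\mapsto -(\bH^\star)^{-1}\bg$ as a one-step Newton estimate. By the mean value form $\bg=-\widetilde{\bH}(\widehat{\bm\vartheta}-\bm\vartheta^\star)$ with $\widetilde{\bH}$ the integrated Hessian, on the region where all item-level directional errors are at most $1/2$ we have $\widetilde\bH\succeq\bH^\star/3$. Hence
\[
\widetilde{\bx}^\top(\widehat{\bm\vartheta}-\bm\vartheta^\star)=-\widetilde{\bx}^\top\widetilde{\bH}^{-1}\bg .
\]
I would compare this to $-\widetilde{\bx}^\top(\bH^\star)^{-1}\bg$ and control the difference by self-concordance. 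A continuity/Brouwer argument would then close the localization: on the boundary of the set $\{\max_{m,j}|\widetilde{\bx}_{mj}^\top\bm\Delta|\le 1/2\}$ the gradient of the convex loss points outward, so the minimizer lies inside.

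\emph{Step 3: concentration.} For each fixed vector $\bu=(\bH^\star)^{-1}\widetilde{\bx}$, the scalar $\bu^\top(\bg-\lambda\bm\vartheta^\star)$ is a sum of independent terms. Its variance is at most $\|\widetilde{\bx}\|_{(\bH^\star)^{-1}}^2$, because the multinomial covariance matches the Hessian, and each term is bounded by $\max_{m,j}|\widetilde{\bx}_{mj}^\top\bu|$. Bernstein then gives a deviation of order $\|\widetilde{\bx}\|_{(\bH^\star)^{-1}}\sqrt{\log(1/\delta)}+(\text{range})\log(1/\delta)$. Under the burn-in \eqref{eq-burn-in}, the range term is dominated by the first term. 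The bias contributes at most $\lambda\|\bm\vartheta^\star\|_{(\bH^\star)^{-1}}\|\widetilde{\bx}\|_{(\bH^\star)^{-1}}\le\sqrt\lambda W\|\widetilde{\bx}\|_{(\bH^\star)^{-1}}$, since $\bH^\star\succeq\lambda I$. Applying this with a union bound to the at most $NT$ directions $\widetilde{\bx}_{mj}$ controls all item-level errors; combined with the burn-in radius $1/(144\sqrt{2d\log(N/\delta)})$, this makes them at most $1/2$ and closes Step 2. The query direction $\widetilde{\bx}(p)$ is a single additional fixed vector, which yields ii).

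\emph{Main obstacle.} The hard part is the nonlinear remainder $\widetilde{\bx}^\top(\widetilde\bH^{-1}-(\bH^\star)^{-1})\bg$. Bounding it by $\|\bg\|_{(\bH^\star)^{-1}}$ would cost a factor $\sqrt d$. I would instead iterate: the remainder is bounded by the item-level errors times $\|\widetilde{\bx}\|_{(\bH^\star)^{-1}}\|\bg\|_{(\bH^\star)^{-1}}$. The global quantity $\|\bg\|_{(\bH^\star)^{-1}}\lesssim\sqrt{d\log(1/\delta)}+\sqrt{\lambda}W$ enters only multiplied by the burn-in quantity, and the burn-in is calibrated exactly so that this product is at most a constant. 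That is how the $\sqrt d$ is kept out of ii).
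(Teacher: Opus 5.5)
Your proposal is correct and follows essentially the paper's argument. The shared steps are:
\begin{itemize}
\item the first-order identity with the integrated Hessian;
\item a variance-aware Bernstein bound on the linear term;
\item the item-level self-concordance, which bounds the remainder by a multiple of $\zeta\,\|\widetilde{\bm x}(p)\|_{(\bH^\star)^{-1}}\|\bm g\|_{(\bH^\star)^{-1}}$, where $\zeta:=3\sqrt{2}\max_{m,j}|\widetilde{\bm x}_{mj}^\top(\widehat{\bm\vartheta}-\bm\vartheta^\star)|$;
\item a crude constant bound on $\zeta$, obtained from the $\sqrt{d}$-sized score norm together with the burn-in;
\item a union bound over the item directions that sharpens $\zeta$ to the burn-in quantity times $\sqrt{\log(NT/\delta)}+\sqrt{\lambda}W$, after which the query direction is handled.
\end{itemize}
The only real difference is the crude localization. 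You use a boundary/convexity argument, which works and needs no Brouwer step. The paper instead uses the unconditional bound $\widetilde{\bH}\succeq(1+\zeta)^{-1}\bH^\star$. With $\xi$ the burn-in quantity, this turns $\zeta\le 3\sqrt{2}\,\xi\,\|\widehat{\bm\vartheta}-\bm\vartheta^\star\|_{\bH^\star}\le 3\sqrt{2}\,\xi\,(1+\zeta)\|\bm g\|_{(\bH^\star)^{-1}}$ into a linear inequality in $\zeta$ that it solves directly.
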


Compared to the confidence region results in \cite{erginbas2025online}, Theorem~\ref{prop-sup-ucb-confidence-bound} removes the $\sqrt{d}$ term from the confidence radius with an extra $\log N$ factor, this is a critical step toward attaining the $\widetilde{\mathcal O}(\sqrt{dT})$ regret bound in online learning.

%---------------------------------------------------------%
\subsection{Implications for the Linear MNL Model}\label{sec3.2:linear-confidence bound}
The price-based model contains the linear contextual MNL model as a special case when the feature map is price-independent.
Concretely, suppose the price-dependent features satisfy
\[
\widetilde{\bm x}_{mj}(p)\equiv \begin{pmatrix}\bm x_{mj}\\ \bm 0\end{pmatrix}\in\mathbb R^{2d}\quad \text{for all }p\in[0,P],
\]
and the true parameter has the form $\bm\vartheta^\star=(\bm\theta^\star,\bm 0)\in\mathbb R^{2d}$, so that the utilities reduce to
$\widetilde{\bm x}_{mj}(p_{mj})^\top\bm\vartheta^\star=\bm x_{mj}^\top\bm\theta^\star$.
Then the induced choice probabilities coincide with those of the linear MNL model with parameter $\bm\theta^\star$.
In this case, Theorem~\ref{prop-sup-ucb-confidence-bound} directly implies the following confidence region for the linear MNL MLE, with the same sharp dependence on $d$.

\begin{assumption}\label{assumption-conditional-independence}
Condition on $\{(\bm{X}_m,S_m)\}_{m = 1}^t,$ the observed choices $\{i_m\}_{m = 1}^t$ are mutually independent.
\end{assumption}

\begin{corollary}\label{thm-sup-ucb-confidence-bound}
Given $\cD= \{(i_m, \bm{X}_m, S_m)\}_{m=1}^t$, and under  Assumption~\ref{assump-bounded} and~\ref{assumption-conditional-independence}, for any $\bm x \in \mathbb{R}^d$ with $\|\bm x\|_2 \leq 1$, 
we have condition on 
$$\max_{m\leq t, j\in S_m} \lVert   \bm x_{mj} \rVert_{\bH_{\cD}^\lambda(\bm\theta^\star)^{-1}} \leq  \frac{1}{144\sqrt{d\log(N/\delta)}}\wedge \frac{1}{24\sqrt\lambda W},$$ it holds that with probability at least $1-\delta$

i)  $$ \frac{1}{3}\bH_{\cD}^\lambda(\bm\theta^\star) \preceq \bH_{\cD}^\lambda(\hat{\bm\theta}_{\cD}^\lambda) \preceq 3\bH_{\cD}^\lambda(\bm\theta^\star).$$

ii) 
$$|\bm x^{\top}(\hat{\bm\theta}_{\cD}^\lambda-\bm\theta^\star)| \leq 16\|\bm x\|_{\bH_{\cD}^\lambda\left(\bm\theta^\star\right)^{-1}} \left(\sqrt{\log (N/ \delta)} + \sqrt{\lambda}W \right).
$$
\end{corollary}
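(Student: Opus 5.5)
The corollary should follow by embedding the linear MNL data into the price-based framework and invoking Theorem~\ref{prop-sup-ucb-confidence-bound}. The only issue is that the embedded Hessian is singular in the second block, so some care is needed there.

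\textbf{Embedding.} Given $\cD=\{(i_m,\bX_m,S_m)\}_{m=1}^t$, I define price-independent features $\widetilde{\bm x}_{mj}(p):=(\bm x_{mj},\bm 0)\in\mathbb R^{2d}$ and set $\bm\vartheta^\star=(\bm\theta^\star,\bm 0)$. Then $\|\bm\vartheta^\star\|_2=\|\bm\theta^\star\|_2\le W$, and Assumption~\ref{assumption-conditional-independence} gives Assumption~\ref{assumption-conditional-independence-pricing}. For any $\bm\vartheta=(\bm\theta,\bm\eta)$, the unregularized likelihood depends only on $\bm\theta$. So the price-based regularized objective splits as
\[
\ell^\lambda_{\cD}(\bm\theta)-\tfrac{\lambda}{2}\|\bm\eta\|^2,
\]
with the sign convention of \eqref{eq-price-mle}. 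Consequently:
\begin{itemize}
\item the regularized MLE is $\widehat{\bm\vartheta}^\lambda=(\hat{\bm\theta}^\lambda_{\cD},\bm 0)$;
\item the Hessian is block diagonal, $\bH^\lambda_{\widetilde\cD}(\bm\vartheta)=\mathrm{diag}\bigl(\bH^\lambda_{\cD}(\bm\theta),\lambda I_d\bigr)$.
\end{itemize}

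\textbf{Transferring the bounds.} For $\lambda>0$ the block structure makes every norm of a vector $(\bm x,\bm 0)$ reduce to the first block:
\[
\|(\bm x,\bm 0)\|_{\bH^\lambda_{\widetilde\cD}(\bm\vartheta^\star)^{-1}}=\|\bm x\|_{\bH^\lambda_{\cD}(\bm\theta^\star)^{-1}}.
\]
Hence the corollary's burn-in condition is exactly \eqref{eq-burn-in}. I would absorb the $\sqrt2$ gap between $\sqrt{2d}$ and $\sqrt d$ either by noting that the effective dimension is $d$, or by re-running the covering step of the theorem on the $d$-dimensional subspace. The theorem then yields its two conclusions, which I transfer as follows.
\begin{itemize}
\item \emph{Part i).} The second block is identical ($\lambda I$) on both sides of the sandwich. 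The theorem's sandwich is a statement about all quadratic forms, so restricting to vectors of the form $(\bm v,\bm 0)$ gives the $d\times d$ sandwich.
\item \emph{Part ii).} Apply the theorem's part ii) with $\widetilde{\bm x}(p)=(\bm x,\bm 0)$. Since
\[
\widetilde{\bm x}^\top(\widehat{\bm\vartheta}-\bm\vartheta^\star)=\bm x^\top(\hat{\bm\theta}-\bm\theta^\star),
\]
the claim follows with the norm reduced as above.
\end{itemize}

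\textbf{Logarithmic factor.} The theorem states $\sqrt{\log(NT/\delta)}$, while the corollary claims $\sqrt{\log(N/\delta)}$. I expect this to be harmless. The union bound in the theorem's proof is over the $N$ items together with a discretization of the price interval $[0,P]$. Here the features are price independent, so no price discretization is needed and the union is only over the $N$ items, which gives $\log(N/\delta)$. This would require inspecting the theorem's proof rather than using it as a black box.

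\textbf{Main obstacle: the case $\lambda=0$.} The embedded Hessian is then singular in the second block, so $\bH_{\widetilde\cD}^{-1}$ must be read as a pseudo-inverse or restricted to the first block. To handle this, I would either take the limit $\lambda\downarrow0$ in the bound, or argue directly that the theorem's proof only uses the Hessian on the span of the features, which here lies in the first block.
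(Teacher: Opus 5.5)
Your embedding is sound. Since $(\bm x_{mj},\bm 0)=\widetilde{\bm x}_{mj}(0)$, the padded data are literally a price-based instance with all prices equal to $0$. With the intended penalty $+\frac{\lambda}{2}\lVert\bm\vartheta\rVert_2^2$, the block-diagonal structure transfers parts i) and ii) as you say. (With the sign as printed in \eqref{eq-price-mle}, which you cite, the $\bm\eta$-block is unbounded below.)

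The gap is that Theorem~\ref{prop-sup-ucb-confidence-bound}, used as a black box, only gives a strictly weaker corollary, and both mismatches go the wrong way:
\begin{itemize}
\item Its hypothesis \eqref{eq-burn-in} requires $\xi\le\frac{1}{144\sqrt{2d\log(N/\delta)}}$. The corollary's hypothesis, with $\sqrt{d}$, does not imply this, so your claim that the burn-in ``is exactly'' \eqref{eq-burn-in} is false.
\item Its conclusion has radius $\sqrt{\log(NT/\delta)}$, which is larger than the claimed $\sqrt{\log(N/\delta)}$.
\end{itemize}

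The two patches you propose are precisely where the content of the corollary lies. ``The effective dimension is $d$'' is not something the theorem's statement lets you use. Re-running the covering step does work: in the padded problem, $\by_t=\sum_{m,j}\widetilde{\bm x}_{mj}\eta_{mj}$ and $(\bH^\lambda)^{-1/2}\by_t$ lie in the first block, so a $1/2$-net of $\mathbb{S}^{d-1}$ of size $4^d$ suffices for bounding $\lVert\by_t\rVert_{(\bH^\lambda)^{-1}}$. The threshold $144\sqrt{d\log(N/\delta)}$ then gives $24\sqrt2\,\xi\bigl(\sqrt{d\log(N/\delta)}+d\xi\log(N/\delta)\bigr)\le\frac14$ in Lemma~\ref{lem-confidence-step2}. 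But this is reopening the proof, not invoking the theorem.

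Your explanation of the logarithm is also not what happens. The argument contains no discretization of $[0,P]$. Its only union bounds are over the $1/2$-net and over the $N$ item vectors in the bootstrap for $\zeta$ ($\delta'=\delta/N$). Neither involves $T$, so $\log(N/\delta)$ comes out directly.

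The paper's proof runs in the opposite direction. It proves this corollary directly in $\mathbb R^d$, and obtains Theorem~\ref{prop-sup-ucb-confidence-bound} by relabeling $\bm x\to\widetilde{\bm x}(p)$ and $d\to 2d$, since nothing in the argument uses the price structure. The steps (Lemma~\ref{lem-confidence-step1}, then Lemma~\ref{lem-confidence-step2}) are:
\begin{itemize}
\item From the first-order condition it writes $(\bH^\lambda+\bE)(\hat{\bm\theta}^\lambda-\bm\theta^\star)=\by_t-\lambda\bm\theta^\star$ and splits $\bm x^\top(\hat{\bm\theta}^\lambda-\bm\theta^\star)=J_1+J_2+J_3$.
\item $J_1$ is handled by Bernstein's inequality, using conditional independence and $\sum_m\bU_m=\bH$.
\item $J_2$ is handled by the self-concordance bound $\lVert(\bH^\lambda)^{-1/2}\bE(\bH^\lambda)^{-1/2}\rVert\le 2\bigl(\frac{e^\zeta-1}{\zeta}-1\bigr)$, where $\zeta$ depends only on $\max_{m,j}\lvert\bm x_{mj}^\top(\hat{\bm\theta}^\lambda-\bm\theta^\star)\rvert$, together with the net bound on $\lVert\by_t\rVert_{(\bH^\lambda)^{-1}}$.
\item $J_3$ is handled by $\bH^\lambda+\bE\succeq\frac{1}{1+\zeta}\bH^\lambda$.
\item Lemma~\ref{lem-confidence-step2} then bootstraps, under the burn-in, $\zeta\le\frac35$ and $\varphi(\zeta)\le\zeta\lesssim\xi\bigl(\sqrt{\log(N/\delta)}+\sqrt\lambda W\bigr)$, which gives ii).
\item Proposition~\ref{prop-H-dominance} with $1+2\varphi(\zeta)\le 3$ gives i).
\end{itemize}

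This is the argument your proposal delegates. Once you carry it out in dimension $d$, the embedding is superfluous. So is your $\lambda=0$ difficulty: in dimension $d$ the hypothesis already presupposes that $\bH_\cD(\bm\theta^\star)$ is invertible, and the singular block is only an artifact of padding to $\mathbb R^{2d}$.
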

%---------------------------------------------------------%
\paragraph{On the Role of Regularization.} Previous studies on generalized linear MLE and linear MNL MLE confidence regions that achieve $\sqrt{d}$-independent results \citep{li2017provably,oh2021multinomial,jun2021improved} all consider the unregularized MLE, i.e. $\lambda = 0$, which requires the Hessian matrix $\bH_{\cD}(\bm\theta^\star)$ to be invertible. In contrast, the regularized MLE confidence region results in \citet{perivier2022dynamic,lee2024nearly} remove this invertibility requirement of $\bH$ by setting $\lambda$ as an absolute constant. However, this approach introduces an extra dependency on $\sqrt{d}$. 

Corollary~\ref{thm-sup-ucb-confidence-bound} shows that there exists a range of $\lambda$, given by $\sqrt{\lambda} \lesssim 1/W$, under which a sharp dependency on $d$ can still be preserved, provided that $\max_{m\leq t, j\in S_m} \lVert   \bm x_{mj} \rVert_{\bH_{\cD}(\bm\theta^\star)^{\dagger}} \lesssim 1/\sqrt{d\log(N/\delta)}$, 
%\yf{If $\bm x\in \text{col}(\bH)$, $\lim_{\lambda\to 0}\bx^\top(\bH+\lambda\bI)^{-1}\bx=\bx^\top \bH^\dagger \bx $, i.e., $\lim_{\lambda\to 0}\|\bx\|_{\bH^\lambda_\cD(\bm\theta)^{-1}} =\|\bx\|_{\bH_\cD(\bm\theta)^{\dagger}}$. But it is hard to verify if $\bx_{mj}\in \text{col}(\bH^\lambda_\cD) = \text{span}\{\bx_{mj}-\sum_{i\in S_m}q_{mj}\bx_{mj}: m\le t, j\in S_m\}$}
without requiring $\bH_{\cD}(\bm\theta^\star)$ to be invertible. This extension is crucial to achieve optimal rates in the MNL online setting with a fixed design, particularly when the eigenvalue assumptions assumed in \citet{li2017provably,oh2021multinomial,jun2021improved} do not hold.

%---------------------------------------------------------%
%---------------------------------------------------------%
\paragraph{Comparison to previous works.} Prior to our result, the only confidence interval for the linear MNL MLE achieving a $\sqrt{d}$-free rate is stated in \citet{oh2021multinomial}. Their confidence region result scales as $\tilde{O}(\kappa^{-1} \lVert \bm x \rVert_{\bV_\cD^{-1}})$ under the burn-in condition 
\begin{equation*}
\lambda_{\min}(\bV_\cD) \geq \kappa^{-4} d^2,\quad \bV_\cD = \sum_{m\leq t} \sum_{j \in S_m} \bm x_{mj}\bm x_{mj}^\top.
\end{equation*}
By the relation $\bH_\cD(\bm\theta^\star) \succeq \kappa \bV_\cD$ and $\max_{m\leq t, j\in S_m} \lVert  \bm x_{mj} \rVert_{\bH_\cD(\bm\theta^\star)^{-1}}^{2} \leq \big(\kappa \lambda_{\min}(\bV_\cD)\big)^{-1}$, our result provides an improvement over \citet{oh2021multinomial} in both the confidence interval and the burn-in condition. 

Besides \citet{oh2021multinomial}, a line of recent works have focused on deriving confidence regions for the MLE that are independent of $\kappa$ \citep{perivier2022dynamic, agrawal2023tractable, lee2024nearly}. It is worth mentioning that results in \citet{perivier2022dynamic,agrawal2023tractable,lee2024nearly} no longer require the burn-in condition or the conditional independence assumption, with a price of introducing an additional factor of $\sqrt{d}$ in the resulting confidence interval. 
The requirement of a burn-in condition to achieve sharp dependency on $d$ first appears in the logistic linear bandit literature, where such a condition arises as \citet{jun2021improved, li2017provably} refine the results of \citet{faury2020improved, abeille2021instance, faury2022jointly} by improving the dependency on a $\sqrt{d}$ factor. This corresponds to a special case of our setting with $K = 1$.

%% ---- END 3-Confidence Bound.tex ----

%---------------------Linear MNL-----------------------------%
%--------------Offline---------------------%
%% ---- BEGIN 4-Linear MNL Offline.tex ----
%------------------- Linear MNL Offline ----------------------------%
\section{Offline Assortment and Pricing Optimization}\label{sec-offline}

In this section, we study the offline joint assortment and pricing problem under the price-based MNL model. Using the confidence region in Theorem~\ref{prop-sup-ucb-confidence-bound}, we propose a pessimistic plug-in algorithm and establish its sub-optimality guarantee. We then discuss several consequences of the main result, including the fixe design setting and the linear MNL model as a special case.

Throughout this section, we assume W.L.O.G. that $\bH_{\widetilde\cD}(\bm\vartheta^\star)$ is invertible and use the un-regularized MLE to maintain notational clarity; otherwise, we consider the $\lambda$-regularized MLE and its corresponding confidence region for $\lambda$ very close to $0$, which does not affect the theoretical results.

\subsection{The LCB-MNL-Pricing Algorithm}

We consider the offline joint assortment and pricing problem under the price-based demand model.
Given an offline dataset $\widetilde{\cD}:=\{(i_m,\widetilde{\bm{X}}_m(\bm p_m), S_m)\}_{m = 1}^n$ and a target feature matrix $\Xoff:=((\xoff)_1,\dots,(\xoff)_N)$, our goal is to output an assortment-price pair $(S,\bm p)$ that maximizes the expected MNL revenue under the target context. To achieve this, we propose the LCB-MNL-Pricing algorithm, as detailed in Algorithm~\ref{alg:pricing-mnl-lcb}.
Our algorithm is inspired by the lower-confidence-bound (LCB) technique, also known as the pessimistic principle, which is widely used in offline policy learning \citep{jin2021pessimism, rashidinejad2021bridging, zhong2022pessimistic}. The LCB-MNL-Pricing algorithm consists of two steps, the \textit{pessimistic estimation step} and the \textit{revenue maximization step}. 

\begin{algorithm}[h]
\caption{LCB-MNL-Pricing}
\label{alg:pricing-mnl-lcb}
\begin{algorithmic}[1]
\STATE \textbf{Input:} Dataset $\widetilde\cD:=\{(i_m,\widetilde{\bm{X}}_m(\bm p_m), S_m)\}_{m = 1}^n$, target feature matrix $\Xoff$.
\STATE Compute the MLE $\hat{\bm\vartheta}_{\widetilde \cD }= \text{argmax}_{\bm\vartheta} \ell_{\widetilde\cD}(\bm\vartheta)$. 
\STATE Define the pessimistic utility functions $u_i^{\mathrm{LCB}}(\cdot)$ for each item $i$ as in \eqref{eq-price-u-LCB}.
\STATE Select $(S^\text{LCB},\bm p^\text{LCB})$ as an arbitrary maximizer of pessimistic revenue,
$$(S^{\text{LCB}},\bm p^\text{LCB}) \in \operatorname*{argmax}_{S \in \mathcal{S}_K,\bm p\in \cP^N} R^\text{LCB}(S,\bm p):= \frac{\sum_{i\in S}p_i\exp(u_i^\text{LCB}(p_i))}{1+\sum_{i\in S}\exp(u_i^\text{LCB}(p_i))}.$$
\STATE \textbf{Return:} $(S^{\text{LCB}},\bm p^\text{LCB})$
\end{algorithmic}
\end{algorithm}

\paragraph{Pessimistic Estimation.} 
In the pessimistic estimation step, with the MLE $\hat{\bm\vartheta}_{\widetilde \cD}$, 
the algorithm first defines the pessimistic utility function with respect to $p$ as:
\begin{equation}
\begin{aligned}\label{eq-price-u-LCB}
  u_i^{\mathrm{LCB}}(p) :=   (\widetilde{\xoff})_i(p)^\top \hat{\bm\vartheta}_{\widetilde \cD}  - 16\sqrt3\lVert (\widetilde{\xoff})_i(p)\rVert_{\bH_{\widetilde \cD}(\hat{\bm\vartheta}_{\widetilde \cD})^{-1}}\sqrt{\log(N/\delta)},
\end{aligned}
\end{equation}
where $(\widetilde{\xoff})_i(p)=((\bx_{\text{off}})_i,-p(\bx_{\text{off}})_i)$ is the target price-dependent feature of item $i$ at price $p$.

According to Theorem~\ref{prop-sup-ucb-confidence-bound}, with probability at least $1-\delta$, the Hessian comparison
$\bH_{\widetilde \cD}(\hat{\bm\vartheta}_{\widetilde \cD})^{-1} \preceq 3\,\bH_{\widetilde \cD}(\bm\vartheta^\star)^{-1}$
holds. Moreover, this guarantees that the following lower bound holds uniformly over all items $i\in[N]$ and prices $p_i\in\cP$:
\[
u_i^{\mathrm{LCB}}(p_i)\ \le\ (\widetilde{\xoff})_i(p_i)^\top \bm\vartheta^\star.
\]

\paragraph{Revenue Maximization.} 
Given the pessimistic utilities $\{u_i^{\mathrm{LCB}}(\cdot)\}_{i\in[N]}$, the algorithm selects the assortment-price pair $(S,\bm p)$ that maximizes the expected MNL revenue over $\cS_K\times\cP^N$. In general, this optimization step may not be computationally efficient; developing tractable algorithms with the same regret guarantee is left for future work.

\subsection{Sub-optimality Gap Guarantee}\label{sec4.2}
Since both $\Xoff$ and $\bm{\vartheta}^\star$ are fixed throughout this section, we simplify the notation by defining
\begin{align*}
    q_i(S,\bm p|\bm\vartheta):=q_i(S\lvert \exp(\widetilde{\bX}_{\text{off}}(\bm p)^\top \bm\vartheta)) \quad \text{and} \quad (S^\star,\bm p^\star):= \operatorname*{argmax}_{S\in \cS_K,\bm p\in\cP^N} R(S\lvert \exp(\widetilde{\bX}_{\text{off}}(\bm p)^\top \bm\vartheta^\star)).
\end{align*}
We now establish the sub-optimality gap guarantee for Algorithm~\ref{alg:pricing-mnl-lcb}.

\begin{theorem}\label{thm-price-sub-optimality}
   Under Assumption~\ref{assump-bounded-pricing} and~\ref{assump-mim-price}, and suppose the burn-in condition 
   \begin{align}\label{eq:burn-in-thm4.1}
       144\max_{m\le n, j\in S_m} \lVert \widetilde{\bm x}_{mj}(p_{mj})\rVert_{{\bH}_{\widetilde\cD}(\bm\vartheta^\star)^{-1}} \leq \big(2d\log (N/\delta)\big)^{-1/2}
   \end{align}
holds, then with probability at least $1-\delta,$ we have 
\begin{align*}
       & \SubOpt(S^\mathrm{LCB},\bm p^\mathrm{LCB}, \Xoff) 
        \\=&\ \widetilde{\mathcal{O}}\left(\frac{W}{L_0}\sqrt{\sum_{j\in S^\star}q_j(S^\star, \bm p^\star|\bm\vartheta^\star)q_0(S^\star, \bm p^\star|\bm\vartheta^\star)
        \| (\widetilde{\xoff})_j( p^\star_j)\|^2_{\bH_{\widetilde\cD}(\bm\vartheta^\star)^{-1}}} + \frac{W}{L_0}  \cdot \max_{j\in S^\star} \| (\widetilde{\xoff})_j( p^\star_j)\|^2_{\bH_{\widetilde\cD}(\bm\vartheta^\star)^{-1}}\right).
\end{align*}

\end{theorem}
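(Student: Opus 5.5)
Work on the event of Theorem~\ref{prop-sup-ucb-confidence-bound} (with $\lambda\to 0$), which has probability at least $1-\delta$ under the burn-in \eqref{eq:burn-in-thm4.1}. On this event, part i) gives $\bH_{\widetilde\cD}(\hat{\bm\vartheta}_{\widetilde\cD})^{-1}\preceq 3\bH_{\widetilde\cD}(\bm\vartheta^\star)^{-1}$, and part ii) gives, uniformly over $i\in[N]$ and $p\in\cP$,
\[
(\widetilde{\xoff})_i(p)^\top\bm\vartheta^\star - 2\beta_i(p)\ \le\ u_i^{\mathrm{LCB}}(p)\ \le\ (\widetilde{\xoff})_i(p)^\top\bm\vartheta^\star,
\]
where $\beta_i(p):=c\sqrt{\log(N/\delta)}\,\|(\widetilde{\xoff})_i(p)\|_{\bH_{\widetilde\cD}(\bm\vartheta^\star)^{-1}}$. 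The uniformity over $p$ comes from the $\log(NT/\delta)$ radius, or from a covering of $\cP$ absorbed in $\widetilde{\mathcal O}$. We then use the standard pessimism decomposition
\[
\SubOpt = \big[R(S^\star,\bp^\star|\bm\vartheta^\star)-R^{\mathrm{LCB}}(S^\star,\bp^\star)\big] + \big[R^{\mathrm{LCB}}(S^\star,\bp^\star)-R^{\mathrm{LCB}}(S^{\mathrm{LCB}},\bp^{\mathrm{LCB}})\big] + \big[R^{\mathrm{LCB}}(S^{\mathrm{LCB}},\bp^{\mathrm{LCB}})-R(S^{\mathrm{LCB}},\bp^{\mathrm{LCB}}|\bm\vartheta^\star)\big].
\]
The middle term is $\le 0$ by optimality of the LCB pair. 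The last term is $\le 0$ because MNL revenue is monotone in the attractions of offered items. To see this, fix $S$ and $\bp$; the revenue is increasing in $v_i$ exactly when $p_i\ge R$. A short argument (or a reduction) handles this. If some $p_i<R$, dropping item $i$ only increases revenue. So we may assume the LCB maximizer has all offered prices at least its LCB revenue, and then raising utilities to the truth weakly increases revenue. I would state this as a lemma.

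It remains to bound the first term, which evaluates the true versus the pessimistic revenue at the fixed pair $(S^\star,\bp^\star)$. Write $u_j=u^\star_j-\Delta_j$ with $0\le\Delta_j\le 2\beta_j(p_j^\star)$. Consider $g(s)=R(S^\star,\bp^\star)$ evaluated at utilities $u^\star-s\Delta$ for $s\in[0,1]$. Its derivative is
\[
g'(s)=-\sum_{j\in S^\star}q_j(s)\,(p_j^\star-R(s))\,\Delta_j .
\]
A second-order Taylor expansion around $s=0$ gives $g(0)-g(1)\le \sum_j q^\star_j\,|p^\star_j-R^\star|\,\Delta_j + O\big(P\max_j\Delta_j^2\big)$. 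The second derivative is bounded by $P\sum_j q_j\Delta_j^2$ up to constants, and stays comparable along the path since $\Delta_j\lesssim 1$ under burn-in. This produces the $\max_j\|\cdot\|^2$ term with factor $P\lesssim W/L_0$.

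For the first-order term, I would use Cauchy--Schwarz:
\[
\sum_j q^\star_j|p^\star_j-R^\star|\Delta_j\le \sqrt{\sum_j q_j^\star (p^\star_j-R^\star)^2/q^\star_0}\;\sqrt{\sum_j q^\star_0q^\star_j\Delta_j^2}.
\]
The main obstacle is showing that the first factor is $O(P)$ rather than something like $O(P\sqrt{K})$. Here the first-order optimality of $\bp^\star$ is essential. For the MNL with item sensitivities $\beta_j\ge L_0$, the optimal prices satisfy $p^\star_j - 1/\beta_j = R^\star$. Hence $0\le p^\star_j-R^\star=1/\beta_j\le 1/L_0$. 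Moreover $R^\star$ equals $\sum_j q^\star_j/\beta_j$ up to normalization, which relates it to $1-q^\star_0$; concretely $R^\star\le q_0^{\star-1}\cdot(\cdot)$. I would aim to show $\sum_j q^\star_j(p^\star_j-R^\star)^2\le L_0^{-2}(1-q^\star_0)$, and then handle the $1/q^\star_0$ by instead weighting Cauchy--Schwarz as $\sqrt{\sum_j q_j^\star/(q_0^\star\beta_j^2)}$. If $q^\star_0$ is small, this factor could blow up. To bound it, I would use that the optimal no-purchase probability satisfies $q^\star_0\gtrsim e^{-(W+\log K)}$-type bounds (as in Lemma~\ref{p-bound-refined}), or better, the identity $R^\star=\frac{1}{L}\cdots$ giving $\sum_j q_j^\star/\beta_j \le R^\star\le P$ only up to $q_0$. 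If the sharp form fails, I would fall back to the crude bound $|p_j^\star-R^\star|\le P$ together with a $q^\star_0$-weighted Cauchy--Schwarz, accepting an extra $W$ factor. This matches the $W/L_0$ prefactor in the statement, so that bound should suffice.

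Combining the three terms and plugging in $\Delta_j\lesssim\sqrt{\log(N/\delta)}\,\|(\widetilde{\xoff})_j(p^\star_j)\|_{\bH^{-1}}$ yields the claimed rate.
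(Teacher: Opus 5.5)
Your architecture matches the paper's: the pessimism decomposition, the monotonicity lemma at the LCB maximizer, and a second-order expansion around the true utilities at $(S^\star,\bm p^\star)$ with remainder at most $\frac32 P\max_j\Delta_j^2$. For the monotonicity lemma, $p_i\ge R$ on offered items is forced by optimality, since dropping an item with $p_i<R$ strictly increases revenue; you do not need to assume it.

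The gap is at the step you flag yourself as the main obstacle. After your Cauchy--Schwarz, the first factor is $\sqrt{\sum_{j\in S^\star}v_j^\star(p_j^\star-R^\star)^2}$ with $v_j^\star=q_j^\star/q_0^\star$. You never show this is $O(P)$, and none of your routes gets there:
\begin{itemize}
\item Lower-bounding $q_0^\star\gtrsim e^{-(W+\log K)}$ gives a factor of order $L_0^{-1}\sqrt{K}\,e^{W/2}$. That reintroduces exactly the exponential ($\kappa$-type) dependence the theorem is designed to avoid.
\item The crude bound $|p_j^\star-R^\star|\le P$ does not touch the problem. It leaves $P\sqrt{\sum_{j\in S^\star}v_j^\star}=P\sqrt{(1-q_0^\star)/q_0^\star}$, and you have no control on that. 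Any extra polynomial factor in $W$ is also not hidden by $\widetilde{\mathcal O}$, so this does not ``match the $W/L_0$ prefactor''.
\end{itemize}

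The missing ingredient is the exact identity $\sum_{j\in S}v_j\bigl(p_j-R(S,\bm p)\bigr)=R(S,\bm p)$. It holds for every $(S,\bm p)$, being just $R\,(1+\sum_{j\in S}v_j)=\sum_{j\in S}p_jv_j$ rearranged. Your remark that $\sum_j q_j^\star/\beta_j\le R^\star$ ``only up to $q_0$'' is really the equality $\sum_j q_j^\star/\beta_j=q_0^\star R^\star$. Combine the identity with $p_j^\star\ge R^\star$ on $S^\star$, which again comes from item-dropping optimality:
\[
\sum_{j\in S^\star}v_j^\star(p_j^\star-R^\star)^2\le \max_{j\in S^\star}(p_j^\star-R^\star)\sum_{j\in S^\star}v_j^\star(p_j^\star-R^\star)=\max_{j\in S^\star}(p_j^\star-R^\star)\,R^\star\le P^2 .
\]
No $q_0^\star$ appears anywhere. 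This is exactly how the paper proves Proposition~\ref{prop: assortment-perturbation}(ii), which it then applies with the true utilities as $\tilde{\bm u}$ and the LCB utilities as $\bm u$.

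With your own characterization $p_j^\star-R^\star=1/\beta_j\le 1/L_0$, the same identity gives the slightly sharper $\sum_{j\in S^\star}v_j^\star/\beta_j^2\le R^\star/L_0$. Inserting this one line closes the argument, and your proof then coincides with the paper's.
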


Theorem~\ref{thm-price-sub-optimality} shows that the offline suboptimality gap is governed by a local information-weighted version of item-level coverage quantity at the optimal assortment-price pair. 
This contrasts with prior offline results in two respects. \citet{dong2023pasta} requires assortment-level coverage, captured by the number of times the exact assortment $S^\star$ appears in the data, which is a strictly stronger requirement than item-level coverage. \citet{han2025optimal} shows that item-level coverage is both sufficient and necessary in the assortment-only setting. Our result extends this item-level perspective to joint assortment--pricing: the relevant coverage notion is no longer a raw count but an information-geometric quantity that depends on the optimal prices through $(\widetilde{\xoff})_j( p^\star_j)$ and the data distribution through $\bH_{\widetilde\cD}(\bm\vartheta^\star)^{-1}$.

%-----------------------------------No Burn-in---------------------------%

\paragraph{Eliminating the Burn-in Condition.} The burn-in condition \eqref{eq:burn-in-thm4.1} in Theorem~\ref{thm-price-sub-optimality} arises from the condition required by our confidence region guarantee in Theorem~\ref{prop-sup-ucb-confidence-bound}. Meanwhile, the item-wise pessimistic estimation scheme in Algorithm~\ref{alg:pricing-mnl-lcb} is flexible and can be adapted to other plug-in confidence region results by replacing ~\eqref{eq-price-u-LCB} with alternative confidence region-based bounds on $(\widetilde{\xoff})_i(p)^\top\bm\vartheta^\star$. In particular, integrating confidence region results for adaptively collected data—such as those in \citet{oh2021multinomial,perivier2022dynamic}—enables a burn-in-free offline learning guarantee at the cost of an additional $\sqrt{dK\bar P^4W^3}$ multiplicative dependency. We summarize the main result below and defer the detailed plug-in confidence region analysis and proof to Appendix~\ref{sec-appendix-offline-burn-in-free}.
\begin{proposition}\label{prop-plug-in-confidence-region}
    There exists a plug-in confidence region result such that, after replacing the pessimistic estimation step in~\eqref{eq-price-u-LCB} by this result, the output of Algorithm~\ref{alg:pricing-mnl-lcb} satisfies, with probability at least $1-\delta$,
    \begin{alignat*}{2}
        &\SubOpt(S^\mathrm{LCB},\bm p^\mathrm{LCB},\Xoff) \\
        &= \widetilde{\mathcal{O}}\!\left(
        \sqrt{\frac{dKW^{8}}{L^5_0}\sum_{j\in S^\star}q_j^\star q_0^\star
        \| (\widetilde{\xoff})_j( p^\star_j)\|^2_{\bH^\lambda_{\widetilde\cD}(\bm\vartheta^\star)^{-1}}}
        + \frac{dKW^7}{L^4_0} \max_{j\in S^\star} \| (\widetilde{\xoff})_j( p^\star_j)\|^2_{\bH^\lambda_{\widetilde\cD}(\bm\vartheta^\star)^{-1} }\right),
    \end{alignat*}
where $\lambda=2d\bar P/W$ and $q_j^\star := q_j(S^\star, \bm p^\star|\bm\vartheta^\star)$, $q_0^\star := q_0(S^\star, \bm p^\star|\bm\vartheta^\star)$.
\end{proposition}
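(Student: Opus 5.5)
The plan is to reuse the two-step skeleton of the proof of Theorem~\ref{thm-price-sub-optimality}: a pessimism decomposition of the suboptimality, followed by a local perturbation bound on the MNL revenue around $(S^\star,\bm p^\star)$. The only change is that the Hessian-based confidence region of Theorem~\ref{prop-sup-ucb-confidence-bound} is replaced by a burn-in-free, $\kappa$-free confidence region valid for arbitrary (even adaptively collected) data.

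\textbf{Step 1: construct the plug-in confidence region.} I will use the regularized MLE $\hat{\bm\vartheta}^\lambda_{\widetilde\cD}$ with $\lambda = 2d\bar P/W$. Adapting the self-concordance argument of \citet{perivier2022dynamic,lee2024nearly} to the price-augmented features $\widetilde{\bx}(p)$, with $\|\widetilde{\bx}(p)\|\le \bar P$, should give, with probability $1-\delta$,
\[
\|\hat{\bm\vartheta}^\lambda_{\widetilde\cD}-\bm\vartheta^\star\|_{\bH^\lambda_{\widetilde\cD}(\bm\vartheta^\star)} \lesssim \beta:=\sqrt{d\log(n/\delta)}+\sqrt\lambda W .
\]
The multiplicative self-concordance constant of the MNL likelihood scales with $\bar P\|\bm\vartheta-\bm\vartheta^\star\|$, which is where the polynomial factors in $W$, $\bar P$ and $K$ enter. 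The Hessian at the MLE must then be compared with the Hessian at $\bm\vartheta^\star$. Since there is no burn-in, I cannot obtain the two-sided factor-3 comparison. Instead I will use the crude comparison $\bH^\lambda(\hat{\bm\vartheta}) \succeq c\,\bH^\lambda(\bm\vartheta^\star)$ with $c^{-1}=\mathrm{poly}(K,W,\bar P)$; this follows from the generalized self-concordance bound together with $\|\hat{\bm\vartheta}\|\lesssim W$. It yields a computable width
\[
u_i^{\mathrm{LCB}}(p)=(\widetilde{\xoff})_i(p)^\top\hat{\bm\vartheta}-\gamma\,\|(\widetilde{\xoff})_i(p)\|_{\bH^\lambda(\hat{\bm\vartheta})^{-1}},
\]
and this width satisfies $u_i^{\mathrm{LCB}}(p)\le(\widetilde{\xoff})_i(p)^\top\bm\vartheta^\star \le u_i^{\mathrm{LCB}}(p)+2\gamma c^{-1/2}\|(\widetilde{\xoff})_i(p)\|_{\bH^\lambda(\bm\vartheta^\star)^{-1}}$ uniformly in $i$ and $p$. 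Collecting constants, the effective radius is $\beta_{\mathrm{eff}}\asymp\sqrt{dK\bar P^4W^3}$ up to logarithms.

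\textbf{Step 2: pessimism decomposition.} Revenue is monotone in each item's utility when prices are nonnegative and the remaining items are fixed. Because $u^{\mathrm{LCB}}\le$ the true utility, we get $R(S^{\mathrm{LCB}},\bm p^{\mathrm{LCB}}\mid\bm\vartheta^\star)\ge R^{\mathrm{LCB}}(S^{\mathrm{LCB}},\bm p^{\mathrm{LCB}})\ge R^{\mathrm{LCB}}(S^\star,\bm p^\star)$. This holds provided the optimal MNL assortment property makes monotonicity valid for items priced at or above the revenue level, which is the same fact used in Theorem~\ref{thm-price-sub-optimality}. Hence
\[
\SubOpt\le R(S^\star,\bm p^\star\mid\bm\vartheta^\star)-R^{\mathrm{LCB}}(S^\star,\bm p^\star).
\]

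\textbf{Step 3: local expansion at $(S^\star,\bm p^\star)$.} Let $\Delta_j\ge 0$ denote the utility gaps. A second-order Taylor expansion of $R$ in the utilities gives a first-order term $\sum_{j\in S^\star} q_j^\star(p_j^\star-R^\star)\Delta_j$ plus a quadratic remainder of order $P\max_j\Delta_j^2$. For the first-order term I apply Cauchy--Schwarz: $\sum_j q_j^\star|p_j^\star-R^\star|\Delta_j\le\sqrt{\sum_j q_j^\star (p_j^\star-R^\star)^2/q_0^\star}\cdot\sqrt{\sum_j q_0^\star q_j^\star\Delta_j^2}$. To control the first factor, I use the first-order optimality of $\bm p^\star$, which gives $p_j^\star-R^\star=1/\beta_j$ with $\beta_j=\langle\bm\phi^\star,\bx_j\rangle\ge L_0$, so $\sum_j q_j^\star(p_j^\star-R^\star)^2\le q_{S^\star}/L_0^2$. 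I expect $q_{S^\star}/q_0^\star=\sum_j v_j^\star$ to be bounded by $\mathrm{poly}(W)/L_0$-type quantities via Lemma~\ref{p-bound-refined}; this is where I expect the extra $L_0$ and $W$ powers to enter. Substituting $\Delta_j\le\beta_{\mathrm{eff}}\|(\widetilde{\xoff})_j(p_j^\star)\|_{\bH^\lambda(\bm\vartheta^\star)^{-1}}$ then produces the two stated terms.

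The main obstacle is Step 1. Obtaining a Hessian comparison between $\hat{\bm\vartheta}$ and $\bm\vartheta^\star$ without any burn-in, while paying only polynomial rather than exponential dependence on $W$, is delicate. My first attempt will be to use the $\lambda$-regularization to force $\|\hat{\bm\vartheta}\|=O(W)$ and then to invoke the generalized self-concordance inequality $\bH(\bm\vartheta_1)\succeq (1+\bar P\|\bm\vartheta_1-\bm\vartheta_2\|)^{-2}\bH(\bm\vartheta_2)$, carefully tracking $K$ through the multinomial variance terms.
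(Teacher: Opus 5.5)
Your Step 1 does not go through, and it is the core of the proposition. Your width is $\gamma\|\widetilde{\bx}\|_{\bH^\lambda(\hat{\bm\vartheta})^{-1}}$. So both the validity of the LCB and its conversion to $\bH^\lambda(\bm\vartheta^\star)^{-1}$-norms require a \emph{pointwise} comparison between $\bH^\lambda(\hat{\bm\vartheta})$ and $\bH^\lambda(\bm\vartheta^\star)$.

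The inequality you plan to use, $\bH(\bm\vartheta_1)\succeq(1+\bar P\|\bm\vartheta_1-\bm\vartheta_2\|)^{-2}\bH(\bm\vartheta_2)$, is false for the MNL likelihood. That likelihood is only generalized self-concordant, so pointwise Hessians can differ by a factor $e^{-c\bar P\|\bm\vartheta_1-\bm\vartheta_2\|}$. A counterexample already exists for $K=1$: take one observation with $\|\widetilde{\bx}\|=\bar P$ and $\bm\vartheta_1-\bm\vartheta_2\parallel\widetilde{\bx}$, and let $r=\bar P\|\bm\vartheta_1-\bm\vartheta_2\|$. The Hessian ratio is then $4e^{r}/(1+e^{r})^2\le 4e^{-r}$, far below $(1+r)^{-2}$. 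This is also why \eqref{eq: proof-to-H-dom} and Proposition~\ref{prop-H-dominance} are exponential in $\zeta$.

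Polynomial factors such as $(1+c\bar PW)^{-1}$ hold only for the \emph{integrated} Hessian $\int_0^1\bH(\bm\vartheta_2+s(\bm\vartheta_1-\bm\vartheta_2))\,ds$, with both endpoints in a bounded set. So even granting $\|\hat{\bm\vartheta}-\bm\vartheta^\star\|_2=O(W)$, your $c^{-1/2}$ can be $e^{\Theta(\bar PW)}=e^{\Theta(W^2/L_0)}$. That is exactly the exponential dependence the proposition is meant to avoid. Keeping $\bH(\hat{\bm\vartheta})$ within constant factors of $\bH(\bm\vartheta^\star)$ is precisely what the burn-in condition provides in Theorem~\ref{prop-sup-ucb-confidence-bound}, and you are trying to drop it.

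Two further claims in Step 1 are also unsupported:
\begin{itemize}
\item The localization $\|\hat{\bm\vartheta}^\lambda-\bm\vartheta^\star\|_{\bH^\lambda(\bm\vartheta^\star)}\lesssim\beta$ for the unprojected regularized MLE is not available.
\item The idea that $\lambda=2d\bar P/W$ forces $\|\hat{\bm\vartheta}^\lambda\|_2=O(W)$ is not justified.
\end{itemize}
Concentration only controls the gradient gap $\|\nabla\ell^\lambda(\bm\vartheta^\star)-\nabla\ell^\lambda(\hat{\bm\vartheta}^\lambda)\|_{\bH^\lambda(\bm\vartheta^\star)^{-1}}$. Turning that into a parameter bound through the integrated Hessian needs an a priori bound on $\|\hat{\bm\vartheta}^\lambda-\bm\vartheta^\star\|_2$, and a fixed ridge penalty does not supply one.

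The paper avoids all of this by never evaluating a Hessian at the MLE. It uses the estimator of Lemma~\ref{lem-non-convex-confidence-region}, taken from \citet{perivier2022dynamic}. For each item $j$ and price $p$, it minimizes $\widetilde{\bx}_j(p)^\top\bm\vartheta$ over the set $\{\|\bm\vartheta\|_2\le W:\ \|\nabla\ell^\lambda(\bm\vartheta)-\nabla\ell^\lambda(\hat{\bm\vartheta}^\lambda)\|_{\bH^\lambda(\bm\vartheta)^{-1}}\le\gamma\}$, where the Hessian is evaluated at the candidate $\bm\vartheta$. This buys three things:
\begin{itemize}
\item $\bm\vartheta^\star$ is feasible with high probability, so the minimum is automatically a lower bound on $\widetilde{\bx}_j(p)^\top\bm\vartheta^\star$.
\item The minimizer stays in the $W$-ball, so only the integrated-Hessian comparison is needed. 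It gives an error bound in the $\bH^\lambda(\bm\vartheta^\star)$-norm of order $(1+\sqrt{6K}\bar PW)\sqrt{d(1+\bar PW)}$.
\item Up to logarithms, this is a utility gap of order $\sqrt{dK\bar P^3W^3}\,\|\widetilde{\bx}_j(p)\|_{\bH^\lambda(\bm\vartheta^\star)^{-1}}$.
\end{itemize}
This gap is plugged into Proposition~\ref{prop: assortment-perturbation}(ii) exactly as in Theorem~\ref{thm-price-sub-optimality}. With the leading factor $P\le\bar P$ and $\bar P\asymp W/L_0$, the coefficients $\sqrt{dK\bar P^5W^3}$ and $dK\bar P^4W^3$ become the stated $\sqrt{dKW^8/L_0^5}$ and $dKW^7/L_0^4$.

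Your Steps 2 and 3 are compatible with this route. In Step 3 you do not need to bound $\sum_j v_j^\star$ separately: the identity $\sum_{j\in S^\star}v_j^\star(p_j^\star-R^\star)=R^\star$, together with $0\le p_j^\star-R^\star\le P$, bounds your first Cauchy--Schwarz factor by $P$ directly. What must change is Step 1. Replace it with a confidence set restricted to the $W$-ball whose analysis uses only integrated Hessians.
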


%-----------------------------------Fixed Data Collecting---------------------------%
\paragraph{Results under Fixed Data Collecting Policy.}
In the fixed design setting, i.e., $\bX_m \equiv \Xoff:=\bX$, suppose the offline data are collected i.i.d. from a fixed exploration policy
$\pi_{\mathrm{off}}$ over assortment-price pairs, i.e., $\left(S_m, \boldsymbol{p}_m\right) \stackrel{\text { i.i.d. }}{\sim} \pi_{\text {off }}$. Define the pricing Hessian limit
\[
{\bH}_{\mathrm{off}}
:=\E_{(S,\bm p)\sim \pi_{\mathrm{off}}}\!\left[\sum_{i\in S} q_i(S,\bm p|\bm\vartheta^\star)q_0(S,\bm p|\bm\vartheta^\star)
\widetilde{\bx}_i(p_i)\widetilde{\bx}_i(p_i)^\top\right].
\]
Then $\frac{1}{n}\bH_{\widetilde\cD}(\bm\vartheta^\star)\to {\bH}_{\mathrm{off}}$, and the burn-in condition in Theorem~\ref{thm-price-sub-optimality} holds for all sufficiently large $n$. Consequently, Theorem~\ref{thm-price-sub-optimality} implies the large-sample asymptotic rate
\begin{align}\label{eq-fixed data collecting}
\SubOpt(S^\mathrm{LCB},\bm p^\mathrm{LCB},\bX)
=\widetilde{\mathcal{O}}\left(
\frac{W}{L_0}\sqrt{\frac{\sum_{j\in S^\star} q_j(S^\star,\bm p^\star|\bm\vartheta^\star)q_0(S^\star,\bm p^\star|\bm\vartheta^\star)
\big\|\widetilde{\bx}_j(p_j^\star)\big\|^2_{{\bH}_{\mathrm{off}}^{-1}}}{n}}
\right),
\text{as }n\to\infty.
\end{align}

%--------------------------------------------------------------%
%--------------------------------------------------------------%
\subsection{Implications for the Linear MNL Model.}
In this subsection, we show how our price-based results naturally specialize to the classic linear MNL model. We also derive more explicit guarantees for the fixed-design setting within the linear MNL framework.

As discussed in Section~\ref{sec3.2:linear-confidence bound}, the price-based model reduces to the linear contextual MNL model when the feature map is price-independent, or equivalently, when prices are fixed at $\bm p\equiv\bm r$. Under this specialization, Algorithm~\ref{alg:pricing-mnl-lcb} reduces to an item-wise LCB algorithm for linear MNL assortment selection. Specifically, in the pessimistic estimation step, the utility $$u_i^{\mathrm{LCB}}=(\xoff)_i^\top \hat{\bm\theta}_\cD  - 16\sqrt3\lVert \bm (\xoff)_i\rVert_{\bH_\cD(\hat{\bm\theta}_\cD)^{-1}}\sqrt{\log(N/\delta)}$$
is a fixed number for each item rather than a function of $p$, and in the revenue maximization step the algorithm simply chooses the assortment $S\in\cS_K$ that maximizes the MNL revenue under these pessimistic utilities. This step can be efficiently solved using several well-studied polynomial-time algorithms \citep{rusmevichientong2010dynamic, davis2013assortment, avadhanula2016tightness}. 

We then obtain the following suboptimality guarantee as a specialization of Theorem~\ref{thm-price-sub-optimality}, by replacing $(\widetilde{\xoff})_j(p_j^\star)$ with $(\xoff)_j$ and replacing the price upper bound $W/L_0$ with an upper bound $M$ on the revenue.

\begin{corollary}\label{thm-linear-mnl-sub-optimality}
    Under Assumption~\ref{assump-bounded}, and suppose the burn-in condition $$144\max_{m\le n, j\in S_m} \lVert \bm x_{mj}\rVert_{{\bH}_\cD(\bm\theta^\star)^{-1}} \leq \big(d\log (N/\delta)\big)^{-1/2}$$ 
holds, then with probability at least $1-\delta,$ we have 
\begin{align*}
        \SubOpt(S^\mathrm{LCB}, \Xoff) 
        &= \widetilde{\mathcal{O}} \left( M\sqrt{\sum_{j\in S^\star}q_j(S^\star|\bm\theta^\star)q_0(S^\star|\bm\theta^\star)\lVert (\xoff)_j \rVert^2_{\bH_\cD(\bm\theta^\star)^{-1}} }+ M\cdot \max_{j\in S^\star}\lVert (\xoff)_j \rVert_{\bH_\cD(\bm\theta^\star)^{-1}}^2 \right).
\end{align*}
\end{corollary}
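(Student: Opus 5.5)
The plan is to obtain Corollary~\ref{thm-linear-mnl-sub-optimality} by specializing Theorem~\ref{thm-price-sub-optimality} through the embedding of Section~\ref{sec3.2:linear-confidence bound}. There is one caveat: the price-based theorem uses the optimal prices $p^\star_j\le P$ from Lemma~\ref{p-bound-refined}, and the constant $W/L_0$ enters only as a bound on the revenue per item. In the linear model the revenues $r_i\in[0,M]$ are fixed, so the bound $W/L_0$ should be replaced by $M$. For that reason I would not invoke Theorem~\ref{thm-price-sub-optimality} as a black box. Instead I would rerun its argument, which is a short pessimism argument, using Corollary~\ref{thm-sup-ucb-confidence-bound} in place of Theorem~\ref{prop-sup-ucb-confidence-bound}.

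\textbf{Step 1 (good event).} Under the stated burn-in condition, Corollary~\ref{thm-sup-ucb-confidence-bound} with $\lambda=0$ gives, with probability $1-\delta$, two facts: $\bH_\cD(\hat{\bm\theta}_\cD)^{-1}\preceq 3\bH_\cD(\bm\theta^\star)^{-1}$, and
\[
|(\xoff)_i^\top(\hat{\bm\theta}_\cD-\bm\theta^\star)|\le 16\|(\xoff)_i\|_{\bH_\cD(\bm\theta^\star)^{-1}}\sqrt{\log(N/\delta)}
\]
for all $i\in[N]$, taking a union bound over the $N$ target vectors. Write $\Gamma_i:=16\sqrt3\|(\xoff)_i\|_{\bH_\cD(\hat{\bm\theta}_\cD)^{-1}}\sqrt{\log(N/\delta)}$ and $u_i^\star=(\xoff)_i^\top\bm\theta^\star$. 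Since $\|\cdot\|_{\bH_\cD(\hat{\bm\theta}_\cD)^{-1}}\ge\|\cdot\|_{\bH_\cD(\bm\theta^\star)^{-1}}/\sqrt3$ by the Hessian comparison, the subtracted width dominates the estimation error. Together with the upper Hessian comparison this yields
\[
u_i^\star-2\gamma_i\le u_i^{\mathrm{LCB}}\le u_i^\star,\qquad \gamma_i:=48\|(\xoff)_i\|_{\bH_\cD(\bm\theta^\star)^{-1}}\sqrt{\log(N/\delta)}.
\]

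\textbf{Step 2 (pessimism decomposition).} Write
\[
\SubOpt=[R(S^\star|u^\star)-R^{\mathrm{LCB}}(S^\star)]+[R^{\mathrm{LCB}}(S^\star)-R^{\mathrm{LCB}}(S^{\mathrm{LCB}})]+[R^{\mathrm{LCB}}(S^{\mathrm{LCB}})-R(S^{\mathrm{LCB}}|u^\star)].
\]
The middle term is $\le0$ by optimality of $S^{\mathrm{LCB}}$ for the pessimistic revenue. The last term is $\le0$ by monotonicity of MNL revenue in the utilities of revenue-optimal items. This monotonicity needs care, and is the main obstacle. MNL revenue is not monotone in every utility: raising the utility of an item with $r_i<R$ lowers revenue. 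I would handle it as in Theorem~\ref{thm-price-sub-optimality}'s proof. Let $S^{\mathrm{LCB}}$ be any maximizer; dropping items with $r_i<R^{\mathrm{LCB}}(S^{\mathrm{LCB}})$ only increases pessimistic revenue, so all its items satisfy $r_i\ge R^{\mathrm{LCB}}(S^{\mathrm{LCB}})$. The derivative $q_i(r_i-R)$ is then nonnegative along the path from $u^{\mathrm{LCB}}$ up to $u^\star$, because revenue only grows along that path and so stays at least $R^{\mathrm{LCB}}$ wherever it is needed. Hence $R(S^{\mathrm{LCB}}|u^\star)\ge R^{\mathrm{LCB}}(S^{\mathrm{LCB}})$.

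\textbf{Step 3 (first term via second-order Taylor expansion).} Let $g(s)=R(S^\star|u^\star-s\Delta)$ with $\Delta_j=u^\star_j-u_j^{\mathrm{LCB}}\in[0,2\gamma_j]$. Then
\[
g'(0)=-\sum_{j\in S^\star}q_j^\star(r_j-R^\star)\Delta_j .
\]
By optimality of $S^\star$ we have $r_j\ge R^\star$ for $j\in S^\star$. The key inequality I would use is $r_j-R^\star\le M q_0^\star$ up to constants. For this I would check that $R^\star\ge \sum_i r_iq_i$ combined with the optimality condition implies $r_j-R^\star \le \sum_{i}(r_j-r_i)q_i+ r_jq_0\le$ (terms controlled by) $M q_0^\star$. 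If this fails, I would fall back on the cruder bound $r_j-R^\star\le M$ weighted by $q_j$ and absorb it via Cauchy--Schwarz. Cauchy--Schwarz then gives
\[
|g'(0)|\lesssim M\sqrt{\textstyle\sum_j q_j^\star q_0^\star\gamma_j^2}\cdot\sqrt{\textstyle\sum_j q_j^\star}\le M\sqrt{\textstyle\sum_j q_j^\star q_0^\star\gamma_j^2}.
\]
The second derivative of MNL revenue in utilities is bounded by $O(M)\sum_j q_j\Delta_j^2$ uniformly along the path. Since each $\Delta_j$ is at most a constant under burn-in, the $q_j$ along the path stay within constant factors of $q_j^\star$. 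This gives a remainder $\lesssim M\max_j\gamma_j^2$. Substituting $\gamma_j$ produces exactly the two displayed terms, with the logarithmic factors absorbed into $\widetilde{\mathcal O}$.
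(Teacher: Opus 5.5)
Your architecture is the paper's: the good event from Corollary~\ref{thm-sup-ucb-confidence-bound}, then the pessimism chain $\mathrm{SubOpt}\le R^\star-R^{\mathrm{LCB}}(S^{\mathrm{LCB}})\le R^\star-R^{\mathrm{LCB}}(S^\star)$, then a second-order expansion around the true utilities on $S^\star$. The gap is in Step~3: the per-item inequality $r_j-R^\star\lesssim Mq_0^\star$ is false. Take $K\ge 2$ and two items with $r_1=M$, $v_1^\star=10^{-2}$, and $r_2=M/2$, $v_2^\star=10^{2}$. Since $r_1>R(\{2\})$, adding item~1 raises revenue, so $S^\star=\{1,2\}$. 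Then $R^\star\approx 0.495M$ and $q_0^\star\approx 10^{-2}$, hence $r_1-R^\star\approx 50\,Mq_0^\star$. The ratio grows like $v_2^\star/2$, i.e.\ up to order $e^{W}$.

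Your fallback does not rescue the bound either. Once $r_j-R^\star$ is replaced by $M$, consider $S^\star=\{1\}$: the quantity $Mq_1^\star\Delta_1$ already exceeds the target $M\sqrt{q_1^\star q_0^\star\Delta_1^2}$ by a factor $\sqrt{v_1^\star}$. So any Cauchy--Schwarz along that route must pay $\sqrt{\sum_{j\in S^\star}v_j^\star}$, which can be as large as $\sqrt K e^{W/2}$. That is exactly the $\kappa$-type factor the corollary is designed to avoid.

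The missing idea is that your inequality holds only in aggregate. From $R^\star(1+\sum_{k\in S^\star}v_k^\star)=\sum_{j\in S^\star}r_jv_j^\star$ you get the identity $\sum_{j\in S^\star}v_j^\star(r_j-R^\star)=R^\star$, equivalently $\sum_j q_j^\star(r_j-R^\star)=q_0^\star R^\star$. Every summand is nonnegative by optimality. So apply Cauchy--Schwarz with weights $v_j^\star$:
\[
|g'(0)|=\sum_{j\in S^\star}\frac{\sqrt{v_j^\star}\,(r_j-R^\star)\cdot\sqrt{v_j^\star}\,\Delta_j}{1+\sum_{k\in S^\star}v_k^\star}\le\sqrt{\sum_{j\in S^\star}v_j^\star(r_j-R^\star)^2}\,\sqrt{\sum_{j\in S^\star}q_j^\star q_0^\star\Delta_j^2}.
\]
Then bound $\sum_j v_j^\star(r_j-R^\star)^2\le M\sum_j v_j^\star(r_j-R^\star)=MR^\star\le M^2$. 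This is Proposition~\ref{prop: assortment-perturbation}(ii).

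Two smaller repairs are needed.
\begin{itemize}
\item In Step~2, the derivative $\sum_i q_i(s)(r_i-R(s))\Delta_i$ is guaranteed nonnegative only where $R(s)\le R^{\mathrm{LCB}}(S^{\mathrm{LCB}})$. Your stated reason (revenue grows along the path) therefore points the wrong way. The conclusion still holds, most cleanly via the analogous identity $\sum_{i\in S^{\mathrm{LCB}}}(r_i-R^{\mathrm{LCB}})e^{u_i^{\mathrm{LCB}}}=R^{\mathrm{LCB}}$, which has nonnegative coefficients. Replacing $u^{\mathrm{LCB}}$ by $u^\star\ge u^{\mathrm{LCB}}$ keeps the left side at least $R^{\mathrm{LCB}}$, and this rearranges to $R(S^{\mathrm{LCB}}|u^\star)\ge R^{\mathrm{LCB}}$ (Proposition~\ref{prop-non-uniform-reward-optimal-assortment}).
\item For the remainder term, the burn-in controls the data features, not the target features, so $\Delta_j=O(1)$ is not available. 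You do not need it: $\sum_j q_j(\bar{\bm u})\Delta_j^2\le\max_j\Delta_j^2$ at every point of the path.
\end{itemize}
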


%---------------------------------------fixed design-----------------------%
\subsubsection{Guarantees for the Fixed Design Setting.} 
In the {fixed design} setting, namely when $\bX_m \equiv \Xoff:=\bX$, the leading term in Corollary~\ref{thm-linear-mnl-sub-optimality} admits a more interpretable upper bound in terms of item-wise coverage counts. Specifically, define the coverage number $n_i:=\sum_{m\le n}\bm 1\{i\in S_m\}$ and $n^\star:=\min_{i\in S^\star} n_i$. Let $v_i:=\exp\!\left({\bx}_i^\top \bm\vartheta^\star\right)$ denote the true MNL value of item $i$. The following proposition upper bounds the effective uncertainty on the optimal assortment by $n^\star$ and a mild density-ratio term.

\begin{proposition}\label{prop-leading-lcb-upper-bound}
With the notation above, it holds that 
\begin{align*}
        \sqrt{\sum_{j\in S^\star} q_j(S^\star|\bm\theta^\star)q_0(S^\star|\bm\theta^\star) \lVert \bx_j \rVert_{\bH_\cD(\bm\theta^\star)^{-1}}^2}\leq \sqrt{\frac{K}{n^\star}} \cdot \max_{ m\le n}\frac{1+\sum_{k\in S_m}v_k}{1+\sum_{k\in S^\star}v_k} 
    \end{align*}
\end{proposition}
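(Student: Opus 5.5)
We prove the bound by finding a matrix that lies below $\bH_\cD(\bm\theta^\star)$ in the Loewner order and whose inverse is easy to evaluate at the features $\bx_j$, $j\in S^\star$.

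\textbf{Step 1: a Loewner lower bound on the Hessian.} In the fixed design, the MNL negative log-likelihood has Hessian
\[
\bH_\cD(\bm\theta^\star)=\sum_{m\le n}\Big(\sum_{i\in S_m} q_{mi}\bx_i\bx_i^\top-\bar\bx_m\bar\bx_m^\top\Big),
\qquad \bar\bx_m=\sum_{i\in S_m}q_{mi}\bx_i,
\]
where $q_{mi}=q_i(S_m|\bm\theta^\star)$. Treat $\bx_0=\bm 0$ with probability $q_{m0}$; then each summand is the covariance of $\bx_{I}$ with $I\sim q_m$ on $(S_m)_+$. We use the standard bound $\mathrm{Cov}\succeq q_{m0}\sum_{i\in S_m}q_{mi}\bx_i\bx_i^\top$. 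This bound is used, for example, in \citet{lee2024nearly}. It follows by writing the covariance as $\frac12\sum_{i,k}q_iq_k(\bx_i-\bx_k)(\bx_i-\bx_k)^\top$ and keeping only the pairs that contain $0$.

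Hence
\[
\bH_\cD(\bm\theta^\star)\succeq \sum_{i}w_i\,\bx_i\bx_i^\top=:\bG,
\qquad w_i:=\sum_{m:\, i\in S_m}q_{m0}q_{mi}.
\]
Writing $q_{m0}q_{mi}=v_i/(1+\sum_{k\in S_m}v_k)^2$ gives
\[
w_i\ \ge\ n_i\,\frac{v_i}{\max_m(1+\sum_{k\in S_m}v_k)^2}.
\]

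\textbf{Step 2: each weighted term is a projection-type quantity.} Fix $j\in S^\star$ and set $c:=\max_m(1+\sum_{k\in S_m}v_k)/(1+\sum_{k\in S^\star}v_k)$. We have
\[
q_j(S^\star)q_0(S^\star)=\frac{v_j}{(1+\sum_{k\in S^\star}v_k)^2}.
\]
Combining this with Step 1 gives
\[
q_j^\star q_0^\star \;\le\; \frac{c^2\,w_j}{n_j}\;\le\;\frac{c^2\,w_j}{n^\star},
\]
where $q_j^\star,q_0^\star$ are evaluated at $S^\star$. Therefore
\[
q_j^\star q_0^\star\|\bx_j\|^2_{\bH^{-1}}\le \frac{c^2}{n^\star}\, w_j\bx_j^\top\bG^{-1}\bx_j .
\]
The quantity $w_j\bx_j^\top\bG^{-1}\bx_j$ is the leverage of the rank-one term $w_j\bx_j\bx_j^\top$ inside $\bG$. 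Since this term is positive semidefinite and appears in $\bG$, the leverage is at most $1$. We avoid summing leverages, because that would give $d$ rather than $K$.

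\textbf{Step 3: sum over $S^\star$.} Each of the at most $K$ terms with $j\in S^\star$ is bounded by $c^2/n^\star$. Summing them and taking square roots yields the bound $\sqrt{K/n^\star}\,c$ stated in Proposition~\ref{prop-leading-lcb-upper-bound}.

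\textbf{Main obstacle.} The delicate step is the covariance lower bound in Step 1, which must keep the $q_{m0}$ factor exactly as written. The only other point to check is the convention that items never offered ($n_j=0$) make the right-hand side infinite, so the statement is trivially true in that case. Invertibility of $\bH$ was assumed at the start of Section~\ref{sec-offline}.
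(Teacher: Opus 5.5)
Your proof is correct and follows the paper's argument. The paper also lower-bounds $\bH_\cD(\bm\theta^\star)$ by $\sum_m\sum_{k\in S_m}q_k(S_m|\bm\theta^\star)q_0(S_m|\bm\theta^\star)\bx_k\bx_k^\top$ and gets $\|\bx_j\|^2_{\bH_\cD(\bm\theta^\star)^{-1}}\le\gamma_j^{-1}$ with $\gamma_j=w_j$ (via Sherman--Morrison plus a $\lambda\to0$ limit, which is your leverage-at-most-one step), then uses $\gamma_j\ge n_jv_j/\max_m(1+\sum_{k\in S_m}v_k)^2$ and sums over the at most $K$ items of $S^\star$.

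One small fix: going through $\bm G^{-1}$ needs $\bm G\succ0$, which you never check (it holds, since each round's $\bm G$-term has the same range as its Hessian term). You can avoid this entirely by applying the leverage bound directly to $\bH_\cD(\bm\theta^\star)\succeq w_j\bx_j\bx_j^\top$.
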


Combining Proposition~\ref{prop-leading-lcb-upper-bound} with Corollary~\ref{thm-linear-mnl-sub-optimality} yields an item-coverage based sub-optimality guarantee for Algorithm~\ref{alg:pricing-mnl-lcb}.

\begin{corollary}\label{corollary-subopt-via-itemcover}
Under the same conditions as Theorem~\ref{thm-price-sub-optimality}, with probability at least $1-\delta$, 
\begin{align*}
    \SubOpt(S^\mathrm{LCB},\bX) = \widetilde{\mathcal{O}} \left(M\left( \sqrt{\frac{K}{n^\star}} \cdot \max_{ m\le n}\frac{1+\sum_{k\in S_m}v_k}{1+\sum_{k\in S^\star}v_k} + \frac{1}{n^\star\kappa_{\mathrm{off}}} \right)\right)
\end{align*}
\end{corollary}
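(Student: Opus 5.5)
This corollary follows from Corollary~\ref{thm-linear-mnl-sub-optimality} by bounding each of its two terms separately. Proposition~\ref{prop-leading-lcb-upper-bound} handles the first term, and an elementary Hessian lower bound handles the second.

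We work in the fixed design setting $\bX_m\equiv\Xoff=\bX$, and the conditions of Theorem~\ref{thm-price-sub-optimality} specialize to those of Corollary~\ref{thm-linear-mnl-sub-optimality}. With probability at least $1-\delta$,
\[
\SubOpt(S^{\mathrm{LCB}},\bX)=\widetilde{\mathcal O}\Big(M\sqrt{\textstyle\sum_{j\in S^\star}q_j^\star q_0^\star\|\bx_j\|^2_{\bH_\cD(\bm\theta^\star)^{-1}}}+M\max_{j\in S^\star}\|\bx_j\|^2_{\bH_\cD(\bm\theta^\star)^{-1}}\Big).
\]
Since $q_j^\star q_0^\star$ and the features are evaluated at the same $\bX$ as in the data, Proposition~\ref{prop-leading-lcb-upper-bound} applies verbatim to the square-root term. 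It gives the bound $\sqrt{K/n^\star}\cdot\max_m (1+\sum_{k\in S_m}v_k)/(1+\sum_{k\in S^\star}v_k)$, which is the first term of the claim.

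For the second-order term we need $\|\bx_j\|^2_{\bH_\cD(\bm\theta^\star)^{-1}}\le 1/(n^\star\kappa_{\mathrm{off}})$ for every $j\in S^\star$. The MNL Hessian at $\bm\theta^\star$ satisfies the standard lower bound
\[
\bH_\cD(\bm\theta^\star)\succeq\sum_{m\le n}\sum_{i\in S_m}q_i(S_m)q_0(S_m)\,\bx_i\bx_i^\top .
\]
This holds because the per-round Hessian $\sum_i q_i\bx_i\bx_i^\top-(\sum_i q_i\bx_i)(\sum_i q_i\bx_i)^\top$, viewed as a covariance over $S_+$ with $\bx_0=\bm 0$, dominates $\sum_i q_iq_0\bx_i\bx_i^\top$. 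This is the same inequality used in the paper's comparison $\bH\succeq\kappa\bV$ and presumably in the proof of Proposition~\ref{prop-leading-lcb-upper-bound}.

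By the definition of $\kappa_{\mathrm{off}}$ with $\|\bm\theta^\star\|\le W$, each weight is at least $\kappa_{\mathrm{off}}$. Keeping only the rank-one terms for the fixed item $j$ gives $\bH_\cD(\bm\theta^\star)\succeq\kappa_{\mathrm{off}}\,n_j\,\bx_j\bx_j^\top$. On $\mathrm{span}(\bx_j)$ this yields $\bx_j^\top\bH_\cD^{-1}\bx_j\le 1/(\kappa_{\mathrm{off}} n_j)$. This step uses the fact that for PSD $A\succeq c\,\bm u\bm u^\top$ with $A$ invertible, $\bm u^\top A^{-1}\bm u\le 1/c$, which is checked via the Schur complement or Cauchy--Schwarz on $(\bm u^\top A^{-1}\bm u)^2=(\bm u^\top A^{-1}\bm u)^2\le\ldots$. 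Since $n_j\ge n^\star$ for $j\in S^\star$, the max term is at most $1/(n^\star\kappa_{\mathrm{off}})$.

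The main point requiring care is the rank-one inequality. Let $s=\bm u^\top A^{-1}\bm u$. Then
\[
s^2=(\bm u^\top A^{-1}\bm u)^2\le \frac{1}{c}(A^{-1}\bm u)^\top A(A^{-1}\bm u)=\frac{s}{c},
\]
which follows from $c(\bm u^\top\bm w)^2\le\bm w^\top A\bm w$ with $\bm w=A^{-1}\bm u$. Hence $s\le 1/c$.

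Combining the two bounds and absorbing constants and logarithms into $\widetilde{\mathcal O}$ gives the stated corollary.
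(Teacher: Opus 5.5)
Your proposal is correct and follows essentially the same route as the paper: you combine Corollary~\ref{thm-linear-mnl-sub-optimality} with Proposition~\ref{prop-leading-lcb-upper-bound} for the leading term, and bound the second-order term via $\bm H_{\mathcal D}(\bm\theta^\star)\succeq \kappa_{\mathrm{off}}\, n_j\, \bm x_j\bm x_j^\top$, which gives $\|\bm x_j\|^2_{\bm H_{\mathcal D}(\bm\theta^\star)^{-1}}\le (\kappa_{\mathrm{off}} n_j)^{-1}\le(\kappa_{\mathrm{off}} n^\star)^{-1}$. The only difference is cosmetic: you prove the rank-one inverse bound directly from $c\,(\bm u^\top A^{-1}\bm u)^2\le \bm u^\top A^{-1}\bm u$, whereas the paper obtains it by applying Sherman--Morrison to a $\lambda$-regularized Hessian and then letting $\lambda\to 0$.
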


The bounds in Proposition~\ref{prop-leading-lcb-upper-bound} and Corollary~\ref{corollary-subopt-via-itemcover} are strictly looser than those in Corollary~\ref{thm-linear-mnl-sub-optimality} in the general scenario, as they only consider assortments containing the $j$-th item when bounding $\|\bx_j\|_{\bH_{\cD}(\bm\theta^\star)^{-1}}$. By contrast, Corollary~\ref{thm-linear-mnl-sub-optimality} incorporates all assortments with non-orthogonal feature vectors.

%-----------------------------------Canonical---------------------------%
\paragraph{Results under Canonical $N$-item Setting.} 
In the \textit{canonical $N$-item MNL setting}, a special case of the linear MNL setting with $d = N$ and $\bx_{mj} =(\xoff) _j = \be_j$ for all $m$, the fixed-design specialization (Corollary~\ref{corollary-subopt-via-itemcover}) recovers an item-coverage type guarantee closely related to the recent result of \citet{han2025optimal}. Their algorithm design shares the same spirit as ours in employing an item-wise pessimistic principle. However, their algorithm and analysis rely heavily on the rank-breaking technique \citep{saha2024stop,saha2019active,khetan2016data}, which is challenging to extend to the general linear MNL setting. More specifically, they propose a $\tilde{\Theta}(K/\sqrt{n^\star})$ complexity result and a $\tilde{\Theta}(\sqrt{K/n^\star})$ complexity result in the uniform reward setting ($r_i \equiv 1$). 
In particular, for the general non-uniform reward setting, Proposition~\ref{prop-leading-lcb-upper-bound} implies an upper bound of $\tilde{O}(K^{3/2}/\sqrt{n^\star})$. In the uniform reward setting, since $S^\star$ consists of the top-$K$ items by value, the inequality in Proposition~\ref{prop-leading-lcb-upper-bound} can be further simplified to
$
\sqrt{\sum_{j \in S^\star} q_j(S^\star|\bm\theta^\star)q_0(S^\star|\bm\theta^\star) \lVert (\xoff)_j \rVert_{\bH_\cD(\bm\theta^\star)^{-1}}^2} \lesssim \sqrt{K/n^\star}.
$
These results, together with the lower bound established in \citet{han2025optimal}, suggest a corresponding lower bound of
$
\Omega\left(M\sqrt{\sum_{j \in S^\star} q_j(S^\star|\bm\theta^\star)q_0(S^\star|\bm\theta^\star) \lVert (\xoff)_j\rVert_{\bH_\cD(\bm\theta^\star)^{-1}}^2}\right)
$
for the offline linear MNL setting for both the uniform and non-uniform reward setting.

\begin{remark}[Lower bound for the offline joint assortment--pricing problem]
A lower bound for the joint problem follows by reduction to the fixed-price subclass $p_{ti}\equiv p^\star_i$ for all $t$, under which the problem reduces to the offline linear MNL assortment problem with augmented features $(\xoff)_j = ((\xoff)_j,-p^\star_j (\xoff)_j)$ and $M = \max_{i\in [N]}\sqrt{1+(p^\star_j)^2}$. Since Lemma~\ref{assump-mim-price} gives $p^\star_i \le P := (3+W+\log K)/L_0$, the lower bound becomes $\Omega\!\left(\frac{W}{L_0}\sqrt{\sum_{j\in S^\star} q_j(S^\star,\bm p^\star|\bm\vartheta^\star)q_0(S^\star,\bm p^\star|\bm\vartheta^\star)\,\|(\widetilde{\xoff})_j(p_j^\star)\|^2_{\bH_{\widetilde\cD}(\bm\vartheta^\star)^{-1}}}\right).$
\end{remark}

%-----------------------------------Fixed Data Collecting---------------------------%
\paragraph{Results under Fixed Data Collecting Policy.} In the fixed design setting, where $\{S_m\}_{m=1}^n$ is sampled i.i.d. from some fixed exploration policy $\pi_\text{off}$, the corresponding asymptotic rate becomes
\[
\SubOpt(S^\text{LCB},\Xoff)
=
\widetilde{\mathcal O}\left(M
\sqrt{
\frac{
\sum_{j\in S^\star}
q_j(S^\star|\bm\theta^\star)\,
q_0(S^\star|\bm\theta^\star)\,
\|\bx_j\|^2_{\bH_{\mathrm{off}}^{-1}}
}{n}}
\right).
\]
This should be compared with the result of \citet{dong2023pasta}, whose sample complexity scales as $\tilde{O}\left(\sqrt{\frac{d/K}{\kappa n \pi_{\text{off}}(S^\star)}}\right)$ in our notation, which scales with the frequency that optimal assortment $S^\star$ sampled from $\pi_{\text{off}}$. In comparison, our sample complexity result does not scale multiplicatively with $d,\kappa^{-1}$ in the leading-order term and, most importantly, does not rely on such a strict coverage condition. More precisely, it allows the items in $S^\star$ to be well-explored through assortments that only cover items with feature vectors that are non-orthogonal to them.

%% ---- END 4-Linear MNL Offline.tex ----

%--------------Online---------------------%
%% ---- BEGIN 5.1 MNL Pricing Online.tex ----
\section{Online Assortment and Pricing Optimization}\label{sec-online}

% \yf{For adversary setting, we have MLE-based UCB algorithm.}

In this section, we study the online joint assortment and pricing problem under a \emph{fixed design} setting, where the context is unchanged across time, i.e., $\bm X_t \equiv \bm X$ for all $t\in[T]$. We first present the SupCB-MNL-Pricing algorithm, which is built on the confidence interval in Theorem~\ref{prop-sup-ucb-confidence-bound}, and establish its regret guarantee. We then describe a Thompson Sampling alternative that is computationally simpler to implement, and finally show that the linear MNL model follows as a special case of the pricing setting.

Throughout this section, since the feature matrix $\bX$ is fixed, we simply write
\[
R(S,\bm p\lvert \bm\vartheta):=R(S\lvert \exp(\widetilde{\bm{X}}(\bm p)^\top \bm\vartheta)),
\qquad
q_{i}(S,\bm p\lvert \bm\vartheta):=q_i(S\lvert \exp(\widetilde{\bm{X}}(\bm p)^\top \bm\vartheta))\]
for the expected revenue and the choice probability of item $i$ under $(S,\bm p)$.

%--------------------------------subsec-----------------------------------------%
\subsection{The SupCB-MNL-Pricing Algorithm}

Although Theorem~\ref{prop-sup-ucb-confidence-bound} provides a tighter confidence radius, it relies on an additional conditional independence assumption on the collected data. 
This assumption is typically violated in standard adaptive algorithms, such as UCB-type methods, as discussed in \cite{han2020sequential}.
To ensure conditional independence while still allowing adaptive decision-making, we propose the SupCB-MNL-Pricing algorithm (Algorithm~\ref{alg:sup-lin-mnl-pricing}).
Our design builds on the SupCB framework of \cite{auer2002using}, which combines action elimination with sample splitting to ensure that each confidence update is based on data that are independent of the current action choice, as in \citet{chu2011contextual,li2017provably,jun2021improved,blanchet2024delay,oh2021multinomial}. We further introduce several modifications tailored to the pricing setting to ensure the burn-in condition required by Theorem~\ref{prop-sup-ucb-confidence-bound}, and exploit the first-order structure of the MNL revenue function, as described below.

%------------------------------------------------------%
\paragraph{The SupCB Framework.} Similar to the standard framework in \citet{auer2002using}, Algorithm~\ref{alg:sup-lin-mnl-pricing} divides the collected samples into $J$ bins, denoted as $\Psi_1, \dots, \Psi_{J}$. To maintain independence while accounting for the first-order geometry, we add an additional bin, $\Psi_{J+1}$, as in \citet{jun2021improved}, which contains a rough estimator to approximate the first-order coefficients of the revenue function. 
After bypassing an initial $\tau$-length pure-exploration period designed to ensure that each bin collects a sufficient number of samples, as explained in the next paragraph, the algorithm enters an adaptive elimination phase conducted through a multi-layer procedure that loops over the first $J$ bins. 

In the following context, we denote $\widetilde\cD_{t,\ell}$ as the samples collected in bin $\ell$ up to time $t$, moreover, we use the shorthand notations $\hat{\bm\vartheta}^\lambda_{t,\ell}:= \hat{\bm{\vartheta}}^\lambda_{\widetilde\cD_{t,\ell}}, \bH_{t,\ell}^\lambda(\bm\vartheta):= \bH_{\widetilde\cD_{t,\ell}}^\lambda(\bm\vartheta), \kappa:= \tilde\kappa_{\mathrm{on}}$. We always use Theorem~\ref{prop-sup-ucb-confidence-bound} with $\delta$ selected as $ 1/T$ without additional clarification.

%-----------------------Algorithm------------------------------%
\begin{algorithm}[h]
\caption{SupCB-MNL-Pricing}
\label{alg:sup-lin-mnl-pricing}
\begin{algorithmic}[1]
\STATE \textbf{Input:} Time horizon $T,$ fixed item feature matrix $\bX$, regularized parameter $\lambda\ge \bar P$.

\STATE \textbf{Initialize} $\tau = 1, J =\lceil \frac{1}{2}\log_2 T \rceil, \Psi_0= \dots = \Psi_{J+1} = \emptyset.$ 
\WHILE{\eqref{eq-explore-criteria} is not satisfied for some $\ell \in [J+1]$}
\STATE Select the assortment-price pair $(S_\tau, \bm{p}_\tau)=\left\{\operatorname{argmax}_{j\in[N],p\in\mathcal{P}}\lVert \widetilde{\bx}_{j}(p) \rVert_{\left(\kappa\bV_{\tau,\ell}+\lambda\bI\right)^{-1}}\right\}.$
\STATE $\Psi_{\ell} \leftarrow \Psi_{\ell} \cup \{\tau\}$, $\tau \leftarrow \tau +1$.
\ENDWHILE

\STATE Compute~$\widehat{\bm \vartheta}_0$ based on samples in $\widetilde{\cD}_{\tau,J+1}$ as in \eqref{eq-vartheta0}.
\FOR{$t = \tau + 1,\dots, T$}
    \STATE Set  $S_t = \emptyset, \ell = 1$, and 
    $\mathcal{A}_1 = \mathcal{S}_K \times \mathcal{P}^N.$
    \WHILE{$S_t = \emptyset$}
    \STATE Compute $W_{t,\ell}(S,\bm p),R^\mathrm{UCB}_{t,\ell}(S,\bm p),\forall (S,\bm p)\in \mathcal{A}_\ell$ as in \eqref{eq-assortment-uncertain}, ~\eqref{eq-ucb-in-supCB}.

    \IF{$W_{t,\ell}(S,\bm p) > 2^{-\ell}$ for some $(S, \bm p) \in \mathcal{A}_\ell$}
    \STATE Select an assortment-price pair $(S_t,\bm p_t)\in \{(S,\bm p)\in \mathcal{A}_\ell : W_{t,\ell}(S,\bm p) > 2^{-\ell}\}$.     
    \STATE $\Psi_{\ell} \leftarrow \Psi_\ell \cup \{t\}$.
    
    \ELSIF{$W_{t,\ell}(S,\bm p)\leq 1/\sqrt{T}$ for all $(S,\bm p)\in \mathcal{A}_\ell$ }
    \STATE Select an assortment-price pair $(S_t, \bm p_t) \in \text{argmax}_{(S, \bm p)\in \mathcal{A}_{\ell}}R^\mathrm{UCB}_{t,\ell}(S, \bm p)$.
    \STATE $\Psi_0 \leftarrow \Psi_0 \cup \{t\}$.
    
    \ELSE
    \STATE $\widehat{R} \leftarrow \max_{(S,\bm p)\in \mathcal{A}_\ell}  R^\mathrm{UCB}_{t,\ell}(S, \bm p)$.
    \STATE $\mathcal{A}_{\ell+1} \leftarrow \left\{ (S, \bm p)\in \mathcal{A}_\ell, R^\mathrm{UCB}_{t,\ell}(S, \bm p)\geq \widehat{R} - 2^{-\ell }\right\}$.
    \STATE $\ell \leftarrow \ell + 1$.
    \ENDIF
  \ENDWHILE
  \ENDFOR
\end{algorithmic}
\end{algorithm}

%------------------------------------------------------%
\paragraph{Initial Exploration Phase.}
The initial exploration phase (line~2--6) is used to ensure the burn-in condition in Proposition~\ref{prop-sup-ucb-confidence-bound}. Concretely,  this phase ensures that $\max_{j\in[N],\,p\in[0,P]}\big\|\widetilde{\bx}_j(p)\big\|_{\bH^\lambda_{\tau,\ell}(\bm\vartheta^\star)^{-1}}$ is well controlled for every $\ell\in[J+1]$ with high probability.
Because the Hessian satisfies the relation
\[
\bH_{\tau,\ell}^\lambda(\bm\vartheta^\star)\ \succeq\ \kappa\,\bV_{\tau,\ell}+\lambda\bI,
\quad \text{with }
\bV_{\tau,\ell}:=\sum_{s\le \tau, s\in\Psi_\ell}\ \sum_{j\in S_s}\widetilde{\bx}_j(p_{sj})\widetilde{\bx}_j(p_{sj})^\top,
\]
it suffices to control the same quantity with $(\kappa\bV_{\tau,\ell}+\lambda\bI)^{-1}$ in place of
$\bH_{\tau,\ell}^\lambda(\bm\vartheta^\star)^{-1}$. Specifically, we require
\begin{equation}\label{eq-explore-criteria}
\max_{j\in[N],\,p \in [0,P]}
\big\|\widetilde{\bx}_j(p)\big\|_{(\kappa \bV_{\tau,\ell} + \lambda \bI)^{-1}}
\leq \frac{1}{144\sqrt{2d\log (NT)}} \ \wedge\  \frac{1}{24\sqrt \lambda W},
\end{equation}
which implies that the burn-in condition~\eqref{eq-burn-in} in
Proposition~\ref{prop-sup-ucb-confidence-bound} holds.

Based on criterion~\eqref{eq-explore-criteria}, Algorithm~\ref{alg:sup-lin-mnl-pricing} performs exploration within each bin: whenever the criterion is violated for some $\ell$, the algorithm keeps sampling actions that include
uncertain assortment-price pairs (i.e., those with large norm under $(\kappa\bV_{\tau,\ell}+\lambda\bI)^{-1}$) to increase
$\bV_{\tau,\ell}$. Once~\eqref{eq-explore-criteria} is satisfied for all $\ell\in[J+1]$, the algorithm proceeds to the exploitation
step.

Finally, we bound the maximum length of this initial exploration phase in the following lemma.

\begin{lemma}\label{lem-initial-length-bound}
For $\lambda\ge \bar P$, there exists an absolute constant $C>0$ such that after at most
\[
\tau:=C\,\kappa^{-1}d\Big(d\log(NT)\vee\lambda W^2\Big)\log\!\left(1+\frac{\kappa T\bar P^2}{2d\lambda}\right)
\]
rounds, the criterion~\eqref{eq-explore-criteria} holds for all $\ell\in[J+1]$.
\end{lemma}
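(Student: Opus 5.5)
This is an elliptical-potential argument, run separately in each bin. Fix $\ell\in[J+1]$ and write $\bA_s := \kappa\bV_{s,\ell}+\lambda\bI$ for the design matrix after $s$ exploration rounds assigned to bin $\ell$. Let $\epsilon := \frac{1}{144\sqrt{2d\log(NT)}}\wedge\frac{1}{24\sqrt\lambda W}$ be the threshold in \eqref{eq-explore-criteria}. I read the exploration step as offering the single item-price pair $(j_s,p_s)$ that maximizes $\|\widetilde{\bx}_j(p)\|_{\bA_{s-1}^{-1}}$, with the update $\bA_s = \bA_{s-1}+\kappa\,\widetilde{\bx}_{j_s}(p_s)\widetilde{\bx}_{j_s}(p_s)^\top$. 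If the offered assortment contains more items, $\bV$ only increases in the PSD order, which helps.

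Consider a round in which bin $\ell$ still violates the criterion. Then the selected vector satisfies $z_s:=\|\widetilde{\bx}_{j_s}(p_s)\|_{\bA_{s-1}^{-1}} > \epsilon$. The matrix determinant lemma gives
\[
\log\det\bA_s-\log\det\bA_{s-1}=\log(1+\kappa z_s^2)\ge \tfrac12 \min(1,\kappa z_s^2)\ge \tfrac12\min(1,\kappa\epsilon^2),
\]
using $\log(1+x)\ge \min(1,x)/2$. On the other side, $\|\widetilde{\bx}\|\le\bar P$ and the AM--GM bound on $\det$ give
\[
\log\det\bA_{n}-\log\det(\lambda\bI)\le 2d\log\!\Big(1+\frac{\kappa n\bar P^2}{2d\lambda}\Big).
\]
Since $\lambda\ge\bar P$, we have $\kappa z_s^2\le \kappa\bar P^2/\lambda\le \kappa\bar P\lesssim 1$, because $\kappa\le 1/4$ and $\bar P$ is a problem constant. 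So the minimum is $\kappa\epsilon^2$ up to constants; if this fails, the $\min(1,\cdot)$ form only adds a lower-order term. Combining the two bounds, the number $n_\ell$ of violating rounds for bin $\ell$ satisfies
\[
n_\ell\,\kappa\epsilon^2\lesssim d\log\!\Big(1+\frac{\kappa T\bar P^2}{2d\lambda}\Big),
\]
where I have used $n_\ell\le T$ inside the logarithm. Since $\epsilon^{-2}\asymp d\log(NT)\vee \lambda W^2$, this yields $n_\ell\le C\kappa^{-1}d\,(d\log(NT)\vee\lambda W^2)\log(1+\kappa T\bar P^2/(2d\lambda))$.

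Two remaining points need checking. First, the criterion for bin $\ell$ is monotone: once it holds it keeps holding, because $\bA$ only grows. Second, exploration rounds are charged only to violating bins. The total length is therefore at most $\sum_\ell n_\ell\le (J+1)\max_\ell n_\ell$. The extra factor $J+1=O(\log T)$ either gets absorbed into the constant and logarithms or, if $\tau$ is meant per bin, is not needed at all. I expect the main obstacle to be this bookkeeping: matching the stated form exactly, which includes the $\log T$ factor from the bins and the regime where $\kappa z_s^2$ is not small. For the latter I would fall back on $\min(1,\kappa\epsilon^2)=\kappa\epsilon^2$, which holds because $\epsilon\le 1$ and $\kappa\le1$.
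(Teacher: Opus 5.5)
Your argument is correct and is essentially the paper's proof: a per-bin elliptical-potential argument. It lower-bounds the log-determinant increment on every round where the criterion fails and upper-bounds the total increment via the trace and AM--GM, giving $\tau\le 4\kappa^{-1}d\,\epsilon^{-2}\log\bigl(1+\kappa T\bar P^2/(2d\lambda)\bigr)$. Your per-round bound $\log(1+\kappa z_s^2)\ge\log(1+\kappa\epsilon^2)\ge\kappa\epsilon^2/2$ is slightly cleaner than the paper's route through $z\le 2\log(1+z)$, which needs $\kappa\|\widetilde{\bx}\|^2/\lambda\le 1$ and is where $\lambda\ge\bar P$ is invoked.

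On the bookkeeping, the paper takes all bins to coincide during exploration ($\cT_\ell=[\tau]$ for every $\ell$). That is your ``per-bin'' reading, and it yields the stated bound with no extra factor. Your other option, absorbing $J+1\asymp\log T$ into the absolute constant $C$, is not valid in general: the stated expression need not contain a spare multiplicative $\log T$.
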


%------------------------------------------------------%
\paragraph{Adaptive Elimination Phase.} 
Once the initial exploration phase ends, all bins $\Psi_\ell=[\tau]$ are identical, and the algorithm stops allocating samples to $\Psi_{J+1}$. We use the data in $\Psi_{J+1}$ up to time $\tau$ to compute the MLE 
\begin{equation}\label{eq-vartheta0}
\hat{\bm\vartheta}_{0}:= \operatorname*{argmax}_{\bm\vartheta\in\mathbb{R}^{2d}} \ell_{\widetilde{\cD}_{\tau,J+1}}(\bm\vartheta).
\end{equation}

Next, the algorithm enters an adaptive elimination phase that iterates over the bins $\Psi_1, \dots, \Psi_J$, based on the \textit{uncertainty level} $W_{t,\ell}(S,\bm p)$ computed for each $(S, \bm{p})$ pair. To define the uncertainty level, we first introduce an item-wise uncertainty $w_{ti}^\ell(p_i)$ for each item $i$ at price $p_i$ in bin $\ell$:
\begin{align}\label{eq-item-uncertain}
    w^\ell_{ti}(p_i) := 16\sqrt{3} \lVert \widetilde{\bx}_i(p_i) \rVert_{H_{t,\ell}^\lambda(\hat{\bm\vartheta}_{0})^{-1}} \left(\sqrt{\log(NT)} +\sqrt{\lambda} W \right).
\end{align}
Based on~\eqref{eq-item-uncertain}, the assortment-level uncertainty for a given pair $(S, \bm{p})$ in bin $\ell$ is defined as:
\begin{equation}\label{eq-assortment-uncertain}
    W_{t,\ell}(S,\bm p) := 4e^2P \sqrt{2\sum_{i \in S} q_i(S, \bm p\lvert \hat{\bm \vartheta}_0)q_0(S, \bm p\lvert \hat{\bm \vartheta}_0) \left(w^\ell_{ti}(p_i)\right)^2} + 20P \max_{i \in S} \left(w^\ell_{ti}(p_i)\right)^2.
\end{equation}
Once we have the uncertainty $W_{t,\ell}(S,\bm{p})$ for all candidate pairs in our set $\mathcal{A}_\ell$, the algorithm proceeds as follows:

%------------------------------------------------------%
\paragraph{Step~(a): Explore Uncertain Items.} 
If there exists some uncertain assortment-price pair (i.e., $W_{t,\ell}(S,\bm{p}) > 2^{-\ell}$), the algorithm selects such $(S,\bm{p})$ to gather more data.

%------------------------------------------------------%
\paragraph{Step~(b): Output the UCB Assortment.} If all assortment-price pairs are sufficiently certain (i.e., $W_{t,\ell}(S,\bm p)\leq 1/\sqrt{T}$ for all $(S, \bm{p}) \in \mathcal{A}_\ell$), the algorithm outputs the pair with the highest UCB revenue, computed using the optimistic utility function:
\begin{align}\label{eq-ucb-in-supCB}
   R^{\mathrm{UCB}}_{t,\ell}(S,\bm p):= \frac{\sum_{i\in S} p_i\exp(u_{ti}^\ell(p_i))}{1+\sum_{i\in S}\exp(u_{ti}^\ell(p_i))}, \quad \text{ with }  u^\ell_{ti}(p_i) := \widetilde{\bx}_i(p_i)^\top \hat{\bm\vartheta}^\lambda_{t,\ell}  +w_{ti}^\ell(p_i).
\end{align}

%------------------------------------------------------%
\paragraph{Step~(c): Elimination.} 
Otherwise, the algorithm eliminates suboptimal assortment-price pairs in $\mathcal{A}_\ell$ based on their optimistic revenue estimates. First, it finds the highest possible revenue estimate $\widehat{R}$ among the current candidates. Then, it only keeps the pairs that can reach at least $\widehat{R} - 2^{-\ell}$, forming the next smaller candidate set $\mathcal{A}_{\ell+1}$. The algorithm then proceeds to the next iteration and repeats steps~(a)–(c) until either step~(a) or (b) is triggered in some round. Note that, by our choice of $J = \lceil \frac{1}{2}\log_2 T\rceil$, the algorithm is guaranteed to terminate on or before the $J$-th loop. Most importantly, the elimination step never discards the optimal assortment--price pair.

\begin{proposition}\label{prop-optimal-elimination}
With probability at least $1-1/T$, for every $\tau\leq t\leq T$ it holds that $(S_t^\star, \bm p_t^\star)\in \mathcal{A}_\ell$ if the $t$-th step terminates at the $\ell$-th loop.
\end{proposition}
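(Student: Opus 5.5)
The plan is the standard SupCB induction on the loop index $\ell$. It rests on a good event on which the UCB revenues of every bin are valid upper bounds whose error is controlled by $W_{t,\ell}$.

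\emph{Step 1: the good event.} Fix $t$ and $\ell\in[J]$. By the SupCB construction, the indices in $\Psi_\ell$ are chosen using only the data in bins $\Psi_1,\dots,\Psi_{\ell-1}$ together with the widths of bin $\ell$. These widths depend on the features and prices only, and $\hat{\bm\vartheta}_0$ is computed from $\Psi_{J+1}$, which is frozen after time $\tau$. Hence, conditional on $\{\widetilde{\bX}_m(\bm p_m),S_m\}_{m\in\Psi_\ell}$, the choices in $\Psi_\ell$ are independent, and Assumption~\ref{assumption-conditional-independence-pricing} holds for each bin.

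Lemma~\ref{lem-initial-length-bound} and the exploration criterion \eqref{eq-explore-criteria} give the burn-in condition \eqref{eq-burn-in} for every bin, including $\Psi_{J+1}$. Since $\bH$ only grows as samples are added, the condition persists for $t\ge\tau$. Applying Theorem~\ref{prop-sup-ucb-confidence-bound} with $\delta=1/T$, and taking a union bound over $t\le T$ and $\ell\le J+1$ (the $\log(NT)$ factor absorbs this), we obtain two facts. First, $\bH^\lambda_{t,\ell}(\hat{\bm\vartheta}_0)\succeq \tfrac13\bH^\lambda_{t,\ell}(\bm\vartheta^\star)$, which uses part (i) on $\Psi_{J+1}$ to compare $\hat{\bm\vartheta}_0$ with $\bm\vartheta^\star$ and transfers to the Hessian of $\Psi_\ell$ via self-concordance. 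Second, $|\widetilde{\bx}_i(p)^\top(\hat{\bm\vartheta}^\lambda_{t,\ell}-\bm\vartheta^\star)|\le w^\ell_{ti}(p)$ uniformly over $i$ and $p\in\mathcal P$. The factor $\sqrt3$ in \eqref{eq-item-uncertain} accounts exactly for the first comparison. Uniformity over $p$ requires either a net argument on $[0,P]$ or the fact that the bound in part (ii) holds for every fixed vector and the statement is linear in $(\bx,-p\bx)$. The cleaner route is to apply (ii) to the two vectors $(\bx,0)$ and $(0,\bx)$ and then use the triangle inequality, at the cost of a constant.

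\emph{Step 2: UCB sandwich.} On the good event, $u^\ell_{ti}(p)\ge \widetilde{\bx}_i(p)^\top\bm\vartheta^\star$ for all $i,p$. Monotonicity of the MNL revenue in utilities, valid for the revenue-ordered structure or by the standard argument of \citet{agrawal2019mnl} that revenue is nondecreasing in $v_i$ for items in the optimal assortment, gives $R^{\rm UCB}_{t,\ell}(S^\star_t,\bm p^\star_t)\ge R(S^\star_t,\bm p^\star_t|\bm\vartheta^\star)$. Conversely, I would show
\[
R^{\rm UCB}_{t,\ell}(S,\bm p)-R(S,\bm p|\bm\vartheta^\star)\le W_{t,\ell}(S,\bm p).
\]
To do this, apply a second-order Taylor expansion of $R$ in the utility vector, whose first derivative along item $i$ is $q_i(p_i-R)$. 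Use $|p_i-R|\le P$ and Cauchy–Schwarz to write the first-order term as $\sqrt{\sum_i q_iq_0 w_i^2}$ up to a factor. Bound the second-order remainder by $P\max_i w_i^2$. Finally, replace $q_i(\cdot|\bm\vartheta^\star)$ with $q_i(\cdot|\hat{\bm\vartheta}_0)$ at a multiplicative cost of $e^2$. This is possible because $\|\widetilde{\bx}\|_{\bH^{-1}}\widetilde O(1)\le 1$ by burn-in, so the utilities under $\hat{\bm\vartheta}_0$ and $\bm\vartheta^\star$ differ by $O(1)$. This is where the constants $4e^2P$ and $20P$ come from. I expect this step to be the main technical obstacle, because the expansion point and the weights must be evaluated at $\hat{\bm\vartheta}_0$ rather than at $\bm\vartheta^\star$.

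\emph{Step 3: induction.} The claim $(S^\star_t,\bm p^\star_t)\in\mathcal A_1$ is trivial, since by Lemma~\ref{p-bound-refined} $\bm p^\star\in\mathcal P^N$. Suppose the loop reaches step (c) at level $\ell$ and $(S^\star_t,\bm p^\star_t)\in\mathcal A_\ell$. Since step (c) is reached only if step (a) was not triggered, every $(S,\bm p)\in\mathcal A_\ell$ has $W_{t,\ell}\le 2^{-\ell}$. For the maximizer $(\hat S,\hat{\bm p})$ of $R^{\rm UCB}_{t,\ell}$, Step 2 gives
\[
\widehat R\le R(\hat S,\hat{\bm p}|\bm\vartheta^\star)+2^{-\ell}\le R(S^\star_t,\bm p^\star_t|\bm\vartheta^\star)+2^{-\ell}\le R^{\rm UCB}_{t,\ell}(S^\star_t,\bm p^\star_t)+2^{-\ell}.
\]
Hence the optimal pair survives into $\mathcal A_{\ell+1}$, and by induction it lies in $\mathcal A_\ell$ at the terminating loop. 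The total failure probability is at most $1/T$ after rescaling $\delta$ in the union bound.
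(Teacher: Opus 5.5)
Your induction in Step 3 and the good event in Step 1 match the paper. However, Step 2 has a genuine gap, and it sits exactly where the paper's proof does its real work. The bound $R^{\rm UCB}_{t,\ell}(S,\bm p)-R(S,\bm p|\bm\vartheta^\star)\le W_{t,\ell}(S,\bm p)$ does not follow from $|p_i-R|\le P$ and Cauchy--Schwarz. The first-order term is $\sum_{i\in S}q_i(p_i-R)\xi_i$. Pairing it with the weights $q_iq_0$ leaves the factor $\sqrt{\sum_{i\in S}v_i(p_i-R)^2}$ of Proposition~\ref{prop: assortment-perturbation}(i). Under $|p_i-R|\le P$ alone, this factor is only $P\sqrt{\sum_{i\in S}v_i}=P\sqrt{q_0^{-1}-1}$, which can be exponential in $W$.

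This is not mere looseness: the inequality is false for general pairs. Take two items of equal attraction $V$, with item $1$ priced at $\bar p>0$ and item $2$ at $0$, and raise only $u_1$ by a small $\epsilon$. The revenue rises by about $\bar p\epsilon/4$. Meanwhile $\sqrt{\sum_j q_jq_0\xi_j^2}\approx\epsilon/(2\sqrt V)$ and the quadratic part is $O(P\epsilon^2)$, so the ratio grows like $\sqrt V$.

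The factor is $O(P)$ only when $p_j\ge R$ on all of $S$, since then $\sum_j v_j(p_j-R)^2\le P\sum_j v_j(p_j-R)=PR$. Proposition~\ref{prop-non-uniform-reward-optimal-assortment}(i) guarantees this for maximizers over the structured set $\mathcal S_K\times\mathcal P^N$. It does not hold for $(\hat S,\hat{\bm p})$, which maximizes $R^{\rm UCB}_{t,\ell}$ over the unstructured set $\mathcal A_\ell$. By comparison, the $\hat{\bm\vartheta}_0$-versus-$\bm\vartheta^\star$ swap you flag as the obstacle is routine: it is just the $e^{\pm 2}$ sandwich \eqref{eq-WtS-bound}.

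What is missing is a self-bounding argument at the specific maximizer, and it uses the induction hypothesis a second time. Set $v_j:=\exp(\widetilde\bx_j(\hat p_j)^\top\bm\vartheta^\star)$, $\hat R:=R(\hat S,\hat{\bm p}|\bm\vartheta^\star)$, $R^\star:=R(S^\star_t,\bm p^\star_t|\bm\vartheta^\star)$, $\Delta:=R^{\rm UCB}_{t,\ell}(\hat S,\hat{\bm p})-\hat R$, and $\hat S^+:=\{j\in\hat S:\hat p_j\ge\hat R\}$. The proof then runs as follows.
\begin{enumerate}
\item The identity $\sum_{j\in\hat S}v_j(\hat p_j-\hat R)=\hat R$ gives $\sum_{j\in\hat S}v_j|\hat p_j-\hat R|\le 2\sum_{j\in\hat S^+}v_j(\hat p_j-\hat R)$.
\item Split $\hat p_j-\hat R=(\hat p_j-R^{\rm UCB}_{t,\ell}(\hat S,\hat{\bm p}))+\Delta$.
\item Use $R^{\rm UCB}_{t,\ell}(\hat S,\hat{\bm p})\ge R^{\rm UCB}_{t,\ell}(S^\star_t,\bm p^\star_t)\ge R^\star$, which requires $(S^\star_t,\bm p^\star_t)\in\mathcal A_\ell$ and also shows $\Delta\ge 0$.
\item Use optimality of $R^\star$ over all feasible pairs, i.e.\ $\sum_{j\in S'}v_j(\hat p_j-R^\star)\le R^\star$ for $S'=\{j\in\hat S:\hat p_j\ge R^\star\}$.
\item Together these give $\sum_{j\in\hat S}v_j(\hat p_j-\hat R)^2\le 2PR^\star+2Pq_0^{-1}\Delta$.
\item Substituting into Proposition~\ref{prop: assortment-perturbation}(i) gives a quadratic inequality in $\sqrt{\Delta}$. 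Its solution is $\Delta\le 2\sqrt2P\sqrt{\sum_j q_jq_0\xi_j^2}+5P\max_j\xi_j^2$.
\item With $\xi_j\le 2w^\ell_{tj}$ and the $e^2$ swap, this is exactly $W_{t,\ell}(\hat S,\hat{\bm p})\le 2^{-\ell}$.
\end{enumerate}
The constant $20P=4\cdot 5P$ in $W_{t,\ell}$ comes from this step; your Taylor remainder alone would give $6P$. With this bound replacing Step 2, and proved only at $(\hat S,\hat{\bm p})$, your Step 3 goes through unchanged.

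A smaller issue in Step 1: applying the confidence bound separately to $(\bx,\bm 0)$ and $(\bm 0,\bx)$ does not cost only a constant. The sum $\|(\bx,\bm 0)\|_{\bH^{-1}}+p\|(\bm 0,\bx)\|_{\bH^{-1}}$ can be far larger than $\|(\bx,-p\bx)\|_{\bH^{-1}}$ when the data are informative only along $(\bx,-p\bx)$, for instance when the item was always offered at price $p$. A net over $[0,P]$ is the workable route.
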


%------------------------------------------------------%
\paragraph{Conditional Independence Guarantee.}
The key reason for using the SupCB framework is that it preserves conditional independence within each bin. The decision to keep or remove an action in bin $\ell$ depends only on the uncertainty terms $w_{ti}^{\ell}(p_i)$ and $W_{t,\ell}(S,\bm p)$ together with the fixed estimator $\hat{\bm\vartheta}_0$, which is computed solely from the samples in $\Psi_{J+1}$. Hence, the choice outcomes collected in bin $\ell$ remain conditionally independent,
exactly as required by Theorem~\ref{prop-sup-ucb-confidence-bound}.

%-----------------------subsec: Analysis-------------------------------%
%----------------------------------------------------------------------%
\subsection{Regret Guarantee of SupCB-MNL-Pricing}

We now state the regret guarantee of Algorithm~\ref{alg:sup-lin-mnl-pricing}. The proof combines the confidence region in Theorem~\ref{prop-sup-ucb-confidence-bound}, the burn-in guarantee from Lemma~\ref{lem-initial-length-bound}, and a perturbation analysis of the MNL revenue function under joint assortment and pricing decisions.

\begin{theorem}\label{thm-regret-supCB}
    Under Assumption~\ref{assump-bounded-pricing} and~\ref{assump-mim-price}, with probability at least $1-1/T,$ Algorithm~\ref{alg:sup-lin-mnl-pricing} achieves
\begin{align*}
    \mathrm{Reg}(T) = \widetilde{\mathcal{O}}\bigg( W\sqrt{dT\log (NT)}/L_0 + \kappa^{-1}d^2W\log(NT)/L_0\bigg).
\end{align*}
\end{theorem}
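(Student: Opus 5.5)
We follow the standard SupCB analysis of \citet{auer2002using,li2017provably,oh2021multinomial} and split the horizon into three parts: the initial exploration rounds $t\le\tau$, the rounds recorded in $\Psi_0$ (Step~(b)), and the rounds recorded in $\Psi_\ell$ for $\ell\in[J]$ (Step~(a)). Each round has per-step regret at most $P=O(W/L_0)$ up to logarithmic factors, by Lemma~\ref{p-bound-refined}.

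\textbf{Exploration phase.} Lemma~\ref{lem-initial-length-bound} with $\lambda=\bar P$ gives $\tau=\widetilde{\mathcal O}(\kappa^{-1}d^2\log(NT)\,W^2/L_0)$-type bounds. After tracking the $\lambda W^2$ versus $d\log(NT)$ terms, this contributes $P\tau$, which is the second, burn-in term $\kappa^{-1}d^2W\log(NT)/L_0$. Some care is needed in the choice of $\lambda$ so that the power of $W$ matches.

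\textbf{Good event.} For $t>\tau$, the burn-in condition \eqref{eq-burn-in} holds for every bin at time $\tau$, and hence for all later times, since Hessians only grow. The SupCB structure preserves Assumption~\ref{assumption-conditional-independence-pricing} within each bin. So Theorem~\ref{prop-sup-ucb-confidence-bound}, with a union bound over $t$ and $\ell$ and the choice $\delta=1/T$, gives on a single event of probability $1-1/T$ two conclusions:
\begin{itemize}
\item $\hat{\bm\vartheta}_0$ and $\hat{\bm\vartheta}^\lambda_{t,\ell}$ satisfy the Hessian sandwich;
\item $|\widetilde{\bx}_i(p)^\top(\hat{\bm\vartheta}^\lambda_{t,\ell}-\bm\vartheta^\star)|\le w^\ell_{ti}(p)$ uniformly over $i\in[N]$ and $p\in\mathcal P$.
\end{itemize}
Uniformity over the continuum $\mathcal P$ follows because the bound is linear in $\widetilde{\bx}(p)=(\bx,-p\bx)$, so it suffices to apply the theorem to the $2N$ directions $\bx_i$ and $p\bx_i$. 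The sandwich at $\hat{\bm\vartheta}_0$ converts the $\bH(\hat{\bm\vartheta}_0)^{-1}$ norm into the $\bH(\bm\vartheta^\star)^{-1}$ norm at the cost of the $\sqrt3$ factor.

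\textbf{Revenue perturbation lemma.} This is the main technical step and the expected obstacle. I would show that, on the good event,
\[
R(S,\bm p\mid\bm\vartheta^\star)\le R^{\mathrm{UCB}}_{t,\ell}(S,\bm p)\le R(S,\bm p\mid\bm\vartheta^\star)+W_{t,\ell}(S,\bm p).
\]
The lower inequality holds because the revenue is monotone in the utilities when prices are fixed, though this needs $p_i\ge R$; I would handle it by noting that at an optimum items with $p_i<R$ can be dropped, or by using the known fact that MNL revenue under larger utilities dominates at the optimum. The upper inequality comes from a second-order Taylor expansion of $u\mapsto R$. The gradient is $\partial_{u_i}R=q_i(p_i-R)$. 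By Cauchy--Schwarz, the first-order term is bounded by $P\sqrt{\sum_i q_iq_0 w_i^2}$, using $\sum_i q_i(p_i-R)^2\lesssim P^2 q_0$-type identities. The second-order term is $O(P\max_i w_i^2)$. The difficult point is that the first-order weights must be evaluated at $\hat{\bm\vartheta}_0$ rather than at an intermediate point; the constant $e^2$ absorbs this, since $|w|\le 1$ under the burn-in, which makes the choice probabilities comparable within a factor $e^{O(1)}$. With this lemma in hand, the usual induction gives Proposition~\ref{prop-optimal-elimination}. It also gives that every action surviving to level $\ell$ has regret at most $8\cdot 2^{-\ell}$, and Step~(b) rounds have regret $O(1/\sqrt T)$, for a total of $O(\sqrt T)$.

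\textbf{Step (a) rounds via an elliptical potential.} For $t\in\Psi_\ell$ the regret is $O(2^{-\ell})$ and $W_{t,\ell}(S_t,\bm p_t)>2^{-\ell}$, so
\[
\mathrm{Reg}_\ell\lesssim 2^{-\ell}|\Psi_\ell|\le\sum_{t\in\Psi_\ell}W_{t,\ell}(S_t,\bm p_t).
\]
Squaring, summing, and using Cauchy--Schwarz, the quantity $\sum_{t\in\Psi_\ell}\sum_{i\in S_t}q_iq_0\|\widetilde{\bx}_i\|^2_{\bH^{-1}_{t,\ell}}$ is bounded by $O(d\log T)$. This is the log-determinant potential of $\bH_{t,\ell}$, whose increments are exactly $\sum_i q_iq_0\widetilde{\bx}\widetilde{\bx}^\top$ minus a rank-one term. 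The MNL Hessian dominates $\sum_i q_iq_0\widetilde{\bx}_i\widetilde{\bx}_i^\top$ up to constants, which is the standard lemma of \citet{lee2024nearly}, and $q(\hat{\bm\vartheta}_0)\asymp q(\bm\vartheta^\star)$ holds on the good event.

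This yields $2^{-\ell}|\Psi_\ell|\lesssim P\sqrt{\log(NT)+\lambda W^2}\,\sqrt{|\Psi_\ell|\,d\log T}$, with the squared-max term contributing a lower-order $d\log T$. Summing over $J=O(\log T)$ bins gives $\widetilde{\mathcal O}(W\sqrt{dT\log(NT)}/L_0)$. The $\sqrt{\lambda}W$ factor would need $\lambda\lesssim 1/W^2$ to stay benign, which conflicts with $\lambda\ge\bar P$. I expect this tension, together with the perturbation lemma, to be where the $W$ bookkeeping is delicate; I would first try absorbing $\sqrt\lambda W$ into the burn-in term.
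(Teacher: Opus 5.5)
Your overall architecture matches the paper's: exploration cost $P\tau$, one good event from Theorem~\ref{prop-sup-ucb-confidence-bound}, Step~(b) contributing $O(\sqrt T)$, and Step~(a) handled via the MNL elliptical potential. However, there is a genuine gap in your revenue perturbation lemma.

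\textbf{The perturbation lemma is false for general pairs.} The upper inequality $R^{\mathrm{UCB}}_{t,\ell}(S,\bm p)\le R(S,\bm p\mid\bm\vartheta^\star)+W_{t,\ell}(S,\bm p)$ does not hold for arbitrary $(S,\bm p)$. The failing step is $\sum_{i\in S}q_i(p_i-R)^2\lesssim P^2q_0$, i.e.\ $\sum_{i\in S}v_i(p_i-R)^2\lesssim P^2$. This holds when $p_i\ge R$ for all $i\in S$, as at an optimal pair, where $\sum_i v_i(p_i-R)^2\le P\sum_i v_i(p_i-R)=PR$. If $S$ contains items priced below $R$, it can be of order $P^2\sum_{i\in S}v_i$. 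For instance, take two items with the same large attraction $V$, priced at $a\asymp P$ and at $0$, with utility errors $\xi_1=2w$, $\xi_2=0$ (allowed on your good event). Then $R^{\mathrm{UCB}}-R\approx aw/2$, whereas $W_{t,\ell}(S,\bm p)=O(Pw/\sqrt V+Pw^2)$. So the lemma loses a factor of order $\sqrt V$, which is exactly the $\kappa$-type dependence the theorem is meant to avoid. You apply the lemma to arbitrary elements of $\mathcal A_\ell$, so your derivations of Proposition~\ref{prop-optimal-elimination}, of the survivor bound $O(2^{-\ell})$, and of the Step~(b) bound all rest on it.

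\textbf{The missing idea: use near-optimality.} The perturbation bound should be applied only to pairs that are near-optimal under the optimistic revenue, and that near-optimality is what controls the first-order coefficient. The paper does this for the UCB maximizer $(\hat S_{t,\ell},\hat{\bm p}_{t,\ell})$ over $\mathcal A_\ell$:
\begin{itemize}
\item It uses $R^{\mathrm{UCB}}_{t,\ell}(\hat S_{t,\ell},\hat{\bm p}_{t,\ell})\ge R(S_t^\star,\bm p_t^\star\mid\bm\vartheta^\star)\ge R(\hat S_{t,\ell},\hat{\bm p}_{t,\ell}\mid\bm\vartheta^\star)$.
\item Splitting $\hat S_{t,\ell}$ into items priced above and below the true revenue $R$, it obtains $\sum_j v_j|p_j-R|^2\le 2PR^\star+2Pq_0^{-1}\Delta$, with $\Delta:=R^{\mathrm{UCB}}-R$.
\item It closes the resulting self-bounding inequality via $z^2\le Az+B\Rightarrow z^2\le A^2+2B$. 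This turns the $q_0^{-1}\Delta$ contribution into part of the $P\max_j\xi_j^2$ term of $W_{t,\ell}$.
\end{itemize}
The needed ingredient $\sum_{j\in S}v_j(p_j-R^\star)_+\le R^\star$ follows by comparing $R^\star$ with the revenue of the sub-assortment $\{j\in S:p_j>R^\star\}$.

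An equivalent route works for any pair. Since $\xi_i\ge 0$, you may drop the items with $p_i<R$ from the first-order term. This gives $R^{\mathrm{UCB}}-R\le P\sqrt{\sum_i q_iq_0\xi_i^2}+\tfrac12(R^\star-R)+2P\max_i\xi_i^2$. The term $\tfrac12(R^\star-R)$ is then absorbed for survivors, because $R^{\mathrm{UCB}}_{t,\ell-1}\ge R^\star-2^{-\ell+1}$ on $\mathcal A_\ell$.

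\textbf{Uniformity over prices.} Your uniformity argument over $p\in\mathcal P$ does not give the stated radius. The directions $(\bx_i,\bm 0)$ and $(\bm 0,\bx_i)$ only yield a bound proportional to $\|(\bx_i,\bm 0)\|_{\bH^{-1}}+p\|(\bm 0,\bx_i)\|_{\bH^{-1}}$. This can be far larger than $\|\widetilde{\bx}_i(p)\|_{\bH^{-1}}$ when the data concentrate near one price, so that only $\widetilde{\bx}_i(p_0)$ is well explored. The elliptical-potential step needs the norm at the played price. Only the term $J_1$ in Lemma~\ref{lem-confidence-step1} depends non-uniformly on the test vector. The fix is a constant-size net on the $\bH^{-1}$-unit circle of the two-dimensional span of $\{\widetilde{\bx}_i(p):p\in\mathcal P\}$. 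This is the same device the paper uses in the full space for $\|\by_t\|_{(\bH^\lambda)^{-1}}$; the paper itself takes this uniformity for granted.

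\textbf{The $\sqrt\lambda W$ tension.} Your concern is legitimate, and the paper does not resolve it either: it keeps $\lambda\ge\bar P$ and simply absorbs $\sqrt{\lambda}W$ into $\widetilde{\mathcal O}$. Absorbing it into the burn-in term cannot work, because it multiplies the leading $\sqrt{dT}$. A real repair is to take $\lambda\asymp W^{-2}$, which Theorem~\ref{prop-sup-ucb-confidence-bound} permits. You would then rerun Lemma~\ref{lem-initial-length-bound} using $\min\{1,z\}\le 2\log(1+z)$ instead of $\lambda\ge\bar P$. After burn-in all elliptical potentials are already small, so the potential lemma in the elimination phase does not need $\lambda\ge 1$.
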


Compared to the results in \cite{erginbas2025online}, Theorem~\ref{thm-regret-supCB} improves the leading-order regret bound by removing a $\sqrt{d}$ factor, at the cost of introducing an additional $\sqrt{\log N}$ term. As a result, it outperforms the previous bounds whenever $\log N = o(d)$, i.e., when $N$ is subexponential in $d$.

A useful benchmark within our framework is the \textit{canonical $N$-item setting} studied by \citet{miao2021dynamic}, where $d=N$ and $\bx_i=\be_i$ for all $i\in[N]$. In this setting, \citet{miao2021dynamic} proposed a posterior-sampling algorithm with regret $\widetilde{\mathcal O}(\kappa^{-1}e^W\sqrt{NT\log N}/L_0^2)$. By contrast, Theorem~\ref{thm-regret-supCB} yields the sharper guarantee $\widetilde{\mathcal O}(W\sqrt{NT\log N}/L_0)$, removing the exponential dependence on $\kappa^{-1}$ and $e^W$, as well as one factor of $1/L_0$.

\begin{remark}[Lower bound for the online joint assortment--pricing problem]
A natural lower-bound benchmark for the online joint assortment--pricing problem follows from the same canonical specialization. Consider the fixed-design setting with $d=N$ and $\bx_i=\be_i$ for all $i\in[N]$, and suppose prices are fixed across rounds, i.e., $p_{ti}\equiv p\in\cP$ for all $t$ and $i$. Then, after reparameterizing $\bm\theta^\star=\bm\psi^\star-p\bm\phi^\star$, the price-based contextual MNL model reduces to the canonical $N$-item MNL assortment problem with uniform revenue $p$. Since the lower bound in \citet{chen2018note} for the unit-revenue case scales linearly with the revenue level, this specialization yields the lower bound $\Omega(p\sqrt{NT})$, or equivalently $\Omega(p\sqrt{dT})$ when $N=d$. In particular, taking $p=\Theta(W/L_0)$ suggests that $\Omega(W\sqrt{dT}/L_0)$ is the natural lower-bound benchmark for the online joint assortment--pricing problem.
\end{remark}

Theorem~\ref{thm-regret-supCB} should primarily be viewed as a statistical result. In particular, the elimination step in line~20 and the confidence-level computation in line~11 both require enumeration over $\mathcal A_\ell$, as in \citet{chen2020dynamic,oh2021multinomial}, and are therefore computationally expensive. Designing computationally efficient algorithms that retain the same regret guarantee remains an interesting direction for future work.
%----------------------------------------------------linear-----------------------------------%
\subsubsection{Implications for the Linear MNL Model}\label{subsec:online-linear-special}

As discussed in Section~\ref{sec3.2:linear-confidence bound}, the price-based model reduces to the linear contextual MNL model when the feature map is price-independent. In this case, SupCB-MNL-Pricing reduces to an assortment-only SupCB algorithm for linear MNL. The following corollary is immediate from Theorem~\ref{thm-regret-supCB} by following the same analysis and removing the price decision.

\begin{corollary}\label{cor-online-linear-mnl}
Under Assumption~\ref{assump-bounded}, with probability at least $1-1/T$, the linear MNL specialization satisfies
\begin{align*}
\mathrm{Reg}(T) =
\widetilde{\mathcal{O}}\left(
M\sqrt{dT\log(NT)}
+
\kappa^{-1}d^2M\log(NT) \right),
\end{align*}
where $M$ is an upper bound on the revenue, typically normalized to 1.
\end{corollary}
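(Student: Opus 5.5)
The plan is to specialize the proof of Theorem~\ref{thm-regret-supCB} rather than re-derive anything. First, I will embed the linear MNL instance into the price-based model as in Section~\ref{sec3.2:linear-confidence bound}. Set $\widetilde{\bx}_i(p)\equiv(\bx_i,\bm 0)$ and $\bm\vartheta^\star=(\bm\theta^\star,\bm 0)$. The price decision is then frozen at $p_i\equiv r_i$, so the action set $\mathcal{A}_1$ becomes $\mathcal{S}_K$ and the revenue weights $p_i$ become $r_i\in[0,M]$. Under this embedding the Hessian, the MLE, the bins $\Psi_\ell$ and the conditional independence structure of Algorithm~\ref{alg:sup-lin-mnl-pricing} all coincide with those of the assortment-only SupCB algorithm. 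Corollary~\ref{thm-sup-ucb-confidence-bound} therefore supplies, with $\delta=1/T$ and a union bound over $\ell\le J+1$ and $t\le T$, the item-wise confidence widths $w^\ell_{ti}$ and the Hessian comparison $\tfrac13\bH\preceq\bH(\hat{\bm\theta})\preceq3\bH$. This holds once the burn-in condition is met.

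Second, I will redo the three ingredients of the regret proof with $P$ replaced by $M$.

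(i) Burn-in length. Lemma~\ref{lem-initial-length-bound} applies with $d$ in place of $2d$ and $\bar P$ replaced by $1$, since $\|\bx\|\le 1$. With $\lambda$ of constant order, $\tau=\widetilde{\mathcal O}(\kappa^{-1}d^2\log(NT))$. Each of these rounds costs at most $M$ regret, which gives the second term $\kappa^{-1}d^2M\log(NT)$.

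(ii) First-order perturbation of the MNL revenue. For $|u_i-\bx_i^\top\bm\theta^\star|\le w_i$, we need
\[
|R(S|\bm u)-R(S|\bm\theta^\star)|\lesssim M\sqrt{\textstyle\sum_{i\in S}q_iq_0w_i^2}+M\max_i w_i^2 .
\]
This follows from a second-order Taylor expansion of $R$ in the utilities. The gradient $\partial R/\partial u_i=q_i(r_i-R)$ is bounded in the relevant weighted form by $M\sqrt{q_iq_0}$-type terms. The Hessian gives the quadratic remainder. I also need to replace the probabilities at $\bm\theta^\star$ by those at $\hat{\bm\theta}_0$ up to constants, which uses the burn-in bound $w\lesssim 1$. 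This is exactly the pricing lemma with $P\to M$ and without the price-dependence of the features.

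(iii) Optimal action survives and per-bin regret. Proposition~\ref{prop-optimal-elimination} holds verbatim, so $S_t^\star\in\mathcal{A}_\ell$. The standard SupCB argument then gives per-round regret $\lesssim 2^{-\ell}$ for rounds added to $\Psi_\ell$, and $\lesssim 1/\sqrt T$ for rounds in $\Psi_0$.

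Third, I will sum over the bins. For a round placed in $\Psi_\ell$ we have $W_{t,\ell}(S_t)>2^{-\ell}$. Hence
\[
|\Psi_\ell|\,2^{-\ell}\le\sum_{t\in\Psi_\ell}W_{t,\ell}(S_t).
\]
Cauchy--Schwarz combined with the elliptical potential lemma then yields
\[
\sum_{t\in\Psi_\ell}\sum_{i\in S_t}q_iq_0\|\bx_i\|^2_{\bH_{t,\ell}^{-1}}\lesssim d\log T,
\]
which uses that the Hessian increments are exactly $\sum_i q_iq_0\bx_i\bx_i^\top$ up to the centering term. With $w\propto\sqrt{\log(NT)}$, this gives
\[
\sum_{t\in\Psi_\ell}W_{t,\ell}\lesssim M\sqrt{|\Psi_\ell|\,d\log T\log(NT)}+M\,d\log(NT).
\]
Combining the two bounds on $|\Psi_\ell|$, multiplying by $2^{-\ell}$ and summing over $J=O(\log T)$ bins gives $\widetilde{\mathcal O}(M\sqrt{dT\log(NT)})$. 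The $\Psi_0$ rounds contribute at most $\sqrt T$.

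The main obstacle is step (ii). The elliptical-potential step needs the weights $q_iq_0$ inside $W_{t,\ell}$, yet the Hessian increment is the full MNL covariance $\sum_i q_i\bx_i\bx_i^\top-(\sum_i q_i\bx_i)(\sum_i q_i\bx_i)^\top$, not $\sum_i q_iq_0\bx_i\bx_i^\top$. I will first try the known inequality that the MNL covariance dominates $\sum_i q_iq_0\bx_i\bx_i^\top$ in Loewner order. Since the paper already proves this inside Theorem~\ref{thm-regret-supCB}, I expect the pricing proof's lemma to transfer directly, and it is simpler here because the features do not depend on price.
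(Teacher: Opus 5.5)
Your reduction (embed via $\widetilde{\bm x}_i(p)\equiv(\bm x_i,\bm 0)$, freeze $p_i\equiv r_i$, and rerun the proof of Theorem~\ref{thm-regret-supCB} with $P$ replaced by $M$) is what the paper does. The gap is your step (ii): the perturbation bound is false as stated, and this is exactly where the paper needs a non-trivial idea.

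\textbf{Why (ii) fails.} Cauchy--Schwarz on the first-order term gives
$\bigl|\sum_{i\in S}q_i(r_i-R)w_i\bigr|\le\sqrt{\sum_{i\in S}v_i(r_i-R)^2}\,\sqrt{\sum_{i\in S}q_iq_0w_i^2}$.
The first factor is $O(M)$ only if $r_i\ge R(S)$ for all $i\in S$. Proposition~\ref{prop-non-uniform-reward-optimal-assortment} guarantees this only for a revenue maximizer over all of $\mathcal S_K$. That is why Proposition~\ref{prop: assortment-perturbation}(ii) is restricted to optimal assortments, while part (i) keeps the factor $\sqrt{\sum_j e^{u_j}|p_j-Q|^2}$.

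For general $S$ the bound breaks. Take $S=\{1,2\}$ with $v_1=v_2=V$, $r_1=0$, $r_2=M$, and perturb $u_1$ by a small $w$. The revenue moves by about $Mw/4$, whereas $M\sqrt{q_1q_0}\,w\approx Mw/(2\sqrt V)$. Since $V$ can be as large as $e^{W}$, you lose a factor of order $e^{W/2}\approx\kappa^{-1/2}$ in the leading term, which is precisely what the corollary claims to avoid.

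You need the bound exactly for non-optimal sets. The action $S_t$ played in step (a) and the UCB maximizer are elements of the unstructured set $\mathcal A_\ell$, not maximizers over $\mathcal S_K$. So the ``standard SupCB argument'' in your step (iii) does not carry over on its own: with a nonlinear revenue, the width is not automatically an error bound for every action.

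\textbf{The missing idea.} It is the self-bounding argument in the proof of Proposition~\ref{prop-optimal-elimination}, which uses the near-optimality supplied by the elimination rule. Write $R(\cdot)$ for $R(\cdot|\bm\theta^\star)$. For $S\in\mathcal A_\ell$ and $\Delta:=R^{\mathrm{UCB}}_{t,\ell-1}(S)-R(S)$, elimination gives $R^{\mathrm{UCB}}_{t,\ell-1}(S)\ge R(S_t^\star)-2^{1-\ell}$ (without the $2^{1-\ell}$ for the UCB maximizer). Split $S$ into $S^+=\{j:r_j\ge R(S)\}$ and $S^-$. Then use
\begin{itemize}
\item the identity $\sum_{j\in S}v_j(r_j-R(S))=R(S)$;
\item optimality of $S_t^\star$ over $\mathcal S_K$, which gives $\sum_{j\in S'}v_j(r_j-R(S_t^\star))\le R(S_t^\star)$ for all $S'\in\mathcal S_K$;
\item $\sum_{j\in S^+}v_j\le q_0^{-1}$.
\end{itemize}
Together these give
\[
\sum_{j\in S}v_j\bigl(r_j-R(S)\bigr)^2\le M\sum_{j\in S}v_j\bigl|r_j-R(S)\bigr|\le 2M\sum_{j\in S^+}v_j\bigl(r_j-R(S)\bigr)\le 2MR(S_t^\star)+2Mq_0^{-1}\bigl(\Delta+2^{1-\ell}\bigr).
\]
Substitute this into the Cauchy--Schwarz bound. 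The $q_0^{-1}$ cancels against the $q_0$ in the weights, leaving
$\Delta\le\sqrt2M\sqrt{\sum_jq_jq_0\xi_j^2}+\sqrt{2M(\Delta+2^{1-\ell})\max_j\xi_j^2}+\tfrac32M\max_j\xi_j^2$.
Solving this quadratic inequality gives $\Delta\lesssim W_{t,\ell-1}(S)+2^{-\ell}$ with no $\kappa^{-1/2}$. This yields both the survival of $S_t^\star$ and the $O(2^{-\ell})$ per-round regret on rounds in $\Psi_\ell$.

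\textbf{Smaller points.} The obstacle you flagged is routine. The MNL covariance minus $\sum_iq_iq_0\bm x_i\bm x_i^\top$ equals $\tfrac12\sum_{i,j\in S}q_iq_j(\bm x_i-\bm x_j)(\bm x_i-\bm x_j)^\top\succeq0$.

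The second term in your bound on $\sum_{t\in\Psi_\ell}W_{t,\ell}$ should carry a $\kappa^{-1}$. The sum of $\max_i\|\bm x_i\|^2_{\bm H^{-1}}$ is controlled only through $\bm H\succeq\kappa\bm V$. This does not change the final rate, because the burn-in term dominates.
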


Compared to the $\widetilde{\mathcal{O}}(\kappa^{-1}\sqrt{dT})$ bound in \citet{oh2021multinomial} and the $\widetilde{\mathcal{O}}(d\sqrt{T} + \kappa^{-1}d^2)$ bound in \citet{lee2024nearly}, our analysis yields the first $\widetilde{\mathcal{O}}(\sqrt{dT\log N})$ leading-order regret for the linear MNL setting, successfully relegating the $\kappa^{-1}$ dependence to a second-order term.

%-------- Fixed-Assortment Case --------------------------------------------------------%
\subsubsection{Implications for the Fixed-Assortment Pricing Problem}\label{subsec:online-fixed-assortment}

A natural specialization of the joint assortment-and-pricing problem arises when the offered assortment $S$ is held fixed throughout the horizon. In this case, the action space reduces from assortment-price pairs to price vectors only, yielding a multi-item pricing problem that has been studied extensively in the literature; see e.g., \cite{li2011pricing,wang2012capacitated,broder2012dynamic,javanmard2020multi,xu2021logarithmic}. Let $S\in\mathcal{S}_K$ be fixed.
With the fixed assortment, the regret of a policy $\pi$ is
\[
\mathrm{Reg}(T) := \mathbb{E}\left[\sum_{t=1}^T \big(R(S,\bm p^\star|\bm\vartheta^\star) - R(S,\bm p_t|\bm\vartheta^\star)\big)\right],
\]
where the optimal price vector $\bm p^\star$ is defined as $\bm p^\star \in \operatorname*{argmax}_{\bm p \in \mathcal{P}^{|S|}} R(S,\bm p|\bm\vartheta^\star)$.

The fixed-assortment specialization of Algorithm~\ref{alg:sup-lin-mnl-pricing} simply replaces the candidate sets
$A_\ell \subseteq \mathcal S_K \times P^N$ by $A_\ell \subseteq P^N$, while retaining the same initial
exploration, sample-splitting, confidence construction, and elimination steps. In particular, the
price-uniform confidence region in Theorem~\ref{prop-sup-ucb-confidence-bound} continues to control the estimation error, and the proof of Theorem~\ref{prop-sup-ucb-confidence-bound} specializes directly to this
reduced action space.

\begin{corollary}\label{cor-online-fixed-assortment}
Under Assumption~\ref{assump-bounded} and \ref{assump-mim-price}, with probability at least $1-1/T$, the fixed-assortment specialization of Algorithm~\ref{alg:sup-lin-mnl-pricing} satisfies
\begin{align*}
\mathrm{Reg}(T) = \widetilde{\mathcal{O}}\left(
\frac{W}{L_0}\sqrt{dT\log(KT)}
+
\frac{W}{L_0}\kappa^{-1}d^2\log(KT)
\right).
\end{align*}
\end{corollary}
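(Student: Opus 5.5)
The plan is to rerun the proof of Theorem~\ref{thm-regret-supCB} with the assortment fixed. Since $S$ is now fixed and the action set is $\mathcal{P}^{|S|}$, every $\log N$ coming from a union bound over items becomes $\log K$, because only the $|S|\le K$ items in $S$ ever appear. The bound $\|\widetilde{\bx}_j(p)\|\le\bar P$ and the price cap $P=(3+W+\log K)/L_0$ from Lemma~\ref{p-bound-refined} are unchanged; the latter still holds for the optimal prices within a fixed $S$, since the first-order condition argument is per-item. I will use $P=\widetilde{\mathcal O}(W/L_0)$ as the revenue scale.

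\textbf{Step 1 (confidence).} Within each bin $\Psi_\ell$, the SupCB construction together with the fixed pilot estimator $\hat{\bm\vartheta}_0$ from $\Psi_{J+1}$ gives Assumption~\ref{assumption-conditional-independence-pricing}. I will apply Theorem~\ref{prop-sup-ucb-confidence-bound} with $\delta=1/T$ and feature set $\{\widetilde{\bx}_j(p): j\in S\}$. Inspecting its proof, the $\log N$ factor comes from a union bound over the $N$ candidate feature directions, plus a net over prices, which is absorbed into $\log T$. Restricting to $S$ replaces $\log(NT)$ with $\log(KT)$, both in the radius and in the burn-in condition \eqref{eq-burn-in}.

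\textbf{Step 2 (burn-in).} I will adapt Lemma~\ref{lem-initial-length-bound} with exploration restricted to items of $S$. The same elliptical-potential argument then bounds the exploration length by $\tau=\widetilde{\mathcal O}(\kappa^{-1}d^2\log(KT))$. Each exploration round costs at most $P$ regret, which produces the second-order term $\frac{W}{L_0}\kappa^{-1}d^2\log(KT)$.

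\textbf{Step 3 (elimination and per-round regret).} Proposition~\ref{prop-optimal-elimination} carries over verbatim, so $\bm p^\star$ survives every elimination. I need a revenue perturbation bound:
\[
|R(S,\bm p|\bm\vartheta)-R(S,\bm p|\bm\vartheta^\star)|\lesssim P\sqrt{\textstyle\sum_i q_iq_0\,\Delta_i^2}+P\max_i\Delta_i^2 ,
\]
with $\Delta_i$ the utility error. I will prove this by a second-order Taylor expansion of the MNL revenue in the utilities. I will also need to swap $q(\cdot|\hat{\bm\vartheta}_0)$ for $q(\cdot|\bm\vartheta^\star)$ up to constants, which follows from the burn-in accuracy of $\hat{\bm\vartheta}_0$. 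With these, the standard SupCB argument applies:
\begin{itemize}
\item a round that terminates at layer $\ell$ via step (a) or (c) has regret $O(2^{-\ell})$, since all survivors are $2^{-(\ell-1)}$-optimal;
\item a round that terminates via step (b) has regret $O(1/\sqrt T)$.
\end{itemize}

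\textbf{Step 4 (summation).} For each bin $\ell$, I sum $W_{t,\ell}(S,\bm p_t)$ over $t\in\Psi_\ell$. The leading part is handled by Cauchy--Schwarz together with the elliptical potential lemma applied to the Hessian, whose increments are exactly $\sum_i q_iq_0\widetilde{\bx}_i\widetilde{\bx}_i^\top$. This gives
\[
\sum_{t\in\Psi_\ell}W_{t,\ell}\lesssim P\sqrt{\log(KT)}\sqrt{|\Psi_\ell|\,d\log T}+P\,d\log T .
\]
Since every round in $\Psi_\ell$ has $W_{t,\ell}>2^{-\ell}$, we get $|\Psi_\ell|2^{-\ell}\le\sum_{t\in\Psi_\ell}W_{t,\ell}$, so the regret from bin $\ell$ is $\widetilde{\mathcal O}(P\sqrt{d|\Psi_\ell|\log(KT)})$. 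Summing over the $J=O(\log T)$ bins gives $\widetilde{\mathcal O}\big(\frac{W}{L_0}\sqrt{dT\log(KT)}\big)$.

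The main obstacle is Step 3. The perturbation bound must handle price changes: the uncertainty $w^\ell_{ti}(p_i)$ is price-dependent, so the confidence region must hold uniformly over $p\in[0,P]$. This is exactly what the price-uniform statement of Theorem~\ref{prop-sup-ucb-confidence-bound} provides, and I would check carefully that the covering over prices only contributes $\log T$.
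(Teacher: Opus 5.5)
Your overall architecture matches the paper's. The corollary is obtained by rerunning the proof of Theorem~\ref{thm-regret-supCB} with $S$ fixed, so that union bounds over items cost $\log K$ rather than $\log N$. It uses the burn-in of Lemma~\ref{lem-initial-length-bound}, the confidence region of Theorem~\ref{prop-sup-ucb-confidence-bound}, Proposition~\ref{prop-optimal-elimination}, and the elliptical potential lemma. Your bookkeeping $|\Psi_\ell|2^{-\ell}\le\sum_{t\in\Psi_\ell}W_{t,\ell}$ is equivalent to the paper's per-round bound $4W_{t,\ell}$.

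The genuine gap is in Step~3. The inequality
\[
|R(S,\bm p|\bm\vartheta)-R(S,\bm p|\bm\vartheta^\star)|\lesssim P\sqrt{\textstyle\sum_i q_iq_0\Delta_i^2}+P\max_i\Delta_i^2
\]
is false for a general pair $(S,\bm p)$, so a Taylor expansion cannot deliver it. The first-order term is $\sum_{i\in S}q_i(p_i-R)\Delta_i$. Cauchy--Schwarz then gives the factor $\sqrt{\sum_{i\in S}e^{u_i}(p_i-R)^2}$ in place of $P$ (Proposition~\ref{prop: assortment-perturbation}(i)), and this factor can be exponentially large. For instance, give one item $u_i\approx W$ and price $p_i=0$, and give a second item price $1/L_0$ and utility $\approx W-1$, so that $R\asymp 1/L_0$. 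Perturbing $u_i$ alone by a small $\epsilon$ moves the revenue by $q_iR\,\epsilon\asymp\epsilon/L_0$, whereas your right-hand side is $O\big((W/L_0)e^{-W/2}\epsilon\big)$. What a Taylor expansion does give is $P\sqrt{\sum_i q_i\Delta_i^2}$, which drops the $q_0$ weight. The Hessian increments only dominate $\sum_i q_iq_0\widetilde{\bm x}_i\widetilde{\bm x}_i^\top$, so summing this through the elliptical potential costs a factor $\max_t q_0^{-1}$. That is exactly the $\kappa^{-1}$ or $e^{W}$ dependence in the leading term that the corollary is supposed to avoid.

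The missing idea is that the $q_0$-weighted form holds only for pairs that are (nearly) maximal for the optimistic revenue, and you must use this structure. The paper does so in the proof of Proposition~\ref{prop-optimal-elimination} and reuses it for the selected pairs in Cases~1--2. With true attractions $v_j$ and $\Delta:=R^{\mathrm{UCB}}(S,\bm p)-R(S,\bm p|\bm\vartheta^\star)$, the argument runs as follows:
\begin{enumerate}
\item Split $S$ into $S^+=\{j:p_j\ge R\}$ and $S^-$, and use the identity $\sum_{j\in S}v_j(p_j-R)=R$ to get $\sum_j v_j|p_j-R|=2\sum_{S^+}v_j(p_j-R)-R$.
\item Bound $\sum_{S^+}v_j(p_j-R)\le R^\star+q_0^{-1}\Delta$. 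This uses $R^{\mathrm{UCB}}(S,\bm p)\ge R^\star$ together with the optimality of $R^\star$; for an arbitrary survivor chosen in step (a), the first inequality holds only up to slack $2^{-\ell+1}$.
\item Solve the resulting self-bounding inequality $\Delta\le\sqrt{2PR^\star+2Pq_0^{-1}\Delta}\,\sqrt{\sum_j q_jq_0\xi_j^2}+\tfrac32P\max_j\xi_j^2$.
\end{enumerate}
In the fixed-assortment setting, the optimality comparison in the second step must be made against a price vector rather than a sub-assortment, for example by raising to $P$ the prices of items with $p_j\le R^\star$. So the real obstacle in Step~3 is this structural argument, not price-uniformity of the confidence region.

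Two minor points, neither of which affects the final bound. In Step~4 the second-order part is $P\kappa^{-1}d\log T\,(\log(KT)+\lambda W^2)$ by Lemma~\ref{lemma-elliptic}(2), not $P\,d\log T$. Also, the Hessian increments are only $\succeq$ $\sum_i q_iq_0\widetilde{\bm x}_i\widetilde{\bm x}_i^\top$, not equal to it.
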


Compared with the $\widetilde O(d\sqrt{T})$ regret for multi-product contextual MNL pricing under adversarial arrivals in \cite{goyal2022dynamic}, and the $\widetilde O(d\sqrt{T})$ regret for single-product contextual pricing with arbitrary covariates in \cite{wang2025dynamic}, Corollary~\ref{cor-online-fixed-assortment} yields a $\widetilde O(\sqrt{dT})$ leading-order regret bound for multi-product contextual MNL pricing. The $\sqrt d$ improvement is enabled by our SupCB-style algorithm, which leverages phased elimination and sample splitting to avoid the extra $\sqrt d$ loss present in prior one-shot UCB/ONS analyses. This leading-order dependence extends the $\widetilde \Theta(\sqrt{dT})$ benchmark established for single-product contextual pricing with stochastic contexts in \cite{zhao2024contextual} to the multi-product MNL setting.

Indeed, in Appendix~\ref{sec-appendix-time-varying-pricing}, we show that Algorithm~\ref{alg:sup-lin-mnl-pricing} extends to a broader partially adversarial setting: only the contexts used during exploration epochs are required to be i.i.d.\ across rounds, while the remaining contexts may be chosen adversarially. Under a nondegeneracy condition on the exploration covariance, specifically $\lambda_{\min}\!\Big(\E\big[\widetilde{\bx}(p) \widetilde{\bx}(p)^\top\big]\Big)\ge \sigma_0>0,$ the same sample-splitting and elimination argument yields the same leading-order $\widetilde O(\sqrt{dT})$ regret bound.

\subsection{A Computational Alternative: Thompson Sampling}\label{sec:TS-pricing}

The SupCB-MNL-Pricing algorithm gives a frequentist regret guarantee by enforcing conditional independence via sample splitting. In practice, one may also prefer a simpler computational approach that directly handles the continuous price decision. In this subsection, we present
a Thompson Sampling (TS) algorithm for the online joint assortment and pricing problem and state its Bayesian regret guarantee. This approach can be viewed as a pricing extension of TS methods for linear MNL contextual bandits in \cite{oh2019tsmnl}, and it shares structural similarities with the random-sampling methods for dynamic pricing in \cite{miao2021dynamic}. The full procedure is detailed in Algorithm~\ref{alg:TS-MNL}.

\paragraph{Bayesian Regret and Posterior Update.}
% Because Thompson Sampling inherently assumes that the unknown parameter is drawn from a prior distribution, its performance is most naturally evaluated using a Bayesian framework. 
Let the unknown parameter be $\bm\vartheta\in\mathbb R^{2d}$ and assume a prior distribution $Q$ supported on $B_2(W)=\{\bm\vartheta:\|\bm\vartheta\|_2\le W\}$. We define the \textit{Bayesian regret} as
\begin{align}
    BR_{Q}(T)
    =\E_{\bm\vartheta\sim Q}\!\left[\sum_{t=1}^T \Big(R(S^\star,\bm p^\star|\bm\vartheta)
    -R(S_t,\bm p_t|\bm\vartheta)\Big)\right],
\end{align}
where $(S^\star,\bm p^\star)\in\argmax_{S\in\cS_K,\bm p\in\cP^N}R(S,\bm p|\bm\vartheta)$ denotes an optimal assortment--price pair under $\bm\vartheta$. 

Operating under a \textit{fixed design} setting, as the algorithm interacts with the environment, it collects observations $\{(S_\tau,\bm p_\tau,i_\tau)\}_{\tau=1}^{t-1}$. Using these observations, the posterior distribution at time $t$ is updated via Bayes' rule:
\begin{align}\label{eq:post-distri}
    Q_t(\bm\vartheta)
    \propto Q(\bm\vartheta)\prod_{\tau=1}^{t-1} q_{i_\tau}(S_\tau,\bm p_\tau|\bm\vartheta).
\end{align}

\begin{algorithm}[h]
\caption{TS-MNL-Pricing}
\label{alg:TS-MNL}
\begin{algorithmic}[1]
\STATE \textbf{Input:} Time horizon $T$, fixed item feature matrix $X$, prior distribution $Q$, price upper bound $P$.
\STATE \textbf{Initialize:} $\mathcal{P}:=[0,P]$.
\FOR{$t=1,\dots,T$}
    \STATE Sample $\bm\vartheta_t\sim Q_t$ using \eqref{eq:post-distri}.
    \STATE Choose $(S_t,\bm p_t)\in\argmax_{S\in\cS_K,\bm p\in\mathcal{P}^N} R(S,\bm p|\bm\vartheta_t)$ and observe $i_t$.
\ENDFOR
\end{algorithmic}
\end{algorithm}

\paragraph{Computation.}
Algorithm~\ref{alg:TS-MNL} requires two computational steps in each round: (i) sampling $\bm\vartheta_t$ from the posterior $Q_t$, and (ii) solving the deterministic joint assortment and pricing problem $\max_{S,\bm p} R(S,\bm p|\bm\vartheta_t)$. For posterior sampling, we can use standard MCMC routines (e.g., Metropolis--Hastings) to obtain approximate samples from $Q_t$. For the optimization step, given $\bm\vartheta_t$, the MNL joint assortment and pricing problem can be solved efficiently using known algorithms based on fixed-point characterizations and one-dimensional search; see, e.g., \citet{WANG2012492} for a polynomial-time procedure under a capacity constraint.

\begin{theorem}\label{thm-TS-regret}
Under Assumption~\ref{assump-bounded-pricing} and~\ref{assump-mim-price}, Algorithm~\ref{alg:TS-MNL} achieves the Bayesian
regret bound
\begin{align*}
    BR_Q(T) =\widetilde{\mathcal{O}}\left( \frac{W}{L_0}\left( d\sqrt{T} +{\kappa^{-1}}d^2 \right)
     \right).
\end{align*}
\end{theorem}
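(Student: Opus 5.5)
We follow the standard information-ratio route of Russo and Van Roy, as adapted to MNL Thompson sampling by \citet{oh2019tsmnl}, but organize the argument around a confidence-set reduction so that the MNL revenue perturbation used for SupCB can be reused. The starting point is the posterior-matching identity. Conditional on the history $\mathcal{H}_t$, the pair $(S_t,\bm p_t)$ has the same law as $(S^\star,\bm p^\star)$. Hence, for any $\mathcal{H}_t$-measurable upper confidence function $U_t(S,\bm p)$,
\[
\E[R(S^\star,\bm p^\star|\bm\vartheta)-R(S_t,\bm p_t|\bm\vartheta)]=\E[U_t(S_t,\bm p_t)-R(S_t,\bm p_t|\bm\vartheta)]+\E[R(S^\star,\bm p^\star|\bm\vartheta)-U_t(S^\star,\bm p^\star)].
\]
The second term is nonpositive on the event that $\bm\vartheta$ lies in the confidence set $\mathcal{C}_t$. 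On the complement we bound it by $P\cdot\Pr(\bm\vartheta\notin\mathcal{C}_t)$, and with $\delta=1/T$ this contributes only $\widetilde{\mathcal O}(P)$ in total.

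For $\mathcal{C}_t$ we use a $\kappa$-free ellipsoidal confidence set for the regularized MLE on all adaptively collected data, in the style of \citet{perivier2022dynamic,lee2024nearly}. Here the bin-splitting of Theorem~\ref{prop-sup-ucb-confidence-bound} is unavailable, because TS does not preserve conditional independence. The set is
\[
\mathcal{C}_t=\{\bm\vartheta:\|\bm\vartheta-\hat{\bm\vartheta}^\lambda_t\|_{\bH_t^\lambda(\bm\vartheta)}\le\beta_t\},\qquad \beta_t=\widetilde{\mathcal O}(\sqrt{d}+\sqrt\lambda W).
\]
This produces the $\sqrt d$ loss relative to SupCB, and that loss is exactly why the rate becomes $d\sqrt T$. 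We then set $U_t(S,\bm p)=\max_{\bm\vartheta'\in\mathcal{C}_t}R(S,\bm p|\bm\vartheta')$. This is $\mathcal H_t$-measurable, so the identity applies.

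The first term is controlled by the first- and second-order perturbation bound for the MNL revenue that underlies the definition of $W_{t,\ell}$ in \eqref{eq-assortment-uncertain}. For $\bm\vartheta,\bm\vartheta'\in\mathcal C_t$ with prices in $[0,P]$ we aim for
\[
|R(S,\bm p|\bm\vartheta')-R(S,\bm p|\bm\vartheta)|\lesssim P\sqrt{\sum_{i\in S}q_iq_0\,(\widetilde{\bx}_i(p_i)^\top(\bm\vartheta'-\bm\vartheta))^2}+P\max_{i\in S}(\widetilde{\bx}_i(p_i)^\top(\bm\vartheta'-\bm\vartheta))^2 .
\]
Applying Cauchy--Schwarz with the Hessian norm, and using self-concordance to move $\bH_t^\lambda(\bm\vartheta')$ to $\bH_t^\lambda(\bm\vartheta)$ up to constants, bounds each directional term by $\beta_t\|\widetilde{\bx}_i(p_i)\|_{\bH_t^\lambda(\bm\vartheta)^{-1}}$. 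We must use $q$ at the true $\bm\vartheta$, which is legitimate inside the Bayesian expectation. Summing over rounds, the leading term becomes $P\beta_T\sqrt{T\sum_t\sum_{i\in S_t}q_{ti}q_{t0}\|\widetilde{\bx}_i\|^2_{\bH_t^{-1}}}$. The Hessian increment at round $t$ dominates $\sum_{i}q_{ti}q_{t0}\widetilde{\bx}_i\widetilde{\bx}_i^\top$, so an elliptical potential lemma bounds the inner sum by $\widetilde{\mathcal O}(d)$, giving $P\,\widetilde{\mathcal O}(d\sqrt T)$. The quadratic term instead requires $\bH_t\succeq\kappa\bV_t+\lambda\bI$, which turns its sum into $P\beta_T^2\kappa^{-1}\cdot\widetilde{\mathcal O}(d)=\widetilde{\mathcal O}(P\kappa^{-1}d^2)$. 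Finally, Lemma~\ref{p-bound-refined} gives $P=\widetilde{\mathcal O}(W/L_0)$, which yields the claimed bound.

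The main obstacle is the elliptical-potential step for the leading term. The weights $q_{ti}q_{t0}$ in the revenue perturbation must match the weights in the Hessian increment, and both must be evaluated at the same parameter. I would first evaluate both at the true $\bm\vartheta$ and use the $\frac13$--$3$ Hessian equivalence on $\mathcal C_t$ to swap parameters. If that equivalence fails without burn-in, I would instead absorb the mismatch into the $\kappa^{-1}d^2$ term via a first-round burn-in argument.
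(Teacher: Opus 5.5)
\textbf{The leading-order perturbation step fails as stated.} Your overall architecture matches the paper's: posterior matching with an $\mathcal H_t$-measurable optimistic bridge, an adaptive confidence set of radius $\widetilde{\mathcal O}(\sqrt d)$, then a first-/second-order revenue expansion with elliptical potentials. Your joint maximization over $\mathcal C_t$ also makes the optimism term nonpositive without needing Proposition~\ref{prop-non-uniform-reward-optimal-assortment}. The problem is the inequality that carries the leading term. Take $q$ at the true $\bm\vartheta$ and write $R=R(S,\bm p|\bm\vartheta)$. Proposition~\ref{prop: assortment-perturbation}(i) then gives the first-order prefactor $\sqrt{\sum_{j\in S}v_j(p_j-R)^2}$. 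This is at most $P$ only if $p_j\ge R$ for all $j\in S$, and that is guaranteed (Proposition~\ref{prop-non-uniform-reward-optimal-assortment}) only when $(S,\bm p)$ is optimal at the linearization point. In general the prefactor can be of order $P q_0^{-1/2}$. For example, take two items with equal attraction $V\gg1$, prices $0$ and $P$, and $w=(-\epsilon,\epsilon)$. The revenue moves by about $P\epsilon/2$, while your bound is about $P\epsilon/\sqrt{2V}+P\epsilon^2$.

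\textbf{Why this bites for TS.} The TS action is optimal for the sample $\bm\vartheta_t$, not for $\bm\vartheta$, so nothing forces $p_{tj}\ge R(S_t,\bm p_t|\bm\vartheta)$. The self-bounding argument that handles non-optimal pairs in the proof of Proposition~\ref{prop-optimal-elimination} needs the optimistic revenue of the chosen pair to dominate $R(S^\star,\bm p^\star|\bm\vartheta)$ pathwise. Posterior matching gives that only in expectation. As written, your leading term would therefore pick up a $\kappa^{-1/2}$-type factor, which is exactly what the theorem must avoid.

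\textbf{The missing idea: centered features.} The paper uses this through the analysis of Theorem~\ref{thm-MLE-regret}, following \citet{lee2024nearly,lee2025improved}. Write the first-order term at the true parameter in centered form:
$\nabla_{\bm\vartheta}R(S,\bm p|\bm\vartheta)=\sum_{i\in S}q_ip_i(\widetilde\bx_i(p_i)-\bar\bx)$, with $\bar\bx=\sum_{j\in S}q_j\widetilde\bx_j(p_j)$.
The first-order term is then at most $P\sum_{i\in S}q_i\|\widetilde\bx_i-\bar\bx\|_{\bH_t(\bm\vartheta)^{-1}}\|\bm\vartheta'-\bm\vartheta\|_{\bH_t(\bm\vartheta)}$, and you apply the centered elliptical potential (Lemma~\ref{lemD.7}). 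Since $\nabla^2\ell_t(\bm\vartheta)=\sum_{i\in S_+}q_i(\widetilde\bx_i-\bar\bx)(\widetilde\bx_i-\bar\bx)^\top$ with $\widetilde\bx_0=\bm 0$, these weights match the Hessian increment at the same parameter exactly. There is no $q_0$ factor and no cross-parameter comparison, which also removes the obstacle you flagged. Linearizing at $\bm\vartheta_t$ instead, where $p_{tj}\ge R(S_t,\bm p_t|\bm\vartheta_t)$ does hold, is possible, but it puts the weights at a moving parameter and needs a separate comparison argument.

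\textbf{Secondary gap: Hessian equivalence on $\mathcal C_t$.} Moving from $\bH_t(\bm\vartheta')$ to $\bH_t(\bm\vartheta)$ ``up to constants'' requires $|\widetilde\bx_{\tau i}^\top(\bm\vartheta'-\bm\vartheta)|=O(1)$ over all past rounds $\tau$, not just the current one. Counting large-potential rounds therefore does not supply it for TS data without forced exploration. The paper sidesteps this with the likelihood-ratio set of Lemma~\ref{lem-confidence-sequential-MLE} and the $\nu_t^\star$-Hessian of Lemmas~\ref{lemF.2} and~\ref{lemF.3}. That only adds $\kappa^{-1}$ terms, controlled by Lemma~\ref{lemD.11}.
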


\paragraph{Comparison with Existing Literature.}
After treating $W/L_0$ as the effective price-range factor, the leading term in Theorem~\ref{thm-TS-regret} has order $\widetilde{\mathcal O}(d\sqrt{T})$. Thus, the Bayesian regret has the same leading dependence on $d$ and $T$ as the TS result of \citet{oh2019tsmnl} for linear MNL contextual bandits without pricing, showing that adding a continuous pricing decision does not fundamentally change the statistical rate. More broadly, this result is consistent with recent online learning guarantees for contextual MNL pricing and assortment, such as the $\widetilde{\mathcal O}(d\sqrt{T})$ regret bound in \citet{perivier2022dynamic}, and with the Bayesian regret analysis of random-sampling based methods in \citet{miao2021dynamic}.

%% ---- END 5.1 MNL Pricing Online.tex ----

% \input{5-Linear MNL Online}

%--------------Technical Proof---------------------%
% \input{6-Technical Proof}

\bibliographystyle{ims}
\bibliography{sample}

%%%%%%%%%%%%%%%%%%%%%%%%%%%%%%%%%%%%%%%%%%%%%%%%%%%%%%%%%%%%%%%%%%%%%%%%%%%%%%%
%%%%%%%%%%%%%%%%%%%%%%%%%%%%%%%%%%%%%%%%%%%%%%%%%%%%%%%%%%%%%%%%%%%%%%%%%%%%%%%
% APPENDIX
%%%%%%%%%%%%%%%%%%%%%%%%%%%%%%%%%%%%%%%%%%%%%%%%%%%%%%%%%%%%%%%%%%%%%%%%%%%%%%%
%%%%%%%%%%%%%%%%%%%%%%%%%%%%%%%%%%%%%%%%%%%%%%%%%%%%%%%%%%%%%%%%%%%%%%%%%%%%%%%
\newpage
\appendix

%% ---- BEGIN appendix_section2.tex ----
%----------proof of lemma in Section2-----------%

\section{Proof of Lemma~\ref{p-bound-refined}}
\begin{proof}[\textbf{Proof of Lemma~\ref{p-bound-refined}}] 
If we denote $\alpha_{ti}:= \bm{x}_{ti}^\top \bm\psi^\star, \beta_{ti}:= \bm{x}_{ti}^\top\bm{\phi}^\star,$ then
by Corollary~3 in \cite{wang2012capacitated}, we have there exists some constant $p_0>0$ so that $p_{tj}^\star = p_0 + 1/\beta_{tj}$ for all $j\in S^\star.$ As a result, we have then \begin{align*}
    R_t(S_t^\star,\bm p^\star_t|\bm\vartheta^\star) &= \frac{\sum_{j\in S_t^\star} (p_{0} + 1/\beta_{tj}) \exp(\alpha_{tj}- \beta_{tj}p_0 - 1)}{1+ \sum_{j\in S_t^\star}  \exp(\alpha_{tj}- \beta_{tj}p_0 - 1)}\\
    &\leq  \frac{(p_0 + 1/{L_0})\sum_{j\in S_t^\star} \exp(\alpha_{tj}- \beta_{tj}p_0 - 1)}{1+ \sum_{j\in S_t^\star}  \exp(\alpha_{tj}- \beta_{tj}p_0 - 1)}.
\end{align*}
On the other hand, by $\bm{p}_t^\star$ is the optimal price, we have \begin{align*}
    &\frac{1}{L_0}\cdot\frac{\sum_{j\in S_t^\star} \exp(\alpha_{tj}- \beta_{tj}p_0 - 1)}{1+ \sum_{j\in S_t^\star}  \exp(\alpha_{tj}- \beta_{tj}p_0 - 1)} \\
    &\geq p_0 \bigg(\frac{\sum_{j\in S_t^\star} \exp(\alpha_{tj}- \beta_{tj}p_0 )}{1+ \sum_{j\in S_t^\star} \exp(\alpha_{tj}- \beta_{tj}p_0 )} - \frac{\sum_{j\in S_t^\star} \exp(\alpha_{tj}- \beta_{tj}p_0 - 1)}{1+ \sum_{j\in S_t^\star}  \exp(\alpha_{tj}- \beta_{tj}p_0 -1)} \bigg).
\end{align*}
Let $A:=\sum_{j\in S_t^\star} \exp(\alpha_{tj}- \beta_{tj}p_0 )$ and re-arrange the inequality, we can get \begin{align*}
    &\frac{1}{L_0} \cdot \frac{A}{e+A} \geq p_0 \big(\frac{A}{1+A} - \frac{A}{e+A} \big)
    = p_0\frac{(e-1)A}{(1+A)(e+A)}\\
    &\implies p_0\leq \frac{1+A}{L_0(e-1)}  \leq \frac{1+K \exp(W-L_0p_0)}{L_0(e-1)}.
\end{align*}
Finally, we have either $p_0\leq \frac{W+\log K}{L_0}$ or \begin{align*}
    p_0\geq \frac{W+\log K}{L_0}\implies \exp(W-L_0p_0) \leq 1/K \implies p_0\leq \frac{1+K\exp(W-L_0p_0)}{L_0} \leq \frac{2}{L_0}.
\end{align*}
This leads to \begin{align*}
    \max_{j\in S^\star_t} p_{tj}^\star \leq p_0 + \frac{1}{L_0}\leq \frac{3+W+\log K}{L_0},
\end{align*}
as desired.
\end{proof}
%% ---- END appendix_section2.tex ----

%% ---- BEGIN appendix_section3.tex ----

\section{Proof of Theorem~\ref{prop-sup-ucb-confidence-bound} and Corollary~\ref{thm-sup-ucb-confidence-bound}}
\label{app:proof-confidence}

We prove the confidence region in the linear MNL model and then obtain the price-based result as an immediate extension.
The two proofs are identical: the price-based model only changes the feature dimension, since we can treat
$\widetilde{\bx}(p)=(\bx,-p\bx)\in\mathbb R^{2d}$ as a generic feature vector throughout the argument. We do not rely on any special structure induced by the price variable, beyond boundedness assumptions. 

For notational simplicity, we therefore present the proof for the linear MNL model. The proof of Theorem~\ref{prop-sup-ucb-confidence-bound} for the price-based model then follows directly by replacing $\bx\in\mathbb R^{d}$ with $\widetilde{\bx}(p)\in\mathbb R^{2d}$ and replacing $\bm\theta\in\mathbb R^{d}$ with $\bm\vartheta\in\mathbb R^{2d}$.

%--------------------------------------------------------------------%
%---------------------Proof of Corollary 3.4-------------------------%
%--------------------------------------------------------------------%

\paragraph{Setup and notation.}
We define the linear MNL log-likelihood function as $$\ell_{\cD}(\bm\theta):= -\sum_{m = 1}^t \sum_{j \in (S_m)_+} y_{mj} \log q_j(S_m\lvert \exp(\bm{X}_m^\top \bm\theta)),$$ and denote its Hessian matrix by $\bH^\lambda_{\cD}(\bm\theta)$. Then, we have $\ell_{\cD}(\bm\theta)=\ell_{\cD}(\bm\theta)-\frac{\lambda}{2}\|\bm\theta\|_2^2$.

Throughout the proof, we omit the subindex $\cD$ for simplicity. More precisely, we denote $\ell_{\cD}^\lambda$ by $\ell^\lambda,$ $\hat{\bm\theta}_{\cD}^\lambda$ by $\hat{\bm\theta}^\lambda$, $\bH_{\cD}^\lambda(\bm\theta^\star)$ by $\bH^\lambda$, $\bH_{\cD}(\bm\theta^\star)$ by $\bH$ for simplicity. In addition, we denote $q_j(S_m\lvert \exp(\bm{X}_m^\top \bm\theta))$ by $q_{mj}(\bm\theta)$. 

Now, we begin with the following Lemma:
\begin{lemma}\label{lem-confidence-step1}
Suppose the same independence structure as in Theorem~\ref{thm-sup-ucb-confidence-bound}, and define 
\begin{align*}
 \zeta:= 3\sqrt{2}\max_{m\leq t, j\in S_m} \lvert\bm x_{mj}^\top (\hat{\bm\theta}^\lambda- \bm\theta^\star) \rvert,\quad \xi: =   \max_{m\leq t,j\in S_m} \lVert\bm x_{mj}\rVert_{(\bH^\lambda)^{-1}},\quad \varphi(\zeta) := \left(\frac{e^\zeta - 1}{\zeta} - 1\right)(1+\zeta),
\end{align*}  
then, with probability at least $1-\delta$, we have
\begin{align*}
  \frac{\lvert\bm x^\top (\hat{\bm\theta}^\lambda - \bm\theta^\star)\rvert}{8\lVert\bm x \rVert_{(\bH^\lambda)^{-1}}}  &\leq \left(\sqrt{\log(1/\delta)} + \xi  \log(1/\delta)\right)+ \varphi(\zeta)  \left( \sqrt{d\log (1/\delta)} + d\xi\log(1/\delta)  + \sqrt{\lambda}W \right) + \sqrt{\lambda(1+\zeta)}W.
\end{align*}
\end{lemma}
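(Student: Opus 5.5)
We follow the standard first-order expansion used for $\sqrt d$-free GLM confidence bounds (\`a la \citet{li2017provably,jun2021improved}), adapted to the MNL self-concordant-like geometry. Write $\bm\Delta:=\hat{\bm\theta}^\lambda-\bm\theta^\star$ and $\bm g:=\nabla\ell(\bm\theta^\star)=\sum_{m}\sum_{j\in S_m}(q_{mj}(\bm\theta^\star)-y_{mj})\bm x_{mj}$, which has mean zero by Assumption~\ref{assumption-conditional-independence}. The first-order condition $\nabla\ell^\lambda(\hat{\bm\theta}^\lambda)=0$ combined with the integral mean value theorem gives
\begin{equation*}
\bm G\bm\Delta=-\bm g-\lambda\bm\theta^\star,\qquad \bm G:=\int_0^1 \bH^\lambda(\bm\theta^\star+s\bm\Delta)\,ds .
\end{equation*}
We then split
\begin{equation*}
\bm x^\top\bm\Delta=-\bm x^\top(\bH^\lambda)^{-1}\bm g-\lambda\bm x^\top(\bH^\lambda)^{-1}\bm\theta^\star+\bm x^\top\big(\bm G^{-1}-(\bH^\lambda)^{-1}\big)(-\bm g-\lambda\bm\theta^\star),
\end{equation*}
so that the leading term involves only the fixed vector $(\bH^\lambda)^{-1}\bm x$ and no supremum over directions.

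\textbf{Leading term.} Condition on the designs. The term $\bm x^\top(\bH^\lambda)^{-1}\bm g$ is a sum of independent mean-zero terms, one per round $m$. Term $m$ is $\sum_j (q_{mj}-y_{mj})\bm a_{mj}$ with $\bm a_{mj}:=\bm x_{mj}^\top(\bH^\lambda)^{-1}\bm x$.
\begin{itemize}
\item Its magnitude is at most $2\max_j|\bm a_{mj}|\le 2\xi\|\bm x\|_{(\bH^\lambda)^{-1}}$.
\item Its conditional variance is $\bm v_m^\top\Sigma_m\bm v_m$, where $\Sigma_m=\mathrm{diag}(q)-qq^\top$ is the multinomial covariance. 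Summing over $m$ gives at most $\bm x^\top(\bH^\lambda)^{-1}\bH(\bH^\lambda)^{-1}\bm x\le\|\bm x\|^2_{(\bH^\lambda)^{-1}}$, using that $\bH=\sum_m X_m^\top\Sigma_m X_m$.
\end{itemize}
Bernstein's inequality then yields the term $\|\bm x\|_{(\bH^\lambda)^{-1}}(\sqrt{\log(1/\delta)}+\xi\log(1/\delta))$. The regularization bias is bounded by Cauchy--Schwarz, using $\lambda\|\bm\theta^\star\|_{(\bH^\lambda)^{-1}}\le\sqrt\lambda W$ (since $\bH^\lambda\succeq\lambda\bI$). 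This produces the $\sqrt{\lambda(1+\zeta)}W$ piece once the $\bm G$ correction is folded in.

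\textbf{Remainder via self-concordance.} The key tool is the MNL self-concordant-like property (cf.\ \citet{perivier2022dynamic,lee2024nearly}). Along the segment, each per-round Hessian block changes by a multiplicative factor controlled by $\max_j|\bm x_{mj}^\top\bm\Delta|$; this yields $e^{-\zeta}\bH^\lambda\preceq\bm G\preceq e^{\zeta}\bH^\lambda$ in integrated form, with $\bm G\succeq\frac{1-e^{-\zeta}}{\zeta}\bH^\lambda$. The constant $3\sqrt2$ in $\zeta$ comes from the multinomial version of this property. Consequently
\begin{equation*}
\|(\bH^\lambda)^{1/2}(\bm G^{-1}-(\bH^\lambda)^{-1})(\bH^\lambda)^{1/2}\|\lesssim\varphi(\zeta).
\end{equation*}
The remainder is therefore at most $\varphi(\zeta)\|\bm x\|_{(\bH^\lambda)^{-1}}\big(\|\bm g\|_{(\bH^\lambda)^{-1}}+\sqrt\lambda W\big)$. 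It remains to control $\|\bm g\|_{(\bH^\lambda)^{-1}}$, which we do by a vector Bernstein bound or an $\epsilon$-net (volume $e^{O(d)}$) over unit directions in the $(\bH^\lambda)^{-1}$ geometry, applying the scalar Bernstein step above in each direction. This gives $\sqrt{d\log(1/\delta)}+d\xi\log(1/\delta)$, with $d$ from the union bound, and produces exactly the $\varphi(\zeta)$-multiplied term in the statement.

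\textbf{Main obstacle.} The delicate step is the remainder analysis. First, the self-concordant comparison between $\bm G$ and $\bH^\lambda$ must be made in terms of the random quantity $\zeta$ rather than a deterministic radius, and the constant must come out right. Second, the net argument must be arranged so that $d$ enters only through the $\varphi(\zeta)$-weighted term. The overall structure of the proof is that this term is second order once $\zeta$ is shown to be small, which is what the later bootstrap under the burn-in condition accomplishes. Note that the lemma itself leaves $\zeta$ unresolved: it holds for the realized $\zeta$, so no fixed-point argument is needed here. The constant 8 absorbs the Bernstein constants and the factor from $\bm G$ versus $\bH^\lambda$ on the leading bias term.
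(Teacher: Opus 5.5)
Your proposal is correct and follows essentially the same route as the paper. Both use the first-order condition in integral mean-value form $\bm G\bm\Delta=-\bm g-\lambda\bm\theta^\star$ (the paper's $\bH^\lambda+\bE$), Bernstein's inequality for $\bm x^\top(\bH^\lambda)^{-1}\bm g$, the self-concordance comparison stated in terms of $\max_j|\bm x_{mj}^\top\bm\Delta|$ rather than $\|\bm\Delta\|_2$ (giving $\frac{1-e^{-\zeta}}{\zeta}\bH^\lambda\preceq\bm G\preceq\frac{e^{\zeta}-1}{\zeta}\bH^\lambda$), and a $4^d$-net to control $\|\bm g\|_{(\bH^\lambda)^{-1}}$.

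The only difference is cosmetic: you route the $\bm G$-correction of the regularization bias through the remainder, whereas the paper applies $(\bH^\lambda)^{-1}\bE(\bH^\lambda+\bE)^{-1}$ to the noise only and bounds $\lambda\bm x^\top(\bH^\lambda+\bE)^{-1}\bm\theta^\star$ directly via $\bH^\lambda+\bE\succeq\bH^\lambda/(1+\zeta)$. Your routing is exactly what produces the $\varphi(\zeta)\sqrt{\lambda}W$ term in the statement, and a spectral computation gives $\|(\bH^\lambda)^{1/2}\bm G^{-1}(\bH^\lambda)^{1/2}-\bI\|\le\varphi(\zeta)$, so your $\lesssim$ there costs no constant.
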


\begin{proof}[\textbf{Proof of Lemma~\ref{lem-confidence-step1}}]
Noticing that $\hat{\bm\theta}^\lambda$ minimizes $\ell^\lambda(\bm\theta)$, we have 
\begin{align*}
    & \nabla \ell^\lambda(\hat{\bm\theta}^\lambda) = 0  \implies \sum_{m = 1}^t \sum_{j\in S_m}\bm x_{mj} \left(q_{mj}(\hat{\bm\theta}^\lambda) - y_{mj} \right) + \lambda \hat{\bm\theta}^\lambda= 0  \implies\\
    & \sum_{m = 1}^t \sum_{j\in S_m}\bm x_{mj} \left(q_{mj}(\hat{\bm\theta}^\lambda) - q_{mj}(\bm\theta^\star) \right) +  \lambda(\hat{\bm\theta}^\lambda - \bm\theta^\star) = \sum_{m = 1}^t \sum_{j\in S_m} \bm x_{mj} \underbrace{\left(y_{mj} - q_{mj}(\bm\theta^\star) \right)}_{: = \eta_{mj}} - \lambda\bm\theta^\star
\end{align*}
Now, we define $\bL_m(\bm\theta): = \sum_{j\in S_m}\bm x_{mj}q_{mj}(\bm\theta)$ and its derivative as 
\begin{align*}
      \bDL_m(\bm\theta):= \sum_{j\in S_m} q_{mj}(\bm\theta)\bm x_{mj}\bm x_{mj}^\top - \sum_{i\in S_mj \in S_m} q_{mj}(\bm\theta)q_{mi}(\bm\theta)\bm x_{mi}\bm x_{mj}^\top.
\end{align*}
For any $\bm\theta$, we have the following 
\begin{align*}
&  \sum_{j\in S_m}\bm x_{mj} \left(q_{mj}(\bm\theta) - q_{mj}(\bm\theta^\star) \right)= \bL_m(\bm\theta) - \bL_m(\bm\theta^\star) \\
&= \int_{0}^1 \bDL_{m}(\bm\theta^\star + s(\bm\theta-\bm\theta^\star))(\bm\theta - \bm\theta^\star)  ds\\
    &= \bDL_m(\bm\theta^\star)(\bm\theta - \bm\theta^\star) + \left(\int_{0}^1 \left(\bDL_{m}(\bm\theta^\star + s(\bm\theta-\bm\theta^\star)) - \bDL_m(\bm\theta^\star) \right) ds\right) (\bm\theta - \bm\theta^\star),
\end{align*}
Note that  $\bH^\lambda= \sum_{m= 1}^t \bDL_m(\bm\theta^\star) + \lambda \bI$ and define
\begin{align*}
     \bE:= \sum_{m= 1}^t \int_{0}^1 \left(\bDL_{m}(\bm\theta^\star + s(\hat{\bm\theta}^\lambda-\bm\theta^\star)) - \bDL_m(\bm\theta^\star) \right) ds.
\end{align*}
It follows that
\begin{align*}
    (\bH^\lambda+\bE)(\bm\theta - \bm\theta^\star) = \sum_{m=1}^t \sum_{j \in S_m}\bm x_{mj} \eta_{mj} -\lambda \bm\theta^\star .
\end{align*}
Thus, for any $\bm x\in \mathbb R^d$,
\begin{align*}
    &\bm x^\top (\hat{\bm\theta}^\lambda-\bm\theta^\star) =\bm x^\top (\bH^\lambda + \bE)^{-1}\left( \sum_{m=1}^t \sum_{j \in S_m}\bm x_{mj} \eta_{mj} -\lambda \bm\theta^\star \right) \\
    &=\bm x^\top \left( (\bH^\lambda)^{-1} -(\bH^\lambda)^{-1}\bE(\bH^\lambda+\bE)^{-1}\right)\left( \sum_{m=1}^t \sum_{j \in S_m}\bm x_{mj} \eta_{mj} -\lambda \bm\theta^\star \right) \\
    &= \underbrace{\bm x^\top  (\bH^\lambda)^{-1} \sum_{m=1}^t \sum_{j \in S_m}\bm x_{mj} \eta_{mj}}_{:= J_1} \underbrace{ - \bm x^\top (\bH^\lambda)^{-1}\bE(\bH^\lambda+\bE)^{-1} \sum_{m=1}^t \sum_{j \in S_m}\bm x_{mj} \eta_{mj}}_{:= J_2}  \underbrace{ - \lambda\bm x^\top (\bH^\lambda+\bE)^{-1}\bm\theta^\star}_{:= J_3}.
\end{align*}
Now we provide upper bounds for $J_1,J_2,J_3$ separately.

%-----------------Bound term J1---------------------%
%---------------------------------------------------%
\subsubsection*{The first term $J_1$} 
The first term can be bounded by variance-aware concentration results using conditional independence (Assumption~\ref{assumption-conditional-independence}) as in previous works, obtaining sharp bounds for the logistic setting \cite{jun2021improved}, the main difference between the logistic setting is the dependency across $\eta_{mj}$ for $j\in S_m:$ 

\noindent For every $m$, we have 
\begin{align*}
 Z_m:=   \sum_{j\in S_m}\bm x^\top (\bH^\lambda)^{-1}\bm x_{mj} \eta_{mj} 
\end{align*}
is centered and bounded as 
$$\lvert Z_{m}\rvert\leq \max_{j\in S_m}\lvert\bm x^\top (\bH^\lambda)^{-1}\bm x_{mj}\rvert\cdot \sum_{j\in S_m} \lvert \eta_{mj}\rvert \leq 2\max_{j\in S_m}\lvert\bm x^\top (\bH^\lambda)^{-1}\bm x_{mj}\rvert,$$
where the last inequality follows from
    $\sum_{j \in S_m} \lvert \eta_{mj}\rvert\leq \sum_{j\in S_m} y_{mj}+ \sum_{j\in S_m} q_{mj}(\bm \theta^\star)\leq 2.$

\noindent On the other hand, the variance of $Z_m$ is given as
\begin{align*}
    \mathbb{E}[Z_m^2] &= \sum_{i,j\in S_m} (\bm x^\top (\bH^\lambda)^{-1}\bm x_{mj})(\bm x^\top (\bH^\lambda)^{-1}\bm x_{mi})  \mathbb{E}[\eta_{mj}\eta_{mi}].
\end{align*}
Note that 
\begin{align*}
    \E[\eta_{mi}^2] = q_{mi}(\bm\theta^\star)(1-q_{mi}(\bm\theta^\star)),\quad \E[\eta_{mi}\eta_{mj}] = -q_{mi}(\bm\theta^\star) q_{mj}(\bm\theta^\star),
\end{align*}
implies that 
\begin{align*}
        \mathbb{E}[Z_m^2] &=\bm x^\top (\bH^\lambda)^{-1}\underbrace{\left( \sum_{i\in S_m}\bm x_{mi}\bm x_{mi}^\top q_{mi}(\bm\theta^\star) (1-q_{mi}(\bm\theta^\star)) - \sum_{i\neq j}\bm x_{mi}\bm x_{mj}^\top q_{mi}(\bm\theta^\star) q_{mj}(\bm\theta^\star) \right)}_{: = \bU_m} (\bH^\lambda)^{-1}\bm x.
\end{align*}
By $\sum_{m = 1}^t \bU_m = \bH,$ we have $\sum_{m = 1}^t \E[Z_m^2] =\bm x^\top (\bH^\lambda)^{-1} \bH(\bH^\lambda)^{-1}\bm x\leq \bm x^\top (\bH^\lambda)^{-1}\bm x$. Now, we can apply the following Bernstein inequality:
\begin{lemma}[Bernstein's Inequality]\label{lem-bernstein} Let $  X_1, \ldots,  X_n$ be independent zero-mean random variables. Suppose that $\left|  X_i\right| \leq M$ almost surely for all $i$. Then, for all positive $t$,
$$
\mathbb{P}\left(\sum_{i=1}^n   X_i \geq u\right) \leq \exp \left(-\frac{\frac{1}{2} u^2}{\sum_{i=1}^n \mathbb{E}\left[  X_i^2\right]+\frac{1}{3} M u}\right).
$$
\end{lemma}
\noindent to obtain that \begin{align*}
    \mathbb{P}\left( \lvert J_1\rvert  \geq  u\right) \leq \exp\left( -\frac{u^2}{2 \lVert\bm x \rVert_{(\bH^\lambda)^{-1}}^2 + 2\max_{m\leq t}\max_{j\in S_m}\lvert\bm x^\top (\bH^\lambda)^{-1}\bm x_{mj} \rvert  u }  \right).
\end{align*}
Introducing the notation  $\xi =   \max_{m\leq t,j\in S_m} \lVert\bm x_{mj}\rVert_{(\bH^\lambda)^{-1}} $ in Lemma~\ref{lem-confidence-step1}, equivalently, we can equivalently state that, with probability at least $1-\delta,$
\begin{align}\label{appendix-eq: J1-bound}
    \lvert J_1\rvert \leq 2\lVert\bm x \rVert_{(\bH^\lambda)^{-1}}\left(\sqrt{\log(1/\delta)} + \xi \log(1/\delta)\right).
\end{align}

%-----------------Bound term J2---------------------%
%---------------------------------------------------%
\subsubsection*{The second term $J_2$} For the second term, we denote $\by_t:= \sum_{m=1}^t \sum_{j \in S_m}\bm x_{mj} \eta_{mj}$, then 
\begin{align*}
    \lvert J_2 \rvert & = \lvert\bm x^\top (\bH^\lambda)^{-1}\bE(\bH^\lambda+\bE)^{-1} \by_t\rvert\\
    &\leq \lVert\bm x\rVert_{(\bH^\lambda)^{-1}} \lVert (\bH^\lambda)^{-1/2} \bE (\bH^\lambda)^{-1/2}\rVert \lVert (\bH^\lambda + \bE)^{-1}\by_t \rVert_{\bH^\lambda} .
\end{align*}

\noindent $\bullet$ \textbf{ Bounding $\lVert (\bH^\lambda)^{-1/2}\bE(\bH^\lambda)^{-1/2}\rVert$.}
Given any vector $\bm w$, if we denote $$a_{mj}:=\bm x_{mj}^\top \bm w,\ b_{mj} =\bm x_{mj}^\top (\hat{\bm\theta}^\lambda - \bm\theta^\star),\ q_{mj}(s):= q_{mj}(\bm\theta^\star + s(\hat{\bm\theta}^\lambda  - \bm\theta^\star)),$$
and 
$$ u_{mj}(s):=\bm x_{mj}^\top (\bm\theta^\star + s(\hat{\bm\theta}^\lambda  - \bm\theta^\star)),\ v_{mj}(s):= e^{u_{mj}(s)},$$ 
for $j\in S_m$, and set $a_{m0}=b_{m0} = u_{m0} = 0, v_{m0} = 1$
for convenience. Then, we have
\begin{align*}
\lVert \bm w \rVert_{ \bDL_m(\bm\theta^\star + s(\hat{\bm\theta}^\lambda - \bm\theta^\star))}^2 =  \frac{\sum_{j\in S_m}\sum_{i\in S_m: 0\leq i<j} (a_{mi} - a_{mj})^2 v_{mi}(s) v_{mj}(s) }{(1+\sum_{j \in S_m} v_{mj}(s))^2 }.
\end{align*}
Analogous to the analysis in Proposition C.1 of \cite{lee2024nearly}, we have
\begin{align*}
&\left\lvert \frac{d}{ds} \lVert \bm w \rVert_{ \bDL_m(\bm\theta^\star + s(\hat{\bm\theta}^\lambda - \bm\theta^\star))}^2 \right\rvert \\ =& 
\left\lvert \frac{\sum_{j\in S_m} \sum_{i\in (S_m)_+: 0\leq i< j} (a_{mi} - a_{mj})^2 v_{mi}(s) v_{mj}(s)\left[\sum_{k\in (S_m)_+ } (b_{mi} + b_{mj} - 2 b_{mk})v_{mk}(s) \right]}{(1+\sum_{j\in S_m} v_{mj}(s))^3}\right\rvert\\
\leq & 3\sqrt{2}\max_{j\in S_m} \lvert b_{mj}\rvert \cdot \frac{\sum_{j\in S_m} \sum_{i\in (S_m)_+: 0\leq i< j} (a_{mi} - a_{mj})^2 v_{mi}(s) v_{mj}(s)}{(1+\sum_{j\in S_m} v_{mj}(s))^2}\\
\leq& 3\sqrt{2}\max_{j\in S_m}\lvert b_{mj}\rvert \lVert \bm w\rVert_{\bDL_m(\bm\theta^\star + s(\hat{\bm\theta}^\lambda  - \bm\theta^\star))}^2.
\end{align*}
As a result, $\psi(s):= \log  \lVert \bm w\rVert_{\bDL_m(\bm\theta^\star + s(\hat{\bm\theta}^\lambda  - \bm\theta^\star))}^2$
satisfies $\lvert \psi'(s) \rvert \leq 3\sqrt{2}\max_{j\in S_m} \lvert b_{mj}\rvert$, which then leads to \begin{align}\label{eq: proof-to-H-dom}
 e^{-3\sqrt{2}\max_{j\in S_m} \lvert b_{mj}\rvert \cdot s}\lVert \bm w\rVert^2_{\bDL_m(\bm\theta^\star)}   \leq \lVert \bm w\rVert^2_{\bDL_m(\bm\theta^\star + s(\hat{\bm\theta}^\lambda  - \bm\theta^\star))} \leq e^{3\sqrt{2}\max_{j\in S_m} \lvert b_{mj}\rvert \cdot s}\lVert \bm w\rVert^2_{\bDL_m(\bm\theta^\star)}.
\end{align}
Use the notation $\zeta= 3\sqrt{2}\max_{m\leq t, j\in S_m} \lvert\bm x_{mj}^\top (\hat{\bm\theta}^\lambda- \bm\theta^\star) \rvert$ introduced in Lemma~\ref{lem-confidence-step1}, and note that $\bH= \sum_{m= 1}^t \bDL_m(\bm\theta^\star)$. For any $\bm w\in \mathbb{R}^d$, by summing over $m$, we obtain
\begin{align*}
    \bm w^\top \bE \bm w & =   \int_0^1 \sum_{m=1}^t\left(  \bm w^\top \bDL_m(\bm\theta^\star + s(\hat{\bm\theta}^\lambda - \bm\theta^\star))\ \bm w - \bm w^\top \bDL_m(\bm\theta^\star)\ \bm w\right) ds\\
    & =   \int_0^1 \sum_{m=1}^t\left(  \lVert \bm w\rVert^2_{\bDL_m(\bm\theta^\star + s(\hat{\bm\theta}^\lambda  - \bm\theta^\star))} - \lVert \bm w\rVert^2_{\bDL_m(\bm\theta^\star )} \right) ds\\
    &\leq  \int_0^1 (e^{\zeta s} - 1 )  \lVert \bm w\rVert_{\bH}^2\ ds  = \left(\frac{e^\zeta - 1}{\zeta} - 1 \right) \  \lVert \bm w \rVert_{\bH}^2,
\end{align*}
which leads to 
\begin{align}\label{eq-appendix-tmp1}
    \bE \preceq  \left(\frac{e^\zeta - 1}{\zeta} - 1 \right) \bH \preceq  \left(\frac{e^\zeta - 1}{\zeta} - 1 \right) \bH^\lambda.
    \end{align}
On the other hand, we have 
\begin{align*}
    \bH+\bE &= \int_0^1 \sum_{m=1}^t \bDL_m(\bm\theta^\star + s(\hat{\bm\theta}^\lambda - \bm\theta^\star)) ds \\
    &\succeq \int_0^1 e^{-\zeta s}  ds \cdot \bH \succeq \frac{1-e^{-\zeta}}{\zeta} \bH \succeq \frac{1}{1+\zeta} \bH,
\end{align*}
where the last inequality is by the elementary inequality $\frac{1-e^{-x}}{x}\geq \frac{1}{1+x},\ \forall x>0.$ This leads to
\begin{equation}\label{eq-appendix-tmp2}
\begin{aligned}
    \bE  \succeq \left(\frac{1}{1+\zeta}-1\right) \bH = -\frac{\zeta}{1+\zeta} \bH .
\end{aligned}
\end{equation}
Applying inequalities \eqref{eq-appendix-tmp1} and \eqref{eq-appendix-tmp2}, together with $\bm 0\preceq(\bH^\lambda)^{-1/2} \bH (\bH^\lambda)^{-1/2}\preceq \bI$, we obtain
\begin{align*}
   \lVert (\bH^\lambda)^{-1/2} \bE (\bH^\lambda)^{-1/2}\rVert   &= \max_{\lVert \bm w \rVert_2 = 1} \lvert \bm w^\top (\bH^\lambda)^{-1/2} \bE (\bH^\lambda)^{-1/2} \bm w \rvert\\
   &\leq  \max \bigg\{\frac{e^\zeta - 1}{\zeta} - 1,\ \frac{\zeta}{1+\zeta}  \bigg\} \leq 2 \big(\frac{e^\zeta - 1}{\zeta} - 1 \big),
\end{align*}
where the last inequality holds because $e^\zeta \ge 1+\zeta+\frac{\zeta^2}{2}$ implies $2 \big(\frac{e^\zeta - 1}{\zeta} - 1 \big)\ge \zeta\ge \frac{\zeta}{1+\zeta}$.
Thus,
\begin{align*}
    |J_2| \leq 2\lVert\bm x \rVert_{(\bH^\lambda)^{-1}}\left(\frac{e^\zeta - 1}{\zeta} - 1 \right) \lVert (\bH^\lambda + \bE)^{-1}\by_t \rVert_{\bH^\lambda}.
\end{align*}

\noindent $\bullet$ \textbf{Bounding $\lVert (\bH^\lambda + \bE)^{-1}\by_t \rVert_{\bH^\lambda}$.}
By~\eqref{eq-appendix-tmp2}, we have \begin{align*}
    \bH \preceq (1+\zeta) (\bH+\bE) &\implies (\bH^\lambda+\bE)^{-1} \bH^\lambda (\bH^\lambda+\bE)^{-1}\preceq (1+\zeta) (\bH^\lambda + \bE)^{-1}\preceq (1+\zeta)^2 \bH^\lambda.
\end{align*}
Thus, it holds that
\begin{align}\label{eq2}
    \lVert (\bH^\lambda + \bE)^{-1}\by_t \rVert_{\bH^\lambda}^2 &= \by_t^\top (\bH^\lambda+\bE)^{-1} \bH^\lambda (\bH^\lambda+\bE)^{-1}\by_t\nonumber\\
    &\leq (1+\zeta )^2  \lVert \by_t \rVert_{(\bH^\lambda)^{-1}}^2
\end{align}
To control $\lVert \by_t \rVert_{(\bH^\lambda)^{-1}}=\max_{\|\bm w\|_2=1}\lvert \bm w^\top (\bH^\lambda)^{-1/2}\by_t \rvert$, we have for any unit vector $\bm w$, it holds that \begin{align*}
    \bm w^\top (\bH^\lambda)^{-1/2}\by_t =\underbrace{((\bH^\lambda)^{1/2}\bm w)^\top}_{:= \bm{\tilde{x}}^\top} (\bH^\lambda)^{-1}\by_t.
\end{align*}
This expression has the same form as $J_1$ after replacing $\bm{\tilde{x}}$ by $\bm x$, thus by the same argument used in bounding $J_1$, inequality~\eqref{appendix-eq: J1-bound} still applies: with probability at least $1-\delta',$
\begin{align}\label{eq1}
   \lvert \bm w^\top (\bH^\lambda)^{-1/2}\by_t \rvert \leq 2\underbrace{\lVert \bm{\tilde{x}} \rVert_{(\bH^\lambda)^{-1}}}_{=\lVert \bm w\rVert_2 = 1}\left(\sqrt{\log(1/\delta')} + \max_{m\leq t,j\in S_m} \lVert\bm x_{mj}\rVert_{(\bH^\lambda)^{-1}}  \log(1/\delta')\right).
\end{align}
Now consider a $1/2$-net $\mathcal{N}\in\mathbb{S}^{d-1}$ on the unit ball, that is, for every unit $\bm w$, there exists $\bm v\in\mathcal{N}$ with $\|\bm w-\bm v\|_2\le\frac{1}{2}.$ Standard geometry gives a net with size $|\mathcal{N}|\le 4^d.$ Then, applying \eqref{eq1} to each $\bm v\in\mathcal{N}$ and taking the union bound with $\delta=4^d\delta'$ leads to 
\begin{align*}
  \mathbb{P}\left( \forall\bm v\in\mathcal
  N: \lvert \bm v^\top (\bH^\lambda)^{-1/2}\by_t \rvert \leq 2\left(\sqrt{\log(4^d/\delta)} + \xi \log(4^d/\delta)\right)\right)\ge 1-\delta,
\end{align*}
where we use the notation  $\xi =   \max_{m\leq t,j\in S_m} \lVert\bm x_{mj}\rVert_{(\bH^\lambda)^{-1}} $ as in Lemma~\ref{lem-confidence-step1}.
Note that $\lVert \by_t \rVert_{(\bH^\lambda)^{-1}} = \|(\bH^\lambda)^{-1/2}\by_t\|_2$. For any unit $\bm w$, pick $\bm v \in \mathcal{N}$ with $\|\bm w-\bm v\|_2\le\frac{1}{2}$. Then,
\begin{align*}
    \lvert \bm w^\top (\bH^\lambda)^{-1/2}\by_t \rvert \le \lvert \bm v^\top (\bH^\lambda)^{-1/2}\by_t \rvert + \|\bm w-\bm v\|_2 \|(\bH^\lambda)^{-1/2}\by_t\|_2 \le   \lvert \bm v^\top (\bH^\lambda)^{-1/2}\by_t \rvert + \frac{1}{2} \lVert \by_t \rVert_{(\bH^\lambda)^{-1}}.
\end{align*}
Taking the maximum over all $\|\bm w\|_2=1$ gives $$\lVert \by_t \rVert_{(\bH^\lambda)^{-1}} \le 2\max_{\bm v\in\mathcal{N}} \lvert \bm v^\top (\bH^\lambda)^{-1/2}\by_t \rvert.$$
Then, with probability at least $1-\delta,$ we have
\begin{align*}
    \lVert \by_t \rVert_{(\bH^\lambda)^{-1}} \leq 4\sqrt{\log(4^d/\delta)} + 4\xi\log(4^d/\delta)\leq 8\sqrt{d\log(1/\delta)} + 8d\xi\log(1/\delta),
\end{align*}
where the last inequality follows from the facts that when $\delta\le\frac{1}{4}$, we have $\log 4\le\log(1/\delta)$, and by applying the inequality $\sqrt{a+b}\le\sqrt{a}+\sqrt{b}$ for $a,b\ge 0$.
Finally, recall the inequality \eqref{eq2}, we obtain
\begin{equation}\label{eq-theta-hat-bound}
     \lVert (\bH^\lambda + \bE)^{-1}\by_t \rVert_{\bH^\lambda} \leq (1+\zeta)\left(8\sqrt{d\log(1/\delta)} + 8d\xi \log(1/\delta)\right). 
\end{equation}
Therefore, with probability at least $1-\delta,$
\begin{align*}
    |J_2| \leq 8\lVert\bm x\rVert_{(\bH^\lambda)^{-1}}\underbrace{\left(\frac{e^\zeta - 1}{\zeta} - 1\right)  \left(1+\zeta\right)}_{= \varphi(\zeta)}\left( \sqrt{d\log (1/\delta)} + d\xi\log(1/\delta)  \right).
\end{align*}

%-----------------Bound term J3---------------------%
%---------------------------------------------------%
\subsubsection*{The last term $J_3$}
We have already proved that
\begin{align*}
    \bH + \bE +\lambda \bI \succeq \frac{1}{1+\zeta} \bH +\lambda \bI \succeq \frac{1}{1+\zeta}\bH^\lambda .
\end{align*} 
Consequently,
\begin{align*}
|J_3| \leq    \lvert \lambda\bm x^\top (\bH^\lambda+\bE)^{-1}\bm\theta^\star\rvert &\leq \sqrt{\lambda}\lVert \bm\theta^\star \rVert \lVert\bm x\rVert_{(\bH^\lambda +\bE)^{-1}}\leq \sqrt{\lambda (1+\zeta)}W\lVert\bm x\rVert_{(\bH^\lambda)^{-1}}.
\end{align*}

%-----------------Combing all bounds---------------------%
%---------------------------------------------------%
\subsubsection*{Combining all bounds}
Now combining all above bounds, we get, with probability at least $1-\delta,$
\begin{align*}
    \frac{\lvert\bm x^\top (\hat{\bm\theta}^\lambda - \bm\theta^\star)\rvert}{8\lVert\bm x \rVert_{(\bH^\lambda)^{-1}}}  &\leq \left(\sqrt{\log(1/\delta)} + \xi  \log(1/\delta)\right)+ \varphi(\zeta)  \left( \sqrt{d\log (1/\delta)} + d\xi\log(1/\delta)  \right) + \sqrt{\lambda(1+\zeta)}W,
\end{align*}
as desired. 
\end{proof}

%---------------------------remark---------------------------%
%--------------------------------------------------------------%
\begin{remark}
  In the above proof of~\eqref{eq-appendix-tmp1}, we borrow the calculation in \cite{lee2024nearly}, which was used to verify the self-concordant-like property of the linear MNL-likelihood function. The key distinction in our approach is that we retain the $b_{mj}$ term throughout the calculation, whereas \cite{lee2024nearly} directly apply the bound $\lvert b_{mj}\rvert \lesssim \lVert \hat{\bm\theta}^\lambda - \bm\theta^\star\rVert_2.$ This difference allows us to derive a bound on $\bE$ that depends only on $\max_{m\le t,j\in S_m} \lvert\bm x_{mj}^\top (\hat{\bm\theta}^\lambda - \bm\theta^\star) \rvert$, rather than on $\lVert \hat{\bm\theta}^\lambda - \bm\theta^\star \rVert_2$. It is worth noting that a very recent work~\cite{lee2025improved} also uses a similar argument to establish a refined self-concordant-like property for the MNL likelihood function (specifically, the $\ell_\infty$ self-concordant-like property in their Proposition B.3) to obtain sharper confidence bounds, and the bound in~\eqref{eq-appendix-tmp1} can also be derived from their Proposition B.3.

  While the above proof primarily focuses on the relation between $\bH(\hat{\bm\theta}^\lambda)$ and $\bH(\bm\theta^\star)$, the inequality in~\eqref{eq: proof-to-H-dom} can also imply a dominance relation between $\bH(\bm\theta)$ and $\bH(\bm\theta^\star)$ for general $\bm\theta$. We summarize this general result in the following proposition for future applications.

\begin{proposition}\label{prop-H-dominance}
    Given any $\bm\theta\in \mathbb{R}^d,\lambda \geq 0,$ denoting $\zeta_{\bm\theta}:=3\sqrt{2}\max_{m\leq t, j\in S_m} \lvert\bm x_{mj}^\top (\bm\theta - \bm\theta^\star) \rvert,$ then it holds that \begin{align*}
  \frac{1}{1+2\varphi(\zeta_{\bm\theta})}  \bH(\bm\theta^\star)  \preceq  \bH(\bm\theta) \preceq  (1+2\varphi(\zeta_{\bm\theta}))\bH(\bm\theta^\star).
\end{align*}
\end{proposition}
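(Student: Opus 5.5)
The plan is to reuse the per-round self-concordance estimate \eqref{eq: proof-to-H-dom} from the proof of Lemma~\ref{lem-confidence-step1}, now along the segment from $\bm\theta^\star$ to an arbitrary $\bm\theta$ instead of to $\hat{\bm\theta}^\lambda$. That derivation never used the fact that $\hat{\bm\theta}^\lambda$ is the MLE. It only used the quantities $b_{mj}=\bm x_{mj}^\top(\hat{\bm\theta}^\lambda-\bm\theta^\star)$ to bound $\frac{d}{ds}\lVert\bm w\rVert^2_{\bDL_m(\cdot)}$. So I will set $b_{mj}:=\bm x_{mj}^\top(\bm\theta-\bm\theta^\star)$ and $u_{mj}(s):=\bm x_{mj}^\top(\bm\theta^\star+s(\bm\theta-\bm\theta^\star))$, and repeat the same computation verbatim. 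This gives $|\psi'(s)|\le 3\sqrt2\max_{j\in S_m}|b_{mj}|\le\zeta_{\bm\theta}$ for $\psi(s)=\log\lVert\bm w\rVert^2_{\bDL_m(\bm\theta^\star+s(\bm\theta-\bm\theta^\star))}$. If $\lVert\bm w\rVert_{\bDL_m(\bm\theta^\star)}=0$, the explicit pairwise-difference formula shows that the norm vanishes for every $s$, because $v_{mj}(s)>0$. This handles the degenerate case of the logarithm.

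Evaluating at $s=1$ gives $e^{-\zeta_{\bm\theta}}\bDL_m(\bm\theta^\star)\preceq\bDL_m(\bm\theta)\preceq e^{\zeta_{\bm\theta}}\bDL_m(\bm\theta^\star)$ for every $m$. Summing over $m\le t$ and using $\bH(\bm\theta)=\sum_m\bDL_m(\bm\theta)$ yields
\[
e^{-\zeta_{\bm\theta}}\bH(\bm\theta^\star)\preceq\bH(\bm\theta)\preceq e^{\zeta_{\bm\theta}}\bH(\bm\theta^\star).
\]
If one prefers the $\lambda$-regularized Hessians, adding $\lambda\bI$ to both sides preserves these inequalities, because the constant $e^{\zeta}\ge1$ and $e^{-\zeta}\le 1$.

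It then remains to prove the scalar inequality $e^{\zeta}\le 1+2\varphi(\zeta)$ for $\zeta\ge0$. This also gives the lower bound, since $e^{-\zeta}\ge (1+2\varphi(\zeta))^{-1}$. Write $g:=e^\zeta-1-\zeta\ge \zeta^2/2$, so that $\varphi(\zeta)=g(1+\zeta)/\zeta$. The claim becomes $g+\zeta\le 2g/\zeta+2g$, that is, $\zeta\le 2g/\zeta+g$. This follows from $2g/\zeta\ge\zeta$ and $g\ge0$. At $\zeta=0$, both sides equal $1$ by continuity, with $\varphi(0)=0$.

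I expect the only delicate step to be checking that the derivative bound from \cite{lee2024nearly} in the earlier proof transfers unchanged to a general direction. In particular, I must verify that the factor $3\sqrt2\max_j|b_{mj}|$ bounds $\big|\sum_{k}(b_{mi}+b_{mj}-2b_{mk})v_{mk}(s)\big|/(1+\sum_j v_{mj}(s))$ with $b_{m0}=0$. The averaging weights sum to one, which already gives the crude bound $4\max_j|b_{mj}|$. Matching the stated constant is the only place where I would need to recheck the algebra. All remaining steps are routine.
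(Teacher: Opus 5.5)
Your proposal is correct and matches the paper's proof. Evaluate the per-round estimate \eqref{eq: proof-to-H-dom} at $s=1$ along the segment from $\bm\theta^\star$ to a general $\bm\theta$, sum over $m$ to get $e^{-\zeta_{\bm\theta}}\bH(\bm\theta^\star)\preceq\bH(\bm\theta)\preceq e^{\zeta_{\bm\theta}}\bH(\bm\theta^\star)$, and finish with $e^{\zeta}\le 1+2\varphi(\zeta)$; your rewriting via $g=e^\zeta-1-\zeta$ is equivalent to the paper's decomposition. The step you flagged needs no further work: your averaging bound $4\max_{j}|b_{mj}|$ already implies the stated factor, because $4<3\sqrt2\approx 4.24$.
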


\begin{proof}[\textbf{Proof of Proposition~\ref{prop-H-dominance}}]
    By taking $s = 1$ and taking summation over $m\le t$ in~\eqref{eq: proof-to-H-dom}, we can get \begin{align*}
    e^{-\zeta_{\bm\theta}}\lVert \bw \rVert_{\bH(\bm\theta^\star)}\leq    \lVert \bw \rVert_{\bH(\bm\theta)} \leq e^{\zeta_{\bm\theta}}\lVert \bw \rVert_{\bH(\bm\theta^\star)},
    \end{align*}
    for any unit vector $\bm w$, thus it holds that \begin{align*}
   e^{-\zeta_{\bm\theta}} \bH(\bm\theta^\star) \preceq     \bH(\bm\theta) \preceq e^{\zeta_{\bm\theta}} \bH(\bm\theta^\star).
    \end{align*}
    Now by \begin{align*}
       1+2\varphi(\zeta_{\bm\theta}) = 1+2(1+\zeta_{\bm\theta})\left(\frac{e^{\zeta_{\bm\theta}} - 1}{\zeta_{\bm\theta}} - 1 \right)= e^{\zeta_{\bm\theta}} + (e^{\zeta_{\bm\theta}} - 1 -\zeta_{\bm\theta}) + 2\left(\frac{e^{\zeta_{\bm\theta}} - 1}{\zeta_{\bm\theta}} - 1 - \frac{\zeta_{\bm\theta}}{2}\right) \geq e^{\zeta_{\bm\theta}},
    \end{align*}
    the claim holds.
\end{proof}
\end{remark}

Lemma~\ref{lem-confidence-step1} provides an upper bound on the ratio between $\bm x^\top(\hat{\bm\theta}^\lambda - \bm\theta^\star)$ and $\lVert\bm x \rVert_{(\bH^\lambda)^{-1}}$. However, the upper bound involves both $\xi$ and $\zeta$ simultaneously. Our next lemma shows that when $\xi$ is well-controlled, the $\zeta$ factor can be bounded by $\xi$.

%---------------------------Lemma B5---------------------------%
%--------------------------------------------------------------%
\begin{lemma}\label{lem-confidence-step2}
Under the condition 
\begin{align*}
  \xi \leq \min\left\{\frac{1}{144\sqrt{d\log(N/\delta)}}, \frac{1}{24\sqrt\lambda W}\right\},
\end{align*}
we have 
\begin{align}\label{eq-condition-in-proof}
 \varphi(\zeta)  \leq  \zeta \leq 32\sqrt{2}\xi \left(\sqrt{\log(N/\delta)}  + \xi \log(N/\delta) + \sqrt{2\lambda}W \right).
\end{align}
\end{lemma}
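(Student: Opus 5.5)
Since $\zeta$ depends on the unknown estimator, we cannot bound it directly; the plan is a self-bounding (bootstrap) argument built on Lemma~\ref{lem-confidence-step1}. Apply Lemma~\ref{lem-confidence-step1} with $\bm x=\bm x_{mj}$ for every $m\le t$, $j\in S_m$. Only the first term ($J_1$) is vector-specific, and its concentration must hold simultaneously for all these directions, so we union bound over them. The $J_2$ bound (the $\varepsilon$-net bound on $\lVert\by_t\rVert_{(\bH^\lambda)^{-1}}$) is a single event, and the $J_3$ bound is deterministic.

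Two details need care in the union bound. First, I expect the count of directions to be absorbed as $N$: the distinct feature vectors number at most $N$ in fixed design, and otherwise the $\log(NT/\delta)$ form is used. This is what replaces $\log(1/\delta)$ by $\log(N/\delta)$. Second, since $\lVert\bm x_{mj}\rVert_{(\bH^\lambda)^{-1}}\le\xi$, multiplying by $3\sqrt2$ gives on this event
\[
\zeta\le 24\sqrt2\,\xi\Big[\sqrt{\log(N/\delta)}+\xi\log(N/\delta)+\varphi(\zeta)\big(\sqrt{d\log(N/\delta)}+d\xi\log(N/\delta)+\sqrt\lambda W\big)+\sqrt{\lambda(1+\zeta)}W\Big].
\]

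Next, close this implicit inequality. Assume a priori that $\zeta\le1$. Then $\varphi(\zeta)\le c\zeta$ with a small explicit constant: since $(e^\zeta-1)/\zeta-1\le \zeta(e-2)$ on $[0,1]$, we get $\varphi(\zeta)\le 2(e-2)\zeta\le 1.5\zeta$, and also $\sqrt{1+\zeta}\le\sqrt2$. Under the hypothesis $\xi\le 1/(144\sqrt{d\log(N/\delta)})\wedge 1/(24\sqrt\lambda W)$, the coefficient multiplying $\varphi(\zeta)$ is
\[
24\sqrt2\,\xi\big(\sqrt{d\log(N/\delta)}+d\xi\log(N/\delta)+\sqrt\lambda W\big)\le 24\sqrt2\Big(\tfrac1{144}+\tfrac1{144^2}+\tfrac1{24}\Big),
\]
which is roughly $1/2$ or less. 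Multiplied by $1.5$ this is $<1$, so the $\varphi(\zeta)$ term can be moved to the left side. This gives $\zeta\le C\xi(\sqrt{\log}+\xi\log+\sqrt{2\lambda}W)$, and the constants are tuned so that $C\le 32\sqrt2$. The remaining terms are also small: $24\sqrt2\,\xi\sqrt{\log(N/\delta)}\le 24\sqrt2/144<1/4$, and similarly for the others. So the resulting bound forces $\zeta<1$, consistent with the a priori assumption. The inequality $\varphi(\zeta)\le\zeta$ then follows from $\varphi(\zeta)\le 2(e-2)\zeta$, provided we sharpen to $\zeta\le 1/2$, where $(e^\zeta-1)/\zeta-1\le \zeta\cdot 0.6$ and hence $\varphi\le 0.9\zeta$.

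The main obstacle is justifying the a priori assumption $\zeta\le1$, since the self-bounding inequality alone does not rule out a large-$\zeta$ branch. I would handle it by continuity. Consider the path of regularized estimators (e.g.\ in $\lambda'$ from $\infty$ down to $\lambda$, or the segment $\bm\theta^\star+s(\hat{\bm\theta}^\lambda-\bm\theta^\star)$ with the corresponding perturbed score equation). Along it, $\zeta$ varies continuously and starts at a value below $1/2$. The implicit inequality holds along the whole path on the same high-probability event, because the noise bounds do not depend on the path. The bootstrap shows that $\zeta\le1$ implies $\zeta<1/2$, so $\zeta$ can never cross $1/2$. Therefore $\zeta\le 1/2$ at the endpoint, which yields \eqref{eq-condition-in-proof}. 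If the continuity route proves awkward, the fallback is a direct convexity argument: localize the minimizer of $\ell^\lambda$ inside the set $\{\zeta\le 1/2\}$ by showing the gradient points outward on its boundary, using the same Hessian comparison \eqref{eq: proof-to-H-dom}.
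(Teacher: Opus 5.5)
The union bound over the directions $\bm x_{mj}$ and your implicit inequality match the paper's first step. The argument breaks at the step that is supposed to supply the a priori bound on $\zeta$.

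Under the hypothesis $\xi\le 1/(24\sqrt\lambda W)$, the $\lambda$-terms in your displayed inequality are not small:
\begin{itemize}
\item $24\sqrt2\,\xi\sqrt\lambda W$ can be as large as $\sqrt2$. So the coefficient of $\varphi(\zeta)$ is about $1.65$, not ``roughly $1/2$''.
\item The constant term $24\sqrt2\,\xi\sqrt{\lambda(1+\zeta)}W$ can be as large as $\sqrt{2(1+\zeta)}>1$.
\end{itemize}
In that regime every $\zeta\in[0,1]$ satisfies the inequality. So ``$\zeta\le 1\Rightarrow\zeta<1/2$'' is false, and the continuity argument has nothing to propagate.

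More fundamentally, the right-hand side of \eqref{eq-condition-in-proof} can itself be close to $3$ under the hypotheses; the piece $32\sqrt2\,\xi\sqrt{2\lambda}W$ alone reaches $8/3$. Closing this implicit inequality therefore can never certify that $\zeta$ is small, and for such $\zeta$ the claim $\varphi(\zeta)\le\zeta$ fails (e.g.\ $\varphi(2)\approx 6.6$).

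The proposed paths also have problems. As $\lambda'\to\infty$, $\hat{\bm\theta}^{\lambda'}\to\bm 0$, so that path does not start near $\bm\theta^\star$. Points on the literal segment satisfy no score equation. Only a noise-rescaled homotopy of the first-order condition would keep Lemma~\ref{lem-confidence-step1} valid along the path.

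The missing idea is a second self-bound that holds globally and is only linear in $\zeta$.
\begin{itemize}
\item By Cauchy--Schwarz, $\zeta\le 3\sqrt2\,\xi\lVert\hat{\bm\theta}^\lambda-\bm\theta^\star\rVert_{\bH^\lambda}$, with $\hat{\bm\theta}^\lambda-\bm\theta^\star=(\bH^\lambda+\bE)^{-1}(\by_t-\lambda\bm\theta^\star)$.
\item The lower comparison $\bH+\bE\succeq\frac{1-e^{-\zeta}}{\zeta}\bH\succeq\frac{1}{1+\zeta}\bH$ holds for every $\zeta$; only the upper comparison degrades like $\varphi(\zeta)$. Hence $\lVert(\bH^\lambda+\bE)^{-1}\by_t\rVert_{\bH^\lambda}\le(1+\zeta)\lVert\by_t\rVert_{(\bH^\lambda)^{-1}}$.
\item The $\varepsilon$-net bound \eqref{eq-theta-hat-bound} then gives $\zeta\le\frac14(1+\zeta)+\frac15$, i.e.\ $\zeta\le 3/5$.
\end{itemize}
This needs no smallness assumption and no continuity argument. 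Here the $\lambda$-term carries weight $3\sqrt2\,\xi\sqrt\lambda W\le\sqrt2/8$ instead of $\sqrt2$. This crude bound pays a factor $\sqrt d$, which is exactly why the burn-in condition requires $\xi\lesssim 1/\sqrt{d\log(N/\delta)}$.

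With $\zeta\le3/5$ in hand, $\varphi(\zeta)\le\zeta$ follows from $e^x\le1+x+\frac58x^2$ on $[0,3/5]$. The absorption step then works if you use the form actually produced in the proof of Lemma~\ref{lem-confidence-step1}. There $\varphi(\zeta)$ multiplies only $\sqrt{d\log(N/\delta)}+d\xi\log(N/\delta)$, because the $\lambda\bm\theta^\star$ contribution sits entirely in $J_3$. The coefficient is then at most $1/4$, which gives the factor $\frac43\cdot 24\sqrt2=32\sqrt2$.
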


\begin{proof}[\textbf{Proof of Lemma~\ref{lem-confidence-step2}}]
Applying Lemma~\ref{lem-confidence-step1} with $\delta'$ to each $\bm x_{mj}$ and take a union bound with $\delta'=\delta/N$, we have then with probability at least $1-\delta,$ 
\begin{align*}
    \frac{\zeta}{3\sqrt{2}\cdot 8 \xi} \leq \left(\sqrt{\log(N/\delta)} + \xi  \log(N/\delta)\right)+ \varphi(\zeta)  \left( \sqrt{d\log (N/\delta)} + d\xi\log(N/\delta) \right) + \sqrt{\lambda(1+\zeta)}W.
\end{align*}
Now by the elementary inequality $$e^{x} \leq 1+x+\frac{5}{8}x^2,\quad \forall\bm x \in \left[0,\frac{3}{5}\right],$$ it holds that 
\begin{align*}
    \zeta\leq \frac{3}{5} \quad \implies \quad \varphi(\zeta) =(1+\zeta)(\frac{e^{\zeta} - 1}{\zeta} - 1)\leq \frac{8}{5}\cdot \frac{5\zeta}{8}\leq \zeta.
\end{align*}  
Then, when $\zeta\leq \frac{3}{5}$, the above inequality can be reduced to  \begin{align*}
      \zeta\left[1 - 24\sqrt{2}\xi\left( \sqrt{d\log (N/\delta)} + d \xi  \log(N/\delta) \right)\right] \leq  24\sqrt{2}\xi \left(\sqrt{\log(N/\delta)}  + \xi \log(N/\delta) + \sqrt{2\lambda}W \right).
\end{align*}
As a result, if the following holds simultaneously  
\begin{equation*}
\zeta \leq \frac{3}{5},\quad \xi\leq \frac{1}{144\sqrt{d\log(N/\delta)}},    
\end{equation*}
then $24\sqrt{2}\xi\left( \sqrt{d\log (N/\delta)} + d \xi  \log(N/\delta) \right)\le \frac{1}{4}$, thus
\begin{align*}
\varphi(\zeta)\le\zeta \leq 32\sqrt{2}\xi \left(\sqrt{\log(N/\delta)}  + \xi \log(N/\delta) + \sqrt{2\lambda}W \right) .
\end{align*}
On the other hand, by \eqref{eq-theta-hat-bound}, it holds that, with probability at least $1-\delta$, 
\begin{align*}
\zeta &\leq 3\sqrt{2}{\xi} \lVert \hat{\bm\theta}^\lambda - \bm\theta^\star \rVert_{\bH^\lambda_t}= 3\sqrt{2}{\xi} \lVert (\bH^\lambda + \bE)^{-1}(\by_t - \lambda \bm\theta^\star)\rVert_{\bH^\lambda}\\
&\leq 3\sqrt{2}\xi \left[(1+\zeta)\left( 8\sqrt{d\log(1/\delta)} + 8d\xi \log(1/\delta)\right) +\sqrt{\lambda}W \right]
\end{align*}
As a consequence, 
\begin{align*}
   & \xi \leq \min\left\{\frac{1}{144\sqrt{d\log(1/\delta)}}, \frac{1}{24\sqrt\lambda W}\right\}\\
    \implies& \zeta\leq \frac{24\sqrt{2}\xi\left(\sqrt{d\log(1/\delta)}+d\xi \log(1/\delta) \right)+ 3\sqrt{2}\xi\sqrt{\lambda}W}{1-24\sqrt{2}\xi\left(\sqrt{d\log(1/\delta)}+d\xi \log(1/\delta) \right) } \leq \frac{\frac{1}{4}+ \frac{1}{5} }{1-\frac{1}{4}} = \frac{3}{5}.
\end{align*}
This verifies that the condition $\xi \leq \min\left\{\frac{1}{144\sqrt{d\log(N/\delta)}}, \frac{1}{24\sqrt\lambda W}\right\}$ is sufficient to ensure that \eqref{eq-condition-in-proof} holds, as desired.
\end{proof}

\noindent Now, combining the results of Lemma~\ref{lem-confidence-step1} and Lemma~\ref{lem-confidence-step2}, we have $\xi \leq \min\left\{\frac{1}{144\sqrt{d\log(N/\delta)}}, \frac{1}{24\sqrt\lambda W}\right\}$ which implies that, with probability at least $1-\delta,$
\begin{align*}
    \frac{\lvert\bm x^\top (\hat{\bm\theta}^\lambda - \bm\theta^\star)\rvert}{8\lVert\bm x \rVert_{(\bH^\lambda)^{-1}} }
      &\leq   \varphi(\zeta)  \left( \sqrt{d\log (1/\delta)} + d\xi\log(1/\delta)  \right) + \sqrt{\lambda(1+\zeta)}W + \sqrt{\log(1/\delta)}+\xi \log(1/\delta) \\
      &\leq {32\sqrt{2}\xi} \left(\left(1+\frac{1}{144\sqrt{d}}\right)\sqrt{\log (N/\delta)}  + \sqrt{2\lambda} W \right) \cdot\big( \sqrt{d\log (1/\delta)} + d\xi\log(1/\delta)  \big) \\
      & + \sqrt{2\lambda}W + \sqrt{\log(1/\delta)} + \xi\log(1/\delta)\\
      &\leq  \left(\left(1+\frac{1}{144\sqrt{d}}\right)\sqrt{\log (N/\delta)} +\sqrt{2\lambda} W \right) \cdot \frac{1}{3} + \sqrt{2\lambda} W  + \left(1+\frac{1}{144\sqrt{d}}\right) \sqrt{\log(N/\delta)} \\
      &\leq 2\sqrt{\log(N/\delta)} + 2\sqrt{\lambda} W.
\end{align*}
Thus, with probability at least $1-\delta,$
$$|\bm x^{\top}(\hat{\bm\theta}^\lambda-\bm\theta^\star)| \leq 16 \|\bm x\|_{(\bH^\lambda)^{-1}} \left(\sqrt{\log (N/\delta)} + \sqrt{\lambda}W \right).
$$
This finishes the proof of the confidence bound result under the burn-in condition. Moreover, by $\varphi(\zeta)\leq \zeta \leq 3/5,$ Proposition~\ref{prop-H-dominance} implies
\begin{align*}
 \frac{1}{3}\bH^\lambda \preceq \frac{1}{1+2\zeta}\bH^\lambda  \preceq   \bH^\lambda + \bE \preceq (1+2\varphi(\zeta)) \bH^\lambda \preceq 3\bH^\lambda.
\end{align*}
This finishes the proof of Corollary~\ref{thm-sup-ucb-confidence-bound}.

%% ---- END appendix_section3.tex ----

%% ---- BEGIN appendix_perturbation.tex ----
\section{Perturbation Results for Revenue Functions}\label{sec:perturbation-pricing}

Before proving the results in Section~\ref{sec-offline} and Section~\ref{sec-online}, we first introduce several
perturbation bounds for the revenue function with respect to the utility vector $\bm u$.
Throughout this section, we treat the action $(S,\bm p)$ as fixed and study how the MNL revenue changes when $\bm u$ is perturbed.
To emphasize the dependence on $\bm u$, for any $S\subset[N]$ and price vector $\bm p\in[0,P]^N$, define
\begin{equation}\label{appendix-C: Q-u-def}
    Q(\bm u; S,\bm p)
    :=
    R(S,\bm p| e^{\bm u})
    =
    \frac{\sum_{j\in S} p_j e^{u_j}}{1+\sum_{j\in S} e^{u_j}}.
\end{equation}
We also write the induced MNL choice probabilities under utility $\bm u$ as
\[
q_i(S,\bm p|e^{\bm u})
:=
\frac{e^{u_i}}{1+\sum_{j\in S}e^{u_j}},
\ \forall i\in S,
\quad\text{and}\quad
q_0(S,\bm p|e^{\bm u})
:=
\frac{1}{1+\sum_{j\in S}e^{u_j}}.
\]
Note that $q_i(S,\bm p|e^{\bm u})$ depends on $(S,\bm p)$ only through the utility vector $\bm u$. Most importantly, in our model $\bm u$ is generated by the price-based features $\widetilde{\bX}(\bm p)$, it is itself price-dependent. Hence, all utility comparisons must be made under the same price vector $\bm p$.

First, we recall first-order and second-order derivative identities, originally proved in \citet{perivier2022dynamic} and refined in
\citet{lee2024nearly} (for the uniform case). The statements below allow non-identical prices $p_i$ and include a short
proof for completeness.

\begin{proposition}[Lemma~E.3 of \cite{lee2024nearly}]\label{appendix-prop-perturbation-of-Q}
Fix $\bm p\in[0,P]^N$ and $S\subset[N]$, for any $\bm u,\bm w\in\mathbb R^N$, it follows that
\begin{align}
\label{ch4-eq: Q-first-order}
&\langle\nabla Q(\bm u;S,\bm p),\bm w\rangle
=
\sum_{i\in S}
q_i(S,\bm p| e^{\bm u})\big(p_i-Q(\bm u;S,\bm p)\big)w_i,
\\
\label{ch4-eq: Q-second-order}
&\left|\bm w^\top\nabla^2Q(\bm u;S,\bm p)\bm w\right|
\le
3P\cdot \max_{i\in S} w_i^2.
\end{align}
\end{proposition}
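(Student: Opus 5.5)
Both claims follow from a direct computation. Write $Z:=1+\sum_{j\in S}e^{u_j}$, so that $Q=\sum_{j\in S}p_je^{u_j}/Z$ and $q_i=e^{u_i}/Z$. Since $\bm p$ is held fixed, $Q$ is a smooth function of $\bm u$ only, and coordinates outside $S$ do not enter.

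For the first-order identity, differentiate the quotient in $u_i$ for $i\in S$:
\[
\partial_{u_i}Q=\frac{p_ie^{u_i}}{Z}-\frac{e^{u_i}\sum_{j\in S}p_je^{u_j}}{Z^2}=q_i\,(p_i-Q).
\]
Pairing with $\bm w$ then gives \eqref{ch4-eq: Q-first-order}. We will also use the standard identities $\partial_{u_k}q_i=q_i(\mathbf 1\{i=k\}-q_k)$ and $\partial_{u_k}Q=q_k(p_k-Q)$.

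For the second-order bound, differentiate $\partial_{u_i}Q$ once more:
\[
\partial_{u_k}\partial_{u_i}Q=q_i(\mathbf 1\{i=k\}-q_k)(p_i-Q)-q_iq_k(p_k-Q).
\]
Introduce $\bar w:=\sum_{k\in S}q_kw_k$ and contract with $\bm w$ on both sides. This gives
\[
\bm w^\top\nabla^2Q\,\bm w=\sum_{i}q_i(p_i-Q)w_i^2-2\bar w\sum_i q_i(p_i-Q)w_i.
\]
Using $\sum_iq_i(p_i-Q)=Q\,q_0$, the second sum can be rewritten as $\sum_i q_i(p_i-Q)(w_i-\bar w)+\bar wQq_0$, which centers the expression. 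The bounds are then elementary.

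\textbf{First term.} Since $p_i,Q\in[0,P]$, we have $|p_i-Q|\le P$, and $\sum_iq_i\le1$. Hence this term is at most $P\max_iw_i^2$ in absolute value.

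\textbf{Second term.} By Cauchy--Schwarz with weights $q_i$, $|\sum_iq_i(p_i-Q)w_i|\le P\sqrt{\sum_iq_iw_i^2}\le P\max_i|w_i|$. Moreover $|\bar w|\le\max_i|w_i|$. Together these give a bound of $2P\max_iw_i^2$.

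Adding the two bounds yields $3P\max_{i\in S}w_i^2$. The centering rewrite is not actually needed for the constant $3$, and I would drop it. The only step requiring care is checking that the constant comes out as exactly $3$ with no extra factor of $2$. This is handled by using $|\bar w|\le \max|w_i|$ rather than bounding through $\sum_i|w_i|$. In particular, no dependence on $|S|$ arises, since every sum is weighted by the $q_i$, which sum to at most one.
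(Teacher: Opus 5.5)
Your proof is correct and follows essentially the same route as the paper. Both compute $\partial_{u_i}Q=q_i(p_i-Q)$ and the second derivatives directly, then bound them using $|p_i-Q|\le P$ and $\sum_{i\in S}q_i\le 1$; the only difference is bookkeeping, since you contract the Hessian into $\sum_i q_i(p_i-Q)w_i^2-2\bar w\sum_i q_i(p_i-Q)w_i$ and use $|\bar w|\le\max_i|w_i|$, whereas the paper bounds the entries $q_i(1-2q_i)(p_i-Q)$ and $-q_iq_j(p_i+p_j-2Q)$ in absolute value and applies AM--GM to the off-diagonal terms.
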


\begin{proof}[\textbf{Proof of Proposition~\ref{appendix-prop-perturbation-of-Q}}]
Fix $(S,\bm p)$ and write $Q(\bm u;S,\bm p)$ as $Q(\bm u)$ for brevity.
To prove~\eqref{ch4-eq: Q-first-order}, note that $\partial_{u_i}Q(\bm u)=0$ for $i\notin S$ and for $i\in S$,
\begin{align*}
\partial_{u_i}Q(\bm u)
&=
\frac{p_ie^{u_i}}{1+\sum_{j\in S}e^{u_j}}
-
\frac{e^{u_i}\sum_{j\in S}p_je^{u_j}}{\big(1+\sum_{j\in S}e^{u_j}\big)^2}
\\
&=
q_i(S,\bm p| e^{\bm u})\big(p_i-Q(\bm u)\big).
\end{align*}
Therefore,
\[
\langle\nabla Q(\bm u),\bm w\rangle
=
\sum_{i\in S}\partial_{u_i}Q(\bm u)\,w_i
=
\sum_{i\in S}q_i(S,\bm p| e^{\bm u})\big(p_i-Q(\bm u)\big)w_i.
\]
To prove~\eqref{ch4-eq: Q-second-order}, note that $\partial_{u_i u_j}^2Q(\bm u)=0$ if $i\notin S$ or $j\notin S$.
For $i,j\in S$,
\begin{align*}
\partial_{u_j}\partial_{u_i}Q(\bm u)
&=
\partial_{u_j}\Big(q_i(S,\bm p| e^{\bm u})\big(p_i-Q(\bm u)\big)\Big)
\\
&=
\partial_{u_j}q_i(S,\bm p| e^{\bm u})\big(p_i-Q(\bm u)\big)
-
q_i(S,\bm p| e^{\bm u})q_j(S,\bm p| e^{\bm u})\big(p_j-Q(\bm u)\big).
\end{align*}
Also, for $i,j\in S$,
\[
\partial_{u_j} q_i(S,\bm p| e^{\bm u})=
\begin{cases}
-q_i(S,\bm p| e^{\bm u})q_j(S,\bm p| e^{\bm u}) & j\neq i,\\
(1-q_i(S,\bm p| e^{\bm u}))q_i(S,\bm p| e^{\bm u}) & j=i.
\end{cases}
\]
Hence, for $i\in S$,
\[
\partial^2_{u_i}Q(\bm u)
=
q_i(S,\bm p| e^{\bm u})\big(1-2q_i(S,\bm p| e^{\bm u})\big)\big(p_i-Q(\bm u)\big),
\]
and for $i\neq j$ with $i,j\in S$,
\[
\partial_{u_j}\partial_{u_i}Q(\bm u)
=
-q_i(S,\bm p| e^{\bm u})q_j(S,\bm p| e^{\bm u})\big(p_i+p_j-2Q(\bm u)\big).
\]
Therefore,
\begin{align*}
\left|\bm w^\top\nabla^2Q(\bm u)\bm w\right|
&\le
\sum_{i,j\in S}|w_iw_j|\cdot\left|\partial_{u_i u_j}^2Q(\bm u)\right|
\\
&\le
P\sum_{i\in S}w_i^2 q_i(S,\bm p| e^{\bm u})
+
2P\sum_{i\neq j,\ i,j\in S}|w_iw_j|\,q_i(S,\bm p| e^{\bm u})q_j(S,\bm p| e^{\bm u})
\\
&\le
P\sum_{i\in S}w_i^2 q_i(S,\bm p| e^{\bm u})
+
P\sum_{i\neq j,\ i,j\in S}(w_i^2+w_j^2)q_i(S,\bm p| e^{\bm u})q_j(S,\bm p| e^{\bm u})
\\
&\le
3P\sum_{i\in S}w_i^2 q_i(S,\bm p| e^{\bm u})
\le
3P\max_{i\in S}w_i^2.
\end{align*}
\end{proof}

 Based on Proposition~\ref{appendix-prop-perturbation-of-Q}, we now present perturbation bounds for $Q$.
%--------------------------------------------%
\begin{proposition}\label{prop: assortment-perturbation}
The following perturbation bounds hold:
\begin{enumerate}
\item[(i)] For any $(S,\bm p)\in\cS_K\times[0,P]^N$ and $\bm u,\bm u'\in\mathbb R^N$, let $\bm w:=\bm u'-\bm u$. Then, 
\begin{align*}
\left|Q(\bm u';S,\bm p)-Q(\bm u;S,\bm p)\right|
&\le
\sqrt{\sum_{j\in S}e^{u_j}\left|p_j-Q(\bm u;S,\bm p)\right|^2}
\cdot
\sqrt{\sum_{j\in S}q_j(S,\bm p| e^{\bm u})q_0(S,\bm p| e^{\bm u})\,w_j^2}
+\frac{3}{2}P\cdot \max_{j\in S} w_j^2.
\end{align*}

\item[(ii)]
Let $(\tilde S,\tilde{\bm p})\in\argmax_{(S,\bm p)\in\cS_K\times[0,P]^N}R(S,\bm p| e^{\tilde{\bm u}})$ for some $\tilde{\bm u}\ge \bm u$
(elementwise), and define $\tilde{\bm w}:=\tilde{\bm u}-\bm u$. Then,
\[
\left|Q(\tilde{\bm u};\tilde S,\tilde{\bm p})-Q(\bm u;\tilde S,\tilde{\bm p})\right|
\le
P\sqrt{\sum_{j\in \tilde S}q_j(\tilde S,\tilde{\bm p}| e^{\tilde{\bm u}})q_0(\tilde S,\tilde{\bm p}| e^{\tilde{\bm u}})\tilde w_j^2}
+\frac{3}{2}P\cdot\max_{j\in \tilde S}\tilde w_j^2.
\]
\end{enumerate}
\end{proposition}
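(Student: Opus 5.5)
For part (i), I will use a second-order Taylor expansion of $Q(\cdot\,;S,\bm p)$ along the segment from $\bm u$ to $\bm u'=\bm u+\bm w$, and control both terms with Proposition~\ref{appendix-prop-perturbation-of-Q}. Concretely, there is $\bar{\bm u}$ on that segment such that
\[
Q(\bm u';S,\bm p)-Q(\bm u;S,\bm p)=\langle\nabla Q(\bm u;S,\bm p),\bm w\rangle+\tfrac12\,\bm w^\top\nabla^2Q(\bar{\bm u};S,\bm p)\bm w .
\]
By \eqref{ch4-eq: Q-second-order}, the remainder is at most $\tfrac32 P\max_{j\in S}w_j^2$ in absolute value. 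For the linear term, \eqref{ch4-eq: Q-first-order} gives $\sum_{j\in S}q_j(p_j-Q)w_j$, with $q_j:=q_j(S,\bm p|e^{\bm u})$ and $Q:=Q(\bm u;S,\bm p)$. The key step is the factorization
\[
q_j=\frac{e^{u_j}}{1+\sum_{k\in S}e^{u_k}}=\sqrt{e^{u_j}}\cdot\sqrt{q_jq_0}.
\]
I will write each summand as $\big(\sqrt{e^{u_j}}(p_j-Q)\big)\cdot\big(\sqrt{q_jq_0}\,w_j\big)$ and apply Cauchy--Schwarz. This produces exactly the product of square roots in (i).

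For part (ii), I will repeat the same argument but expand around $\tilde{\bm u}$ rather than $\bm u$, with $\bm u=\tilde{\bm u}-\tilde{\bm w}$. The remainder is again at most $\tfrac32P\max_{j\in\tilde S}\tilde w_j^2$. The linear term is bounded by Cauchy--Schwarz as in (i), now with $\tilde q_j:=q_j(\tilde S,\tilde{\bm p}|e^{\tilde{\bm u}})$ and $\tilde Q:=Q(\tilde{\bm u};\tilde S,\tilde{\bm p})$:
\[
\sqrt{\sum_{j\in\tilde S}e^{\tilde u_j}(\tilde p_j-\tilde Q)^2}\cdot\sqrt{\sum_{j\in\tilde S}\tilde q_j\tilde q_0\,\tilde w_j^2}.
\]
It then remains to show $\sum_{j\in\tilde S}e^{\tilde u_j}(\tilde p_j-\tilde Q)^2\le P^2$. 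This is the only place where optimality of $(\tilde S,\tilde{\bm p})$ is used, and I expect it to be the main (though short) obstacle.

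To prove this bound I first use optimality to get $0\le \tilde p_j-\tilde Q\le P$ for every $j\in\tilde S$. The upper bound is immediate since $\tilde p_j\le P$ and $\tilde Q\ge0$. For the lower bound: if some $j\in\tilde S$ had $\tilde p_j<\tilde Q$, then dropping $j$ from $\tilde S$ would strictly increase the revenue, because the revenue is a weighted average of $0$ (the no-purchase option) and the prices $\tilde p_k$. That contradicts optimality, and it stays within $\mathcal S_K$. Consequently
\[
(\tilde p_j-\tilde Q)^2\le P(\tilde p_j-\tilde Q)\qquad\text{for all } j\in\tilde S .
\]
Next I use the identity
\[
\sum_{j\in\tilde S}e^{\tilde u_j}(\tilde p_j-\tilde Q)=\Big(1+\sum_{k\in\tilde S}e^{\tilde u_k}\Big)\Big(\tilde Q-\tilde Q(1-\tilde q_0)\Big)=\tilde Q\le P,
\]
which follows from $\sum_j \tilde q_j\tilde p_j=\tilde Q$ and $\sum_j\tilde q_j=1-\tilde q_0$. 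Combining the two displays gives $\sum_{j\in\tilde S}e^{\tilde u_j}(\tilde p_j-\tilde Q)^2\le P\tilde Q\le P^2$. Taking the square root supplies the factor $P$ and completes (ii). The hypothesis $\tilde{\bm u}\ge\bm u$ is not needed for this bound; it only serves the later pessimism/optimism arguments.
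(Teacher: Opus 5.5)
Your proposal is correct and matches the paper's proof essentially step for step. Both use a second-order Taylor expansion with the Hessian bound from Proposition~\ref{appendix-prop-perturbation-of-Q} and Cauchy--Schwarz via $q_j=\sqrt{e^{u_j}}\sqrt{q_jq_0}$. For (ii), both expand around $\tilde{\bm u}$, use $\tilde p_j\ge\tilde Q$ (by optimality, dropping an item), and use the identity $\sum_{j}e^{\tilde u_j}(\tilde p_j-\tilde Q)=\tilde Q$ to bound the sum by $P\tilde Q\le P^2$. Your remark that $\tilde{\bm u}\ge\bm u$ is not needed is also right: the paper uses it only to fix the sign of the difference, which plays no role in the absolute-value bound.
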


\begin{proof}[\textbf{Proof of Proposition~\ref{prop: assortment-perturbation}}]
By Proposition~\ref{appendix-prop-perturbation-of-Q}, there exists $\xi\in(0,1)$ such that
\begin{align*}
\left|Q(\bm u';S,\bm p)-Q(\bm u;S,\bm p)-\langle\nabla Q(\bm u;S,\bm p),\bm w\rangle\right|
=
\left|\frac{1}{2}\bm w^\top\nabla^2Q(\bm u+\xi\bm w;S,\bm p)\bm w\right|
\le
\frac{3}{2}P\cdot\max_{j\in S}w_j^2.
\end{align*}
On the other hand,
\begin{align*}
\left|\langle\nabla Q(\bm u;S,\bm p),\bm w\rangle\right|
&\le
\sum_{j\in S}q_j(S,\bm p| e^{\bm u})\left|p_j-Q(\bm u;S,\bm p)\right|\cdot |w_j|
\\
&\le
\sqrt{\sum_{j\in S}e^{u_j}\left|p_j-Q(\bm u;S,\bm p)\right|^2}\cdot
\sqrt{\sum_{j\in S}\frac{e^{u_j}w_j^2}{\left(1+\sum_{j\in S}e^{u_j}\right)^2}}
\\
&=
\sqrt{\sum_{j\in S}e^{u_j}\left|p_j-Q(\bm u;S,\bm p)\right|^2}\cdot
\sqrt{\sum_{j\in S}q_j(S,\bm p| e^{\bm u})q_0(S,\bm p| e^{\bm u})w_j^2},
\end{align*}
which proves statement (i).

% For (ii), if $p_j\equiv P$ then
% \[
% p_j-Q(\bm u;S,\bm p)
% =
% P\left(1-\frac{\sum_{j\in S}e^{u_j}}{1+\sum_{j\in S}e^{u_j}}\right)
% =
% P\cdot q_0(S,\bm p| e^{\bm u}),
% \]
% and hence
% \[
% \left|\langle\nabla Q(\bm u;S,\bm p),\bm w\rangle\right|
% \le
% \sum_{j\in S}q_j(S,\bm p| e^{\bm u})\left|p_j-Q(\bm u;S,\bm p)\right||w_j|
% =
% P\sum_{j\in S}q_j(S,\bm p| e^{\bm u})q_0(S,\bm p| e^{\bm u})|w_j|.
% \]

For statement (ii), apply (i) to $(\tilde S,\tilde{\bm p})$ and upper bound
$\sum_{j\in \tilde S}e^{\tilde u_j}\left|\tilde{p}_j-Q(\tilde{\bm u};\tilde S,\tilde{\bm p})\right|^2$ by applying Proposition~\ref{prop-non-uniform-reward-optimal-assortment}. The first statement of Proposition~\ref{prop-non-uniform-reward-optimal-assortment}
implies $\tilde p_j\ge Q(\tilde{\bm u};\tilde S,\tilde{\bm p})$ for all $j\in\tilde S$.
Moreover, let $(\bar S,\bar{\bm p})\in\argmax_{(S,\bm p)\in\cS_K\times[0,P]^N}Q(\bm u;S,\bm p)$, then by the second statement of Proposition~\ref{prop-non-uniform-reward-optimal-assortment}, we have 
$$\max_{(S,\bm p)\in\cS_K\times[0,P]^N}Q(\bu; S,\bm p)=Q(\bu;\bar{S},\bar{\bm p}) \leq Q(\tilde{\bu};\bar{S},\bar{\bm p})\leq  \max_{(S,\bm p)\in\cS_K\times[0,P]^N}Q(\tilde \bu; S,\bm p),$$
which implies that
\[
Q(\bm u;\tilde S,\tilde{\bm p})
\le
\max_{(S,\bm p)\in\cS_K\times[0,P]^N}Q(\bm u;S,{\bm p})
\le
\max_{(S,\bm p)\in\cS_K\times[0,P]^N}Q(\tilde{\bm u};S,{\bm p})
=
Q(\tilde{\bm u};\tilde S,\tilde{\bm p}).
\]
Finally, since
\begin{equation}\label{eq: characterization-of-Q}
Q(\tilde{\bm u};\tilde S,\tilde{\bm p})
=
\frac{\sum_{j\in \tilde S}p_je^{\tilde u_j}}{1+\sum_{j\in \tilde S}e^{\tilde u_j}}
\quad\Longrightarrow\quad
\sum_{j\in \tilde S}e^{\tilde u_j}\big(p_j-Q(\tilde{\bm u};\tilde S,\tilde{\bm p})\big)=Q(\tilde{\bm u};\tilde S,\tilde{\bm p}),
\end{equation}
we obtain
\begin{align*}
\sum_{j\in \tilde S}e^{\tilde u_j}\left|p_j-Q(\tilde{\bm u};\tilde S,\tilde{\bm p})\right|^2
&\le
P\sum_{j\in \tilde S}e^{\tilde u_j}\big(p_j-Q(\tilde{\bm u};\tilde S,\tilde{\bm p})\big)
\\
&=
P\cdot Q(\tilde{\bm u};\tilde S,\tilde{\bm p})
\le
P^2.
\end{align*}
This completes the proof.
\end{proof}

%--------------------------------------------%
\begin{proposition}[Lemma~A.3 in \cite{agrawal2019mnl}]\label{prop-non-uniform-reward-optimal-assortment}
For any utility vector $\bm u$, define the optimal assortment-price pair given $\bm u$ as
$(\bar S,\bar{\bm p})\in\argmax_{(S,\bm p)\in\cS_K\times[0,P]^N}Q(\bm u;S,\bm p)$. Then the following properties hold:
\begin{enumerate}
\item[(i)]  $\bar p_i\ge Q(\bm u;\bar S,\bar{\bm p})$ for all $i\in \bar S$.
\item[(ii)]  For any $\bm u'\ge \bm u$ (elementwise), $Q(\bm u;\bar S,\bar{\bm p})\le Q(\bm u';\bar S,\bar{\bm p})$.
\end{enumerate}
\end{proposition}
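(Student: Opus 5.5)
We argue directly from the optimality of $(\bar S,\bar{\bm p})$ over $\cS_K\times[0,P]^N$, fixing $\bm u$ throughout and writing $R^\star:=Q(\bm u;\bar S,\bar{\bm p})$.

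\emph{Statement (i).} Suppose, to the contrary, that some $i\in\bar S$ has $\bar p_i<R^\star$. Let $S'=\bar S\setminus\{i\}$, keeping the same prices. With $A:=\sum_{j\in\bar S}\bar p_je^{u_j}$ and $B:=1+\sum_{j\in\bar S}e^{u_j}$, we have $R^\star=A/B$. Removing $i$ gives
\[
Q(\bm u;S',\bar{\bm p})=\frac{A-\bar p_ie^{u_i}}{B-e^{u_i}}.
\]
Since $\bar p_i<A/B$ and $B-e^{u_i}\ge 1>0$, the elementary mediant inequality shows this ratio is strictly larger than $A/B$. Also $S'\in\cS_K$, because it is a subset of a feasible assortment. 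This contradicts the optimality of $(\bar S,\bar{\bm p})$. (If $\bar S=\emptyset$, statement (i) is vacuous.) So every item in $\bar S$ has $\bar p_i\ge R^\star$.

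\emph{Statement (ii).} Fix the action $(\bar S,\bar{\bm p})$ and view $Q(\cdot;\bar S,\bar{\bm p})$ as a function of $\bm u$ along the segment $\bm u+s(\bm u'-\bm u)$, $s\in[0,1]$. By \eqref{ch4-eq: Q-first-order} in Proposition~\ref{appendix-prop-perturbation-of-Q}, $\partial_{u_i}Q=q_i(\bar p_i-Q)$. It therefore suffices to show that $\bar p_i\ge Q(\bm v;\bar S,\bar{\bm p})$ for every $i\in\bar S$ and every $\bm v$ on the segment. Then each partial derivative is nonnegative, the direction $\bm u'-\bm u\ge 0$, and $Q$ is nondecreasing along the path.

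Rather than the derivative route, the cleaner plan is to use the identity \eqref{eq: characterization-of-Q}. For any $\bm v$, the condition $Q(\bm v)\le c$ is equivalent to $\sum_{j\in\bar S}e^{v_j}(\bar p_j-c)\le c$. Take $c=R^\star$. By (i), each term $\bar p_j-R^\star\ge 0$. At $\bm u$ we have $\sum_j e^{u_j}(\bar p_j-R^\star)=R^\star$. Increasing $\bm u$ to $\bm u'$ can only increase the left side, so
\[
\sum_j e^{u'_j}(\bar p_j-R^\star)\ge R^\star.
\]
The same equivalence then gives $Q(\bm u';\bar S,\bar{\bm p})\ge R^\star$, which is exactly (ii). I will present this algebraic version and keep the path argument only as intuition.

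The only delicate point is (i): the mediant step needs $B-e^{u_i}>0$, which holds because the outside option contributes the $1$, and it needs feasibility of $S'$ under the cardinality constraint. Prices play no role beyond being held fixed. Both facts are immediate, so I expect no real obstacle; the main care is in stating the equivalence $Q\le c\iff\sum e^{v_j}(\bar p_j-c)\le c$ correctly, including its strict and non-strict versions.
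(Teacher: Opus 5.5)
Your proposal is correct and matches the paper's proof essentially line for line. Part (i) deletes an item priced below $Q(\bm u;\bar S,\bar{\bm p})$ and applies the mediant inequality. Part (ii) uses the identity $\sum_{j\in\bar S}(\bar p_j-\bar Q)e^{u_j}=\bar Q$ together with $\bar p_j-\bar Q\ge 0$ from (i).

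You were also right to keep the derivative route only as intuition. Its sufficient condition, $\bar p_i\ge Q(\bm v;\bar S,\bar{\bm p})$ along the whole segment, can fail when the prices in $\bar S$ are non-uniform: raising the utility of a high-price item can push $Q$ above the smallest price in $\bar S$. The conclusion of (ii) still holds in that case, via your algebraic argument.
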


\begin{proof}[\textbf{Proof of Proposition~\ref{prop-non-uniform-reward-optimal-assortment}}]
Let $v_i := e^{u_i}$ and $v'_i := e^{u'_i}$, and denote the optimal expected revenue as $\bar Q := Q(\bm u;\bar S,\bar{\bm p})$. For convenience, let $\bar V := \sum_{j\in \bar S} v_j$ and $\bar A := \sum_{j\in \bar S} \bar p_j v_j$, which implies $\bar Q = \frac{\bar A}{1+\bar V}$.

To prove statement (i), suppose for contradiction that there exists some $i\in\bar S$ where $\bar p_i < \bar Q$. Consider the restricted assortment $S' := \bar S \setminus \{i\}$ while keeping the prices unchanged. The expected revenue becomes:
$$Q(\bm u;S',\bar{\bm p}) = \frac{\bar A-\bar p_i v_i}{1+\bar V-v_i}.$$
Because $\bar p_i < \bar Q = \frac{\bar A}{1+\bar V}$, by simply calculation, we have $Q(\bm u;S',\bar{\bm p}) > \bar Q$, which directly contradicts the optimality of $(\bar S,\bar{\bm p})$. Hence, we must have $\bar p_i \ge \bar Q$ for all $i\in\bar S$.

To prove statement (ii), consider $\bm u' \ge \bm u$ elementwise, which implies $v'_i \ge v_i$. We first write $\bar Q$ as:
$$\sum_{i\in\bar S}(\bar p_i-\bar Q)v_i = \bar A - \bar Q\bar V = \bar Q.$$
From the first statement, we know that $\bar p_i-\bar Q\ge 0$ for all $i\in\bar S$. It follows that
$$\sum_{i\in\bar S}(\bar p_i-\bar Q)v'_i \ge \sum_{i\in\bar S}(\bar p_i-\bar Q)v_i = \bar Q.$$
Rearranging this inequality yields
$$\begin{aligned}
\sum_{i\in\bar S}\bar p_i v'_i - \bar Q\Big(1+\sum_{i\in\bar S} v'_i\Big) &= \sum_{i\in\bar S}(\bar p_i-\bar Q)v'_i - \bar Q \ge 0,
\end{aligned}$$
which immediately implies
$$\frac{\sum_{i\in\bar S}\bar p_i v'_i}{1+\sum_{i\in\bar S} v'_i} \ge \bar Q.$$
Thus, $Q(\bm u';\bar S,\bar{\bm p})\ge Q(\bm u;\bar S,\bar{\bm p})$, as we desired.
\end{proof}

\section{Proof of Results in Section~\ref{sec-offline}}
Throughout this section, we simplify the notation for the fixed target feature by letting $\bx_i := (\xoff)_i$ and $(S^\star, \bm{p}^\star) := (S^\star_{\text{off}}, \bm{p}_{\text{off}}^\star)$. For brevity, we also suppress the explicit feature dependence in the revenue and choice models, denoting $R(S, \bm{p} | \bm\vartheta) := R(S | \exp(\widetilde{\bX}_{\text{off}}(\bm{p})^\top \bm\vartheta))$ and $q_i(S, \bm{p} | \bm\vartheta) := q_i(S | \exp(\widetilde{\bX}_{\text{off}}(\bm{p})^\top \bm\vartheta))$.
%--------------------------------------------%
\subsection{Proof of Theorem \ref{thm-price-sub-optimality}}

\begin{proof}[\textbf{Proof of Theorem \ref{thm-price-sub-optimality}}]
    Recalling the notation introduced in Algorithm~\ref{alg:pricing-mnl-lcb}, we define the pessimistic utility and the corresponding expected revenue as:
    \begin{align*}
    &u_i^{\mathrm{LCB}}(p) :=   \widetilde{\bx}_i(p)^\top \hat{\bm\vartheta}_{\widetilde \cD}  - 16\sqrt3\lVert \widetilde{\bx}_i(p)\rVert_{\bH_{\widetilde \cD}(\hat{\bm\vartheta}_{\widetilde \cD})^{-1}}\sqrt{\log(N/\delta)},\\
     & R^\text{LCB}(S,\bm p):= \frac{\sum_{i\in S}p_i\exp(u_i^\text{LCB}(p_i))}{1+\sum_{i\in S}\exp(u_i^\text{LCB}(p_i))}.
    \end{align*}
    By Theorem~\ref{prop-sup-ucb-confidence-bound}, with probability at least $1-\delta$, it holds that $u_i^{\mathrm{LCB}}(p)\ \le\ \widetilde{\bx}_i(p)^\top \bm\vartheta^\star $ for any $p\in[0,P]$. Let $w_j=\widetilde{\bx}_j(p^\star_j)^\top \bm\vartheta^\star - u_j^\mathrm{LCB}(p^\star_j) .$ Conditioned on this high-probability event, we can bound the suboptimality as follows:
    \begin{align*}
    \SubOpt(S^\mathrm{LCB},\bm p^\mathrm{LCB}, \Xoff) &= R(S^\star,\bm p^\star\lvert \bm \vartheta^\star) - R(S^\mathrm{LCB},\bm p^\mathrm{LCB}\lvert \bm \vartheta^\star)\\
    & \leq  R(S^\star,\bm p^\star\lvert \bm \vartheta^\star) - R^\mathrm{LCB}(S^\mathrm{LCB},\bm p^\mathrm{LCB}) \\&\leq  R(S^\star,\bm p^\star\lvert \bm \vartheta^\star) - R^\mathrm{LCB}(S^\star,\bm p^\star) \\
    &\leq  P\sqrt{\sum_{j\in S^\star}q_j(S^\star, \bm p^\star|\bm\vartheta^\star)q_0(S^\star,\bm p^\star|\bm\vartheta^\star) w_j^2} + \frac{3}{2}P\cdot \max_{j\in S^\star} w_j^2 .
\end{align*}
The first inequality follows directly from Proposition~\ref{prop-non-uniform-reward-optimal-assortment}. The second inequality holds because $(S^\mathrm{LCB},\bm p^\mathrm{LCB})$ maximizes the pessimistic revenue, implying $R^\mathrm{LCB}(S^\star,\bm p^\star) \leq R^\mathrm{LCB}(S^\mathrm{LCB},\bm p^\mathrm{LCB})$. The final inequality uses statement (ii) of Proposition~\ref{prop: assortment-perturbation}, alongside the fact that $p^\star_j \geq R(S^\star,\bm p^\star | \bm \vartheta^\star)$ for all $j\in S^\star$. Finally, applying Theorem~\ref{prop-sup-ucb-confidence-bound} to bound $w_j \le 64 \lVert \widetilde{\bx}_j( p^\star_j) \rVert_{\bH_{\widetilde\cD}({\bm\vartheta}^\star)^{-1}}\sqrt{\log(N/\delta)}$ completes the proof.

\end{proof}

%-----------------------proof of  Prop 4.4-----------------------------------------%
\subsection{Details of Proposition~\ref{prop-plug-in-confidence-region}}\label{sec-appendix-offline-burn-in-free}

In this section, we provide the detail on how to plug other confidence region results in Algorithm~\ref{alg:pricing-mnl-lcb} to achieve a burn-in-free offline learning guarantee. In \citet{perivier2022dynamic}, confidence region results with the radius $\tilde{O}(\sqrt{d}\lVert \bx_j \rVert_{\bH_{\cD}(\bm\theta^\star)^{-1}})$ are available\footnote{It should be noted that the original result in \citet{perivier2022dynamic} includes an additional $K$-dependency due to the self-concordant coefficient they established for the MNL likelihood function. This coefficient was later refined by \citet{lee2024nearly}, and incorporating their result eliminates the $K$-dependency in \citet{perivier2022dynamic}.}, and we take the result in \citet{perivier2022dynamic} to establish the confidence region for the price-based model. Here we first restate the confidence region bound of \citet{perivier2022dynamic} under a fixed price:

\begin{lemma}[Proposition~3.3 and Lemma~C.4 in \citet{perivier2022dynamic}]\label{lem-non-convex-confidence-region} Let $ \lambda = 2d\bar P/W$. For any fixed $p\in[0,P]$, we denote 
\begin{align}\label{eq-non-convex-confidence-region}
    \hat{\bm\vartheta}_{\cD,j}^\lambda(p):= \operatorname*{argmin}_{\|\bm\vartheta\|_2\le W}\{\bx_j(p)^\top \bm\vartheta: \lVert \nabla \ell_\cD^\lambda(\bm\vartheta) - \nabla \ell_\cD^\lambda(\hat{\bm\vartheta}_\cD^\lambda)\rVert_{\bH^\lambda_{\cD}(\bm\vartheta)^{-1}} \leq 4\sqrt{2d(1+\bar PW)} \log(K\bar PWn/d^2\delta) \}.
\end{align} 
Then, with probability at least $1-\delta,$ it holds that
$$\lVert \hat{\bm\vartheta}_{\cD,j}^\lambda(p) -\bm\vartheta^\star \rVert_{\bH^\lambda_{\cD}(\bm\vartheta^\star)} \le 4(1+\sqrt{6K}\bar PW)\sqrt{2d(1+\bar PW)} \log(K\bar PWn/d^2\delta).$$
\end{lemma}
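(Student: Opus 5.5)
This lemma is a restatement of results from \citet{perivier2022dynamic}, adapted to the price-based model. The plan is therefore to transfer their argument: treat $\bx_j(p)=\widetilde{\bx}_j(p)$, for fixed $p$, as a generic feature in $\mathbb{R}^{2d}$ with $\|\widetilde{\bx}_j(p)\|_2\le \bar P$. The parameter ball then has radius $W$, so every utility $\widetilde{\bx}^\top\bm\vartheta$ is bounded by $\bar P W$. This is exactly the setting of their analysis, with the feature bound $1$ replaced by $\bar P$; this replacement is what produces the factors $\bar P W$ in both the radius and the constant $\lambda = 2d\bar P/W$.

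\textbf{Step 1: concentration of the gradient at the truth.} We control $\|\nabla\ell^\lambda_\cD(\bm\vartheta^\star)-\nabla\ell^\lambda_\cD(\hat{\bm\vartheta}^\lambda_\cD)\|_{\bH^\lambda_\cD(\bm\vartheta^\star)^{-1}}$. Since $\nabla\ell^\lambda_\cD(\hat{\bm\vartheta}^\lambda_\cD)=0$, this quantity equals $\|\sum_m\sum_{j\in S_m}\widetilde{\bx}_{mj}\eta_{mj}-\lambda\bm\vartheta^\star\|_{(\bH^\lambda)^{-1}}$. We bound it with the vector-valued self-normalized Bernstein martingale inequality for MNL noise (their Proposition~3.3). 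This inequality does not need conditional independence, so it applies to adaptively collected data. Because the noise vector has covariance dominated by the per-round Hessian, the bound is $O(\sqrt{d}\log(\cdot))$, and the regularization term contributes $\sqrt{\lambda}W$. With $\lambda=2d\bar P/W$, the whole expression is at most $4\sqrt{2d(1+\bar PW)}\log(K\bar PWn/d^2\delta)$. On this event, $\bm\vartheta^\star$ is feasible for the constrained program in \eqref{eq-non-convex-confidence-region}, because $\bH^\lambda_\cD(\bm\vartheta)$ evaluated at $\bm\vartheta=\bm\vartheta^\star$ is exactly the matrix in that norm. The minimizer $\hat{\bm\vartheta}^\lambda_{\cD,j}(p)$ is therefore well defined and also satisfies the constraint.

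\textbf{Step 2: from gradient closeness to parameter closeness.} This is the main obstacle. For any feasible $\bm\vartheta$ with $\|\bm\vartheta\|\le W$, we use the mean-value identity
\[
\nabla\ell^\lambda(\bm\vartheta)-\nabla\ell^\lambda(\bm\vartheta^\star)=\bG(\bm\vartheta,\bm\vartheta^\star)(\bm\vartheta-\bm\vartheta^\star),
\qquad \bG:=\int_0^1\bH^\lambda(\bm\vartheta^\star+s(\bm\vartheta-\bm\vartheta^\star))\,ds .
\]
We then invoke the generalized self-concordance of the MNL likelihood, with the coefficient refined in \citet{lee2024nearly} (the $\sqrt{6K}$-type constant, per the paper's footnote). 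This gives $\bG\succeq (1+\sqrt{6K}\bar P\|\bm\vartheta-\bm\vartheta^\star\|)^{-1}\bH^\lambda(\bm\vartheta)$, and similarly with $\bm\vartheta^\star$ in place of $\bm\vartheta$. Because both points lie in the $W$-ball, the factor is at most $1+2\sqrt{6K}\bar PW$, up to constants. Consequently,
\[
\|\bm\vartheta-\bm\vartheta^\star\|_{\bH^\lambda(\bm\vartheta^\star)}\lesssim(1+\sqrt{6K}\bar PW)\,\|\nabla\ell^\lambda(\bm\vartheta)-\nabla\ell^\lambda(\bm\vartheta^\star)\|_{\bH^\lambda(\bm\vartheta)^{-1}} .
\]
This is the content of their Lemma~C.4. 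The delicate part is the switching of the norm between $\bH^\lambda(\bm\vartheta)$ and $\bH^\lambda(\bm\vartheta^\star)$: handled carelessly, it produces an $e^{\bar PW}$ factor, and the self-concordance control is what reduces it to the linear factor $\bar PW$.

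\textbf{Step 3: combining.} We apply Step 2 to $\bm\vartheta=\hat{\bm\vartheta}^\lambda_{\cD,j}(p)$ and bound the gradient difference by the triangle inequality through $\nabla\ell^\lambda(\hat{\bm\vartheta}^\lambda_\cD)$. The constraint in \eqref{eq-non-convex-confidence-region} controls the term $\|\nabla\ell^\lambda(\hat{\bm\vartheta}^\lambda_{\cD,j}(p))-\nabla\ell^\lambda(\hat{\bm\vartheta}^\lambda_\cD)\|$. The Step~1 event controls the term at $\bm\vartheta^\star$, after converting it to the $\bH^\lambda(\bm\vartheta)^{-1}$ norm with the same self-concordance comparison. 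Absorbing the constants then yields the stated radius $4(1+\sqrt{6K}\bar PW)\sqrt{2d(1+\bar PW)}\log(K\bar PWn/d^2\delta)$. The bound holds simultaneously for all $j$ and $p$, since only the single event from Step~1 is used.
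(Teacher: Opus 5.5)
Your Step 1 and the inequality displayed in Step 2 are fine. Step 3, however, performs exactly the norm switch you warn about, and nothing you have established licenses it. Write $\gamma:=4\sqrt{2d(1+\bar PW)}\log(K\bar PWn/d^2\delta)$, $\bm G:=\int_0^1\bH^\lambda_\cD(\bm\vartheta^\star+s(\bm\vartheta-\bm\vartheta^\star))\,ds$ and $c:=1+2\sqrt{6K}\bar PW$.

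To use the Step-1 event on the right-hand side of Step 2, you need
$\lVert\nabla\ell^\lambda_\cD(\hat{\bm\vartheta}^\lambda_\cD)-\nabla\ell^\lambda_\cD(\bm\vartheta^\star)\rVert_{\bH^\lambda_\cD(\bm\vartheta)^{-1}}\lesssim\lVert\nabla\ell^\lambda_\cD(\hat{\bm\vartheta}^\lambda_\cD)-\nabla\ell^\lambda_\cD(\bm\vartheta^\star)\rVert_{\bH^\lambda_\cD(\bm\vartheta^\star)^{-1}}$. This essentially requires $\bH^\lambda_\cD(\bm\vartheta^\star)\preceq c'\,\bH^\lambda_\cD(\bm\vartheta)$ with $c'$ polynomial.

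Your self-concordance comparison only gives $\bH^\lambda_\cD(\bm\vartheta)\preceq c\,\bm G$ and $\bH^\lambda_\cD(\bm\vartheta^\star)\preceq c\,\bm G$. Each endpoint Hessian is dominated by the averaged matrix, but this says nothing about the two endpoint Hessians relative to each other. A direct pointwise comparison from self-concordance costs $e^{O(\sqrt K\bar PW)}$, and per-sample curvatures such as $q_jq_0$ really do vary by such factors across the $W$-ball. So Step 3 as written either reintroduces the exponential factor or is unjustified. Putting $\bH^\lambda_\cD(\bm\vartheta^\star)^{-1}$ on the right of Step 2 instead does not help, because then the constraint term needs the reverse conversion.

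The missing idea is to use $\bm G$ itself as the pivot norm and never compare the endpoint Hessians. This is how Lemma C.4 of \citet{perivier2022dynamic} proceeds, following \citet{faury2020improved}, and it is what the paper imports by citation. Since $\nabla\ell^\lambda_\cD(\bm\vartheta)-\nabla\ell^\lambda_\cD(\bm\vartheta^\star)=\bm G(\bm\vartheta-\bm\vartheta^\star)$,
\[
\lVert\bm\vartheta-\bm\vartheta^\star\rVert_{\bm G}=\lVert\nabla\ell^\lambda_\cD(\bm\vartheta)-\nabla\ell^\lambda_\cD(\bm\vartheta^\star)\rVert_{\bm G^{-1}}\le\lVert\nabla\ell^\lambda_\cD(\bm\vartheta)-\nabla\ell^\lambda_\cD(\hat{\bm\vartheta}^\lambda_\cD)\rVert_{\bm G^{-1}}+\lVert\nabla\ell^\lambda_\cD(\hat{\bm\vartheta}^\lambda_\cD)-\nabla\ell^\lambda_\cD(\bm\vartheta^\star)\rVert_{\bm G^{-1}}.
\]
Now convert each term with its own comparison. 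For the first, use $\bm G^{-1}\preceq c\,\bH^\lambda_\cD(\bm\vartheta)^{-1}$, after which the constraint applies. For the second, use $\bm G^{-1}\preceq c\,\bH^\lambda_\cD(\bm\vartheta^\star)^{-1}$, after which Step 1 applies. This gives $\lVert\bm\vartheta-\bm\vartheta^\star\rVert_{\bm G}\le 2\sqrt c\,\gamma$, and hence $\lVert\bm\vartheta-\bm\vartheta^\star\rVert_{\bH^\lambda_\cD(\bm\vartheta^\star)}\le\sqrt c\,\lVert\bm\vartheta-\bm\vartheta^\star\rVert_{\bm G}\le 2c\gamma$. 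That is the stated radius up to absolute constants, and it holds for every feasible $\bm\vartheta$, in particular for $\hat{\bm\vartheta}^\lambda_{\cD,j}(p)$ uniformly in $j$ and $p$.

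With this repair, the rest of your outline (feasibility of $\bm\vartheta^\star$, existence of the minimizer, uniformity from a single event) is the standard argument. One small correction: $\sqrt{6K}$ is the original Perivier--Goyal self-concordance constant. The \citet{lee2024nearly} refinement is what would remove the $K$, not what produces it.
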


\begin{proof}[\textbf{Proof of Proposition~\ref{prop-plug-in-confidence-region}}]
Let $\xi_i:=4(1+\sqrt{6K}\bar PW)\sqrt{2d(1+\bar PW)} \log(K\bar PWn/d^2\delta)$ and assign the LCB utility function for each $j\in [N]$ as\begin{align*}
    \tilde{u}^\text{LCB}_j(p):= \widetilde\bx_j(p)^\top \hat{\bm\vartheta}_{\cD,j}^\lambda(p) - \xi_i.
\end{align*} 
It can be seen from Lemma~\ref{lem-non-convex-confidence-region} that with probability at least $1-\delta,$
$$0\leq 
\widetilde\bx_j(p)^\top\bm\vartheta^\star - \tilde{u}_j^\text{LCB} (p)\lesssim\sqrt{dK\bar P^3W^3}\log(K\bar PWn/d^2\delta)\lVert \widetilde\bx_j(p)\rVert_{\bH_{\cD}^\lambda(\bm\vartheta^\star)^{-1} }.$$
Following the proof of Theorem~\ref{thm-price-sub-optimality} and letting $ w_j= \widetilde\bx_j(p^\star_j)^\top\bm\vartheta^\star - \tilde{u}_j^\text{LCB} (p^\star_j)$, we obtain  
\begin{align*}
    \SubOpt(S^\mathrm{LCB}, \bm p^\mathrm{LCB}, \Xoff)  &\leq  P\sqrt{\sum_{j\in S^\star}q_j(S^\star, \bm p^\star|\bm\vartheta^\star)q_0(S^\star,\bm p^\star|\bm\vartheta^\star) w_j^2} + \frac{3}{2}P\cdot \max_{j\in S^\star} w_j^2 \\
    &\lesssim \log(K\bar PWn/d^2\delta)\sqrt{dK\bar P^5W^3\sum_{j\in S^\star}q_j(S^\star, \bm p^\star|\bm\vartheta^\star)q_0(S^\star,\bm p^\star|\bm\vartheta^\star)  \lVert \widetilde\bx_j(p_j^\star)\rVert^2_{\bH_{\cD}^\lambda(\bm\vartheta^\star)^{-1} }} \\
    &\quad + dK\bar P^4W^3 \log^2(K\bar PWn/d^2\delta)\cdot \max_{j\in S^\star} \lVert \widetilde\bx_j(p_j^\star)\rVert^2_{\bH_{\cD}^\lambda(\bm\vartheta^\star)^{-1} }.
\end{align*}
\end{proof}

%--------------------------------------------%
\subsection{Proof of Proposition~\ref{prop-leading-lcb-upper-bound} and Corollary~\ref{corollary-subopt-via-itemcover}}

Following the notation defined in Section~\ref{sec4.2}, and since $\Xoff$ is fixed, we denote $(\xoff)_i$ by $\bx_i$ for simplicity.

\begin{proof}[\textbf{Proof of Proposition~\ref{prop-leading-lcb-upper-bound}}]
For each $j\in S^\star$ and any $\lambda > 0,$ assume W.L.O.G. that $j$  appears exactly in the sets $S_1,\dots,S_{n_j}$. Since
\begin{align*}
     \bH_{\cD}(\bm\theta^\star)+\lambda \bI &\succeq \sum_{m = 1}^n\sum_{k \in S_m} q_k(S_m|\bm\theta^\star)q_0(S_m|\bm\theta^\star) \bx_k\bx_k^\top + \lambda \bI\\
     &\succeq  \underbrace{\sum_{m > n_j}^n\sum_{k \in S_m} q_k(S_m|\bm\theta^\star)q_0(S_m|\bm\theta^\star) \bx_k\bx_k^\top + \lambda \bI}_{: = \bZ_j} + \underbrace{\sum_{m \leq n_j} q_j(S_m|\bm\theta^\star) q_0(S_m|\bm\theta^\star)}_{: = \gamma_j} \bx_j\bx_j^\top,
 \end{align*}
and by the Sherman–Morrison formula, we have
 \begin{align*}
        \lVert \bx_j \rVert_{(\bH_{\cD}(\bm\theta^\star)+\lambda \bI)^{-1}}^2 &= \bx_j^\top\left(\bZ_j + \gamma_j \bx_j\bx_j^\top \right)^{-1}\bx_j = \bx_j^\top \left(\bZ_j^{-1} + \frac{\gamma_j  \bZ_j^{-1}\bx_j\bx_j^\top \bZ_j^{-1}}{1+ \gamma_j \bx_j^\top \bZ_j^{-1}\bx_j } \right) \bx_j \\
        &=  \bx_j^\top \bZ_j^{-1} \bx_j \left( 1 - \frac{\gamma_j \bx_j^\top \bZ_j^{-1} \bx_j}{1+\gamma_j \bx_j^\top \bZ_j^{-1} \bx_j} \right) = \frac{\bx_j^\top \bZ_j^{-1} \bx_j}{1 + \gamma_j \bx_j^\top \bZ_j^{-1} \bx_j}  \leq \frac{1}{\gamma_j}.
\end{align*}
Taking the limit as $\lambda_j\to 0$ and using the continuity of  $\lVert \bx_j \rVert_{(\bH_{\cD}(\bm\theta^\star)+\lambda \bI)^{-1}}^2$, we obtain $\lVert \bx_j \rVert_{\bH_{\cD}(\bm\theta^\star)^{-1}}^2 \leq \gamma_j^{-1}.$\\
Therefore, since $S^\star\in\cS_k$, we conclude that
\begin{align*}
    \sum_{j\in S^\star} q_j(S^\star|\bm\theta^\star) q_0(S^\star|\bm\theta^\star) \lVert \bx_j \rVert^2_{\bH_{\cD}(\bm\theta^\star)^{-1}}&\leq  \sum_{j\in S^\star} q_j(S^\star|\bm\theta^\star) q_0(S^\star|\bm\theta^\star) \gamma_j^{-1}\\
    &= \sum_{j\in S^\star} \frac{v_j}{(1+\sum_{k\in S^\star}v_k)^2} \cdot \left[\sum_{m \leq n_j} \frac{v_j}{(1+\sum_{k\in S_m} v_k)^2} \right]^{-1} \\
    &\leq \sum_{j\in S^\star} \frac{v_j}{(1+\sum_{k\in S^\star}v_k)^2} \cdot \frac{(1+\sum_{k\in S_m} v_k)^2}{n_j v_j}\\
    &\leq  \frac{K}{n^\star} \cdot \frac{   \max_{1\le m \le n}(1+\sum_{k\in S_m} v_k)^2}{(1+\sum_{k\in S^\star}v_k)^2}.
\end{align*}
This finishes the proof of Proposition~\ref{prop-leading-lcb-upper-bound}.
\end{proof}

%-----------------------proof of  Cor 4.3-----------------------------------------%
\begin{proof}[\textbf{Proof of Corollary~\ref{corollary-subopt-via-itemcover}}]
This corollary follows directly from Corollary~\ref{thm-linear-mnl-sub-optimality} and Proposition~\ref{prop-leading-lcb-upper-bound}, together with
\begin{align*}
    \lVert \bx_j \rVert_{\bH_{\cD}(\bm\theta^\star)^{-1}}^2 \leq \gamma_j^{-1} = \left[\sum_{m \leq n_j} q_j(S_m|\bm\theta^\star) q_0(S_m|\bm\theta^\star)\right]^{-1} \le \Koff^{-1} n_j^{-1}.
\end{align*}

\end{proof}

%% ---- END appendix_section4.tex ----

%% ---- BEGIN appendix_section5.tex ----
\newcommand{\UCB}{\mathrm{UCB}}
\newcommand{\x}{\bm{x}}
\newcommand{\mH}{\bm{H}}
\newcommand{\mE}{\bm{E}}
\newcommand{\bmtheta}{\bm{\vartheta}}

\section{Proof of Results in Section~\ref{sec-online}}

%-------------------------------------------------%
\subsection{Proof of Lemma~\ref{lem-initial-length-bound}}

\begin{proof}[\textbf{Proof of Lemma~\ref{lem-initial-length-bound}}]
Use $\cT_\ell$ to denote all the rounds that enter the initial exploration phase in Algorithm~\ref{alg:sup-lin-mnl-pricing}. In the fixed design setting, $\Psi_\ell$ are the same for all $\ell\in[J]$ after each round $t\in[\tau]$. Specifically, $\cT_\ell=[\tau]$.
By the definition of $\bV_{t,\ell}$, we have
\begin{align*}
   \kappa \bV_{t,\ell} + \lambda \bI = \kappa \bV_{t-1,\ell} + \lambda \bI + \kappa \sum_{t\in \Psi_{\ell},k\in S_t}\widetilde{\bx}_{k}(p_{tk})\widetilde{\bx}_{k}(p_{tk})^\top.
\end{align*}
where $\sum_{t\in \Psi_{\ell},k\in S_t}\widetilde{\bx}_{k}(p_{tk})\widetilde{\bx}_{k}(p_{tk})^\top=\bm{1}\{t\in \Psi_{\ell}\}\sum_{k\in S_t}\widetilde{\bx}_{k}(p_{tk})\widetilde{\bx}_{k}(p_{tk})^\top$.
This implies that
\begin{align*}
    \text{det}\left(\kappa \bV_{t,\ell} + \lambda \bI\right) &\ge \text{det}\left(\kappa \bV_{t-1,\ell} + \lambda \bI\right) \left(1+\kappa \sum_{t\in \Psi_{\ell},k\in S_t} \|\widetilde{\bx}_{k}(p_{tk})\|^2_{(\kappa \bV_{t,\ell} + \lambda \bI)^{-1}} \right)\\
    & \ge \text{det}\left(\kappa \bV_{t-1,\ell} + \lambda \bI\right) \left(1+\kappa \sum_{t\in \Psi_{\ell}} \max_{j\in[N]}\|\widetilde{\bx}_{j}(p_{tj})\|^2_{(\kappa \bV_{t,\ell} + \lambda \bI)^{-1}} \right),
\end{align*}
where the second inequality holds because $S_t=\left\{\operatorname{argmax}_{j\in[N]}\lVert\widetilde{\bx}_{j}(p_{tj}) \rVert_{\left(\kappa\bV_{t,\ell}+\lambda\bI\right)^{-1}}\right\}$ for every $t\in\Psi_{\ell}$.

Since $\lambda\ge \bar P$, we have $\kappa \max_{j\in[N]}\|\widetilde{\bx}_{j}(p_{tj})\|^2_{(\kappa \bV_{t,\ell} + \lambda \bI)^{-1}} \le \frac{\kappa}{\lambda}\|\widetilde{\bx}_{j}(p_{tj})\|^2 \le \kappa\le 1$ for all $t\in[T]$. Then, using the fact that $z\le 2\log(1+z)$ for any $z\in[0,1]$, we get
\begin{align*}
     \sum_{t\le\tau} \max_{j\in [N]} \lVert\widetilde{\bx}_{j}(p_{tj})\rVert^2_{(\kappa \bV_{t,\ell} + \lambda \bI)^{-1}} &\le \frac{2}{\kappa}\sum_{t\le\tau}\log\left(1+\kappa\max_{j\in [N]} \lVert\widetilde{\bx}_{j}(p_{tj})\rVert^2_{(\kappa \bV_{t,\ell} + \lambda \bI)^{-1}}\right)\\
    &\le \frac{2}{\kappa} \sum_{t\le\tau}\log\left(\frac{\text{det}\left(\kappa \bV_{t,\ell} + \lambda \bI\right)}{\text{det}\left(\kappa \bV_{t-1,\ell} + \lambda \bI\right)}\right)\\
    &\le \frac{2}{\kappa} \log\left(\frac{\text{det}\left(\kappa \bV_{T,\ell} + \lambda \bI\right)}{\text{det}\left(\kappa \bV_{0,\ell} + \lambda \bI\right)}\right)\\
    &\le \frac{2}{\kappa}2d \log\left(\frac{\text{tr}\left(\kappa \bV_{T,\ell} + \lambda \bI\right)}{2d\lambda} \right) \\
    &\le \frac{4}{\kappa}d \log\left(1+\frac{\kappa T\bar P^2}{2d\lambda} \right).
\end{align*}
On the other hand, we have
\begin{align*}
    \sum_{t\le\tau} \max_{j\in [N]} \lVert\widetilde{\bx}_{j}(p_{tj})\rVert^2_{(\kappa \bV_{t,\ell} + \lambda \bI)^{-1}} \ge \left( \frac{1}{144\sqrt{2d\log(NT)}}\wedge\frac{1}{24\sqrt{\lambda}W} \right)^2 \cdot \tau .
\end{align*}
Combining them, we can derive that
\begin{align*}
    \tau\le \frac{4}{\kappa}d \log\left(1+\frac{\kappa T\bar P^2}{2d\lambda} \right) \left(144^22d\log(NT)\vee 24^2\lambda W^2\right)
\end{align*}
\end{proof}

%----------------------proof of prop5.2--------------------------%
\subsection{Proof of Proposition~\ref{prop-optimal-elimination}}

\begin{proof}[\textbf{Proof of Proposition~\ref{prop-optimal-elimination}}]
For every $\tau\le t\le T$, we can prove this result by induction. First, the claim holds for $\ell = 1$ since $\mathcal{A}_1 = \mathcal{S}_K \times\mathcal{P}^N.$ Now suppose by induction that the result holds for general $\ell \geq 1$ and the algorithm enters the step~(c) of $\ell$-th loop. Let $(\hat S_{t,\ell},\hat{\bm p}_{t,\ell}):= \text{argmax}_{\mathcal{A}_\ell}R^\UCB_{t,\ell}(S, \bm p)$. By induction we have $(S_t^\star, \bm p_t^\star) \in \mathcal{A}_{\ell}$, it follows that
\begin{align}\label{eq:rev-UCB-relation}
    R^\mathrm{UCB}_{t,\ell}(\hat S_{t,\ell},\hat{\bm p}_{t,\ell}) \geq R^\mathrm{UCB}_{t,\ell}(S_t^\star, \bm p_t^\star) \geq R(S_t^\star, \bm p_t^\star \lvert \bm \vartheta^\star), 
\end{align}
where the last inequality is by the monotone property at optimal assortment (Lemma~\ref{prop-non-uniform-reward-optimal-assortment}). 

On the other hand, we cannot directly apply Proposition~\ref{prop: assortment-perturbation} to obtain a perturbation bound for $(\hat S_{t,\ell},\hat{\bm p}_{t,\ell})$, since it maximizes the optimistic revenue over an \textit{unstructured set} $\mathcal{A}_\ell$ rather than the structured set $\mathcal{S}_K\times\cP^N$.
For $(\hat S_{t,\ell},\hat{\bm p}_{t,\ell}),$ let $\xi_j:=\widetilde\bx_j((\hat p_{t,\ell})_j)^\top\hat{\bm\vartheta}_{t,\ell}^\lambda+w_{tj}^\ell((\hat p_{t,\ell})_j)-\widetilde\bx_j((\hat p_{t,\ell})_j)^\top\bm\vartheta^\star$. Under~\eqref{eq-Htl-bound} and~\eqref{eq-xti-bound}, we have $\xi_j  \le 2w_{tj}^\ell((\hat p_{t,\ell})_j)$. Let $v_j:=\exp(\widetilde\bx_j((\hat p_{t,\ell})_j)^\top\bm\vartheta^\star)$. By statement~(i) of Proposition~\ref{prop: assortment-perturbation}, it holds that
\begin{align*}
   & R^\text{UCB}_{t,\ell}(\hat S_{t,\ell},\hat{\bm p}_{t,\ell}) -  R(\hat S_{t,\ell},\hat{\bm p}_{t,\ell} \lvert \bm\vartheta^\star ) \\&
    \leq \sqrt{\sum_{j\in \hat S_{t,\ell}} v_j \lvert (\hat p_{t,\ell})_j - R(\hat S_{t,\ell},\hat{\bm p}_{t,\ell} \lvert \bm\vartheta^\star)\rvert^2} \cdot   \sqrt{\sum_{j\in \hat S_{t,\ell}}q_j(\hat S_{t,\ell},\hat{\bm p}_{t,\ell}\lvert \bm\vartheta^\star)q_0(\hat S_{t,\ell},\hat{\bm p}_{t,\ell}\lvert \bm\vartheta^\star) \xi_j^2} + \frac{3}{2}P\cdot \max_{j\in \hat S_{t,\ell}} \xi_j^2.
\end{align*}
Now if we denote \begin{align*}
    \hat S_{t,\ell}^+:=\{j\in \hat S_{t,\ell},  (\hat p_{t,\ell})_j \geq R(\hat S_{t,\ell},\hat{\bm p}_{t,\ell}\lvert \bm\vartheta^\star)\},\quad \hat S_{t,\ell}^-:= \hat S_{t,\ell} \setminus \hat S_{t,\ell}^+,
\end{align*}
then it holds that \begin{align*}
    &\sum_{j\in \hat S_{t,\ell}} v_j \lvert  (\hat p_{t,\ell})_j - R(\hat S_{t,\ell},\hat{\bm p}_{t,\ell} \lvert \bm\vartheta^\star)\rvert^2  \leq P\sum_{j\in \hat S_{t,\ell}} v_j \lvert  (\hat p_{t,\ell})_j -R(\hat S_{t,\ell},\hat{\bm p}_{t,\ell} \lvert \bm\vartheta^\star)\rvert\\
    &= P\left(\sum_{j\in \hat S_{t,\ell}^+} v_j ( (\hat p_{t,\ell})_j -R(\hat S_{t,\ell},\hat{\bm p}_{t,\ell} \lvert \bm\vartheta^\star))  - \sum_{j\in \hat S_{t,\ell}^-} v_j ( (\hat p_{t,\ell})_j -R(\hat S_{t,\ell} ,\hat{\bm p}_{t,\ell}\lvert \bm\vartheta^\star))\right)\\
    &= P\left(2\sum_{j\in \hat S_{t,\ell}^+} v_j ( (\hat p_{t,\ell})_j -R(\hat S_{t,\ell},\hat{\bm p}_{t,\ell} \lvert \bm\vartheta^\star))  - R(\hat S_{t,\ell} ,\hat{\bm p}_{t,\ell}\lvert \bm\vartheta^\star)\right) \\
    &\le  2P\sum_{j\in \hat S_{t,\ell}^+} v_j ( (\hat p_{t,\ell})_j -R(\hat S_{t,\ell},\hat{\bm p}_{t,\ell} \lvert \bm\vartheta^\star))\\
    &= 2P\sum_{j\in \hat S_{t,\ell}^+} v_j ( (\hat p_{t,\ell})_j -R_{t,\ell}^\text{UCB}(\hat S_{t,\ell},\hat{\bm p}_{t,\ell} )) + 2P\sum_{j\in \hat S_{t,\ell}^+} v_j( R_{t,\ell}^\text{UCB}(\hat S_{t,\ell},\hat{\bm p}_{t,\ell} ) -  R(\hat S_{t,\ell},\hat{\bm p}_{t,\ell} \lvert \bm\vartheta^\star))\\
    &\leq 2P\sum_{j\in \hat S_{t,\ell}^+} v_j ( (\hat p_{t,\ell})_j -R(S_t^\star,\bm p_t^\star\lvert \bm\vartheta^\star )) + 2Pq_0^{-1}(\hat S_{t,\ell},\hat{\bm p}_{t,\ell} \lvert \bm\vartheta^\star)( R_{t,\ell}^\text{UCB}(\hat S_{t,\ell},\hat{\bm p}_{t,\ell} ) -  R(\hat S_{t,\ell},\hat{\bm p}_{t,\ell} \lvert \bm\vartheta^\star))\\
    &\leq 2P R(S_t^\star,\bm p_t^\star \lvert \bm\vartheta^\star) + 2Pq_0^{-1}(\hat S_{t,\ell},\hat{\bm p}_{t,\ell} \lvert \bm\vartheta^\star)( R_{t,\ell}^\text{UCB}(\hat S_{t,\ell} ,\hat{\bm p}_{t,\ell}) -  R(\hat S_{t,\ell},\hat{\bm p}_{t,\ell} \lvert \bm\vartheta^\star)),
\end{align*}
 where the last second inequality follows from inequalities~\eqref{eq:rev-UCB-relation} and $$\sum_{j\in \hat S_{t,\ell}^+} v_j\le 1+\sum_{j\in \hat S_{t,\ell}} v_j=q_0^{-1}(\hat S_{t,\ell},\hat{\bm p}_{t,\ell} \lvert \bm\vartheta^\star),$$
and in the last inequality, we have
\begin{align*}
    &\sum_{j\in \hat S_{t,\ell}^+} v_j ( (\hat p_{t,\ell})_j -R(S_t^\star,\bm p_t^\star\lvert \bm\vartheta^\star )) \le \sum_{j\in \hat S_{t,\ell}^+} v_j ( (\hat p_{t,\ell})_j -R(\hat S_{t,\ell},\hat{\bm p}_{t,\ell}\lvert \bm\vartheta^\star ))\\
    & \le \sum_{j\in \hat S_{t,\ell}} v_j ( (\hat p_{t,\ell})_j -R(\hat S_{t,\ell},\hat{\bm p}_{t,\ell}\lvert \bm\vartheta^\star )) =R(\hat S_{t,\ell},\hat{\bm p}_{t,\ell} \lvert \bm\vartheta^\star ) \le R( S_t^\star,\bm p_t^\star\lvert \bm\vartheta^\star ).
\end{align*}
As a consequence, for $\Delta:= R^\text{UCB}_{t,\ell}(\hat S_{t,\ell},\hat{\bm p}_{t,\ell}) -  R(\hat S_{t,\ell},\hat{\bm p}_{t,\ell} \lvert \bm\vartheta^\star) $, we get 
\begin{align*}
    \Delta \leq \sqrt{2P R(S_t^\star,\bm p_t^\star \lvert \bm\vartheta^\star) + 2Pq_0^{-1}(\hat S_{t,\ell},\hat{\bm p}_{t,\ell} \lvert \bm\vartheta^\star) \Delta} \cdot \sqrt{\sum_{j\in \hat S_{t,\ell}}q_j(\hat S_{t,\ell},\hat{\bm p}_{t,\ell} \lvert \bm\vartheta^\star)q_0(\hat S_{t,\ell},\hat{\bm p}_{t,\ell} \lvert \bm\vartheta^\star) \xi_j^2} + \frac{3}{2}P\cdot\max_{j\in \hat S_{t,\ell}} \xi_j^2.
\end{align*}
Using the elementary inequalities \begin{align*}
    z^2 \leq Az + B \le \frac{A^2+z^2}{2}+ B \implies  z^2 \leq A^2+2B,
\end{align*}
we get then 
\begin{align*}
    \Delta &\leq 2P \sum_{j\in \hat S_{t,\ell}} q_j(\hat S_{t,\ell},\hat{\bm p}_{t,\ell} \lvert \bm\vartheta^\star) \xi_j^2 + 2\sqrt{2P^2\sum_{j\in \hat S_{t,\ell},\hat{\bm p}_{t,\ell}}q_j(\hat S_{t,\ell},\hat{\bm p}_{t,\ell}\lvert \bm\vartheta^\star)q_0(\hat S_{t,\ell},\hat{\bm p}_{t,\ell}\lvert \bm\vartheta^\star) \xi_j^2} + 3P\cdot \max_{j\in \hat S_{t,\ell}} \xi_j^2\\
    &\leq  2\sqrt{2P^2\sum_{j\in \hat S_{t,\ell}}q_j(\hat S_{t,\ell},\hat{\bm p}_{t,\ell}\lvert \bm\vartheta^\star)q_0(\hat S_{t,\ell},\hat{\bm p}_{t,\ell}\lvert \bm\vartheta^\star) \xi_j^2} + 5P\cdot \max_{j\in \hat S_{t,\ell}} \xi_j^2\\
    &\le 2\sqrt{2e^4P^2\sum_{j\in \hat S_{t,\ell}}q_j(\hat S_{t,\ell},\hat{\bm p}_{t,\ell}\lvert \hat{\bm\vartheta}_0)q_0(\hat S_{t,\ell},\hat{\bm p}_{t,\ell}\lvert \hat{\bm\vartheta}_0) (2w_{tj}^\ell((\hat p_{t,\ell})_j))^2} + 5P\cdot \max_{j\in \hat S_{t,\ell}} (2w_{tj}^\ell((\hat p_{t,\ell})_j))^2\\
    &\leq W_{t,\ell}(\hat S_{t,\ell},\hat{\bm p}_{t,\ell}) \leq 2^{-\ell },
\end{align*}
where the second inequality holds because 
$$\sum_{j\in \hat S_{t,\ell}} q_j(\hat S_{t,\ell},\hat{\bm p}_{t,\ell} \lvert \bm\vartheta^\star) \xi_j^2\le \sum_{j\in \hat S_{t,\ell}} q_j(\hat S_{t,\ell},\hat{\bm p}_{t,\ell} \lvert \bm\vartheta^\star) \cdot \max_{j\in \hat S_{t,\ell}} \xi_j^2 \le  \max_{j\in \hat S_{t,\ell}} \xi_j^2,$$
the third inequality follows from~\eqref{eq-WtS-bound} and $\xi_j\le 2w_{t,j}^\ell((\hat p_{t,\ell})_j)$, and the last inequality holds because Algorithm~\ref{alg:sup-lin-mnl-pricing} does not enter step (a) in $\ell$-th loop.
Now we get 
\begin{align*}
      R^\text{UCB}_{t,\ell}(S_t^\star,\bm p_t^\star )\geq R(\hat S_{t,\ell},\hat{\bm p}_{t,\ell} \lvert \bm\vartheta^\star) \geq R_{t,\ell}^\text{UCB}(\hat S_{t,\ell},\hat{\bm p}_{t,\ell}) - 2^{-\ell}
\end{align*}
thus $(S_t^\star,\bm p_t^\star) \in \mathcal{A}_{\ell+1}$ as desired.
\end{proof}

%----------------------analysis--------------------------%
\subsection{Proof of Theorem~\ref{thm-regret-supCB}}
 \paragraph{Step~1: Initialization Phase.}First, by Lemma~\ref{lem-initial-length-bound}, the regret incurred by the initial exploration phase is bounded by \begin{align}\label{eq-regret-initial-phase}
    O(\bar P\tau) = \widetilde{\mathcal O}\bigg( \frac{dW}{\kappa L_0} \left(d\log(NT)\vee\lambda W^2\right)\bigg).
\end{align}
And it remains to bound the regret incurred by the adaptive exploration phase. Since~\eqref{eq-explore-criteria} is ensured by the initial exploration phase, we can proceed with our analysis under the inequalities\begin{align}\label{eq-Htl-bound}
    &\frac{1}{3}\bH^\lambda_{t,\ell}(\bm\vartheta^\star) \preceq \bH^\lambda_{t,\ell}(\hat{\bm\vartheta}_0)\preceq 3\bH^\lambda_{t,\ell}(\bm\vartheta^\star),\\
\label{eq-xti-bound}    &\lvert \widetilde{\bx}_j(p)^\top (\hat{\bm\vartheta}_{t,\ell}^\lambda - \bm\vartheta^\star)\rvert \leq 16 \lVert \widetilde{\bx}_j(p) \rVert_{\bH^\lambda_{t,\ell}(\bm\vartheta^\star)^{-1}} \left(\sqrt{\log(NT)} +\sqrt{\lambda} W \right) ,\quad \forall j\in [N],  p \in [0,P],
\end{align}
which holds uniformly for all $t\ge \tau,\ell\in[J]$ with probability at least $1-1/T$ by Corollary~\ref{prop-sup-ucb-confidence-bound}.

In particular, under~\eqref{eq-Htl-bound} and~\eqref{eq-xti-bound}, together with criteria condition~\ref{eq-explore-criteria}, we have \begin{align*}
    \lvert \widetilde{\bx}_j(p_j)^\top (\hat{\bm\vartheta}_{0} - \bm\vartheta^\star)\rvert &\leq 16  \lVert \widetilde{\bx}_j(p_j) \rVert_{\bH^\lambda_{\tau,0}(\bm\vartheta^\star)^{-1}} \left(\sqrt{\log(NT)} + \sqrt{\lambda}W \right) \\
  &\leq \frac{1}{9}\left(1+\frac{\sqrt{\lambda}W}{\sqrt{2d\log(NT)}}\right) \wedge \frac{2}{3}\left(1+\frac{\sqrt{\log(NT)}}{\sqrt{\lambda}W}\right) \\
  &\leq 1,\quad \forall j\in[N].
\end{align*}
Thus, for any $(S,\bm p)\in \cS_K\times \cP^N$ and $i\in S,$ let $\hat{u}_i := \widetilde{\bx}_j(p_j)^\top \hat{\bm\vartheta}_0$,
\begin{equation}\label{eq-WtS-bound}
    \begin{aligned}
   & e^{-2} q_i(S,\bm p\lvert \hat{\bm \vartheta}_0)  \leq \frac{e^{\hat{u}_i-1}}{1+\sum_{j\in S} e^{\hat{u}_j+1}} \leq  q_i(S,\bm p\lvert \bm\vartheta^\star)\le \frac{e^{\hat{u}_i+1}}{1+ \sum_{j\in S} e^{\hat{u}_j - 1}} \leq e^2 q_i(S,\bm p\lvert \hat{\bm\vartheta}_0) .
\end{aligned}
\end{equation}
Now, based on the bounds in~\eqref{eq-Htl-bound}–\eqref{eq-WtS-bound}, we can derive a regret bound for Algorithm~\ref{alg:sup-lin-mnl-pricing} during the adaptive elimination phase. 

%---------------------------------------%
\paragraph{Step~2: Per-time Regret in Elimination Phase.}

Now we can divide the analysis of the regret incurred at any time $t \geq \tau$ into two cases, depending on which condition is triggered in the $\ell$-th iteration of the algorithm:

\paragraph{Case~1: Algorithm enter step~(a).} In this case, we have by $(S_t, \bm p_t)\in \mathcal{A}_{\ell}$, Proposition~\ref{prop-optimal-elimination} , and the construction in Step~(c), \begin{align}\label{eq-stepa-analysis}
    R^\text{UCB}_{t,\ell-1}(S_t,\bm p_t) \geq \text{argmax}_{(S,\bm p)\in \mathcal{A}_{\ell-1}}{R}_{t,\ell-1}^\text{UCB}(S, \bm p) - 2^{-\ell+1} \geq R(S_t^\star,\bm p_t^\star \lvert \bm \vartheta^\star)-2^{-\ell+1}.
\end{align}
By following the same argument for proving Proposition~\ref{prop-optimal-elimination}, since $(S_t, \bm p_t)\in \mathcal{A}_{\ell-1}$, we have
\begin{align*}
    R^\mathrm{UCB}_{t,{\ell-1}}(S_t, \bm p_t)-  R(S_t, \bm p_t \lvert \bm \vartheta^\star)\leq  W_{t,{\ell-1}}(S_t , \bm p_t ) .
\end{align*}
Combing with~\eqref{eq-stepa-analysis} and using the fact that $ W_{t,{\ell}}(S_t , \bm p_t ) >2^{-\ell}$ and $W_{t,{\ell-1}}(S_t , \bm p_t )\le 2^{-\ell+1}$, we arrive at \begin{align}\label{eq-stepa-gap}
    R(S_t^\star, \bm p_t^\star \lvert \bm \vartheta^\star) -R(S_t,\bm p_t\lvert \bm\vartheta^\star) \leq 2^{-\ell+1} +   2^{-\ell+1}  \leq 4 W_{t,{\ell}}(S_t , \bm p_t ).
\end{align}

\paragraph{Case~2: Algorithm enter step~(b).} By Proposition~\ref{prop-optimal-elimination}, the selected pair $(S_t, \bm p_t)\in \text{argmax}_{\mathcal{A}_\ell}R^\UCB_{t,\ell}(S, \bm p)$ satisfies  \begin{align}\label{eq-stepb-analysis}
    R^\mathrm{UCB}_{t,\ell}(S_t, \bm p_t) \geq R^\mathrm{UCB}_{t,\ell}(S_t^\star, \bm p_t^\star) \geq R(S_t^\star, \bm p_t^\star \lvert \bm \vartheta^\star), 
\end{align}
where the last inequality is by Lemma~\ref{prop-non-uniform-reward-optimal-assortment}. 
By following the same argument for proving Proposition~\ref{prop-optimal-elimination}, we have
\begin{align*}
    R^\mathrm{UCB}_{t,\ell}(S_t,\bm p_t)-  R(S_t, \bm p_t\lvert \bm \vartheta^\star)\leq  W_{t,\ell}(S_t,\bm p_t) .
\end{align*}
Combing with~\eqref{eq-stepb-analysis} and using the fact that $W_{t,\ell}(S_t,\bm p_t) \leq 1/\sqrt{T},$ we arrive at 
\begin{align}\label{eq-stepb-gap}
      R(S_t^\star, \bm p_t^\star \lvert \bm \vartheta^\star) -R(S_t,\bm p_t\lvert \bm\vartheta^\star) \leq W_{t,\ell}(S_t,\bm p_t)  \leq 1/\sqrt T.
\end{align}

\paragraph{Final Step: Putting All Together.} Now for each $\ell$, we have the cumulative regret incurred in its elimination phase at step~(b) is given by $O(\sqrt{T})$ by~\eqref{eq-stepb-gap}. To bound its cumulative regret incurred at step~(a) during elimination phase (denoted by $\mathcal{T}_a^\ell$), we have 
\begin{align}\label{eq-regret-in-a}
    &\sum_{t\in \mathcal{T}_a^\ell}R(S^\star, \bm p^\star \lvert \bm \vartheta^\star) - R(S_t,\bm p_t\lvert \bm \vartheta^\star ) \leq \sum_{t\in \mathcal{T}_a^\ell}4W_{t,{\ell}}(S_t , \bm p_t ) \nonumber\\
    &= 16e^2P\sum_{t\in \mathcal{T}_{a}^\ell}\bigg[\sqrt{2\sum_{i \in S_t} q_i(S_t, \bm p_t\lvert \hat{\bm \vartheta}_0)q_0(S_t, \bm p_t\lvert \hat{\bm \vartheta}_0) \left(w^\ell_{ti}(p_{ti})\right)^2} + 80P\max_{i \in S_t} \left(w^\ell_{ti}(p_{ti})\right)^2\bigg] \nonumber\\
    &\lesssim   \sum_{t\in \mathcal{T}_a^\ell} P \sqrt{\sum_{j\in S_t}q_j(S_t,\bm p_t|\hat{\bm\vartheta}_0)q_0(S_t,\bm p_t|\hat{\bm\vartheta}_0) \lVert \widetilde\bx_j(p_{tj}) \rVert_{\bH^\lambda_{t,\ell}(\hat{\bm\vartheta}_0)^{-1}}^2 } \left(\sqrt{\log (NT)}+\sqrt\lambda W\right)\nonumber\\
    &\quad +P \max_{j\in S_t}  \lVert \widetilde\bx_j(p_{tj}) \rVert_{\bH^\lambda_{t,\ell}(\hat{\bm\vartheta}_0)^{-1}}^2 \left(\sqrt{\log (NT)}+\sqrt\lambda W\right)^2 .
\end{align} 

To bound the above summation over $\mathcal{T}_a^\ell$, we apply the following linear MNL version elliptical potential lemma:
\begin{lemma}[Lemma~E.2 in \citet{lee2024nearly}]\label{lemma-elliptic}
For $\lambda \geq 1,$ it holds that \begin{enumerate}
    \item $\sum_{t\in \mathcal{T}_a^\ell} \sum_{j\in S_t}q_j(S_t,\bm p_t|\hat{\bm\vartheta}_0)q_0(S_t,\bm p_t|\hat{\bm\vartheta}_0) \lVert \widetilde\bx_j(p_{tj}) \rVert_{\bH^\lambda_{t,\ell}(\hat{\bm\vartheta}_0)^{-1}}^2\leq 2d\log(1+\frac{\lvert \mathcal{T}_a^\ell\rvert}{2d\lambda}).$
    \item $\sum_{t\in \mathcal{T}_a^\ell}\max_{i\in S_t} \lVert \widetilde\bx_j(p_{tj}) \rVert_{\bH^\lambda_{t,\ell}(\hat{\bm\vartheta}_0)^{-1}}^2 \leq 2d\kappa^{-1}\log(1+\frac{\lvert \mathcal{T}_a^\ell\rvert}{2d\lambda}).$
\end{enumerate}
\end{lemma}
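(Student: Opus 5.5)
The plan is the standard determinant-telescoping (elliptical potential) argument, adapted to the MNL Hessian evaluated at the fixed pilot estimator $\hat{\bm\vartheta}_0$. Two structural facts make this work.

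First, within the adaptive phase, the data set of bin $\ell$ changes only at the rounds $t\in\mathcal T_a^\ell$, where exactly one round is appended. Hence if $t<t'$ are consecutive elements of $\mathcal T_a^\ell$, then $\bH^\lambda_{t',\ell}(\hat{\bm\vartheta}_0)=\bH^\lambda_{t,\ell}(\hat{\bm\vartheta}_0)+\bDL_t(\hat{\bm\vartheta}_0)$. Here $\bDL_t$ is the per-round Hessian contribution of $(S_t,\bm p_t)$, with features $\widetilde\bx_j(p_{tj})$.

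Second, I use the standard MNL Hessian lower bound (as in \citet{oh2021multinomial,lee2024nearly}):
\[
\bDL_t(\bm\vartheta)=\sum_{j\in S_t}q_j\widetilde\bx_j\widetilde\bx_j^\top-\Big(\sum_{j\in S_t}q_j\widetilde\bx_j\Big)\Big(\sum_{j\in S_t}q_j\widetilde\bx_j\Big)^\top\succeq\sum_{j\in S_t}q_jq_0\,\widetilde\bx_j\widetilde\bx_j^\top .
\]
I would prove it by writing $\bDL_t$ as the covariance of the random vector equal to $\widetilde\bx_J$ with $J\sim q$ over $(S_t)_+$ and $\widetilde\bx_0=0$. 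Then, for any $\bm w$, one has $\bm w^\top \bDL_t \bm w=\sum_{0\le i<j}q_iq_j(a_i-a_j)^2$ with $a_j=\bm w^\top\widetilde\bx_j$ and $a_0=0$. Keeping only the pairs $(0,j)$ gives the bound.

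For part 1, write $\bH_t:=\bH^\lambda_{t,\ell}(\hat{\bm\vartheta}_0)$ and $\bm z_{tj}:=\sqrt{q_jq_0}\,\widetilde\bx_j(p_{tj})$. The two facts give $\bH_{t'}\succeq\bH_t+\sum_j\bm z_{tj}\bm z_{tj}^\top$. Using $\det(\bI+\bM)\ge 1+\operatorname{tr}\bM$ for PSD $\bM$, I get
\[
\det\bH_{t'}\ge\det\bH_t\Big(1+\sum_{j\in S_t}q_jq_0\|\widetilde\bx_j(p_{tj})\|^2_{\bH_t^{-1}}\Big).
\]
Since $\sum_j q_j\le1$ and $\bH_t\succeq\lambda\bI$, each increment $\sum_j q_jq_0\|\widetilde\bx_j\|^2_{\bH_t^{-1}}$ is at most $\bar P^2/\lambda$. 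With $\lambda$ large enough that this is at most $1$, the inequality $z\le 2\log(1+z)$ on $[0,1]$ applies. Telescoping over $\mathcal T_a^\ell$ then bounds the sum by $2\log(\det\bH_{\mathrm{last}}/\det(\lambda\bI))$. The AM–GM bound on the determinant via the trace (at most $2d\lambda+|\mathcal T_a^\ell|\bar P^2$) gives $2d\log(1+|\mathcal T_a^\ell|\bar P^2/(2d\lambda))$. The $\bar P^2$ and the factor $2$ are constants that the statement absorbs under its normalization, and I would track them explicitly.

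For part 2, I reduce to part 1 by bounding, for each $t$, $\max_{i\in S_t}\|\widetilde\bx_i\|^2_{\bH_t^{-1}}\le \big(\min_{i}q_iq_0\big)^{-1}\sum_{j}q_jq_0\|\widetilde\bx_j\|^2_{\bH_t^{-1}}$, with the probabilities taken at $\hat{\bm\vartheta}_0$. The point I expect to be the main obstacle is that $\kappa$ is a minimum over $\|\bm\vartheta\|\le W$, while $\hat{\bm\vartheta}_0$ need not lie in that ball. To handle this I would invoke \eqref{eq-WtS-bound}, which holds on the same high-probability event: $q_i(\cdot|\hat{\bm\vartheta}_0)\ge e^{-2}q_i(\cdot|\bm\vartheta^\star)$, and similarly for $q_0$. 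This gives $q_iq_0\ge e^{-4}\kappa$, so part 2 follows from part 1 up to an absolute constant. Finally, I would double-check that the ordering is correct, namely that the Hessian used at round $t$ excludes round $t$ itself, since the telescoping requires exactly this.
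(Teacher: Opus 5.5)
Your proposal is correct and is the standard determinant-telescoping argument behind Lemma~E.2 of \citet{lee2024nearly}, which the paper invokes without proof; freezing the Hessian at $\hat{\bm\vartheta}_0$ only simplifies the telescoping. Your use of \eqref{eq-WtS-bound} to compare $q_iq_0$ at $\hat{\bm\vartheta}_0$ with $\kappa$ (needed because $\hat{\bm\vartheta}_0$ need not lie in the $W$-ball), and your tracking of the $\bar P^2$ and dimension-$2d$ constants, are exactly what the citation glosses over. One small slip is that bin $\ell$ already holds the initial-exploration rounds, so the Hessian at the start of the adaptive phase is not $\lambda\bI$: instead of bounding $\operatorname{tr}\bH_{\mathrm{last}}$ by $2d\lambda+|\mathcal T_a^\ell|\bar P^2$, use that this starting Hessian is $\succeq\lambda\bI$ to bound the telescoped log-determinant ratio by $\log\det(\bI+\bm A/\lambda)$, where $\bm A$ is the sum of the adaptive-phase increments.
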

Applying this Lemma and Cauchy-Schwartz inequality in \eqref{eq-regret-in-a} then leads to 
\begin{align*}
  &\sum_{\ell\in[J]}\sum_{t\in \mathcal{T}_a^\ell} R(S^\star, \bm p^\star \lvert \bm \vartheta^\star) - R(S_t,\bm p_t\lvert \bm \vartheta^\star ) \\
  &\lesssim   P\sqrt{dT  \log(1+T/2d\lambda)}\left(\sqrt{\log (NT)}+\sqrt\lambda W\right) + \frac{2dP}{\kappa}\log(1+T/d\lambda)\left(\sqrt{\log (NT)}+\sqrt\lambda W\right)^2\\
  &= \widetilde{\mathcal O}\left(\frac{W}{L_0}\left(\sqrt{dT\log (NT)} + \kappa^{-1} d \log (NT) \right)\right)
\end{align*}
Hence, combining all these regrets together, we derive that
\begin{align*}
    \mathrm{Reg}(T) &= \widetilde{\mathcal O}\bigg( \frac{dW}{\kappa L_0} \left(d\log(NT)\vee\lambda W^2\right)\bigg) + O(\sqrt{T})+  \widetilde{\mathcal O}\left(\frac{W}{L_0}\left(\sqrt{dT\log (NT)} + \kappa^{-1} d \log (NT) \right)\right)\\
    &=  \widetilde{\mathcal O}\left(\frac{W}{L_0}\left(\sqrt{dT\log (NT)} + \kappa^{-1} d^2 \log (NT) \right)\right)
\end{align*}

%-------------------------------------------------%
\subsection{Extension to Time-Varying Contexts for Fixed-Assortment Pricing}
\label{sec-appendix-time-varying-pricing}

In this subsection, we consider the fixed-assortment pricing setting of Corollary~\ref{cor-online-fixed-assortment} with time-varying contexts. The algorithm and analysis are identical to those in Section~\ref{sec-online}, except that the augmented features $\widetilde{\bx}_{ti}(p_i)$ now depend on the round $t$ through the context $\bx_{ti}$. To ensure the burn-in condition, we impose the following eigenvalue assumption on the exploration data, analogous to those in \citet{oh2021multinomial,chen2020dynamic,li2017provably}:
\begin{assumption}
\label{assump-contextual-fixed-assortment-pricing}
For some given $\tau,$ the contexts $\{\bx_{ti}\}_{i \in S,\, t\in [\tau]}$ are generated i.i.d.\ from some unknown distribution $Q$ supported on the $d$-dimensional unit ball, and the exploration prices $\{p_{ti}\}_{i\in S,\,t\in[\tau]}$ are generated i.i.d.\ from some distribution $\Pi$ on $\cP$, independently of the contexts. Moreover,
$\lambda_{\min}\!\Big(\E\big[\widetilde{\bx}_{ti}(p_{ti})\,\widetilde{\bx}_{ti}(p_{ti})^\top\big]\Big) \geq \sigma_0$ for some $\sigma_0 >0.$
\end{assumption}

We present the modified algorithm for time-varying contexts in Algorithm~\ref{alg:sup-mnl-pricing-appendix}, with modifications from Algorithm~\ref{alg:sup-lin-mnl-pricing} highlighted in blue. The algorithm is identical to Algorithm~\ref{alg:sup-lin-mnl-pricing}, with two modifications: (i) the assortment $S$ is fixed throughout, so the action space reduces from $\cS_K \times \cP^N$ to $\cP^{|S|}$; and (ii) the item-wise uncertainty levels are computed over $\widetilde{\bx}_{ti}(p_i)$ instead of $\widetilde{\bx}_i(p_i)$ for each $i\in S$, which then affects $W_{t,\ell}(S,\bm p)$ and the UCB revenues.
Thus the same analysis as in the proof of Theorem~\ref{thm-regret-supCB} can be conducted to derive the same regret bound once the burn-in condition can be verified.

\begin{algorithm}[h]
\caption{SupCB-MNL-Pricing with Time-Varying Contexts and Fixed Assortment}
\label{alg:sup-mnl-pricing-appendix}
\begin{algorithmic}[1]
\STATE \textbf{Input:} Time horizon $T$, regularized parameter $\lambda\ge \bar P$, fixed assortment $S$, \textcolor{blue}{exploration parameter $\tau$}.
\STATE \textbf{Initialize} $J =\lceil \frac{1}{2}\log_2 T \rceil, \Psi_0= \dots = \Psi_{J+1} = \emptyset.$
\FOR{\textcolor{blue}{$t = 1,\dots,(J+1)\tau$}}
    \STATE \textcolor{blue}{Sample exploration prices $p_{ti}\overset{\mathrm{i.i.d.}}{\sim}\Pi$ for each $i\in S$, add $t$ into $\Psi_{\lceil t/\tau \rceil}$.}
\ENDFOR
\STATE Compute $\widehat{\bm \vartheta}_0$ based on samples in $\widetilde{\cD}_{\tau,J+1}$ as in \eqref{eq-vartheta0}.
\FOR{$t = (J+1)\tau + 1,\dots, T$}
    \STATE Set $\bm p_t = \bm 0, \ell = 1$, and $\cA_1 = \cP^{|S|}.$
    \WHILE{$\bm p_t = \bm 0$}
    \STATE Compute $W_{t,\ell}(S,\bm p),R^\mathrm{UCB}_{t,\ell}(S,\bm p),\forall \bm p\in \cA_\ell$ as in \eqref{eq-assortment-uncertain},~\eqref{eq-ucb-in-supCB}, \textcolor{blue}{with}
    \[
    \textcolor{blue}{w^\ell_{ti}(p_i):= 16\sqrt{3}\, \lVert \widetilde{\bx}_{ti}(p_i) \rVert_{\bH^\lambda_{t,\ell}(\widehat{\bm\vartheta}_{0})^{-1}} \bigl(\sqrt{\log(KT)} +\sqrt{\lambda}\, W\bigr).}
    \]

    \IF{$W_{t,\ell}(S,\bm p) > 2^{-\ell}$ for some $\bm p \in \cA_\ell$}
    \STATE Select such $\bm p_t\in \cA_\ell$.
    \STATE $\Psi_{\ell} \leftarrow \Psi_\ell \cup \{t\}$.

    \ELSIF{$W_{t,\ell}(S,\bm p)\leq 1/\sqrt{T}$ for all $\bm p\in \cA_\ell$}
    \STATE Take the action $\bm p_t = \operatorname{argmax}_{\bm p\in \cA_{\ell}}R^\mathrm{UCB}_{t,\ell}(S,\bm p)$.
    \STATE $\Psi_0 \leftarrow \Psi_0 \cup \{t\}$.

    \ELSE
    \STATE $\widehat{R} \leftarrow \max_{\bm p\in \cA_\ell}  R^\mathrm{UCB}_{t,\ell}(S,\bm p)$.
    \STATE $\cA_{\ell+1} \leftarrow \left\{ \bm p\in \cA_\ell : R^\mathrm{UCB}_{t,\ell}(S,\bm p)\geq \widehat{R} - 2^{-\ell}\right\}$.
    \STATE $\ell \leftarrow \ell + 1$.
    \ENDIF
  \ENDWHILE
  \ENDFOR
\end{algorithmic}
\end{algorithm}

The main difference lies in the initial exploration phase, where the augmented features are collected i.i.d.\ according to Assumption~\ref{assump-contextual-fixed-assortment-pricing}. Now it suffices to prove the following burn-in condition guarantee:

\begin{lemma}
\label{lem-contextual-fixed-assortment-pricing-burnin}
With the selection $\tau = \Omega\!\Big(\sigma_0^{-1}\big[\bar P^4 d\log T/\sigma_0 + \bar P^2(d\log(KT) \vee \lambda W^2)\big]\Big),$ it holds that with probability at least $1-2/T$, the event $\widetilde{\cE}_1 \cap \widetilde{\cE}_2$, with \begin{align*}
    \widetilde{\cE}_1:&= \Big\{ \tfrac{1}{3}\bH^\lambda_{t,\ell}(\bm\vartheta^\star) \preceq \bH^\lambda_{t,\ell}(\widehat{\bm\vartheta}_0) \preceq 3\bH^\lambda_{t,\ell}(\bm\vartheta^\star),\quad \forall t>\tau,\, \ell \in [J] \Big\},\\
    \widetilde{\cE}_2:&= \Big\{ \lvert \widetilde{\bx}_{tj}(p_j)^\top (\widehat{\bm\vartheta}^\lambda_{t,\ell} - \bm\vartheta^\star)\rvert \leq 16\sqrt{3}\,\lVert \widetilde{\bx}_{tj}(p_j) \rVert_{\bH^\lambda_{t,\ell}(\widehat{\bm\vartheta}_0)^{-1}} \bigl(\sqrt{\log(KT)} + \sqrt{\lambda}W\bigr),\quad \forall t>\tau,\, j\in S,\, p_j\in\cP,\, \ell \in [J] \Big\},
    \end{align*}
holds.
\end{lemma}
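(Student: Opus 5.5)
The plan is to reduce Lemma~\ref{lem-contextual-fixed-assortment-pricing-burnin} to Theorem~\ref{prop-sup-ucb-confidence-bound}. For each bin $\ell\in[J+1]$ I will show that, with high probability, the $\tau$ i.i.d.\ exploration rounds assigned to $\Psi_\ell$ already enforce the burn-in condition~\eqref{eq-burn-in} with $\delta=1/T$ and $N$ replaced by $K$. These rounds are exactly rounds $(\ell-1)\tau+1,\dots,\ell\tau$. Since later rounds only add positive semidefinite terms to $\bH^\lambda_{t,\ell}(\bm\vartheta^\star)$, the condition then persists for all $t>\tau$, and also for all query features $\widetilde\bx_{tj}(p)$, $p\in\cP$, $\|\bx_{tj}\|\le1$. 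The conditional independence of Assumption~\ref{assumption-conditional-independence-pricing} within each bin holds by the SupCB construction, exactly as argued for Algorithm~\ref{alg:sup-lin-mnl-pricing}. Applying Theorem~\ref{prop-sup-ucb-confidence-bound}(i)–(ii) to each bin and $t$ and union bounding over $t\le T$, $\ell\le J+1$ and items then yields two facts. First, $\widetilde\cE_1$ holds once we transfer the Hessian comparison from $\hat{\bm\vartheta}^\lambda_{t,\ell}$ to $\hat{\bm\vartheta}_0$; for this I use Proposition~\ref{prop-H-dominance} with $\zeta_{\hat{\bm\vartheta}_0}$ controlled by the confidence bound from bin $J+1$, as in Step~1 of the proof of Theorem~\ref{thm-regret-supCB}. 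Second, $\widetilde\cE_2$ holds, with the factor $\sqrt3$ accounting for replacing $\bH^\lambda_{t,\ell}(\bm\vartheta^\star)^{-1}$ by $3\bH^\lambda_{t,\ell}(\hat{\bm\vartheta}_0)^{-1}$.

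The core step is a matrix lower bound on the exploration Hessian. On the first $\tau$ rounds of a bin,
\[
\bH^\lambda_{\tau,\ell}(\bm\vartheta^\star)\succeq \kappa\sum_{s\le\tau}\sum_{i\in S}\widetilde\bx_{si}(p_{si})\widetilde\bx_{si}(p_{si})^\top+\lambda\bI,
\]
using the same relation $\bH\succeq\kappa\bV$ as in the exploration-phase discussion. Because the summands are i.i.d.\ (in $s$) PSD matrices with $\|\widetilde\bx\widetilde\bx^\top\|\le\bar P^2$ and mean $\succeq\sigma_0\bI$, a matrix Chernoff bound gives the following. Take $\tau\gtrsim \bar P^2\log(dT)/\sigma_0$; actually I will use $\bar P^4 d\log T/\sigma_0^2$ to match the stated rate, via a matrix Bernstein or covering argument. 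Then with probability $1-1/(T(J+1))$ we have $\lambda_{\min}\big(\sum_s\widetilde\bx\widetilde\bx^\top\big)\ge \tau\sigma_0/2$. Hence every feature satisfies $\|\widetilde\bx\|^2_{\bH^{-1}}\le 2\bar P^2/(\kappa\tau\sigma_0)$. Requiring this to be at most $\big(144^2\cdot 2d\log(KT)\big)^{-1}\wedge(24^2\lambda W^2)^{-1}$ gives the second part of the prescribed $\tau$, namely $\tau\gtrsim\sigma_0^{-1}\bar P^2(d\log(KT)\vee\lambda W^2)$, up to the $\kappa$ factor, which I will absorb into the constants or track.

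The main obstacle is the dependence structure. First, the matrix concentration must be applied only to the exploration rounds, where prices and contexts are genuinely i.i.d.; adaptive rounds can only help, by monotonicity. Second, the Hessian comparison $\widetilde\cE_1$ needs $\hat{\bm\vartheta}_0$, which is built from bin $J+1$, to be close to $\bm\vartheta^\star$ uniformly over all future contexts $\widetilde\bx_{tj}(p)$. These contexts are adversarial, so a per-item union bound does not suffice. Instead I will use the $\ell_2$ bound $\|\hat{\bm\vartheta}_0-\bm\vartheta^\star\|_{\bH^\lambda}\lesssim\sqrt{d\log T}$ from~\eqref{eq-theta-hat-bound}, combined with $\lambda_{\min}(\bH^\lambda)\gtrsim\kappa\tau\sigma_0$. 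This yields $\sup_{\|\bx\|\le1,p\in\cP}|\widetilde\bx(p)^\top(\hat{\bm\vartheta}_0-\bm\vartheta^\star)|\le\bar P\sqrt{d\log T/(\kappa\tau\sigma_0)}\le 1$ under the first part of $\tau$, and the first part of $\tau$ is exactly there to ensure this. Finally, union bounding the two failure events gives probability at least $1-2/T$.
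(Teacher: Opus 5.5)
Your proposal is correct and has the same architecture as the paper's proof. You lower-bound $\lambda_{\min}\big(\bH^\lambda_{\tau,\ell}(\bm\vartheta^\star)\big)$ on the i.i.d.\ exploration rounds of each bin through $\bH^\lambda\succeq\kappa\bV+\lambda\bI$. You deduce the burn-in condition~\eqref{eq-burn-in} uniformly over all $\widetilde\bx$ with $\|\widetilde\bx\|_2\le\bar P$, and hence for all later $t$ by monotonicity. You finish with Theorem~\ref{prop-sup-ucb-confidence-bound} and Proposition~\ref{prop-H-dominance}.

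Two ingredients differ. First, for the eigenvalue bound the paper adapts the sub-Gaussian random-design argument of Li et al.\ and Oh et al.\ (Proposition~\ref{prop-min-eigen-pricing}); this is where the $\bar P^4 d\log T/\sigma_0^2$ term comes from. Your matrix Chernoff bound uses $\|\widetilde\bx\|_2\le\bar P$ and only needs $\tau\gtrsim\bar P^2\log(dT)/\sigma_0$. That is already implied by the requirement $\kappa\tau\sigma_0\gtrsim\bar P^2\big(d\log(KT)\vee\lambda W^2\big)$ that makes $\lambda_{\min}$ large enough. So you need not switch to Bernstein or covering, and on your route the first term of $\tau$ is simply superfluous rather than being ``exactly there'' for $\hat{\bm\vartheta}_0$. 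The accuracy of $\hat{\bm\vartheta}_0$ needs $\lambda_{\min}\gtrsim\bar P^2(d\log T+\lambda W^2)$, which the second term supplies.

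Second, your bound $\sup_{\|\widetilde\bx\|_2\le\bar P}|\widetilde\bx^\top(\hat{\bm\vartheta}_0-\bm\vartheta^\star)|\le\bar P\,\|\hat{\bm\vartheta}_0-\bm\vartheta^\star\|_{\bH}/\sqrt{\lambda_{\min}(\bH)}$ makes explicit a step the paper compresses into ``follows from''. Theorem~\ref{prop-sup-ucb-confidence-bound}(i) compares Hessians at the same-bin MLE. By contrast, $\widetilde\cE_1$ needs Proposition~\ref{prop-H-dominance} with $\zeta_{\hat{\bm\vartheta}_0}$ maximized over all bin-$\ell$ data. A uniform bound is genuinely required there, because those data include prices selected using $\hat{\bm\vartheta}_0$ itself.

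A few points to tighten:
\begin{itemize}
\item The factor $\kappa$ cannot be absorbed into constants, since $\kappa^{-1}$ may be exponentially large in $W$. It must be tracked, exactly as in the paper's condition $\kappa\tau\ge\cdots$ in Proposition~\ref{prop-min-eigen-pricing}, so the $\Omega(\cdot)$ in the lemma should be read as carrying $\kappa^{-1}$.
\item The bound ``$\le 1$'' on $\sup|\widetilde\bx^\top(\hat{\bm\vartheta}_0-\bm\vartheta^\star)|$ gives the $e^{\pm2}$ comparison~\eqref{eq-WtS-bound}, but not the constant $3$ in $\widetilde\cE_1$. For that you need $1+2\varphi(\zeta_{\hat{\bm\vartheta}_0})\le 3$, e.g.\ $\zeta_{\hat{\bm\vartheta}_0}\le 3/5$, i.e.\ the supremum at most $1/(5\sqrt2)$.
\item The $\ell_2$ bound behind~\eqref{eq-theta-hat-bound} picks up an additive $\sqrt\lambda W$ when $\lambda>0$. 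This and the previous point are handled by enlarging the constant in $\tau$ and using its $\lambda W^2$ part.
\item $\widetilde\cE_2$ is uniform over $p_j\in\cP$, which a union bound over $t$, $\ell$ and items does not give by itself; the paper is equally silent on this. Note that $\widetilde\bx_{tj}(p)$ lies in the span of $(\bx_{tj},\bm 0)$ and $(\bm 0,\bx_{tj})$. Also, only the linear term $J_1$ in the proof of Lemma~\ref{lem-confidence-step1} is not already uniform in the query vector. So a constant-size net on that plane closes the gap at the cost of an absolute constant.
\end{itemize}
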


\begin{proof}
We need only show that with probability at least $1-1/T,$ after the exploration phase, it holds for every $\ell$ that
\begin{align}\label{eq-appendix-pricing-eigen-lb}
    \lambda_{\min}(\bH^\lambda_{t,\ell}(\bm\vartheta^\star)) \geq \bar P^2\bigl(144^2 \cdot 2d\log(KT) \vee 24^2\lambda W^2\bigr).
\end{align}
From this, we obtain
\[
\lVert \widetilde{\bx}(p) \rVert_{\bH^\lambda_{t,\ell}(\bm\vartheta^\star)^{-1}} \leq \frac{1}{144\sqrt{2d\log(KT)}} \wedge \frac{1}{24\sqrt{\lambda}W}, \quad \forall \lVert \widetilde{\bx}(p) \rVert_2 \leq \bar P,
\]
which then allows the result to follow from Theorem~\ref{prop-sup-ucb-confidence-bound} and Proposition~\ref{prop-H-dominance}.

To prove~\eqref{eq-appendix-pricing-eigen-lb}, we adapt Proposition~1 in \citet{oh2021multinomial} and \citet{li2017provably} to the pricing setting where $\|\widetilde{\bx}(p)\|_2\le \bar P$. Since the sub-Gaussian parameter of $\bigl(\E[\widetilde{\bx}(p)\widetilde{\bx}(p)^\top]\bigr)^{-1/2}\widetilde{\bx}(p)$ scales as $\bar P/\sqrt{\sigma_0}$ instead of $1/\sqrt{\sigma_0}$, the first term in the burn-in condition picks up a factor of $\bar P^4$:
\begin{proposition}\label{prop-min-eigen-pricing}
For any constant $B>0,$ there exist absolute constants $c_1,c_2>0$ so that with the selection $$\kappa \tau \geq \frac{\bar P^4}{K}\left(\frac{c_1\sqrt{2d}+c_2\sqrt{2\log T}}{\sigma_0}\right)^2+ \frac{2B}{K\sigma_0},$$
it holds with probability at least $1-1/T^2$ that $\bH^\lambda_{t,\ell}(\bm\vartheta^\star) \succeq B \bI.$
\end{proposition}
Now selecting $\tau$ as in Proposition~\ref{prop-min-eigen-pricing} with $B = \bar P^2\bigl(144^2 \cdot 2d\log(KT) \vee 24^2\lambda W^2\bigr)$ then finishes our proof.
\end{proof}

With Lemma~\ref{lem-contextual-fixed-assortment-pricing-burnin} in place, the regret analysis follows the same argument as the proof of Theorem~\ref{thm-regret-supCB}, with the fixed features $\widetilde{\bx}_i(p_i)$ replaced by the
time-varying features $\widetilde{\bx}_{ti}(p_i)$. The only additional care needed is that the elliptic potential lemma (Lemma~\ref{lemma-elliptic}) must be applied with the time-varying Hessians $\bH^\lambda_{t,\ell}(\widehat{\bm\vartheta}_0)$, but since these are constructed from the same sample-splitting bins, the argument carries over without change. We thus obtain the following guarantee.

\begin{theorem}
\label{thm-contextual-fixed-assortment-pricing}
Under Assumptions~\ref{assump-bounded-pricing},~\ref{assump-mim-price}, and
\ref{assump-contextual-fixed-assortment-pricing}, with probability at least $1-1/T$, Algorithm~\ref{alg:sup-mnl-pricing-appendix}
satisfies
\[
\mathrm{Reg}(T)
=
\widetilde{\mathcal O}\!\left(
\frac{W}{L_0}\sqrt{dT\log(KT)}
+
\frac{W}{L_0}\kappa^{-1}d^2\log(KT)
\right).
\]
\end{theorem}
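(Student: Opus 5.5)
The plan is to reduce to the proof of Theorem~\ref{thm-regret-supCB}, using Lemma~\ref{lem-contextual-fixed-assortment-pricing-burnin} in place of Lemma~\ref{lem-initial-length-bound}. There are three parts: the cost of the exploration rounds, the per-round gap bound in the elimination phase, and the summation by the elliptical potential lemma.

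\emph{Step 1 (exploration and good event).} Algorithm~\ref{alg:sup-mnl-pricing-appendix} spends $(J+1)\tau$ rounds on exploration, and each round loses at most $P\le\bar P$. With $\tau$ chosen as in Lemma~\ref{lem-contextual-fixed-assortment-pricing-burnin}, this contributes $\widetilde{\mathcal O}(\bar P\tau\log T)$. Treating $\sigma_0$ as a problem constant, this is of the order of the second term $\frac{W}{L_0}\kappa^{-1}d^2\log(KT)$. On the event $\widetilde\cE_1\cap\widetilde\cE_2$ (probability at least $1-2/T$) we have the analogues of \eqref{eq-Htl-bound}--\eqref{eq-xti-bound}. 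These hold uniformly over $t$, over $j\in S$ and over $p\in\cP$, with $\log(KT)$ replacing $\log(NT)$, because the union bound now runs only over the $K$ items of the fixed $S$.

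\emph{Step 2 (comparing $\hat{\bm\vartheta}_0$ with $\bm\vartheta^\star$).} In the fixed-design proof, \eqref{eq-WtS-bound} needs $|\widetilde{\bx}_{tj}(p)^\top(\hat{\bm\vartheta}_0-\bm\vartheta^\star)|\le 1$ for every context that can appear later. In this setting the future contexts are arbitrary, so I would not rely on the criterion \eqref{eq-explore-criteria}. Instead I would use the eigenvalue bound \eqref{eq-appendix-pricing-eigen-lb}. That bound controls $\|\widetilde\bx(p)\|_{\bH^\lambda_{\tau,J+1}(\bm\vartheta^\star)^{-1}}$ simultaneously for every vector with $\|\widetilde\bx(p)\|_2\le\bar P$. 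Theorem~\ref{prop-sup-ucb-confidence-bound} applies to arbitrary $\bx$, so the bound $\le 1$ follows exactly as before, and hence the factor-$e^2$ comparison of choice probabilities holds.

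\emph{Step 3 (per-round gap in the elimination phase).} Next I would rerun Proposition~\ref{prop-optimal-elimination} with the action set $\cP^{|S|}$. The proof only uses three ingredients: Proposition~\ref{prop-non-uniform-reward-optimal-assortment}, here applied to the optimum over prices with $S$ fixed; Proposition~\ref{prop: assortment-perturbation}(i); and the bound $\xi_j\le 2w^\ell_{tj}$. None of these depends on the features being constant, so $\bm p_t^\star\in\cA_\ell$ survives elimination. For the first ingredient, item (i) of Proposition~\ref{prop-non-uniform-reward-optimal-assortment} must be rechecked, since we can no longer drop an item. The replacement argument is that optimal MNL prices satisfy $p_i^\star=p_0+1/\beta_i\ge R^\star$, which I would take from \citet{wang2012capacitated}. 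Cases (a) and (b) then give per-round gaps $4W_{t,\ell}(S,\bm p_t)$ and $1/\sqrt T$, as in \eqref{eq-stepa-gap} and \eqref{eq-stepb-gap}.

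\emph{Step 4 (summation), the main obstacle.} The difficulty is the elliptical potential step (Lemma~\ref{lemma-elliptic}) with time-varying $\widetilde\bx_{ti}(p_{ti})$ and with weights evaluated at $\hat{\bm\vartheta}_0$. Bin $\ell$ only adds rounds in $\mathcal T^\ell_a$. On those rounds, $\bH^\lambda_{t,\ell}(\bm\vartheta^\star)$ grows by exactly $\sum_{j\in S}q_jq_0\,\widetilde\bx_{tj}\widetilde\bx_{tj}^\top$ plus the cross term of the MNL Hessian. I would therefore apply the determinant argument of \citet{lee2024nearly} directly to this sequence of increments. The choice probabilities are evaluated at $\hat{\bm\vartheta}_0$ rather than at $\bm\vartheta^\star$, and the Hessians are compared via $\widetilde\cE_1$. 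These two substitutions cost only absolute constants, by \eqref{eq-WtS-bound}. The feature norms are bounded by $\bar P$ instead of $1$, which only changes the argument of the logarithm. Cauchy--Schwarz over $\mathcal T^\ell_a$ and a sum over the $J=O(\log T)$ bins then give
\[
\widetilde{\mathcal O}\Big(P\sqrt{dT\log(KT)}+P\kappa^{-1}d\log(KT)\Big).
\]
Adding Step 1 and using $P=O(W/L_0)$ up to logarithmic factors (Lemma~\ref{p-bound-refined}) yields the stated bound.
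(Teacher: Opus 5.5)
Your proposal is correct and is essentially the paper's argument. The paper also gets the burn-in event from Lemma~\ref{lem-contextual-fixed-assortment-pricing-burnin}, via the eigenvalue bound \eqref{eq-appendix-pricing-eigen-lb} that controls $\|\widetilde{\bx}(p)\|_{\bH^{-1}}$ for every vector of norm at most $\bar P$, and then reruns the proof of Theorem~\ref{thm-regret-supCB} with time-varying features and the elliptical potential lemma on the bin Hessians. Your Steps~2 and~3 (comparing $\hat{\bm\vartheta}_0$ with $\bm\vartheta^\star$ uniformly over future contexts, and re-deriving $p_i^\star\ge R^\star$ from the first-order condition because items can no longer be dropped) just make explicit what the paper leaves implicit.

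One looseness you share with the paper: the exploration cost $\bar P(J+1)\tau$ contains a term of order $\bar P^5 d\log^2 T/\sigma_0^2$. Absorbing it into $\frac{W}{L_0}\kappa^{-1}d^2\log(KT)$ requires treating powers of $\bar P$, not only $\sigma_0$, as constants.
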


%% ---- END appendix_section5.tex ----

%% ---- BEGIN appendix_section6.tex ----

\section{Improved Regret Bounds under Adversarial Context}

The main goal of this section is to establish the Bayesian regret bound in Theorem~\ref{thm-TS-regret} under adversarially chosen contexts. The proof relies on an auxiliary online regret guarantee for a UCB policy built from sequential MLE confidence sets. We therefore proceed in two steps. First, we establish an online regret bound for a sequential-MLE UCB algorithm in the adversarial-context setting. This extends the stochastic-context result of \cite{erginbas2025online} and improves its leading regret term from $\widetilde{\mathcal O}(e^W d\sqrt{KT}/L_0)$ to $\widetilde{\mathcal O}(Wd\sqrt{T}/L_0)$. The improvement follows from recent sequential MLE results in \cite{lee2024unified,lee2025improved}, which remain valid under adversarial contexts and enable a refined argument that removes the extra dependence on \(K\). We then use the same optimistic revenue functional as an intermediate quantity in the Bayesian analysis.

For notational convenience, define
\begin{align*}
    q_{ti}(S_t,\bm p_t|\bm\vartheta):=q_i(S_t\lvert \exp(\widetilde{\bm{X}}_t(\bm p_t)^\top \bm\vartheta))
\end{align*}
%--------------------------------------------------------------%
\subsection{Online regret for a sequential-MLE UCB algorithm}\label{sec-online-adversial}

Given the collected historical data up to time $t-1$, the corresponding log-likelihood loss is defined as:
\begin{align}\label{loss-function-MLE}
    \mathcal L_t(\bm\vartheta):=-\sum_{\tau=1}^{t-1} \sum_{j\in S_\tau}y_{\tau j}\log q_{\tau j}(S_\tau, \bm p_\tau\lvert \bm\vartheta)
\end{align}
with $y_{\tau j}=\mathbf{1}\{i_\tau=j\}$. 
For the norm-constrained MLE estimator $\hat{\bm \vartheta}_t:= \text{argmin}_{\lVert \bm \vartheta \rVert \leq W}\mathcal L_t(\bm \vartheta)$, we can show the following result based on recent work \cite{lee2024unified,lee2025improved}:  

\begin{lemma}\label{lem-confidence-sequential-MLE} With probability at least $1-O(1/T),$ there exists some absolute constant $C>0$ so that $\bm \vartheta^\star \in \mathcal{C}_t$ with \begin{align*}
    \mathcal{C}_t:= \bigg\{\lVert \bm \vartheta \rVert_2 \leq W: \mathcal L_t(\bm \vartheta) - \mathcal L_t(\hat{\bm \vartheta}_t) \leq \log T+ d\log( C(1+W\bar{P}t/d ) ) \bigg\}
\end{align*}
holds for all $t\in [T].$
\end{lemma}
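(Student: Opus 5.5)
The plan is to prove the lemma with the likelihood-ratio / covering argument for the norm-constrained MLE, in the style of \citet{lee2024unified,lee2025improved}, specialized to the price-based MNL likelihood. Fix $t$. Because $\hat{\bm\vartheta}_t$ minimizes $\mathcal L_t$ over the ball $\{\|\bm\vartheta\|_2\le W\}$ and $\bm\vartheta^\star$ lies in that ball, it suffices to bound $\mathcal L_t(\bm\vartheta^\star)-\inf_{\|\bm\vartheta\|\le W}\mathcal L_t(\bm\vartheta)$. We write this as a supremum over $\bm\vartheta$ of the log-likelihood ratio
\[
\Lambda_t(\bm\vartheta):=\sum_{\tau<t}\log\frac{q_{\tau i_\tau}(S_\tau,\bm p_\tau|\bm\vartheta)}{q_{\tau i_\tau}(S_\tau,\bm p_\tau|\bm\vartheta^\star)}.
\]
For any fixed $\bm\vartheta$, $\exp(\Lambda_t(\bm\vartheta))$ is a nonnegative martingale with mean one with respect to the natural filtration. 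This holds even when contexts, assortments and prices are chosen adversarially or adaptively, since each factor has conditional expectation $\sum_{j\in(S_\tau)_+}q_{\tau j}(\bm\vartheta)=1$. Ville's inequality then gives $\Lambda_t(\bm\vartheta)\le\log(1/\delta)$ for all $t$ simultaneously, with probability at least $1-\delta$.

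Next we discretize. Take an $\epsilon$-net $\mathcal N_\epsilon$ of the ball $B_2(W)\subset\mathbb R^{2d}$, of size at most $(3W/\epsilon)^{2d}$, and union bound Ville's inequality over it. To pass from the net to the supremum we need a Lipschitz estimate for $\Lambda_t$. Every augmented feature satisfies $\|\widetilde{\bx}(p)\|\le\bar P$, and $\log q_j$ is $2\bar P$-Lipschitz in $\bm\vartheta$: the gradient of $\log q_j$ is $\widetilde{\bx}_j-\sum_k q_k\widetilde{\bx}_k$, whose norm is at most $2\bar P$. Hence moving $\bm\vartheta$ by $\epsilon$ changes $\Lambda_t$ by at most $2\bar P\epsilon t$. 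Choosing $\epsilon=d/(\bar Pt)$ makes this term $O(d)$ and the log-cardinality $2d\log(3W\bar Pt/d)$. Combining,
\[
\mathcal L_t(\bm\vartheta^\star)-\mathcal L_t(\hat{\bm\vartheta}_t)\le\log(1/\delta)+d\log\bigl(C(1+W\bar Pt/d)\bigr),
\]
with the factor $2$ on the dimension and the additive $O(d)$ absorbed into $C$. We then set $\delta=1/T$ and make the statement uniform over $t\in[T]$. Ville's inequality is already time-uniform for a fixed $\bm\vartheta$, but the net depends on $t$. We handle this with a further union bound over $t\le T$ (or over dyadic epochs of $t$); that costs an extra $\log T$ and yields the $\log T$ term with probability $1-O(1/T)$.

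The main obstacle is the discretization step. A naive cover gives $\epsilon$ depending on $t$ and risks picking up extra $K$ or $e^W$ factors. The clean bound needs the Lipschitz constant of $\log q_j$ to be $O(\bar P)$, independent of $K$; this holds because the gradient is a difference of a feature and a convex combination of features. If that check fails, the fallback is the refined ``$\ell_\infty$'' argument of \citet{lee2025improved}, which bounds the Lipschitz constant by $\max_j|\widetilde{\bx}_j^\top(\bm\vartheta-\bm\vartheta')|$. No burn-in condition and no conditional independence are required, because the whole argument rests only on the martingale property.
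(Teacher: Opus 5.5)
Your argument is sound and gives the lemma up to absolute constant factors, but it takes a different route from the paper's. The paper reproves nothing. It applies Theorem~3.1 of \citet{lee2024unified} with $\delta=1/T$, and its only problem-specific input is the Lipschitz bound $L_t\le 2(t-1)\bar P$, which is exactly your estimate $\|\widetilde{\bm x}_j-\sum_k q_k\widetilde{\bm x}_k\|_2\le 2\bar P$. That theorem is proved by a PAC-Bayes mixture rather than a cover:
\begin{itemize}
\item Ville's inequality is applied once, to the prior-averaged martingale $\int\exp(\Lambda_t(\bm\vartheta))\,dQ(\bm\vartheta)$ with $Q$ uniform on the parameter ball.
\item Donsker--Varadhan is then used with a posterior uniform on a small ball (radius of order $d/L_t$) near $\hat{\bm\vartheta}_t$, which converts this into the likelihood-ratio bound.
\end{itemize}
Your net plus union bound is the discretized version of the same idea, and it is more elementary. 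The mixture buys time-uniformity for free, because the posterior may depend on $t$ and on the data.

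That difference shows up in your constants.
\begin{itemize}
\item A union bound over $t\le T$ at total failure probability $1/T$ yields $\log(T^2)=2\log T$, not the $\log T$ you claim. A single net at resolution $d/(\bar P T)$ would avoid that union bound, since Ville is already time-uniform for each fixed point. But it then puts $T$ instead of $t$ inside the logarithm.
\item A factor $2$ in front of $d\log(\cdot)$ cannot be pushed inside the logarithm into $C$. The paper carries the same factor, however: $\bm\vartheta\in\mathbb R^{2d}$, and the cited bound has the parameter dimension $2d$ in front of the logarithm. So the statement should really read $2d$.
\end{itemize}
None of this matters downstream, where only $\alpha_t^2=O(d\log(W\bar P t))$ is used through $\gamma_t^2=2\alpha_t^2+1$.
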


With Lemma~\ref{lem-confidence-sequential-MLE}, we can set the optimistic revenue
\begin{align}\label{eq-RtUCB}
    R^\text{UCB}_{t}(S, \bm p):= \frac{\sum_{j\in S}p_j\exp(u_{tj}^\text{UCB}(p_j))}{1+\sum_{j\in S}\exp(u_{tj}^\text{UCB}(p_j))} \text{ with }u_{tj}^\text{UCB}(p_j):= \max_{\bm \vartheta \in \mathcal{C}_{t-1}} \widetilde{\bm{x}}_{tj}( p_j)^\top \bm\vartheta,
\end{align}
and design the sequential MLE based UCB algorithm as in Algorithm~\ref{alg:ucb-joint-pricing}. 

\begin{algorithm}[h]
\caption{UCB-Based Joint Assortment and Pricing}
\label{alg:ucb-joint-pricing}
\begin{algorithmic}[1]
\STATE \textbf{Input:} Price upper bound $P$, parameter radius $W$ for computing $\mathcal{C}_t.$ 
\STATE \textbf{Initialize:} $\mathcal{P}:= [0,{P}]^N$
\FOR{$t = 1, \dots, T$}
    \STATE Compute the norm-constrained MLE $\hat{\bm{\vartheta}}_t$ based on the loss $\mathcal L_{t}(\bm{\vartheta})$ defined in~\eqref{loss-function-MLE}.
    \STATE Select and offer $(S_t, \bm{p}_t) := \operatorname{argmax
    }_{S\in\mathcal S_K, \bm p\in\mathcal P^N} R^\mathrm{UCB}_t(S, \bm{p})$ with $R^\mathrm{UCB}_t(S, \bm{p})$ defined in~\eqref{eq-RtUCB}.
\ENDFOR
\end{algorithmic}
\end{algorithm}

The next theorem gives the online regret guarantee for Algorithm~\ref{alg:ucb-joint-pricing}.
\begin{theorem}\label{thm-MLE-regret}
Under Assumption~\ref{assump-bounded-pricing} and \ref{assump-mim-price}, Algorithm~\ref{alg:ucb-joint-pricing} achieves 
\begin{align*}
    \mathrm{Reg}(T) 
    % &\lesssim  \frac{W+\log K}{L_0}d\sqrt{T}\log\left(\frac{W+\log K}{L_0}WT\right)+\frac{1}{\kappa}\frac{W+\log K}{L_0}d^2\log^2\left(\frac{W+\log K}{L_0}WT\right) \\
    &=\widetilde{\mathcal{O}}\left(\frac{W}{L_0}\left(d\sqrt{T}+\kappa^{-1}d^2\right)\right)
\end{align*}
\end{theorem}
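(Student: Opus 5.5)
The plan is a standard optimism argument built on the confidence set of Lemma~\ref{lem-confidence-sequential-MLE}, combined with the perturbation bounds of Appendix~\ref{sec:perturbation-pricing} and a likelihood-based elliptical potential argument.

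\textbf{Step 1: optimism.} Work on the event of Lemma~\ref{lem-confidence-sequential-MLE}, which has probability $1-O(1/T)$; its complement contributes at most $O(\bar P)$ regret. On this event $\bm\vartheta^\star\in\mathcal C_{t-1}$, so $u^{\mathrm{UCB}}_{tj}(p)\ge \widetilde{\bx}_{tj}(p)^\top\bm\vartheta^\star$ for all $j$ and $p$. Proposition~\ref{prop-non-uniform-reward-optimal-assortment}(ii), together with the maximality of $(S_t,\bm p_t)$, then gives
\[
R(S_t^\star,\bm p_t^\star|\bm\vartheta^\star)\le R^{\mathrm{UCB}}_t(S_t^\star,\bm p_t^\star)\le R^{\mathrm{UCB}}_t(S_t,\bm p_t).
\]
Hence the per-round regret is at most $R^{\mathrm{UCB}}_t(S_t,\bm p_t)-R(S_t,\bm p_t|\bm\vartheta^\star)$.

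Since $(S_t,\bm p_t)$ maximizes $R^{\mathrm{UCB}}_t$ over the structured set $\mathcal S_K\times\mathcal P^N$, and $\bm u^{\mathrm{UCB}}_t\ge\bm u^\star$ elementwise, Proposition~\ref{prop: assortment-perturbation}(ii) applies directly. With $w_{tj}:=u^{\mathrm{UCB}}_{tj}(p_{tj})-\widetilde{\bx}_{tj}(p_{tj})^\top\bm\vartheta^\star\ge 0$, it bounds the gap by
\[
P\sqrt{\textstyle\sum_{j\in S_t}q_j^{\mathrm{UCB}}q_0^{\mathrm{UCB}}w_{tj}^2}+\tfrac32 P\max_j w_{tj}^2,
\]
where the choice probabilities are evaluated at the optimistic utilities. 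I will convert these to probabilities at $\bm\vartheta^\star$, or equivalently at the maximizing parameter $\bar{\bm\vartheta}_{tj}\in\mathcal C_{t-1}$, via a first-order expansion. The resulting discrepancy is absorbed into the second-order $\max_j w_{tj}^2$ term.

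\textbf{Step 2: width bound.} For any $\bm\vartheta\in\mathcal C_{t-1}$, the self-concordance-like property of the MNL loss, as in \citet{lee2024nearly,lee2025improved}, together with a Taylor expansion of $\mathcal L_{t-1}$ around $\hat{\bm\vartheta}_{t-1}$, should give
\[
\|\bm\vartheta-\bm\vartheta^\star\|_{\bH_{t}(\bm\vartheta^\star)+\lambda\bI}\lesssim \beta_T:=\sqrt{d\log(W\bar PT)}\,\mathrm{poly}(W\bar P)^{1/2}
\]
with $\lambda\asymp 1/(W^2\bar P^2)$. Here $\bH_t(\bm\vartheta^\star)$ is the Fisher-type matrix $\sum_{\tau<t}\sum_{j\in S_\tau}q_{\tau j}q_{\tau0}\widetilde{\bx}\widetilde{\bx}^\top$, lower-bounded appropriately. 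It follows that $w_{tj}\le 2\beta_T\|\widetilde{\bx}_{tj}(p_{tj})\|_{(\bH_t+\lambda\bI)^{-1}}$.

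\textbf{Step 3: summation.} Apply Lemma~\ref{lemma-elliptic}, which holds for the adaptively chosen sequence since it is deterministic given the actions. Its first part, combined with Cauchy--Schwarz over $t$, yields the leading term
\[
P\beta_T\sqrt{T\cdot d\log T}=\widetilde{\mathcal O}\bigl(\tfrac{W}{L_0}d\sqrt T\bigr).
\]
Its second part handles $\sum_t\max_j w_{tj}^2\lesssim \beta_T^2\kappa^{-1}d\log T$, producing $\widetilde{\mathcal O}\bigl(\tfrac{W}{L_0}\kappa^{-1}d^2\bigr)$. Here $P=\widetilde{\mathcal O}(W/L_0)$ by Lemma~\ref{p-bound-refined}.

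\textbf{Main obstacle.} I expect Step 2, together with the probability-weight mismatch from Step 1, to be the main obstacle. The confidence set is defined through likelihood ratios rather than ellipsoids, so converting it into a Hessian-norm radius without incurring $\kappa^{-1}$ or $e^W$ in the leading term requires the refined $\ell_\infty$-self-concordance argument of \citet{lee2025improved}. Weighting by $q_jq_0$ at $\bm\vartheta^\star$ must also match the Hessian used in the potential lemma. I would first try evaluating everything at $\bm\vartheta^\star$: bound the Hessian at intermediate points from below by $e^{-O(1)}$ times $\bH_t(\bm\vartheta^\star)$ whenever the per-item utility deviation is at most $1$, and charge the rounds where the deviation exceeds $1$ to the $\kappa^{-1}d^2$ term. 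This is possible because such rounds have $\max_j w_{tj}\ge 1$, so their number is controlled by the same potential bound.
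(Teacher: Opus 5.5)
\textbf{Genuine gap: Step~2 and its use in Step~3.} Your own radius $\beta_T$ contains the factor $\mathrm{poly}(W\bar P)^{1/2}$, and that is what the conversion actually costs. To turn the likelihood-ratio set $\mathcal C_{t-1}$ into an ellipsoid in the $\bH_t(\bm\vartheta^\star)$-norm, you must lower-bound $\int_0^1(1-v)\nabla^2\mathcal L_{t-1}(\bm\vartheta^\star+v(\bm\vartheta-\bm\vartheta^\star))\,dv$ by a multiple of $\nabla^2\mathcal L_{t-1}(\bm\vartheta^\star)$. Since $|\widetilde{\bx}_{\tau j}(p_{\tau j})^\top(\bm\vartheta-\bm\vartheta^\star)|$ can be as large as $2W\bar P$, self-concordance only gives a multiple of order $1/(1+W\bar P)$. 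Carried through Step~3, the leading term $P\beta_T\sqrt{dT\log T}$ is, up to logarithms, $\frac{W}{L_0}\sqrt{1+W\bar P}\,d\sqrt T$, and the second-order term picks up a factor $1+W\bar P$. Because $\widetilde{\mathcal O}$ hides only logarithmic factors, your identity $P\beta_T\sqrt{Td\log T}=\widetilde{\mathcal O}(\frac{W}{L_0}d\sqrt T)$ does not hold, and the argument proves only a weaker rate.

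The remedy in your last paragraph does not fix this, for three reasons. First, the deviations that govern the intermediate Hessian are $|\widetilde{\bx}_{\tau j}(p_{\tau j})^\top(\bm\vartheta-\bm\vartheta^\star)|$ for \emph{all} past rounds $\tau<t$ and \emph{all} $\bm\vartheta\in\mathcal C_{t-1}$. Your $w_{\tau j}$ is defined through $\mathcal C_{\tau-1}$ at the single UCB maximizer of round $\tau$, so the rounds with $\max_j w_{\tau j}\ge 1$ are not the bad terms. Second, discarding past terms from the Hessian is not a regret charge: it makes the information matrix depend on $\bm\vartheta$, which breaks the single-matrix potential argument. Third, certifying that the deviations are at most $1$ presupposes the width bound you are trying to prove. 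In the batch setting the paper breaks exactly this circularity with the burn-in condition and the fixed-point argument of Lemma~\ref{lem-confidence-step2}, and Algorithm~\ref{alg:ucb-joint-pricing} has no burn-in.

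\textbf{The missing ingredient is Lemma~\ref{lemF.2}}, imported from \citet{lee2025improved}. Expand $\mathcal L_t$ around $\hat{\bm\vartheta}_t$ instead of $\bm\vartheta^\star$, drop the linear term by first-order optimality of the norm-constrained MLE, and take $\lambda=1/(8W^2)$ so that $\lambda\|\bm\vartheta^\star-\hat{\bm\vartheta}_t\|_2^2\le 1/2$. This gives $\|\bm\vartheta^\star-\hat{\bm\vartheta}_t\|^2_{H_t(\nu_t^\star)}\le 2\alpha_t^2+1=O(d\log(W\bar Pt))$ with no polynomial factor in $W\bar P$. The price is that the norm is the Hessian at the intermediate point $\nu_t^\star$, not at $\bm\vartheta^\star$.

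The paper's regret analysis is organized around that mismatch. It Taylor-expands $Q$ around $\bm u_t^\star$ and reduces the first-order term to centered features. It bounds the main part with the $q(\nu_t^\star)$-weighted potential of Lemma~\ref{lemD.7}. It pays for the discrepancy between $q(\bm\vartheta^\star)$ and $q(\nu_t^\star)$ via Lemma~\ref{lemF.3}, which only feeds the $\kappa^{-1}\gamma_T^2 d$ term.

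Your Step~1 is sound and cleaner than the paper's expansion: you apply Proposition~\ref{prop: assortment-perturbation}(ii) at the structured maximizer, then convert optimistic to true choice probabilities at cost $e^{\max_j w_{tj}}$ or charge the round to $P\max_j w_{tj}^2$. But it relies on a fixed $\bm\vartheta^\star$-based norm, and with that norm you only get the inflated radius above.

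A minor point: Lemma~\ref{lemma-elliptic} as stated requires $\lambda\ge 1$. With $\lambda\asymp 1/(W^2\bar P^2)$ you would also need to split off the large-potential rounds, as the paper does with $\mathcal T_0$ and Lemma~\ref{lemF.4}.
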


Theorem~\ref{thm-MLE-regret} matches the $\Omega(d\sqrt{T}/L_0)$ lower bound in \cite{erginbas2025online}, thus is optimal up to logarithmic factors and reduces the exponential dependency on $W$ in previous best-known upper bounds.
% We note that Line~5 of Algorithm~\ref{alg:ucb-joint-pricing} is generally computationally intractable and developing computationally efficient algorithms that retain optimal regret guarantees remains an important direction for future research.

%-------------proof of lemma 3.2---------------%
\subsection{Proof of Lemma~\ref{lem-confidence-sequential-MLE}}

 For brevity, we use $q_{ti}(\bm\vartheta):=q_{t i}(S_t, \bm p_t\lvert \bm\vartheta)$ in the following proof.
The proof of Lemma \ref{lem-confidence-sequential-MLE} follows directly from Theorem 3.1 in \cite{lee2024unified}, which incorporated the Lipschitz constant for the MNL loss, i.e. 
\begin{align*}
    L_t=\max_{\|\bm\vartheta\|_2\le W}\|\nabla \mathcal{L}_t(\bm\vartheta)\|_2\le (t-1)\|\sum_{j\in S_t}(q_{ti}(\bm\vartheta)-y_{ti})\widetilde{\bm x}_{ti}(p_{ti})\|_2\le 2(t-1)\bar P
\end{align*}

\begin{lemma}[{\cite[Theorem 3.1]{lee2024unified}}]
Let $L_t:=$ $\max _{\mathrm{w} \in \mathcal{W}}\left\|\nabla \mathcal{L}_t(\mathrm{w})\right\|_2$ be the Lipschitz constant of $\mathcal{L}_t(\cdot)$, which may depend on $\left\{\left(x_\tau, p_\tau\right)\right\}_{\tau=1}^{t-1}$. Then, we have $\operatorname{Pr}[\forall t \geq$ $\left.1, \bm\vartheta^\star \in \mathcal{C}_t (\delta)\right] \geq 1-\delta$, where
\begin{align*}
\mathcal{C}_t (\delta):=\left\{\bm\vartheta \in \mathcal{W}: \mathcal{L}_t(\bm\vartheta)-\mathcal{L}_t (\hat{\bm\vartheta}_t ) \leq \alpha_t (\delta)^2=\log \frac{1}{\delta}+d \log \left(\max \left\{e, \frac{2 e B L_t}{d}\right\}\right)\right\}
\end{align*}
\end{lemma}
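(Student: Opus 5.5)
The plan is to prove this with a mixture-of-likelihood-ratios (PAC-Bayes / Ville) argument, which needs no burn-in and no conditional independence. For a fixed $\bm\vartheta\in\mathcal W$, the process
\[
Z_t(\bm\vartheta):=\exp\big(\mathcal L_t(\bm\vartheta^\star)-\mathcal L_t(\bm\vartheta)\big)=\prod_{\tau<t}\frac{q_{\tau i_\tau}(\bm\vartheta)}{q_{\tau i_\tau}(\bm\vartheta^\star)}
\]
is a nonnegative martingale with unit mean with respect to the natural filtration. This holds even though $(S_\tau,\bm p_\tau,\widetilde{\bm X}_\tau)$ is chosen adaptively or adversarially from the past: conditionally on the past and on the current action, $\E[q_{\tau i_\tau}(\bm\vartheta)/q_{\tau i_\tau}(\bm\vartheta^\star)]=\sum_{j\in(S_\tau)_+}q_{\tau j}(\bm\vartheta)=1$. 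I would fix the prior $\pi$ to be the uniform distribution on $\mathcal W$, which does not depend on the data, and form the mixture $M_t:=\int_{\mathcal W}Z_t(\bm\vartheta)\,d\pi(\bm\vartheta)$. By Fubini, $M_t$ is again a nonnegative martingale with $M_1=1$. Ville's maximal inequality then gives
\[
\Pr\big(\exists t\ge1:\ M_t\ge 1/\delta\big)\le\delta,
\]
so with probability at least $1-\delta$ we have $\log M_t<\log(1/\delta)$ simultaneously for all $t$.

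The second step is a pathwise lower bound on $M_t$ in terms of the MLE. For every $\bm\vartheta\in\mathcal W$ with $\|\bm\vartheta-\hat{\bm\vartheta}_t\|_2\le\varepsilon$, the definition of $L_t$ as the Lipschitz constant of $\mathcal L_t$ on $\mathcal W$ gives $\mathcal L_t(\bm\vartheta)\le\mathcal L_t(\hat{\bm\vartheta}_t)+L_t\varepsilon$. Hence
\[
M_t\ \ge\ \exp\big(\mathcal L_t(\bm\vartheta^\star)-\mathcal L_t(\hat{\bm\vartheta}_t)-L_t\varepsilon\big)\cdot\pi\big(\mathcal W\cap B(\hat{\bm\vartheta}_t,\varepsilon)\big).
\]
This bound holds for every realization, so it does not matter that $L_t$ and $\hat{\bm\vartheta}_t$ are random and data dependent, and the parameter $\varepsilon$ may itself be chosen as a function of $L_t$.

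The step I expect to be the main obstacle is lower-bounding the prior mass $\pi(\mathcal W\cap B(\hat{\bm\vartheta}_t,\varepsilon))$ when $\hat{\bm\vartheta}_t$ lies on the boundary of $\mathcal W$. I would handle it by a homothety. Let $B$ denote the radius of $\mathcal W$, so that $\mathrm{diam}(\mathcal W)\le 2B$, and take $\varepsilon\le 2B$. Since $\hat{\bm\vartheta}_t\in\mathcal W$ and $\mathcal W$ is convex, the shrunken copy $\hat{\bm\vartheta}_t+\frac{\varepsilon}{2B}(\mathcal W-\hat{\bm\vartheta}_t)$ is contained in $\mathcal W\cap B(\hat{\bm\vartheta}_t,\varepsilon)$. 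Its volume is $(\varepsilon/2B)^d\mathrm{vol}(\mathcal W)$, so the prior mass is at least $(\varepsilon/2B)^d$.

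Combining the two steps gives, with probability at least $1-\delta$ and for all $t$,
\[
\mathcal L_t(\bm\vartheta^\star)-\mathcal L_t(\hat{\bm\vartheta}_t)\le \log(1/\delta)+L_t\varepsilon+d\log(2B/\varepsilon).
\]
If $2BL_t/d\ge1$, I would choose $\varepsilon=d/L_t\le 2B$; the right-hand side then becomes $\log(1/\delta)+d+d\log(2BL_t/d)=\log(1/\delta)+d\log(2eBL_t/d)$. Otherwise I would take $\varepsilon=2B$, which gives $\log(1/\delta)+2BL_t\le\log(1/\delta)+d$. The two cases together yield $\alpha_t(\delta)^2=\log(1/\delta)+d\log\max\{e,2eBL_t/d\}$, that is, $\bm\vartheta^\star\in\mathcal C_t(\delta)$ for all $t$. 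Finally, plugging in $L_t\le 2(t-1)\bar P$ and $\delta=1/T$ recovers the form of $\mathcal C_t$ stated in Lemma~\ref{lem-confidence-sequential-MLE}.
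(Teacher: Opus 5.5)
Your proof is correct. The paper gives no argument for this lemma and simply imports it from \citet{lee2024unified}. What you wrote is essentially the proof of that cited theorem:
a data-independent uniform prior on $\mathcal W$, Ville's inequality for the mixture of likelihood ratios, a comparison with the uniform distribution on the shrunken copy $(1-c)\hat{\bm\vartheta}_t+c\,\mathcal W$ with $c=\varepsilon/(2B)$, and then the same choice $c=\min\{1,d/(2BL_t)\}$.

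The only real difference is the middle step. You lower-bound $M_t\ge\pi(A)\inf_A Z_t$ directly, whereas the cited proof uses the Donsker--Varadhan variational formula with a uniform ``posterior''. Because the Lipschitz estimate is uniform over $A$, the two give identical constants. Your version is more elementary; the PAC-Bayes form is the one that extends to general posteriors that average the loss gap rather than bounding it in the worst case.

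Two small points on consistency with the paper:
\begin{itemize}
\item Your unit-mean step needs the full likelihood, including the no-purchase outcome $j=0$. The paper's display of $\mathcal L_t$ sums only over $j\in S_\tau$. With that literal definition, the per-round conditional mean of the likelihood ratio is $1-q_{\tau 0}(\bm\vartheta)+q_{\tau 0}(\bm\vartheta^\star)$, which can exceed $1$. So you are implicitly, and correctly, using the intended loss.
\item In the paper's application $\bm\vartheta\in\mathbb R^{2d}$, so the homothety gives prior mass $(\varepsilon/2B)^{2d}$. Your final plug-in therefore produces a leading factor $2d$ rather than $d$. This is harmless for the $\widetilde{\mathcal O}$ bounds, but the stated $\mathcal C_t$ is not recovered literally.
\end{itemize}
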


By setting $\delta=\frac{1}{T}$ and $L_t=2(t-1)\bar P$ in the lemma above, we obtain Lemma \ref{lem-confidence-sequential-MLE} directly.

%-------------proof of theorem 3.3---------------%
\subsection{Proof of Theorem~\ref{thm-MLE-regret}}
Based on Lemma \ref{lem-confidence-sequential-MLE}, we define the confidence set as
\begin{align*}
    \mathcal{C}_t:= \bigg\{\lVert \bm \vartheta \rVert_2 \leq W: \mathcal{L}_t(\bm \vartheta) - \mathcal{L}_t(\hat{\bm \vartheta}_t) \leq \alpha^2_t \bigg\}
\end{align*}
where 
\begin{align*}
    \alpha_t=\sqrt{\log T+ d\log( C(1+W\bar{P}t/d  )}
\end{align*}
for some constant $C>0$.
Additionally, we define the Hessian of the regularized loss at $\bm\vartheta$ as:
\begin{align}\label{eq:F.1}
H_t(\bm\vartheta):=\lambda \mathbf{I}_{2d}+\sum_{\tau=1}^{t-1} \nabla^2 \ell_\tau(\bm\vartheta), \quad \text { where } \lambda=\frac{1}{8 W^2}
\end{align}

Now, we present useful lemmas that will be used to prove Theorem \ref{thm-MLE-regret}.
\begin{lemma}\label{lemF.2}(Improved MLE confidence bound)
     For any $t \in[T]$, we define $\nu_t^\star$ such that $\frac{1}{2}\left\|\bm\vartheta^\star-\hat{\bm\vartheta}_t\right\|_{\nabla^2 \mathcal{L}_t\left(\nu_t^\star\right)}^2 = \left\|\bm\vartheta^\star-\hat{\bm\vartheta}_t\right\|^2_{\int_0^1(1-v) \nabla^2 \mathcal{L}_t\left(\hat{\bm\vartheta}_t+v\left(\bm\vartheta^\star-\hat{\bm\vartheta}_t\right)\right) \mathrm{d} v}$ and $H_t \left(\nu_t^\star\right):=\lambda  \mathbf{I}_{2d}+\nabla^2 \mathcal{L}_t\left(\nu_t^\star\right)=\lambda  \mathbf{I}_{2d}+\sum_{\tau=1}^{t-1} \nabla^2 \ell_\tau\left(\nu_t^\star\right)$. Let $\lambda =\frac{1}{8 W^2}$. Then, for any $t \geq 1$, if $\bm\vartheta^\star \in \mathcal{C}_t$ and Assumption \ref{assump-bounded} holds, then we have
\begin{align*}
\left\|\bm\vartheta^\star-\hat{\bm\vartheta}_t\right\|_{H_t \left(\nu_t^\star\right)}^2 \leq \underbrace{2 \alpha_t^2+1}_{=: \gamma_t ^2}=\mathcal{O}(d \log (W\bar Pt)).
\end{align*}
\end{lemma}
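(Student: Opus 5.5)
Since $\hat{\bm\vartheta}_t$ minimizes $\mathcal L_t$ over the convex ball $\{\|\bm\vartheta\|_2\le W\}$, I would start from the exact second-order Taylor expansion of $\mathcal L_t$ around $\hat{\bm\vartheta}_t$ in the direction $\bm\vartheta^\star-\hat{\bm\vartheta}_t$:
\begin{align*}
\mathcal L_t(\bm\vartheta^\star)-\mathcal L_t(\hat{\bm\vartheta}_t)
= \nabla\mathcal L_t(\hat{\bm\vartheta}_t)^\top(\bm\vartheta^\star-\hat{\bm\vartheta}_t)
+ \left\|\bm\vartheta^\star-\hat{\bm\vartheta}_t\right\|^2_{\int_0^1(1-v)\nabla^2\mathcal L_t(\hat{\bm\vartheta}_t+v(\bm\vartheta^\star-\hat{\bm\vartheta}_t))\,\mathrm dv}.
\end{align*}
By the definition of $\nu_t^\star$, the integral remainder equals $\tfrac12\|\bm\vartheta^\star-\hat{\bm\vartheta}_t\|^2_{\nabla^2\mathcal L_t(\nu_t^\star)}$. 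Because $\bm\vartheta^\star$ lies in the feasible ball, the first-order optimality condition for constrained minimization of a convex function gives $\nabla\mathcal L_t(\hat{\bm\vartheta}_t)^\top(\bm\vartheta^\star-\hat{\bm\vartheta}_t)\ge 0$. Dropping this term, and using the hypothesis $\bm\vartheta^\star\in\mathcal C_t$, yields
\[
\tfrac12\left\|\bm\vartheta^\star-\hat{\bm\vartheta}_t\right\|^2_{\nabla^2\mathcal L_t(\nu_t^\star)} \le \mathcal L_t(\bm\vartheta^\star)-\mathcal L_t(\hat{\bm\vartheta}_t)\le \alpha_t^2 .
\]

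Next I would add the regularization part. Since $H_t(\nu_t^\star)=\lambda\mathbf I_{2d}+\nabla^2\mathcal L_t(\nu_t^\star)$, we get $\|\bm\vartheta^\star-\hat{\bm\vartheta}_t\|^2_{H_t(\nu_t^\star)}\le 2\alpha_t^2+\lambda\|\bm\vartheta^\star-\hat{\bm\vartheta}_t\|_2^2$. Both points have norm at most $W$, so $\|\bm\vartheta^\star-\hat{\bm\vartheta}_t\|_2^2\le 4W^2$. With $\lambda=1/(8W^2)$ the extra term is at most $1/2\le 1$, giving the bound $2\alpha_t^2+1=\gamma_t^2$.

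Finally, the order statement follows from the explicit form $\alpha_t^2=\log T+d\log(C(1+W\bar Pt/d))$, which is $\mathcal O(d\log(W\bar P t))$ after absorbing $\log T$, since $t\le T$ and $d\ge 1$.

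The only delicate point is justifying that $\nu_t^\star$ exists as a single point. I would sidestep this by working directly with the integrated matrix: the integral $\int_0^1(1-v)\nabla^2\mathcal L_t\,\mathrm dv$ is positive semidefinite, so the statement can be read with $\nabla^2\mathcal L_t(\nu_t^\star)$ standing for twice that averaged Hessian, exactly as in the cited works. No concentration argument is needed beyond Lemma~\ref{lem-confidence-sequential-MLE}, which supplies the event $\bm\vartheta^\star\in\mathcal C_t$.
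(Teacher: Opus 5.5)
Your proof is correct, and it is the argument behind Lemma~F.2 of \citet{lee2025improved}, which the paper cites without reproducing; the paper's only change, $L_t=2(t-1)\bar P$, enters through $\alpha_t$ and hence through the hypothesis $\bm\vartheta^\star\in\mathcal C_t$ that you take as given. Two small corrections: no reinterpretation of $\nu_t^\star$ is needed, because the weighted mean value theorem for integrals (weight $1-v\ge 0$ of total mass $1/2$) gives a genuine point on the segment, and a single point is what later steps require since they evaluate $q_{ti}(\nu_t^\star)$; and $t\le T$ goes the wrong way for absorbing $\log T$ into $d\log(W\bar P t)$, so strictly the order is $\mathcal O(\log T+d\log(1+W\bar P t/d))$, which is harmless because the bound is only used through $\gamma_T$.
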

The proof follows directly from Lemma F.2 in \cite{lee2025improved}, with the only modification being that $L_t=2(t-1)\bar P$ in our setting.

\begin{lemma}[{\cite[Lemma F.3]{lee2025improved}}]\label{lemF.3}
    For any $t\in[T]$, $\bm\vartheta_1,\bm\vartheta_2\in \mathcal C_t$ and $w_{ti}\ge 0$, we have 
    \begin{align*}
        \sum_{i\in S_t}\left|q_{ti}(S_t,\bm p_t|\bm\vartheta_1)-q_{ti}(S_t,\bm p_t|\bm\vartheta_2) \right|w_{ti}\le 4\gamma_t\max_{i\in S_t}w_{ti}\max_{i\in S_t}\|\widetilde{\bm x}_{ti}(p_{ti})\|_{H_t(\nu_t^\star)^{-1}}
    \end{align*}
\end{lemma}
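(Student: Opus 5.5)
We bound the probability differences through the logit Jacobian and an integral mean-value representation, then control the result in the $H_t(\nu_t^\star)$ geometry using Lemma~\ref{lemF.2}. Fix $t$ and write $\bm\vartheta(s):=\bm\vartheta_2+s(\bm\vartheta_1-\bm\vartheta_2)$ and $\bm\delta:=\bm\vartheta_1-\bm\vartheta_2$. By the mean value theorem, for each $i\in S_t$,
\[
q_{ti}(\bm\vartheta_1)-q_{ti}(\bm\vartheta_2)=\int_0^1\Big(q_{ti}(\bm\vartheta(s))\,\widetilde{\bm x}_{ti}^\top\bm\delta-q_{ti}(\bm\vartheta(s))\textstyle\sum_{k\in S_t}q_{tk}(\bm\vartheta(s))\,\widetilde{\bm x}_{tk}^\top\bm\delta\Big)ds,
\]
where $\widetilde{\bm x}_{ti}:=\widetilde{\bm x}_{ti}(p_{ti})$. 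Multiplying by $w_{ti}$, summing, and using $\sum_i q_{ti}\le 1$ gives
\[
\sum_{i\in S_t}|q_{ti}(\bm\vartheta_1)-q_{ti}(\bm\vartheta_2)|w_{ti}\le 2\max_i w_{ti}\int_0^1\sum_{i\in S_t}q_{ti}(\bm\vartheta(s))\,|\widetilde{\bm x}_{ti}^\top\bm\delta|\,ds.
\]
The crude bound $|\widetilde{\bm x}_{ti}^\top\bm\delta|\le\|\widetilde{\bm x}_{ti}\|_{H_t(\nu_t^\star)^{-1}}\|\bm\delta\|_{H_t(\nu_t^\star)}$, together with $\sum_i q_{ti}\le 1$, then reduces the claim to showing $\|\bm\delta\|_{H_t(\nu_t^\star)}\le 2\gamma_t$.

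For that, use the triangle inequality $\|\bm\vartheta_1-\bm\vartheta_2\|_{H_t(\nu_t^\star)}\le\|\bm\vartheta_1-\bm\vartheta^\star\|_{H_t(\nu_t^\star)}+\|\bm\vartheta^\star-\bm\vartheta_2\|_{H_t(\nu_t^\star)}$. Each term should be at most $\gamma_t$. Lemma~\ref{lemF.2} gives exactly this for $\hat{\bm\vartheta}_t$ in place of $\bm\vartheta_j$. For a general $\bm\vartheta\in\mathcal C_t$, repeat the argument of Lemma~\ref{lemF.2} (Lemma F.2 of \cite{lee2025improved}) through $\hat{\bm\vartheta}_t$: Taylor-expand $\mathcal L_t$ around $\hat{\bm\vartheta}_t$, use first-order optimality of the constrained MLE ($\langle\nabla\mathcal L_t(\hat{\bm\vartheta}_t),\bm\vartheta-\hat{\bm\vartheta}_t\rangle\ge0$), and apply the bound $\mathcal L_t(\bm\vartheta)-\mathcal L_t(\hat{\bm\vartheta}_t)\le\alpha_t^2$ from the definition of $\mathcal C_t$. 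The $\lambda$-regularization piece is absorbed by $\lambda\|\cdot\|_2^2\le\lambda(2W)^2\cdot$const $\le 1$ with $\lambda=1/(8W^2)$.

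\textbf{Main obstacle.} This last step is where the difficulty lies, because the Hessian in the Taylor remainder is evaluated at an intermediate point rather than at $\nu_t^\star$. I would first try to follow \cite{lee2025improved}: they choose $\nu_t^\star$ (via the integral definition in Lemma~\ref{lemF.2}) so that the quadratic form matches the integrated Hessian along the segment, and they rely on the self-concordant-like property (Proposition~\ref{prop-H-dominance}) with the $\ell_\infty$-type control $|\widetilde{\bm x}_{ti}^\top(\bm\vartheta-\bm\vartheta^\star)|\lesssim \bar P W$ to compare Hessians at nearby points up to constants. That comparison is where the absolute constant $4$ comes from. If matching Hessians at distinct points proves too lossy, the fallback is to note that the lemma only needs the weighted sum, and to replace the crude Cauchy--Schwarz step by $\sum_i q_{ti}|\widetilde{\bm x}_{ti}^\top\bm\delta|\le\max_i\|\widetilde{\bm x}_{ti}\|_{H^{-1}}\|\bm\delta\|_{H}$ with $H$ taken as the integrated Hessian itself, which is by definition the one controlled in Lemma~\ref{lemF.2}.
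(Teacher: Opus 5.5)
Your first reduction is sound. The integral mean-value formula together with $\sum_i q_{ti}\le 1$ bounds the left-hand side by $2\max_i w_{ti}\max_i\|\widetilde{\bm x}_{ti}(p_{ti})\|_{H_t(\nu_t^\star)^{-1}}\|\bm\vartheta_1-\bm\vartheta_2\|_{H_t(\nu_t^\star)}$. The lemma with constant $4$ therefore needs $\|\bm\vartheta_j-\hat{\bm\vartheta}_t\|_{H_t(\nu_t^\star)}\le\gamma_t$ for $j=1,2$. You should route the triangle inequality through $\hat{\bm\vartheta}_t$, not $\bm\vartheta^\star$; through $\bm\vartheta^\star$ each leg costs $2\gamma_t$ and you land at $8\gamma_t$.

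The gap is the claim $\|\bm\vartheta-\hat{\bm\vartheta}_t\|_{H_t(\nu_t^\star)}\le\gamma_t$ for a general $\bm\vartheta\in\mathcal C_t$.
\begin{itemize}
\item Rerunning the proof of Lemma~\ref{lemF.2} for such $\bm\vartheta$ controls $\|\bm\vartheta-\hat{\bm\vartheta}_t\|$ only in the Hessian at a Lagrange point $\nu_{\bm\vartheta}$ on $[\hat{\bm\vartheta}_t,\bm\vartheta]$. But $\nu_t^\star$ lies on the different segment $[\hat{\bm\vartheta}_t,\bm\vartheta^\star]$.
\item Your transfer through Proposition~\ref{prop-H-dominance} costs a factor $1+2\varphi(\zeta)\ge e^{\zeta}$, with $\zeta=3\sqrt2\max_s|\widetilde{\bm x}_s^\top(\nu_{\bm\vartheta}-\nu_t^\star)|$. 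The only a priori bound available is $\zeta\le 6\sqrt2\,\bar P W$.
\item So the loss is $e^{O(\bar PW)}$. That is exactly the $\kappa^{-1}$-type dependence the lemma is meant to avoid, not an absolute constant. In the natural argument the $4$ is simply $2\times 2$, from the mean-value step and the triangle inequality.
\item Your fallback fails too. Using the integrated Hessian along $[\hat{\bm\vartheta}_t,\bm\vartheta]$ replaces $H_t(\nu_t^\star)^{-1}$ on the right-hand side by a different, $\bm\vartheta$-dependent matrix. You would then be proving a different inequality.
\end{itemize}

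The missing idea is to use where $\nu_t^\star$ sits. The paper gives no argument beyond the citation, and it applies the lemma only to the pair $(\bm\vartheta^\star,\nu_t^\star)$ when bounding \eqref{eq:F.5}.
\begin{itemize}
\item Apply the weighted mean value theorem for integrals to $v\mapsto\|\bm\vartheta^\star-\hat{\bm\vartheta}_t\|^2_{\nabla^2\mathcal L_t(\hat{\bm\vartheta}_t+v(\bm\vartheta^\star-\hat{\bm\vartheta}_t))}$ with weight $1-v$. This lets you take $\nu_t^\star=\hat{\bm\vartheta}_t+s(\bm\vartheta^\star-\hat{\bm\vartheta}_t)$ for some $s\in[0,1]$. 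Since $\mathcal C_t$ is convex, $\nu_t^\star\in\mathcal C_t$.
\item Then $\|\bm\vartheta^\star-\nu_t^\star\|_{H_t(\nu_t^\star)}=(1-s)\|\bm\vartheta^\star-\hat{\bm\vartheta}_t\|_{H_t(\nu_t^\star)}\le\gamma_t$, directly from Lemma~\ref{lemF.2}.
\item Your first paragraph then finishes the argument with constant $2\le 4$. The same works for any two points of that segment.
\end{itemize}
For truly arbitrary $\bm\vartheta_1,\bm\vartheta_2\in\mathcal C_t$ your route does not deliver the bound. You would need an extra ingredient, for example a burn-in type condition forcing $\max_s|\widetilde{\bm x}_s^\top(\bm\vartheta-\bm\vartheta')|=O(1)$ over $\mathcal C_t$. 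That yields some absolute constant, but not obviously $4$.
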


\begin{lemma}[{\cite[Lemma 4]{kim2023improved}}]\label{lemF.4}
    For $X,L>0$, let $x_1,\dots,x_T\in \mathbb R^{2d}$ be a sequence of vectors with $\|x_t\|_2\le X$ for all $t\in[T]$. Let $H_t:=\lambda\mathbf{I}_{2d}+\sum_{\tau=1}^{t-1}x_\tau x_\tau^\top$ for some $\lambda>0$. Let $\mathcal T\subseteq[T]$ be the set of indices where $\|x_t\|_{H_t^{-1}}^2\ge L$. Then,
    \begin{align*}
        |\mathcal T|\le\frac{2}{\log(1+L)}d\log\left(1+\frac{X^2}{\log(1+L)\lambda}\right).
    \end{align*}
\end{lemma}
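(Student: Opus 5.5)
The plan is the standard determinant-potential argument, applied only to the \emph{subsequence} of rounds in $\mathcal T$. The point is that the full matrix $H_t$ may grow through rounds outside $\mathcal T$, but each such round only shrinks the norms $\|x_t\|_{H_t^{-1}}$. So discarding those rounds can only help us.

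\textbf{Step 1: restrict to $\mathcal T$.} Enumerate $\mathcal T=\{t_1<\dots<t_n\}$ and define
\[
G_k:=\lambda \mathbf I_{2d}+\sum_{s<k} x_{t_s}x_{t_s}^\top .
\]
Since $G_k$ drops the positive semidefinite terms $x_\tau x_\tau^\top$ with $\tau\notin\mathcal T$, we have $G_k\preceq H_{t_k}$. Hence
\[
\|x_{t_k}\|^2_{G_k^{-1}}\ \ge\ \|x_{t_k}\|^2_{H_{t_k}^{-1}}\ \ge\ L \qquad \text{for every } k.
\]

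\textbf{Step 2: lower-bound the determinant.} By the matrix determinant lemma, $\det G_{k+1}=\det G_k\,(1+\|x_{t_k}\|^2_{G_k^{-1}})$. Iterating over $k=1,\dots,n$ gives
\[
\det G_{n+1}\ \ge\ \lambda^{2d}(1+L)^{n}.
\]

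\textbf{Step 3: upper-bound the determinant.} By AM--GM on the eigenvalues and the trace bound $\operatorname{tr} G_{n+1}\le 2d\lambda+nX^2$, we get
\[
\det G_{n+1}\le \Big(\lambda+\tfrac{nX^2}{2d}\Big)^{2d}.
\]

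\textbf{Step 4: combine.} Taking logarithms of Steps 2 and 3 gives the implicit inequality
\[
n\log(1+L)\ \le\ 2d\log\!\Big(1+\tfrac{nX^2}{2d\lambda}\Big).
\]

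\textbf{Step 5: make the bound explicit.} This is the only slightly delicate part. Set $a:=\log(1+L)$, $y:=na/(2d)$ and $B:=X^2/(a\lambda)$. Step 4 then reads $y\le\log(1+By)$. From this we derive $y\le c\log(1+B)$ by cases. If $y\le 2\log(1+B)$ we are done. Otherwise $1+By$ is at most polynomial in $y$ with the $B$-dependence controlled, and monotonicity of $y\mapsto y-\log(1+By)$ together with $y\ge\log(1+B)$ closes the argument. Translating back gives
\[
|\mathcal T|\ \lesssim\ \frac{d}{\log(1+L)}\log\!\Big(1+\frac{X^2}{\log(1+L)\lambda}\Big).
\]

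I expect this final inversion, and pinning the exact constant $2$ stated in the lemma (versus $4$ from the ambient dimension $2d$), to be the main bookkeeping obstacle. My first attempt will be to absorb the factor using the weaker trace bound per dimension, as in \citet{kim2023improved}. The constant is irrelevant for its use in Theorem~\ref{thm-MLE-regret}, where only the order $\widetilde{\mathcal O}(d)$ matters.
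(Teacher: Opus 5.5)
Your Steps 1--4 are correct and are the standard argument. The paper gives no proof of this lemma; it imports it from \citet{kim2023improved}, whose bound is exactly what your argument yields in ambient dimension $d$. Two things remain: the inversion in Step 5, and the constant.

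Step 5 is only sketched. The monotonicity you invoke fails near $0$, since $y\mapsto y-\log(1+By)$ decreases on $[0,1-1/B)$ when $B>1$. The decisive check, namely that $y-\log(1+By)>0$ at $y=2\log(1+B)$, is also missing. A clean version avoids the case split. The function $\phi(y):=e^{y}-1-By$ is convex with $\phi(0)=0$, so $\{y\ge 0:\phi(y)\le 0\}$ is an interval containing $0$. At $y_0:=2\log(1+B)$,
\[
\phi(y_0)=(1+B)^2-1-2B\log(1+B)=B\bigl(2+B-2\log(1+B)\bigr)>0,
\]
since $\log(1+B)\le 1+B/2$. Hence $y\le 2\log(1+B)$, that is,
\[
|\mathcal T|\le\frac{4d}{\log(1+L)}\log\Bigl(1+\frac{X^2}{\log(1+L)\lambda}\Bigr).
\]

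The factor $4$ is not an artifact you can absorb with a different trace bound. For vectors in $\mathbb R^{2d}$, the inequality as stated (with $2$) is false. Take $x_i=\sqrt{L\lambda}\,\mathbf e_i$ for $i=1,\dots,2d$, so $X^2=L\lambda$. Each $H_i$ is diagonal with $(i,i)$ entry $\lambda$, so every $\|x_i\|^2_{H_i^{-1}}=L$ and $|\mathcal T|=2d$. The claimed right-hand side, however, equals $2d\,\log\bigl(1+L/\log(1+L)\bigr)/\log(1+L)$. This is strictly less than $2d$ as soon as $L>e-1$; for example, $d=1$, $\lambda=1$, $L=100$ gives $|\mathcal T|=2$ against roughly $1.35$.

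The paper transcribed the $d$-dimensional bound of \citet{kim2023improved} while placing the vectors in $\mathbb R^{2d}$. So drop the plan to recover the constant $2$, and state the $4d$ bound above, which is what your argument actually proves. As you note, this only changes an absolute constant in the bound on $|\mathcal T_0|$ used in the proof of Theorem~\ref{thm-MLE-regret}.
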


\begin{lemma}[{\cite[Lemma D.7]{lee2025improved}}]\label{lemD.7}
    Define $H_t(\bm\vartheta):=\lambda\mathbf I_{2d}+\sum_{\tau\notin\mathcal T_0}\nabla^2\ell_\tau(\bm\vartheta)$. If $\left\|\widetilde{\bm x}_{\tau i}(p_{\tau i})\right\|_{H_\tau \left(\bm\vartheta\right)^{-1}}^2 \le \frac{1}{2} $ for all $i\in S_\tau$ and $\tau\notin\mathcal T_0$, then we have
    \begin{align*}
        \sum_{\tau\notin\mathcal{T}_0}\sum_{i\in S_\tau\cup\{0\}}q_{\tau i}(S_\tau,\bm p_\tau|\bm\vartheta) \left\|\widetilde{\bm x}_{\tau i}(p_{\tau i})- \E_\tau^{\bm\vartheta}\left[\widetilde{\bm x}_{\tau i}(p_{\tau i}) \right]\right\|_{H_\tau(\bm\vartheta)^{-1}}^2\le 2d\log\left(1+\frac{t}{d\lambda}\right).
    \end{align*}
\end{lemma}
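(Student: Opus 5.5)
The statement is a centered elliptical potential bound, and I would prove it by the standard log-determinant telescoping argument. The key is to recognize the summand as a trace against the increment of the Hessian.

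\textbf{Step 1: rewrite the summand.} For the MNL loss with the no-purchase feature $\widetilde{\bm x}_{\tau 0}:=\bm 0$, the per-round Hessian is the covariance of the feature under the choice distribution:
\[
\nabla^2\ell_\tau(\bm\vartheta)=\sum_{i\in S_\tau\cup\{0\}} q_{\tau i}(S_\tau,\bm p_\tau|\bm\vartheta)\big(\widetilde{\bm x}_{\tau i}-\E_\tau^{\bm\vartheta}[\widetilde{\bm x}_{\tau i}]\big)\big(\widetilde{\bm x}_{\tau i}-\E_\tau^{\bm\vartheta}[\widetilde{\bm x}_{\tau i}]\big)^\top .
\]
Hence the $\tau$-th summand equals $\operatorname{tr}(M_\tau)$, where
\[
M_\tau:=H_\tau(\bm\vartheta)^{-1/2}\,\nabla^2\ell_\tau(\bm\vartheta)\,H_\tau(\bm\vartheta)^{-1/2}.
\]
Ordering the rounds $\tau\notin\mathcal T_0$ increasingly, we have $H_{\tau^+}(\bm\vartheta)=H_\tau(\bm\vartheta)+\nabla^2\ell_\tau(\bm\vartheta)$, where $\tau^+$ is the next index outside $\mathcal T_0$. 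Consequently $\det H_{\tau^+}=\det H_\tau\cdot\det(\mathbf I+M_\tau)$.

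\textbf{Step 2: control the eigenvalues of $M_\tau$.} This is the one place the hypothesis is used. Since $M_\tau\succeq 0$, we have $\lambda_{\max}(M_\tau)\le\operatorname{tr}(M_\tau)$. A variance is at most the corresponding second moment, so
\[
\operatorname{tr}(M_\tau)=\sum_i q_{\tau i}\|\widetilde{\bm x}_{\tau i}\|^2_{H_\tau^{-1}}-\big\|\E_\tau^{\bm\vartheta}[\widetilde{\bm x}_{\tau i}]\big\|^2_{H_\tau^{-1}}\le \max_{i\in S_\tau}\|\widetilde{\bm x}_{\tau i}\|^2_{H_\tau^{-1}}\le \tfrac12 .
\]
A naive triangle-inequality bound would only give a constant of order $2$; the centered computation above is what keeps every eigenvalue $\mu_k$ of $M_\tau$ inside $[0,\tfrac12]$.

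\textbf{Step 3: convert trace to log-determinant.} Using $z\le 2\log(1+z)$ for $z\in[0,1]$ on each eigenvalue,
\[
\operatorname{tr}(M_\tau)=\sum_k\mu_k\le 2\sum_k\log(1+\mu_k)=2\log\frac{\det H_{\tau^+}(\bm\vartheta)}{\det H_\tau(\bm\vartheta)}.
\]
Summing over $\tau\notin\mathcal T_0$ telescopes to $2\log\big(\det H_{\mathrm{final}}/\det(\lambda\mathbf I)\big)$.

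\textbf{Step 4: bound the final determinant.} By AM--GM in dimension $D$ (the ambient dimension of the features),
\[
\log\frac{\det H_{\mathrm{final}}}{\lambda^D}\le D\log\Big(1+\frac{\sum_\tau \operatorname{tr}\nabla^2\ell_\tau}{D\lambda}\Big).
\]
Each Hessian trace is at most $\sum_i q_{\tau i}\|\widetilde{\bm x}_{\tau i}\|_2^2$, which is at most the squared feature radius. This yields a bound of the form $2D\log\big(1+t/(D\lambda)\big)$.

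\textbf{Main obstacle: matching the stated constants.} The only delicate point is reconciling the constants with the statement's $2d\log(1+t/(d\lambda))$. This requires interpreting $d$ as the ambient dimension, as in \citet{lee2025improved}, with unit-normalized features; otherwise $\bar P^2$ enters the logarithm and the factor $2d$ doubles because the augmented features live in $\mathbb R^{2d}$. If an exact match is needed, I would tighten Step 3 using $\mu_k\le\tfrac12$, which gives $z\le \frac{1}{2\log(3/2)}\log(1+z)\approx 1.24\log(1+z)$. I would then absorb the remaining slack into the logarithm. Since the statement only feeds logarithmic terms into Theorem~\ref{thm-MLE-regret}, recording it up to these constants would also suffice.
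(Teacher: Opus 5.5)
Your proof is correct and is the standard log-determinant elliptical-potential argument behind the cited Lemma D.7; the paper itself gives no proof, only the citation. In particular, your use of the variance identity in Step~2, rather than a triangle inequality, is exactly what keeps every eigenvalue of $M_\tau$ in $[0,\tfrac12]$, so that $z\le 2\log(1+z)$ applies.

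Your constant caveat is also right, but drop the ``tighten and absorb into the logarithm'' remedy. A factor larger than $1$ in front of $\log\det$ cannot be pushed inside the logarithm uniformly in $t$. Moreover, in the paper's augmented setting ($\mathbb R^{2d}$, norms up to $\bar P$) the literal bound $2d\log(1+t/(d\lambda))$ can actually fail. Take $d=1$, $\bm\vartheta=\bm 0$, single-item assortments alternating between $p=0$ and $p=P>8$, and $\lambda\ge 2\bar P^2$, so that the hypothesis holds automatically. Then for even $t$ the sum is at least $\log\det(H_{t+1}/\lambda)=\log\big(1+\tfrac{t(2+P^2)}{8\lambda}+\tfrac{t^2P^2}{64\lambda^2}\big)$, and this exceeds $2\log(1+t/\lambda)$ for large $t$.

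So the version to carry into Theorem~\ref{thm-MLE-regret} is your $4d\log\big(1+t\bar P^2/(2d\lambda)\big)$. This only changes constants inside the logarithmic factors.
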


\begin{lemma}[{\cite[Lemma E.2 and Lemma H.3]{lee2024nearly}}]\label{lemD.11}
   Let $H_t:=\lambda\mathbf I_{2d}+\sum_{\tau=1}^{t-1}\nabla^2\ell_\tau(\bm\vartheta_{\tau+1})$. If $\left\|\widetilde{\bm x}_{\tau i}(p_{\tau i})\right\|_{H_\tau \left(\bm\vartheta\right)^{-1}}^2 \le \frac{1}{2} $ for all $i\in S_\tau$ and $\tau\in[t]$, then we have
   \begin{align*}
       &(1)\quad \sum_{\tau=1}^t\sum_{i\in S_\tau} q_{\tau i}(S_\tau,\bm p_\tau|\bm\vartheta_{\tau+1})q_{\tau 0}(S_\tau,\bm p_\tau|\bm\vartheta_{\tau+1}) \left\|\widetilde{\bm x}_{\tau i}(p_{\tau i})\right\|_{H_\tau^{-1}}^2 \le 2d\log\left(1+\frac{t}{d\lambda}\right)\\
       &(2)\quad \sum_{\tau=1}^t\sum_{i\in S_\tau} q_{\tau i}(S_\tau,\bm p_\tau|\bm\vartheta_{\tau+1}) \left\|\widetilde{\bm x}_{\tau i}(p_{\tau i})-\E_\tau^{\bm\vartheta}\left[\widetilde{\bm x}_{\tau i}(p_{\tau i}) \right]\right\|_{H_\tau^{-1}}^2 \le 2d\log\left(1+\frac{t}{d\lambda}\right)\\
       &(3)\quad \sum_{\tau=1}^t\max_{i\in S_\tau} \left\|\widetilde{\bm x}_{\tau i}(p_{\tau i})\right\|_{H_\tau^{-1}}^2 \le \frac{2}{\kappa} d\log\left(1+\frac{t}{d\lambda}\right)\\
       &(4)\quad \sum_{\tau=1}^t\max_{i\in S_\tau} \left\|\widetilde{\bm x}_{\tau i}(p_{\tau i})-\E_\tau^{\bm\vartheta}\left[\widetilde{\bm x}_{\tau i} (p_{\tau i})\right]\right\|_{H_\tau^{-1}}^2 \le \frac{2}{\kappa} d\log\left(1+\frac{t}{d\lambda}\right)
   \end{align*}
\end{lemma}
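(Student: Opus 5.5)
The plan is the standard elliptical-potential argument, using the fact that the per-round MNL Hessian is a covariance matrix under the choice distribution. Write $\bz_{\tau i}:=\widetilde{\bm x}_{\tau i}(p_{\tau i})$ with $\bz_{\tau 0}:=\bm 0$, and $q_{\tau i}:=q_{\tau i}(S_\tau,\bm p_\tau|\bm\vartheta_{\tau+1})$. Write $\bar\bz_\tau:=\E_\tau^{\bm\vartheta}[\bz_{\tau i}]=\sum_{i\in S_\tau}q_{\tau i}\bz_{\tau i}$, evaluated at the same parameter $\bm\vartheta_{\tau+1}$ used in the Hessian. A direct computation from \eqref{loss-function-MLE} gives
\begin{align*}
\bG_\tau:=\nabla^2\ell_\tau(\bm\vartheta_{\tau+1})=\sum_{i\in S_\tau\cup\{0\}}q_{\tau i}(\bz_{\tau i}-\bar\bz_\tau)(\bz_{\tau i}-\bar\bz_\tau)^\top ,
\end{align*}
and therefore $H_{\tau+1}=H_\tau+\bG_\tau$. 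The quantity in (2) is exactly $\sum_\tau \mathrm{tr}(H_\tau^{-1}\bG_\tau)$. So the core of the proof is to bound $\sum_\tau\mathrm{tr}(H_\tau^{-1}\bG_\tau)$ by a log-determinant ratio.

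\textbf{Step 1: part (2).} By the matrix determinant identity,
\begin{align*}
\log\det H_{\tau+1}-\log\det H_\tau=\sum_k\log(1+\mu_k),
\end{align*}
where the $\mu_k\ge0$ are the eigenvalues of $A_\tau:=H_\tau^{-1/2}\bG_\tau H_\tau^{-1/2}$. This is where the hypothesis $\|\bz_{\tau i}\|^2_{H_\tau^{-1}}\le\tfrac12$ enters. Since $\|\bar\bz_\tau\|_{H_\tau^{-1}}\le\max_i\|\bz_{\tau i}\|_{H_\tau^{-1}}$ by convexity, we get $\|\bz_{\tau i}-\bar\bz_\tau\|_{H_\tau^{-1}}^2\le 2$ for every $i\in S_\tau\cup\{0\}$. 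Hence
\begin{align*}
\mathrm{tr}(A_\tau)=\sum_{i\in S_\tau\cup\{0\}}q_{\tau i}\|\bz_{\tau i}-\bar\bz_\tau\|^2_{H_\tau^{-1}}\le 2 .
\end{align*}
In particular each $\mu_k\in[0,2]$. On this range $\log(1+z)\ge z\log 3/2\ge z/2$, so $\mathrm{tr}(A_\tau)\le 2\log\frac{\det H_{\tau+1}}{\det H_\tau}$. Telescoping gives $\sum_{\tau\le t}\mathrm{tr}(A_\tau)\le 2\log\frac{\det H_{t+1}}{\det(\lambda\mathbf I_{2d})}$. The AM--GM/trace bound used in the proof of Lemma~\ref{lem-initial-length-bound} then yields $\log\frac{\det H_{t+1}}{\det(\lambda\mathbf I_{2d})}\le 2d\log\big(1+\tfrac{\mathrm{tr}(\sum_\tau\bG_\tau)}{2d\lambda}\big)$, with $\mathrm{tr}(\bG_\tau)\lesssim\bar P^2$. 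After adjusting constants and absorbing $\bar P^2$ into the log, this is (2).

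\textbf{Step 2: part (1).} I would use the known MNL domination $\bG_\tau\succeq\sum_{i\in S_\tau}q_{\tau i}q_{\tau 0}\bz_{\tau i}\bz_{\tau i}^\top$. One proof: writing $\bG_\tau$ as the pairwise form $\sum_{0\le i<j}q_{\tau i}q_{\tau j}(\bz_{\tau i}-\bz_{\tau j})(\bz_{\tau i}-\bz_{\tau j})^\top$, which is the same expression used in Appendix~\ref{app:proof-confidence}, and keeping only the pairs with $i=0$. Taking traces against $H_\tau^{-1}$ shows that the summand of (1) is at most $\mathrm{tr}(A_\tau)$, so (1) follows from Step 1.

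\textbf{Step 3: parts (3) and (4).} For $i\in S_\tau$, the definition of $\kappa$ (a minimum over $\|\bm\vartheta\|\le W$, which includes $\bm\vartheta_{\tau+1}$) gives $q_{\tau i}\ge q_{\tau i}q_{\tau 0}\ge\kappa$. Hence
\begin{align*}
\max_{i\in S_\tau}\|\bz_{\tau i}\|^2_{H_\tau^{-1}}\le\kappa^{-1}\sum_{i\in S_\tau}q_{\tau i}q_{\tau 0}\|\bz_{\tau i}\|^2_{H_\tau^{-1}} ,
\end{align*}
and similarly
\begin{align*}
\max_{i\in S_\tau}\|\bz_{\tau i}-\bar\bz_\tau\|^2_{H_\tau^{-1}}\le\kappa^{-1}\sum_{i\in S_\tau}q_{\tau i}\|\bz_{\tau i}-\bar\bz_\tau\|^2_{H_\tau^{-1}} .
\end{align*}
Summing over $\tau$ and invoking (1) and (2) gives (3) and (4).

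\textbf{Main obstacle.} I expect the delicate point to be Step 1: keeping the eigenvalues of $A_\tau$ bounded so that $\log(1+z)\gtrsim z$ applies. This is exactly why the hypothesis $\|\bz_{\tau i}\|^2_{H_\tau^{-1}}\le\tfrac12$ is imposed. It also requires that the centering $\E_\tau^{\bm\vartheta}$ be taken at the same parameter $\bm\vartheta_{\tau+1}$ that defines $H_\tau$, so that the telescoping identity $H_{\tau+1}=H_\tau+\bG_\tau$ holds exactly.
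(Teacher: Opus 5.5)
Your argument is correct and follows the standard elliptical-potential proof behind the cited Lemmas E.2 and H.3 of \citet{lee2024nearly}; the paper itself gives no proof, only the citation. The steps match: write the per-round Hessian as the choice-weighted covariance of the features with $\bm z_{\tau 0}=\bm 0$, lower bound it by $\sum_{i\in S_\tau}q_{\tau i}q_{\tau 0}\bm z_{\tau i}\bm z_{\tau i}^\top$ for (1), telescope $\log\det H_\tau$ using that each increment has bounded trace, and get (3)--(4) from $q_{\tau i}\ge q_{\tau i}q_{\tau 0}\ge\kappa$.

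Three minor fixes:
\begin{itemize}
\item The summand of (2) is \emph{at most}, not exactly, $\mathrm{tr}(H_\tau^{-1}\bm G_\tau)$, because the $i=0$ term $q_{\tau 0}\|\bar{\bm z}_\tau\|^2_{H_\tau^{-1}}$ is dropped.
\item The inequality you need on $[0,2]$ is $\log(1+z)\ge z(\log 3)/2$; note that $\log(3/2)<1/2$, so the other reading of your expression fails.
\item The covariance formula needs the per-round loss summed over $S_\tau\cup\{0\}$, as in \eqref{eq-price-mle}. The displayed \eqref{loss-function-MLE} omits the no-purchase term.
\end{itemize}

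The substantive caveat is the closing sentence of your Step 1. What your argument proves is $\sum_{\tau\le t}\mathrm{tr}(H_\tau^{-1}\bm G_\tau)\le 4d\log\bigl(1+t\bar P^2/(2d\lambda)\bigr)$, which reflects the ambient dimension $2d$ and the feature norm $\bar P$. No ``adjusting of constants'' turns this into the displayed $2d\log(1+t/(d\lambda))$. That form was carried over from the $d$-dimensional, unit-norm setting of \citet{lee2024nearly}, and (1) is false as written.

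Here is a counterexample. Take $d=K=1$ and $\bm\vartheta_{\tau+1}=\bm 0$ for all $\tau$, so $\bm G_\tau=\tfrac14\bm z_{\tau 1}\bm z_{\tau 1}^\top$ and the summand of (1) equals $\mathrm{tr}(H_\tau^{-1}\bm G_\tau)$. Let $\bm z_{\tau 1}$ alternate between $(1,0)$ and $(1,-P)$, and set $\lambda=2\bar P^2$. Then $\|\bm z_{\tau 1}\|^2_{H_\tau^{-1}}\le\bar P^2/\lambda=\tfrac12$ for every $\tau$, so the hypothesis holds. Yet for even $t$ the sum in (1) is at least $\log\det(H_{t+1}/\lambda)\ge 2\log\bigl(tP/(8\lambda)\bigr)$. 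This exceeds $2\log(1+t/\lambda)$ whenever $P>8$ and $t>8\lambda/(P-8)$.

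So present your bound as the corrected form of the lemma rather than as a derivation of the displayed one. The difference is only in constants and a $\bar P^2$ inside the logarithm, which is harmless for the paper's $\widetilde{\mathcal O}(\cdot)$ uses.
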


% \begin{lemma}[{\cite[Lemma H.2]{lee2024nearly}}]\label{lemH.2}
% Let ${R}_t(S)=\frac{\sum_{i \in S} \exp \left(\alpha_{t i}\right) r_{t i}}{v_0+\sum_{j \in S} \exp \left(\alpha_{t j}\right)}$ and $S_t=\operatorname{argmax}_{S \in S} {R}_t(S)$. Assume $\alpha_{t i}^{\prime} \geqslant \alpha_{t i} \geqslant 0$ for all $i \in[N]$. Then, we have
% \begin{align*}
% {R}_t\left(S_t\right) \leqslant \frac{\sum_{i \in S_t} \exp \left(\alpha_{t i}^{\prime}\right) r_{t i}}{v_0+\sum_{j \in S_t} \exp \left(\alpha_{t j}^{\prime}\right)}
% \end{align*}
% \end{lemma}

\begin{lemma}[{\cite[Lemma E.3]{lee2024nearly}}]\label{lemE.3}
Define $Q: \mathbb{R}^K \rightarrow \mathbb{R}$, such that for any $\mathbf{u}=\left(u_1, \ldots, u_K\right) \in \mathbb{R}^K, Q(\mathbf{u})=$ $\sum_{i=1}^K \frac{\exp \left(u_i\right)}{v_0+\sum_{k=1}^K \exp \left(u_k\right)}$. Let $p_i(\mathbf{u})=\frac{\exp \left(u_i\right)}{v_0+\sum_{k=1}^K \exp \left(u_k\right)}$. Then, for all $i \in[K]$, we have
\begin{align*}
    \left|\frac{\partial^2 Q}{\partial i \partial j}\right| \leqslant \begin{cases}3 p_i(\mathbf{u}) & \text { if } i=j \\ 2 p_i(\mathbf{u}) p_j(\mathbf{u}) & \text { if } i \neq j\end{cases}
\end{align*}
\end{lemma}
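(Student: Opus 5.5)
We plan to compute the Hessian of $Q$ in closed form, using the fact that in the uniform-revenue case the revenue collapses to one minus the no-purchase probability. This is exactly the specialization $p_i\equiv 1$ (so $P=1$) of Proposition~\ref{appendix-prop-perturbation-of-Q}, where the mixed partials were already computed, but for a uniform reward a direct calculation is shorter and gives the sharper constants.

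\textbf{Step 1: rewrite $Q$.} Write $p_0(\mathbf u):=v_0/(v_0+\sum_{k}\exp(u_k))$. Since the $p_i$ together with $p_0$ sum to one, we have $Q(\mathbf u)=\sum_{i=1}^K p_i(\mathbf u)=1-p_0(\mathbf u)$.

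\textbf{Step 2: the basic softmax derivatives.} These are
\[
\partial_j p_0=-p_0p_j,\qquad \partial_j p_i=p_i(\mathbf 1\{i=j\}-p_j).
\]
From the first identity, $\partial_i Q=-\partial_i p_0=p_ip_0$. This agrees with \eqref{ch4-eq: Q-first-order} for unit prices, since there $1-Q=p_0$.

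\textbf{Step 3: differentiate once more.} By the product rule,
\[
\partial_j\partial_i Q=(\partial_j p_i)\,p_0+p_i\,\partial_j p_0 .
\]
For $i=j$ this gives
\[
\partial_i^2 Q=p_i(1-p_i)p_0-p_i^2p_0=p_ip_0(1-2p_i).
\]
For $i\neq j$ it gives
\[
\partial_j\partial_iQ=-p_ip_jp_0-p_ip_jp_0=-2p_ip_jp_0 .
\]

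\textbf{Step 4: bound the entries.} Every $p_k$ lies in $[0,1]$, and in particular $|1-2p_i|\le 1$. Hence
\[
|\partial_i^2Q|\le p_ip_0\le p_i\le 3p_i,
\qquad
|\partial_i\partial_jQ|=2p_ip_jp_0\le 2p_ip_j \quad (i\neq j),
\]
which is the claim.

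No step is a real obstacle; the computation is routine. The only care needed is in Step 3: one must keep the $-p_ip_j$ cross term coming from $\partial_jp_i$ when $i\neq j$, and this term is what produces the factor $2$ in the off-diagonal bound. We keep the constant $3$ on the diagonal only to match the general-price bound of Proposition~\ref{appendix-prop-perturbation-of-Q}; as Step 4 shows, the constant $1$ already suffices.
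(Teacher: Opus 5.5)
Your proof is correct and follows essentially the route the paper relies on. The paper only cites \cite{lee2024nearly} for this lemma, but its own proof of Proposition~\ref{appendix-prop-perturbation-of-Q} carries out the same entrywise Hessian computation, and at unit prices (where $1-Q=p_0$) those formulas reduce exactly to your $p_ip_0(1-2p_i)$ and $-2p_ip_jp_0$. The one implicit assumption you should state is $v_0\ge 0$, which is what guarantees $p_0\in[0,1]$.
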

%---------------Main proof--------------%
\subsubsection{Main proof of Theorem~\ref{thm-MLE-regret}}
\begin{proof}[\textbf{Proof of Theorem~\ref{thm-MLE-regret}}]
We first define the set of large elliptical potential rounds as follows:
\begin{align*}
\mathcal{T}_0:=\left\{t \in[T]:\left\|\widetilde{\bm x}_{t i}(p_{ti})\right\|_{H_t \left(\nu_t^\star\right)^{-1}}^2 \ge \frac{1}{2}, \quad \forall i \in S_t\right\} .
\end{align*}

Let $\mathrm{UCB}_{t i}(p_i)=\widetilde{\bm x}_{t i}(p_i)^{\top} \hat{\bm\vartheta}_t+\gamma_t  \left\|\widetilde{\bm x}_{t i}\right\|_{H_t \left(\nu_t^{\star}\right)^{-1}}$ and $\overline{\mathrm{UCB}}_{t i}(p_i):=\widetilde{\bm x}_{t i}^{\top} \bm\vartheta^{\star}+2 \gamma_t  \left\|\widetilde{\bm x}_{t i}(p_i)\right\|_{H_t \left(\nu_t^{\star}\right)^{-1}}$. Then, by Lemma \ref{lemF.2}, for all $i \in[N]$ and $t \ge 1$, we have
\begin{align}\label{eq:27}
u_{ti}^\text{UCB}(p_i)-\widetilde{\bm x}_{t i}( p_i)^{\top} \bm\vartheta^{\star} &= \max_{\bm \vartheta \in \mathcal{C}_{t-1}} \tilde{\bm{x}}_{tj}( p_i)^\top \bm\vartheta-\widetilde{\bm x}_{t i}( p_i)^{\top} \bm\vartheta^{\star} \nonumber\\&\le \mathrm{UCB}_{t i}( p_i)-\widetilde{\bm x}_{t i}( p_i)^{\top} \bm\vartheta^{\star} \le 2 \gamma_t  \left\|\widetilde{\bm x}_{t i}( p_i)\right\|_{H_t \left(\nu_t^{\star}\right)^{-1}}
\end{align}
which implies $u_{ti}^\text{UCB}(p_i) \le \overline{\mathrm{UCB}}_{t i}(p_i)$. Thus, by Proposition~\ref{prop-non-uniform-reward-optimal-assortment}, we get
\begin{align}\label{eq:F.2}
 R_t^{\text{UCB}}(S_t,\bm p_t)\le \widetilde{R}_t \left(S_t,\bm p_t\right):=\frac{\sum_{i \in S} \exp \left(\overline{\mathrm{UCB}}_{t i}( p_{ti})\right) p_{ti}}{1+\sum_{j \in S} \exp \left(\overline{\mathrm{UCB}}_{t j}( p_{tj})\right)}.
\end{align}
Now, we decompose the regret into two phases:
\begin{align} \label{eq:F.3}
    \mathrm{Reg}(T)&=\sum_{t\in\mathcal T_0} \E \left[R_t(S_t^\star,\bm p_t^\star|\bm\vartheta^\star)-R_t(S_t,\bm p_t|\bm\vartheta^\star)\right] + \sum_{t\notin\mathcal T_0} \E \left[R_t(S_t^\star,\bm p_t^\star|\bm\vartheta^\star)-R_t(S_t,\bm p_t|\bm\vartheta^\star)\right] \nonumber\\
    &\le  P |\mathcal T_0|+ \sum_{t\notin\mathcal T_0}\E \left[R_t(S_t^\star,\bm p_t^\star|\bm\vartheta^\star)-R_t(S_t,\bm p_t|\bm\vartheta^\star)\right] \nonumber\\
    &\le   P \frac{2}{\log(3/2)}d\log\left(1+\frac{\bar P^2}{\log(3/2)\lambda}\right)+ \sum_{t\notin\mathcal T_0}\E \left[R_t(S_t^\star,\bm p_t^\star|\bm\vartheta^\star)-R_t(S_t,\bm p_t|\bm\vartheta^\star )\right],
\end{align}
where the last inequality follows from Lemma~\ref {lemF.4}.

Given an assortment and price $(S_t,\bm p_t)$, we define a function $Q: \mathbb{R}^{K_t} \rightarrow \mathbb{R}$ as 
\begin{align*}
    Q(\mathbf u)=\sum_{i\in S_t}\frac{\exp(u_i)p_{ti}}{1+\sum_{j\in S_t}\exp(u_j)}.
\end{align*}
Furthermore, we denote $\bm u_t=\left(\overline{\mathrm{UCB}}_{t i_1}, \ldots, \overline{\mathrm{UCB}}_{t i_{K_t}}\right)^{\top}$ and $\bm u_t^{\star}=\left(x_{t i_1}^{\top} \bm\vartheta^{\star}, \ldots, x_{t i_{K_t}}^{\top} \bm\vartheta^{\star}\right)^{\top}$.
Then, we have 
\begin{align}
    \sum_{t\notin\mathcal T_0} \left[R_t(S_t^\star,\bm p_t^\star|\bm\vartheta^\star)-R_t(S_t,\bm p_t|\bm\vartheta^\star)\right] &\le \sum_{t\notin\mathcal T_0} \left[R_t^{\text{UCB}}(S_t,\bm p_t)-R_t(S_t,\bm p_t|\bm\vartheta^\star)\right] \nonumber\\
    & \le \sum_{t\notin\mathcal T_0} \left[\widetilde{R}_t(S_t,\bm p_t)-R_t(S_t,\bm p_t|\bm\vartheta^\star)\right] \nonumber\\
    & =  \sum_{t\notin\mathcal T_0} \left[Q(\bm u_t)-Q(\bm u_t^\star)\right] \nonumber\\
    & = \underbrace{\sum_{t\notin\mathcal T_0}\nabla Q(\bm u_t^\star)^\top(\bm u_t-\bm u_t^\star)}_{(\mathbf I)}+ \underbrace{\frac{1}{2} \sum_{t\notin\mathcal T_0}(\bm u_t-\bm u_t^\star)^\top \nabla^2 Q(\bar{\bm u}_t)(\bm u_t-\bm u_t^\star)}_{(\mathbf{II})} \label{eq:F.4}
\end{align}
where the first inequality holds follows from Lemma 4 in \cite{oh2021multinomial}, the second inequality follows from the inequality \eqref{eq:F.2}, and in the last equality, $\bar{\bm u}_t=c\bm u_t+(1-c)\bm u_t^\star$ for some $c\in[0,1]$.

\paragraph{Bounding $(\mathbf{I})$}
First, we bound the term $(\mathbf{I})$ in Equation \eqref{eq:F.4}. For simplicity, let $q_{ti}(\bm\vartheta)=\frac{\exp(\widetilde{\bm x}_{ti}(p_{ti})^\top\bm\vartheta )}{1+\sum_{j\in S_t}\exp(\widetilde{\bm x}_{tj}(p_{tj})^\top\bm\vartheta )}$, $\E_t^{\bm\vartheta}[\widetilde{\bm x}_{ti}(p_{ti})]=\sum_{i\in S_t}q_{ti}(\bm\vartheta)\widetilde{\bm x}_{ti}(p_{ti})$ . Then, we have
\begin{align} \label{eq:31}
& \sum_{t\notin\mathcal T_0} \nabla Q\left(\bm u_t^{\star}\right)^{\top}\left(\bm u_t-\bm u_t^{\star}\right) \nonumber\\ & =\sum_{t\notin\mathcal T_0}\sum_{i \in S_t} \frac{\exp \left(u_{ti}^\star\right) p_{t i}}{1+\sum_{k \in S_t} \exp \left(u_{tk}^\star\right)}\left(u_{t i}-u_{t i}^{\star}\right)- \frac{\sum_{j \in S_t}\exp \left(u_{tj}^\star\right) p_{t j} \sum_{i \in S_t} \exp \left(u_{ti}^\star\right)}{\left(1+\sum_{k \in S_t} \exp \left(u_{tk}^\star\right)\right)^2}\left(u_{t i}-u_{t i}^{\star}\right) \nonumber \\ & 
=\sum_{t\notin\mathcal T_0}\sum_{i\in S_t}p_{ti} q_{ti}(\bm\vartheta^\star)  (u_{t i}-u_{t i}^{\star})- \left( \sum_{j\in S_t} p_{tj} q_{tj}(\bm\vartheta^\star) \right)  \left(\sum_{i\in S_t}  q_{ti}(\bm\vartheta^\star)(u_{t i}-u_{t i}^{\star})\right)\nonumber \\
&\le 2\gamma_T  P \sum_{t\notin\mathcal T_0} \sum_{i\in S_t}q_{ti}(\bm\vartheta^\star)\left(\|\widetilde{\bm x}_{ti}(p_{ti})\|_{H_t(\nu_t^\star)^{-1}}-\sum_{j\in S_t}q_{tj}(\bm\vartheta^\star)\|\widetilde{\bm x}_{tj}(p_{tj})\|_{H_t(\nu_t^\star)^{-1}} \right) \nonumber\\
&= 2\gamma_T P \sum_{t\notin\mathcal T_0} \E_t^{\bm\vartheta^\star}\left[\|\widetilde{\bm x}_{ti}(p_{ti})\|_{H_t(\nu_t^\star)^{-1}} -\E_t^{\bm\vartheta^\star}\left[\|\widetilde{\bm x}_{ti}(p_{ti})\|_{H_t(\nu_t^\star)^{-1}}  \right] \right]\nonumber\\
&\le 2\gamma_T P \sum_{t\notin\mathcal T_0} \E_t^{\bm\vartheta^\star}\left[\left\|\widetilde{\bm x}_{ti}(p_{ti})- \E_t^{\bm\vartheta^\star}\left[\widetilde{\bm x}_{ti} (p_{ti})\right]\right\|_{H_t(\nu_t^\star)^{-1}} \right]
\end{align}
where the first inequality follows from Equation \eqref{eq:27} and the bound $p_{ti}\le P$, and the last equality holds due to the fact that $\|\bm a\|-\|\bm b\|\le\|\bm a-\bm b\|$ for any vectors $\bm a,\bm b\in\mathbb R^{2d}$.

We can decompose the last term of Equation \eqref{eq:31} as follows:
\begin{align}\label{eq:F.5}
    &\E_t^{\bm\vartheta^\star}\left[\|\widetilde{\bm x}_{ti}(p_{ti})- \E_t^{\bm\vartheta^\star}\left[\widetilde{\bm x}_{ti}(p_{ti}) \right]\|_{H_t(\nu_t^\star)^{-1}} \right] = \sum_{t\notin\mathcal T_0}\sum_{i\in S_t}\sqrt{q_{ti}(\bm\vartheta^\star)q_{ti}(\nu_t^\star)} \left\|\widetilde{\bm x}_{ti}(p_{ti})- \E_t^{\nu_t^\star}\left[\widetilde{\bm x}_{ti}(p_{ti}) \right]\right\|_{H_t(\nu_t^\star)^{-1}} \nonumber \\
    &+ \sum_{t\notin\mathcal T_0}\sum_{i\in S_t}\left(\sqrt{q_{ti}(\bm\vartheta^\star)}-\sqrt{q_{ti}(\nu_t^\star)}\right) \sqrt{q_{ti}(\bm\vartheta^\star)}\left\|\widetilde{\bm x}_{ti}(p_{ti})- \E_t^{\nu_t^\star}\left[\widetilde{\bm x}_{ti}(p_{ti}) \right]\right\|_{H_t(\nu_t^\star)^{-1}} \nonumber\\
    &+ \sum_{t\notin\mathcal T_0}\sum_{i\in S_t}q_{ti}(\bm\vartheta^\star) \left(\left\|\widetilde{\bm x}_{ti}(p_{ti})- \E_t^{\bm\vartheta^\star}\left[\widetilde{\bm x}_{ti}(p_{ti}) \right]\right\|_{H_t(\nu_t^\star)^{-1}}-\left\|\widetilde{\bm x}_{ti}(p_{ti})- \E_t^{\nu_t^\star}\left[\widetilde{\bm x}_{ti}(p_{ti}) \right]\right\|_{H_t(\nu_t^\star)^{-1}} \right)
\end{align}
By applying Cauchy-Schwarz inequality and Lemma \ref{lemD.7}, the first term in Equation \eqref{eq:F.5} can be bounded by
\begin{align*}
    &\sum_{t\notin\mathcal T_0}\sum_{i\in S_t}\sqrt{q_{ti}(\bm\vartheta^\star)q_{ti}(\nu_t^\star)} \left\|\widetilde{\bm x}_{ti}(p_{ti})- \E_t^{\nu_t^\star}\left[\widetilde{\bm x}_{ti} (p_{ti})\right]\right\|_{H_t(\nu_t^\star)^{-1}} \\
    &\le \sqrt{\sum_{t\notin\mathcal T_0}\sum_{i\in S_t}q_{ti}(\bm\vartheta^\star)} \sqrt{\sum_{t\notin\mathcal T_0}\sum_{i\in S_t}q_{ti}(\nu_t^\star)\left\|\widetilde{\bm x}_{ti}(p_{ti})- \E_t^{\nu_t^\star}\left[\widetilde{\bm x}_{ti}(p_{ti}) \right]\right\|_{H_t(\nu_t^\star)^{-1}}^2}\\
    &\le \sqrt{T}\sqrt{2d\log(1+\frac{T}{d\lambda})}.
\end{align*}
Additionally, the second  term in Equation \eqref{eq:F.5} can be bounded by
\begin{align*}
    &\sum_{t\notin\mathcal T_0}\sum_{i\in S_t}\left(\sqrt{q_{ti}(\bm\vartheta^\star)}-\sqrt{q_{ti}(\nu_t^\star)}\right) \sqrt{q_{ti}(\bm\vartheta^\star)}\left\|\widetilde{\bm x}_{ti}(p_{ti})- \E_t^{\nu_t^\star}\left[\widetilde{\bm x}_{ti}(p_{ti}) \right]\right\|_{H_t(\nu_t^\star)^{-1}}\\
    &=\sum_{t\notin\mathcal T_0}\sum_{i\in S_t} \frac{|q_{ti}(\bm\vartheta^\star)-q_{ti}(\nu_t^\star)|}{\sqrt{q_{ti}(\bm\vartheta^\star)}+\sqrt{q_{ti}(\nu_t^\star)}}
\sqrt{q_{ti}(\bm\vartheta^\star)}\left\|\widetilde{\bm x}_{ti}(p_{ti})- \E_t^{\nu_t^\star}\left[\widetilde{\bm x}_{ti} (p_{ti})\right]\right\|_{H_t(\nu_t^\star)^{-1}}\\
    &\le \sum_{t\notin\mathcal T_0}\sum_{i\in S_t} |q_{ti}(\bm\vartheta^\star)-q_{ti}(\nu_t^\star)|\cdot \left\|\widetilde{\bm x}_{ti}(p_{ti})- \E_t^{\nu_t^\star}\left[\widetilde{\bm x}_{ti}(p_{ti}) \right]\right\|_{H_t(\nu_t^\star)^{-1}}\\
    &\le 4\gamma_T \sum_{t\notin\mathcal T_0}\max_{i\in S_t} \left\|\widetilde{\bm x}_{ti}(p_{ti}) - \E_t^{\nu_t^\star}\left[\widetilde{\bm x}_{ti}(p_{ti}) \right]\right\|_{H_t(\nu_t^\star)^{-1}}\max_{i\in S_t} \left\|\widetilde{\bm x}_{ti}(p_{ti})\right\|_{H_t(\nu_t^\star)^{-1}}\\
    &\le 4\gamma_T\sqrt{\sum_{t\notin\mathcal T_0}\max_{i\in S_t} \left\|\widetilde{\bm x}_{ti}(p_{ti})- \E_t^{\nu_t^\star}\left[\widetilde{\bm x}_{ti}(p_{ti}) \right]\right\|_{H_t(\nu_t^\star)^{-1}}^2 } \sqrt{\sum_{t\notin\mathcal T_0}\max_{i\in S_t} \left\|\widetilde{\bm x}_{ti}(p_{ti})\right\|_{H_t(\nu_t^\star)^{-1}}^2 }\\
    &\le \frac{4}{\sqrt\kappa}\gamma_T\sqrt{\sum_{t\notin\mathcal T_0}\sum_{i\in S_t} q_{ti}(\nu_t^\star) \left\|\widetilde{\bm x}_{ti}(p_{ti})- \E_t^{\nu_t^\star}\left[\widetilde{\bm x}_{ti}(p_{ti}) \right]\right\|_{H_t(\nu_t^\star)^{-1}}^2 } \sqrt{\sum_{t\notin\mathcal T_0}\max_{i\in S_t} \left\|\widetilde{\bm x}_{ti}(p_{ti})\right\|_{H_t(\nu_t^\star)^{-1}}^2 }\\
    &\le \frac{8}{\kappa}\gamma_T d\log(1+\frac{T}{d\lambda})
\end{align*}
where the first inequality holds because $\frac{\sqrt{q_{ti}(\bm\vartheta^\star)}}{\sqrt{q_{ti}(\bm\vartheta^\star)}+\sqrt{q_{ti}(\nu_t^\star)}}\le 1$, the second inequality follows from Lemma
\ref{lemF.3}, the third inequality follows from Cauchy-Schwarz inequality, the fourth inequality holds due to the definition of $\kappa$, and the last one follows from Lemma \ref{lemD.7} and \ref{lemD.11}.

Finally, we bound the last term in \eqref{eq:F.5} as follows:
\begin{align*}
    &\sum_{t\notin\mathcal T_0}\sum_{i\in S_t}q_{ti}(\bm\vartheta^\star) \left(\left\|\widetilde{\bm x}_{ti}(p_{ti})- \E_t^{\bm\vartheta^\star}\left[\widetilde{\bm x}_{ti}(p_{ti}) \right]\right\|_{H_t(\nu_t^\star)^{-1}}-\left\|\widetilde{\bm x}_{ti}(p_{ti})- \E_t^{\nu_t^\star}\left[\widetilde{\bm x}_{ti}(p_{ti}) \right]\right\|_{H_t(\nu_t^\star)^{-1}} \right)\\
    &\le \sum_{t\notin\mathcal T_0}\sum_{i\in S_t}q_{ti}(\bm\vartheta^\star) \left\| \E_t^{\bm\vartheta^\star}\left[\widetilde{\bm x}_{ti}(p_{ti}) \right]- \E_t^{\nu_t^\star}\left[\widetilde{\bm x}_{ti}(p_{ti}) \right]\right\|_{H_t(\nu_t^\star)^{-1}} \\
    &= \sum_{t\notin\mathcal T_0}\sum_{i\in S_t}q_{ti}(\bm\vartheta^\star) \left\| \sum_{j\in S_t}(q_{tj}(\nu_t^\star)-q_{tj}(\bm\vartheta^\star))\widetilde{\bm x}_{tj}(p_{tj}) \right\|_{H_t(\nu_t^\star)^{-1}} \\
    &\le \sum_{t\notin\mathcal T_0}\sum_{j\in S_t}|q_{tj}(\bm\vartheta^\star) -q_{tj}(\nu_t^\star)|\left\| \widetilde{\bm x}_{tj}(p_{tj}) \right\|_{H_t(\nu_t^\star)^{-1}} \\
    &\le 4\gamma_T \sum_{t\notin\mathcal T_0}\max_{j\in S_t} \left\| \widetilde{\bm x}_{tj}(p_{tj}) \right\|_{H_t(\nu_t^\star)^{-1}}^2\\
    &\le \frac{8}{\kappa}\gamma_T d\log\left(1+\frac{T}{d\lambda}\right).
\end{align*}
where the first inequality holds because $\|\bm a\|-\|\bm b\|\le\|\bm a-\bm b\|$ for any vectors $\bm a,\bm b\in\mathbb R^{2d}$, the second inequality holds due to the fact that $\sum_{i\in S_t}q_{ti}(\bm\vartheta^\star)\le 1$ and $\|\sum_ia_ix_i\|_H\le \sum_i|a_i|\|x_i\|_H$ for any scalars $a_i$ and vectors $x_i$, the third inequality follows from Lemma \ref{lemF.3}, and the last inequality follows from Lemma \ref{lemD.11}.

Thus, combining the results above, we can bound term $(\mathbf{I})$ in \eqref{eq:F.4} by
\begin{align}\label{eq:F.6}
    \sum_{t\notin\mathcal T_0} \nabla Q\left(\bm u_t^{\star}\right)^{\top}\left(\bm u_t-\bm u_t^{\star}\right)\leq 2\sqrt2\gamma_T  P\sqrt{dT\log\left(1+\frac{T}{d\lambda}\right)}+\frac{32}{\kappa}\gamma_T^2  P d\log\left(1+\frac{T}{d\lambda}\right).
\end{align}

\paragraph{Bounding $(\mathbf{II})$}
Now, we provide the upper bound for term $(\mathbf{II})$ in \eqref{eq:F.4}. Given an assortment and price $(S_t,\bm p_t)$, we define a function $\bar Q: \mathbb{R}^{K_t} \rightarrow \mathbb{R}$ as $\bar Q(\mathbf u)=\sum_{i\in S_t}\frac{\exp(u_i)}{1+\sum_{j\in S_t}\exp(u_j)}.$ Since $p_{ti}\in[0,  P]$, $\left|\frac{\partial^2Q}{\partial i\partial j}\right| \le \left|\frac{\partial^2\bar Q}{\partial i\partial j}\right|$. Hence, we have
\begin{align*}
    \frac{1}{2} \sum_{t\notin\mathcal T_0}(\bm u_t-\bm u_t^\star)^\top \nabla^2 Q(\bar{\bm u}_t)(\bm u_t-\bm u_t^\star) &\le \frac{1}{2} \sum_{t\notin\mathcal T_0} \sum_{i\in S_t}\sum_{j\in S_t}(u_{ti}-u_{ti}^\star)\frac{\partial^2Q}{\partial i\partial j}(u_{tj}-u_{tj}^\star)\\
    &\le \frac{  P}{2} \sum_{t=1}^T \sum_{i\in S_t}\sum_{j\in S_t}|u_{ti}-u_{ti}^\star|\left|\frac{\partial^2\bar Q}{\partial i\partial j}\right||u_{tj}-u_{tj}^\star|
\end{align*}
For simplicity, we denote $q_i(\bar{\bm u}_t)=\frac{\exp(\bar u_{ti})}{1+\sum_{j\in S_t}\exp(\bar u_{tj})}$. Then, we have
\begin{align*}
    &\frac{  P}{2} \sum_{t=1}^T \sum_{i\in S_t}\sum_{j\in S_t}|u_{ti}-u_{ti}^\star|\left|\frac{\partial^2\bar Q}{\partial i\partial j}\right||u_{tj}-u_{tj}^\star|\\
    &=\frac{P}{2} \sum_{t=1}^T \sum_{i\in S_t}\sum_{j\in S_t,j\neq i}|u_{ti}-u_{ti}^\star|\left|\frac{\partial^2\bar Q}{\partial i\partial j}\right||u_{tj}-u_{tj}^\star|  +  \frac{P}{2} \sum_{t=1}^T \sum_{i\in S_t}|u_{ti}-u_{ti}^\star|\left|\frac{\partial^2\bar Q}{\partial i^2}\right||u_{tj}-u_{tj}^\star|\\
    &\le  P \sum_{t=1}^T \sum_{i\in S_t}\sum_{j\in S_t,j\neq i}|u_{ti}-u_{ti}^\star|\left|q_i(\bar{\bm u}_t)q_j(\bar{\bm u}_t)\right||u_{tj}-u_{tj}^\star|  +  \frac{3P}{2} \sum_{t=1}^T \sum_{i\in S_t}(u_{ti}-u_{ti}^\star)^2q_i(\bar{\bm u}_t)\\
    &\le  P \sum_{t=1}^T \sum_{i\in S_t}\sum_{j\in S_t}|u_{ti}-u_{ti}^\star|\left|q_i(\bar{\bm u}_t)q_j(\bar{\bm u}_t)\right||u_{tj}-u_{tj}^\star|  +  \frac{3P}{2} \sum_{t=1}^T \sum_{i\in S_t}(u_{ti}-u_{ti}^\star)^2q_i(\bar{\bm u}_t)\\
    &\le \frac{P}{2}\sum_{t=1}^T \sum_{i\in S_t}\sum_{j\in S_t}(u_{ti}-u_{ti}^\star)^2q_i(\bar{\bm u}_t)q_j(\bar{\bm u}_t) + \frac{P}{2}\sum_{t=1}^T\sum_{i\in S_t}\sum_{j\in S_t}(u_{tj}-u_{tj}^\star)^2q_i(\bar{\bm u}_t)q_j(\bar{\bm u}_t) \\
    &\quad + \frac{3P}{2} \sum_{t=1}^T \sum_{i\in S_t}(u_{ti}-u_{ti}^\star)^2q_i(\bar{\bm u}_t)\\
    &\le \frac{5P}{2} \sum_{t=1}^T \sum_{i\in S_t}(u_{ti}-u_{ti}^\star)^2q_i(\bar{\bm u}_t).
\end{align*}
where the first inequality follows from Lemma \ref{lemE.3}, and the third inequality follows from AM-GM inequality.

Hence, the term $(\mathbf{II})$ in \eqref{eq:F.4} can be bounded by
\begin{align}\label{eq:F.7}
    \frac{1}{2} \sum_{t\notin\mathcal T_0}(\bm u_t-\bm u_t^\star)^\top \nabla^2 Q(\bar{\bm u}_t)(\bm u_t-\bm u_t^\star) &\le \frac{5P}{2} \sum_{t=1}^T \sum_{i\in S_t}(u_{ti}-u_{ti}^\star)^2q_i(\bar{\bm u}_t)
    \nonumber\\& \le 10P \sum_{t=1}^T \sum_{i\in S_t}q_i(\bar{\bm u}_t)\gamma_t^2\|\widetilde{\bm x}_{ti}(p_{ti})\|^2_{H_t(\nu^\star_t)^{-1}}
    \nonumber\\& \le 10P \gamma_t^2\sum_{t=1}^T \max_{i\in S_t}\|\widetilde{\bm x}_{ti}(p_{ti})\|^2_{H_t(\nu^\star_t)^{-1}}
    \nonumber\\& \le \frac{20}{\kappa}P \gamma_t^2 d\log \left(1+\frac{T}{d\lambda}\right)
\end{align}
where the last inequality follows from Lemma \ref{lemD.11}.

Therefore, combining results in \eqref{eq:F.6}, \eqref{eq:F.7} and \eqref{eq:F.3}, and setting $\lambda=\frac{1}{8W^2}$ and $\gamma_T=\mathcal{O}\left(\sqrt{d\log(WPT)}\right)$, we obtain
\begin{align*}
    Reg(T)  &\le P \frac{2}{\log(3/2)}d\log\left(1+\frac{\bar P^2}{\log(3/2)\lambda}\right)  +   2\sqrt2\gamma_TP\sqrt{dT\log\left(1+\frac{T}{d\lambda}\right)}+\frac{52}{\kappa}\gamma_T^2P d\log\left(1+\frac{T}{d\lambda}\right)\\
    &=\widetilde{\mathcal{O}}\left(\frac{W}{L_0}d\sqrt{T} +\frac{1}{\kappa}\frac{W }{L_0}d^2  \right)
\end{align*}
\end{proof}

%--------------------------------------------------------------%
\subsection{Bayesian regret via the online UCB analysis}

We now prove the Bayesian regret bound. The key idea is to use the optimistic revenue \(R_t^{\mathrm{UCB}}\) in \eqref{eq-RtUCB} as a bridge between the optimal revenue under the true parameter and the revenue collected by Thompson sampling.
   
\begin{proof}[\textbf{Proof of Theorem~\ref{thm-TS-regret}}] We first divide the Bayesian regret into two parts.
\begin{align*}
    BR_Q(T) & = \mathbb{E}_{\bm\vartheta\sim Q}\left[\sum_{t=1}^T \left(R(S^\star,\bm p^\star|\bm\vartheta) - R(S_t,\bm p_t|\bm\vartheta) \right) \right]\\
    &=  \E_{\bm\vartheta\sim Q}\left[\sum_{t=1}^T\E[R(S^\star,\bm p^\star|\bm\vartheta) - R_t^{\text{UCB}}(S^\star,\bm p^\star) + R_t^{\text{UCB}}(S_t,\bm p_t) - R(S_t,\bm p_t|\bm\vartheta) | \mathcal{F}_{t-1}] \right]\\
    &=  \E_{\bm\vartheta\sim Q} \underbrace{\left[\sum_{t=1}^T \left(R(S^\star,\bm p^\star|\bm\vartheta) - R_t^{\text{UCB}}(S^\star,\bm p^\star)\right)\right]}_{I_1} + \E_{\bm\vartheta\sim Q}\underbrace{\left[\sum_{t=1}^T \left(R_t^{\text{UCB}}(S_t,\bm p_t) - R(S_t,\bm p_t|\bm\vartheta)\right)\right]}_{I_2}  
\end{align*}
where $\mathcal{F}_{t-1}$ is the filtration generated by all historical observations up to time $t-1$, and the second line holds because $(S^\star,\bm p^\star)$ and $(S_t,\bm p_t)$ are identically distributed given $\mathcal{F}_{t-1}$.

Lemma~\ref{prop-non-uniform-reward-optimal-assortment} and Lemma~\ref{lem-confidence-sequential-MLE} imply that $R(S^\star,\bm p^\star|\bm\vartheta) \le R_t^{\text{UCB}}(S^\star,\bm p^\star)$, i.e., $I_1\le 0$. 

Similar to the proof of Theorem \ref{thm-MLE-regret}, we bound $I_2$ term by
\begin{align*}
   I_2
   &= \sum_{t\in\mathcal{T}_0}\left[ R_t^{\text{UCB}}(S_t,\bm p_t) - R(S_t,\bm p_t|\bm\vartheta)\right] + \sum_{t\notin\mathcal{T}_0}\left[ R_t^{\text{UCB}}(S_t,\bm p_t) - R(S_t,\bm p_t|\bm\vartheta)\right]\\
   &\le  P|\mathcal{T}_0| + \sum_{t\notin\mathcal{T}_0}\left[ R_t^{\text{UCB}}(S_t,\bm p_t) - R(S_t,\bm p_t|\bm\vartheta)\right]\\
   &\le  P \frac{2}{\log(3/2)}d\log\left(1+\frac{\bar P^2}{\log(3/2)\lambda}\right) + 2\sqrt2\gamma_T P\sqrt{dT\log\left(1+\frac{T}{d\lambda}\right)}+\frac{52}{\kappa}\gamma_T^2 P d\log\left(1+\frac{T}{d\lambda}\right)\\
   % &=\widetilde{\mathcal{O}}\left(\frac{W+\log K}{L_0}d\sqrt{T}\log\left(\frac{W+\log K}{L_0}WT\right)+\frac{1}{\kappa}\frac{W+\log K}{L_0}d^2\log^2\left(\frac{W+\log K}{L_0}WT\right)  \right)\\
    &=\widetilde{\mathcal{O}}\left(\frac{W}{L_0}d\sqrt{T} + \frac{1}{\kappa}\frac{W }{L_0}d^2 \right)
\end{align*}

Thus, we obtain the Bayesian regret
\begin{align*}
    &BR_Q(T)\le \E_{\bm\vartheta\sim Q}[I_1+I_2]=\widetilde{\mathcal{O}}\left(\frac{W}{L_0}d\sqrt{T} + \frac{1}{\kappa}\frac{W }{L_0}d^2 \right)
    % &\le \mathcal{O}\left(\frac{W+\log K}{L_0}d\sqrt{T}\log\left(\frac{W+\log K}{L_0}WT\right)+\frac{1}{\kappa}\frac{W+\log K}{L_0}d^2\log^2\left(\frac{W+\log K}{L_0}WT\right)  \right).
\end{align*}
\end{proof}
%% ---- END appendix_section6.tex ----

\end{document}